\pdfoutput=1
\documentclass[a4paper,12pt]{article}
\usepackage[english]{babel}
\usepackage{amsfonts}
\usepackage{amsthm}
\usepackage{amsmath}
\usepackage{amssymb}
\usepackage{enumerate}
\usepackage{mathtools}
\usepackage{tikz-cd}
\usepackage[all]{xy}
\usepackage{graphicx}
\usepackage{amsmath}
\usepackage{graphicx}
\usepackage{extpfeil}
\usepackage{comment}
\usepackage{float}
\usepackage[labelformat=empty]{caption}
\usepackage[toc]{appendix}
\usepackage[left=3.2cm,top=2.5cm,right=3.2cm,bottom=2.5cm]{geometry}
\usepackage{hyperref}
\usepackage{comment}
\usepackage{resizegather}
\usepackage[activate={true, nocompatibility}, final, tracking=true, kerning=true, spacing=true]{microtype}
\usepackage{authblk}
%%%%%%%%%%%%%%%%%%%%%%%%%%%%%%%%%%%

\theoremstyle{plain}
\newtheorem{thm}{Theorem}[section]
\newtheorem{coroll}[thm]{Corollary}
\newtheorem{defn}[thm]{Definition}
\newtheorem{lemma}[thm]{Lemma}

\newtheorem{example}[thm]{Example}
\newtheorem{prop}[thm]{Proposition}

\newtheorem{remark}[thm]{Remark}

\newtheorem{algor}[thm]{Algorithm}

%%%%%%%%%%%%%%%%%%%%%%%%%%%%%%%%%%%%%%%%%%%%%%%%%%%%%%%%%%%%%%%%%

%%%%%%%%%%%%%%%%%%%%%%%%%%%%%%%%%%%%%%%%%%%%%%%%%%%%%%%%%%%%%%%%%%

\newcommand{\bseries}[1]{ [\hspace{-0,5mm}[ {#1} ]\hspace{-0,5mm}] } 
\newcommand{\pseries}[1]{ (\hspace{-0,7mm}( {#1} )\hspace{-0,7mm}) }

%%%%%%%%%%%%%%%%%%%%%%%%%%%%%%%%%%%%%%%%%%%%%%%%%%%%%%%%%%%%%%%%%%%%%%%%%
\microtypecontext{spacing=nonfrench}

\setlength{\parskip}{6pt}

%%%%%%%%%%%%%%%%%%%%%%%%%%%%%%%%%%%%%%%%%%%%%%%%%%%%%%%%%%%%%%%

\begin{document}
\thispagestyle{empty}

\title{Reduction theory for connections over the formal punctured disc}
\author{Andres Fernandez Herrero}
\date{}
\affil[]{\small{Department of Mathematics, Cornell University,
    310 Malott Hall, Ithaca, New York 14853, USA;}}
\affil[]{\small{email: \url{ajf254@cornell.edu}}}

\maketitle
\begin{abstract}
We give a purely algebraic treatment of reduction theory for connections over the formal punctured disc. Our proofs apply to arbitrary connected linear algebraic groups over an algebraically closed field of characteristic $0$. We also state and prove some new quantitative results.
\end{abstract}

\tableofcontents

\section{Introduction}
Let $\mathbf{k}$ be an algebraically closed field of characteristic $0$. Fix a connected linear algebraic group $\mathbf{G}$ over $\mathbf{k}$. Let $D^{*}\vcentcolon = \text{Spec} \, \mathbf{k}\pseries{t}$ denote the formal punctured disc over $\mathbf{k}$. 

In this paper we give an algebraic classification of formal $\mathbf{G}$-connections over $D^{*}$ up to gauge equivalence. The regular singular case is more explicit and is presented first (see Subsection \ref{subsection: descent for gauge}). The more general irregular case is Proposition \ref{prop: classif. rat. conn}. These constitute the two main results of the paper.

In order to get our parametrizations, we first prove that every connection can be put into canonical form \cite{Babbitt.varadarajan.formal} after passing to a ramified cover (Theorems \ref{thm:general regular} and \ref{thm: reduction conn general}). %See \cite{Babbitt.varadarajan.formal} and Subsection \ref{subsection: canonical irregular general} for the definition of canonical form. 

Next, we describe a set of representatives for canonical forms for which we can develop a clean description of Galois cohomology cocycles (see the set of conditions in Theorem \ref{thm: reduction conn general}). We then proceed to describe the Galois cocycles in Subsections \ref{subsection: descent for gauge} and \ref{subsection: galois for irregular connections}. As a consequence of our arguments we obtain some new quantitative results in Subsection \ref{subsection: quantitative}.

Our approach to the existence of canonical forms is based on the work of Babbitt and Varadarajan \cite{Babbitt.varadarajan.formal}. Some of the crucial parts in their argument are analytic in nature, so they only apply when the ground field is $\mathbb{C}$. We sidestep those parts to provide a completely algebraic proof. In addition, we simplify the global structure of their inductive arguments.

Our treatment of uniqueness of canonical forms is substantially different from the one in \cite{Babbitt.varadarajan.formal}. We choose a different set of representatives for canonical classes in order to set up our Galois cohomology argument (see the list of properties in Theorem \ref{thm: reduction conn general}). This allows us to avoid the use of the complex exponential map. In our setup the proof of uniqueness and the identification of the gauge transformation centralizer become elementary power series arguments.

We develop separate treatments of reduction theory depending on whether $\mathbf{G}$ is reductive, unipotent or solvable. This allows us to give sharper separate statements, including some new determinacy results in the unipotent and solvable cases (see Propositions \ref{prop: determinacy unipotent}, \ref{prop: determinacy regular solvable} and \ref{prop: determinacy irregular solvable}).

There is some related work by Schn\"{u}rer on the subject. \cite{schnurer.regular} gives a purely algebraic proof of the reduction of formal connections when the group $\mathbf{G}$ is reductive and the connection has a regular singularity. In contrast, our arguments apply more generally to arbitrary linear algebraic groups and irregular singular connections.

Let us briefly describe the body of the paper. Section \ref{section: notation} sets up the notation and some of the facts that are used throughout. 

In Section \ref{section: regular connections} we develop reduction theory for regular singular connections. Section \ref{section: regular connections} culminates with the first main result of this paper: an explicit parametrization of regular singular connections over the formal punctured disc. This is achieved in Subsection \ref{subsection: descent for gauge}. This parametrization is rather concrete in the case of classical groups, see Example \ref{example: classical groups regular connections}. 

Section \ref{section: irregular reductive} treats reduction theory of irregular connections for reductive groups. Section \ref{section: irregular reductive} also includes some of the quantitative results mentioned in the abstract. In Subsection \ref{subsection: quantitative} we give an explicit description of the reduction algorithm for reductive groups. An analysis of this algorithm yields determinacy results for both the irregular and the regular part of the canonical form in the reductive case (Proposition \ref{prop: determinacy.irregular}). We also prove a new uniform bound on the ramification needed to put connections into canonical form (see Propositions \ref{prop: ramification reduction reductive} and \ref{prop: determinacy everything}).

Section \ref{section: general irregular} develops reduction theory for irregular connections in the case of an arbitrary linear algebraic group. Subsection \ref{subsection: galois for irregular connections} gives a parametrization of irregular connections over $D^*$ up to gauge equivalence; this is the second main result of this paper. Precisely, this parametrization consists of two pieces of data. The first is a connection $B$ in canonical form satisfying all five conditions listed in Theorem \ref{thm: reduction conn general}. $B$ determines the connection up to ramified gauge equivalence (i.e. when we allow passing to ramified covers of the formal disc). The second piece of data is a $B$-twisted $\mu_b$-cocycle, which is an element in the centralizer of the residue of $B$ satisfying certain additional condition. See Definition \ref{defn: twisted cocycles} and the paragraph following it for an explanation. Section \ref{section: general irregular} ends with Proposition \ref{prop: coxeter bound levels}, where we use this parametrization to give another proof of a result found in \cite{chen-depthpreservation}. 
\section{Some notation and definitions} \label{section: notation}
\subsection{Preliminaries on formal connections}
We will always work over a fixed algebraically closed field $\mathbf{k}$ of characteristic $0$. An undecorated product of $\mathbf{k}$-schemes (e.g. $X\times S$) should always be interpreted as a fiber product over $\mathbf{k}$. $\mathbf{G}$ will be a connected linear algebraic group over $\mathbf{k}$ and $\mathfrak{g} = \text{Lie}(\mathbf{G})$ will be the corresponding Lie algebra. We let $\mathcal{O} = \mathbf{k}\bseries{t}$ denote the ring of formal power series over $\mathbf{k}$ and $F = \mathbf{k}\pseries{t}$ denote the corresponding field of Laurent series. $\mathcal{O}$ is a discrete valuation ring with maximal ideal $t\mathcal{O}$.

Recall that the module of K\"{a}hler differentials $\Omega^{1}_{\mathcal{O} / \mathbf{k}}$ classifies $\mathbf{k}$-derivations from $\mathcal{O}$. It is spanned as an $\mathcal{O}$-module by formal elements $df$ for every $f \in \mathcal{O}$, subject to the relations $d(fg) = f dg + g df$. We will work with the module of continuous K\"{a}hler differentials $\hat{\Omega}_{\mathcal{O}/\mathbf{k}}^{1}$ , which is defined to be the completion \[\hat{\Omega}_{\mathcal{O}/\mathbf{k}}^{1} \vcentcolon = \underset{n}{\varprojlim} \, \,  \, {\Omega}_{\mathcal{O} / \mathbf{k}}^{1} \, / \, \, t^n \, {\Omega}_{\mathcal{O}/\mathbf{k}}^{1}\]
This is a free $\mathcal{O}$-module of rank $1$. The natural completion map $\widehat{(-)} : \Omega_{\mathcal{O}/\mathbf{k}}^{1} \rightarrow \hat{\Omega}_{\mathcal{O}/\mathbf{k}}^{1}$ can be thought of as the projection onto the quotient obtained by adding the extra relations coming from allowing termwise differentiation of power series.
\begin{remark}
The module of ordinary K\"{a}hler differentials $\Omega^1_{\mathcal{O} / \mathbf{k}}$ is not finitely generated as an $\mathcal{O}$-module. We don't want to work with $\Omega^1_{\mathcal{O} / \mathbf{k}}$, because the relations above do not include classical intuitive identities like $d (e^t) = e^t dt$. That is the reason why we use continuous K\"{a}hler differentials instead.
\end{remark}
For any positive natural number $b$, let $F_{b} \vcentcolon = \mathbf{k}\pseries{t^{\frac{1}{b}}}$. This is a finite Galois extension of $F$ with Galois group canonically isomorphic to $\mathbf{\mu}_{b}$, the group of $b$-roots of unity in $\mathbf{k}$. Under this isomorphism, we have that $\gamma \in \mu_b$ acts by $\gamma \cdot t^{\frac{1}{b}} = \gamma^{-1} t^{\frac{1}{b}}$. Notice that the choice of a primitive root of unity yields an identification $\mu_b \cong \mathbb{Z} / b\, \mathbb{Z}$, since we are working in characteristic $0$. A well known theorem of Puiseux states that the algebraic closure of $F$ is $\overline{F} = \bigcup_{b \geq 1} F_{b}$. 

In this paper we will work with a (right) $\mathbf{G}$-torsor $P$ over the formal punctured disc $D^* \vcentcolon = \text{Spec}\,F$. We know that $P$ can be trivialized, meaning that $P \cong \text{Spec}\, F \times \mathbf{G}$ as right $\mathbf{G}$-torsors. This follows from theorems of Tsen and Springer, see \cite{serre.galoiscoh} page 80 - 3.3(b) and page 132 - 2.3(c). A formal connection $A$ on $P$ is a function from the set of trivializations of $P$ into $\mathfrak{g} \, \otimes_{\mathbf{k}} \, \hat{\Omega}_{\mathcal{O}/\mathbf{k}}^{1} \left[\frac{1}{t}\right]$ that satisfies a certain transformation law. In order to describe the transformation law we need some notation.

Let $T_{\mathbf{G}}$ be the tangent sheaf of $\mathbf{G}$. There is a natural trivialization $T_{\mathbf{G}} \cong \mathfrak{g} \, \otimes_{\mathbf{k}} \mathcal{O}_{\mathbf{G}}$ given by left translation. Therefore, we get an isomorphism  $\mathfrak{g} \, \otimes_{\mathbf{k}} \, \Omega^{1}_{\mathbf{G} / \mathbf{k}} \cong \mathfrak{g} \, \otimes_{\mathbf{k}} \, \text{Hom}_{\mathcal{O}_{\mathbf{G}}} ( T_{\mathbf{G}}, \mathcal{O}_{\mathbf{G}}) \cong \text{Hom}_{\mathbf{k}}(\mathfrak{g}, \mathfrak{g}) \otimes_{\mathbf{k}} \, \mathcal{O}_{\mathbf{G}}$. The invariant $\mathfrak{g}$-valued 1-form on $\mathbf{G}$ that corresponds to $\text{id}_{\mathfrak{g}} \otimes 1$ under this isomorphism is called the Maurer-Cartan form. We will denote it by $\omega \in   \mathfrak{g}\, \otimes_{\mathbf{k}} \Omega^{1}_{\mathbf{G} / \mathbf{k}}$.

Suppose that we are given an element $g \in \mathbf{G}(F)$. We can think of it as a map $g: \text{Spec} \, F \longrightarrow \mathbf{G}$. We can use $g$ to pull back the Maurer-Cartan form to $\text{Spec}\, F$ in order to obtain $g^{*}\omega \in  \mathfrak{g} \, \otimes_{\mathbf{k}} \, \Omega_{F / \mathbf{k}}^{1} =  \mathfrak{g} \, \otimes_{\mathbf{k}} \, \Omega_{\mathcal{O} / \mathbf{k}}^{1}\left[\frac{1}{t}\right]$. By applying the completion map $\widehat{(-)} : \Omega_{\mathcal{O}/\mathbf{k}}^{1} \rightarrow \hat{\Omega}_{\mathcal{O}/\mathbf{k}}^{1}$, we get an element $\widehat{{g}^{*} \omega} \in \mathfrak{g} \, \otimes_{\mathbf{k}} \, \hat{\Omega}_{\mathcal{O}/\mathbf{k}}^{1} \left[\frac{1}{t}\right]$. Now we can define the gauge action of $\mathbf{G}\left(F\right)$ on $\mathfrak{g} \, \otimes_{\mathbf{k}} \, \hat{\Omega}_{\mathcal{O}/\mathbf{k}}^{1} \left[\frac{1}{t}\right]$. For any 
$g \in \mathbf{G}\left(F\right)$ and $B \in \mathfrak{g} \, \otimes_{\mathbf{k}} \, \hat{\Omega}_{\mathcal{O}/\mathbf{k}}^{1} \left[\frac{1}{t}\right]$, we set $g \cdot B  \vcentcolon = \text{Ad}(g) B + \widehat{{g}^{*} \omega}$ .
\begin{defn}
By a formal connection $A$ for $P$ we mean a function 
\[A \; : \; \left\{\text{trivializations} \; \;P \xrightarrow{\sim}\text{Spec} \, F \times \mathbf{G} \right\} \; \; \longrightarrow  \; \; \mathfrak{g} \, \otimes_{\mathbf{k}} \, \hat{\Omega}_{\mathcal{O}/\mathbf{k}}^{1} \left[\frac{1}{t}\right]\]
satisfying the following transformation law. Let $\phi_1, \, \phi_2 \, : \, P \xrightarrow{\sim}\text{Spec}\, F \times \mathbf{G}$ be two trivializations of $P$. We know that $\phi_2 \circ \phi_1^{-1}$ is given by left multiplication by a unique element $g \in \mathbf{G}\left(F\right)$. We then require $A(\phi_2) = g \cdot A(\phi_1)$.
\end{defn}
\begin{remark}
Th reader might have encountered a different definition of formal connection. Using the action of $\mathbf{G}$ on $\mathfrak{g} \otimes_{\mathbf{k}} \Omega^1_{\mathcal{O}/\mathbf{k}} \left[ \frac{1}{t}\right]$ one can define a formal version of the Atiyah sequence \cite{atiyah.connections}. Splittings of such sequence will correspond to formal connections as we have defined them.
\end{remark}
Such a connection $A$ is completely determined by its value at any given trivialization. We will often assume that we have chosen a fixed trivialization of $P$. Hence we can think of $P$ as the trivial bundle, and think of $A$ as the element of $\mathfrak{g} \, \otimes_{\mathbf{k}} \, \hat{\Omega}_{\mathcal{O}/\mathbf{k}}^{1} \left[\frac{1}{t}\right]$ given by the image of this trivialization. Note that we have implicitly fixed a choice of uniformizer $t$ for $\mathcal{O}$. This yields an isomorphism $\hat{\Omega}_{\mathcal{O} / \mathbf{k}}^{1} = \mathcal{O} \,dt \cong \mathcal{O}$. We will often think of connections as elements of $\mathfrak{g}_F \vcentcolon = \mathfrak{g} \otimes_{\mathbf{k}} F$ obtained under the induced isomorphism $\Omega_{\mathcal{O} / \mathbf{k}}^1 \left[ \frac{1}{t} \right] = F \, dt \cong F$.

All of the discussion above also applies over any finite field extension $F_b$ of $F$. The choice of a uniformizer $u \vcentcolon = t^{\frac{1}{b}}$ for $F_b$ yields an isomorphism from $F$ onto $F_{b}$ sending $t$ to $u$. This allows us to``lift" $\mathbf{G}$-bundles and trivializations from $\text{Spec}\,F_b$ to $\text{Spec} \,F$ by transport of structure. We can therefore lift connections from $F_b$ to $F$.

There are some subtleties for the lift of connections when we think of them as elements of $\mathfrak{g}_F$. We generally take derivatives with respect to $t$, and not $u =t^{\frac{1}{b}}$. That is, we fix the trancendental element $t = u^b$ of $F_{b}$ in order to get the isomorphism $ \hat{\Omega}_{\mathcal{O}_{b}/\mathbf{k}}^{1}\left[ \frac{1}{u}\right] = \left( \mathcal{O}_b \, dt \right) \left[\frac{1}{u}\right] \cong \mathcal{O}_b  \left[\frac{1}{u}\right] = F_b$. Under this identification, the lift of a $\mathbf{G}$-connection is not the obvious one given by replacing $u$ by $t$. Instead, the lift of a connection $A = \sum_{j = r}^{\infty} A_j \, t^{\frac{j}{b}} \in \mathfrak{g}_{F_{b}}$ is given by $\tilde{A} \vcentcolon = b t^{b-1} \sum_{j = r}^{\infty} A_j \, t^j$. This is called the $b$-lift of the connection.

Let $\mathbf{T} \subset \mathbf{G}$ be a maximal torus in $\mathbf{G}$. We will denote by $X_{*}(\mathbf{T})$ (resp. $X^{*}(\mathbf{T})$) the cocharacter (resp. character) lattice of $\mathbf{T}$. We will write $\langle-, -\rangle : X_{*}(\mathbf{T}) \otimes X^{*}(\mathbf{T}) \longrightarrow \mathbb{Z}$ for the canonical pairing. There is a natural inclusion $X_{*}(\mathbf{T}) \subset \text{Lie}(\mathbf{T})$ given by taking differentials at the identity. We will freely use this identification without further notice. Note that a cocharacter $\lambda : \mathbb{G}_m \longrightarrow \mathbf{T} \subset \mathbf{G}$ yields a point $\lambda \in \mathbf{G}(\mathbf{k}[t, t^{-1}])$. We denote by $t^{\lambda}$ the element of $\mathbf{G}(F)$ obtained via the natural inclusion $\mathbf{k}[t, t^{-1}]\hookrightarrow F$.

We will make use of the algebraic exponential map, as in \cite{demazure-gabriel} pg. 315. For $X \in t \mathfrak{gl}_n (\mathcal{O})$ we have an exponential $\text{exp}(X) \in \mathbf{GL_n}(\mathcal{O})$ defined by $\text{exp}(X) \vcentcolon = \sum_{i = 0}^{\infty} \frac{1}{i!}X^i$. By choosing a closed embedding $\mathbf{G} \hookrightarrow \mathbf{GL_n}$ we can similarly define an exponential map $\text{exp} \vcentcolon t \mathfrak{g}(\mathcal{O}) \longrightarrow \mathbf{G}(\mathcal{O})$. It can be checked that this does not depend on the choice of embedding. We will only use one property of this map: for any $X \in \mathfrak{g}$, the image of $\text{exp}(t^n \, X)$ when we reduce modulo $t^{n+1}$ is given by $ 1 + t^n X \in  \mathbf{G}\left(\mathcal{O}/ t^{n+1} \mathcal{O} \right)$.
\subsection{Adjoint orbits in semisimple Lie algebras}
Here we include some facts about semisimple algebraic groups and their Lie algebras. Most of these results are standard and can be found in the book \cite{collingwood.nilpotent}. For the rest of this section we will assume that $\mathbf{G}$ is connected semisimple.

 Recall that an element of a semisimple Lie algebra is called semisimple (resp. nilpotent) if the the image under the adjoint representation is semisimple (resp. nilpotent) as a linear transformation of $\mathfrak{g}$. It turns out that we can check these conditions on any faithful representation. This fact follows from the following theorem.
\begin{thm}[Additive Jordan Decomposition] 
Let $\mathfrak{g}$ semisimple. For any $A \in \mathfrak{g}$ there exist unique a  $A_s$ semisimple and $A_n$ nilpotent such that\vspace{-0.25cm}
\begin{enumerate}[(i)]
    \item $A = A_s + A_n$
    \item $[A_s , A_n ] = 0$
\end{enumerate}
\end{thm}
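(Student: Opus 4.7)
The plan is to reduce the claim to the classical Jordan decomposition for linear endomorphisms via the adjoint representation. Since $\mathfrak{g}$ is semisimple, its center is trivial, so $\text{ad}: \mathfrak{g} \to \text{End}(\mathfrak{g})$ is injective. Applying the classical Jordan decomposition to the endomorphism $\text{ad}(A)$, we obtain a unique splitting $\text{ad}(A) = S + N$ with $S$ semisimple, $N$ nilpotent, and $[S,N] = 0$; moreover both $S$ and $N$ are polynomials in $\text{ad}(A)$ without constant term. The core of the proof is to show that $S$ and $N$ are themselves in the image of $\text{ad}$.

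First I would show that $S$ and $N$ are derivations of $\mathfrak{g}$. To this end, consider the generalized eigenspace decomposition $\mathfrak{g} = \bigoplus_{\alpha} \mathfrak{g}_\alpha$ with respect to $\text{ad}(A)$. Because $\text{ad}(A)$ is itself a derivation, a short induction using the Leibniz rule shows that $[\mathfrak{g}_\alpha, \mathfrak{g}_\beta] \subseteq \mathfrak{g}_{\alpha + \beta}$. Since $S$ acts on $\mathfrak{g}_\alpha$ by the scalar $\alpha$, for $x \in \mathfrak{g}_\alpha$ and $y \in \mathfrak{g}_\beta$ one computes
\[S[x,y] = (\alpha + \beta)[x,y] = [Sx, y] + [x, Sy],\]
so $S$ is a derivation; then $N = \text{ad}(A) - S$ is a derivation as well.

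Next I would invoke the classical theorem that every derivation of a semisimple Lie algebra is inner (a consequence of Weyl's theorem on complete reducibility applied to the short exact sequence $0 \to \text{Inn}(\mathfrak{g}) \to \text{Der}(\mathfrak{g}) \to \text{Out}(\mathfrak{g}) \to 0$). This gives unique $A_s, A_n \in \mathfrak{g}$ with $\text{ad}(A_s) = S$ and $\text{ad}(A_n) = N$. Injectivity of $\text{ad}$ combined with the identities $\text{ad}(A_s + A_n) = S + N = \text{ad}(A)$ and $\text{ad}([A_s, A_n]) = [S, N] = 0$ then forces $A = A_s + A_n$ and $[A_s, A_n] = 0$. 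By the very definition recalled just before the theorem, $A_s$ is semisimple and $A_n$ is nilpotent. Uniqueness in $\mathfrak{g}$ is immediate from the uniqueness of the Jordan decomposition in $\text{End}(\mathfrak{g})$ together with injectivity of $\text{ad}$.

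The main obstacles here are the two standard but non-trivial inputs: that the semisimple and nilpotent parts of a derivation are again derivations (handled by the eigenspace computation above), and that every derivation of a semisimple Lie algebra is inner. Both facts appear in \cite{collingwood.nilpotent}, which the author has already cited as a reference for background on semisimple Lie algebras, so I would simply quote them rather than reprove them.
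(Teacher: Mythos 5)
Your proof is correct. The paper does not actually prove this theorem --- it is quoted as a standard fact with \cite{collingwood.nilpotent} as the reference --- and your argument (pass to $\text{ad}(A)$, take its classical Jordan decomposition, check via the generalized eigenspace decomposition that the semisimple and nilpotent parts are derivations, use that derivations of a semisimple Lie algebra are inner and that $\text{ad}$ is injective since the center is trivial) is precisely the standard textbook proof being invoked. The only hypothesis worth flagging is that the generalized eigenspace decomposition and the classical Jordan decomposition use that the ground field is algebraically closed of characteristic $0$, which holds throughout the paper, so nothing is missing.
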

\begin{remark}  \label{remark: semisimple general}
For a reductive Lie algebra, all elements of the center are considered semisimple. For the Lie algebra of an arbitrary linear algebraic group, we will usually fix a Levi subgroup $\mathbf{L}$ and speak of semisimple elements inside $\text{Lie}(\mathbf{L})$.
\end{remark}
Recall that $\mathfrak{sl}_2 = \{ X \in \mathfrak{gl}_2 \mid \text{tr}(X) = 0 \}$. The Lie bracket is given by the matrix commutator. Define $H= \begin{bmatrix} 1 & 0 \\ 0 & -1 \end{bmatrix}$, $X= \begin{bmatrix} 0 & 1 \\ 0 & 0 \end{bmatrix}$ and $Y= \begin{bmatrix} 0 & 0 \\ 1 & 0 \end{bmatrix}$. Then we have $\mathfrak{sl}_2 = \mathbf{k} H \oplus \mathbf{k} X \oplus \mathbf{k} Y$ as a $\mathbf{k}$-vector space.
\begin{defn}
A \textbf{$\mathfrak{sl}_2$-triple} in $\mathfrak{g}$ is a nonzero Lie algebra map $\phi: \mathfrak{sl}_2 \longrightarrow \mathfrak{g}$. We will often abuse notation and denote the images of $H, X, Y$ with the same letters.
\end{defn}
\begin{thm}[Jacobson-Morozov] \label{thm: jacobson} Let $\mathbf{G}$ be a connected semisimple algebraic group with Lie algebra $\mathfrak{g}$. Let $U \in \mathfrak{g}$ be a nilpotent element. Then there exists a homomorphism $\Phi \vcentcolon SL_2 \longrightarrow G$ such that the $\mathfrak{sl}_2$-triple corresponding to the differential $d\Phi: \mathfrak{sl}_2 \longrightarrow \mathfrak{g}$ satisfies $d \Phi (Y)= U$. Moreover such a homomorphism is uniquely determined up to conjugation by an element of the centralizer $Z_{\mathbf{k}}(U)(\mathbf{k})$.
\end{thm}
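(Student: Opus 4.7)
The plan is to prove this standard theorem by first establishing the existence of an $\mathfrak{sl}_2$-triple at the Lie algebra level, then integrating to the algebraic group, and finally handling uniqueness by analyzing how the centralizer $Z_{\mathbf{G}}(U)$ acts on the set of such triples.

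For the Lie algebra existence, the core step is Morozov's lemma: given a nilpotent $U \in \mathfrak{g}$, produce a semisimple $H \in \mathfrak{g}$ with $[H,U] = -2U$. I would argue this using the Killing form duality $[U,\mathfrak{g}]^{\perp} = Z_{\mathfrak{g}}(U)$ together with the observation that $U \in [U,\mathfrak{g}]$, which allows one to write $U = [Z, U]$ for some $Z$ and then iteratively improve $Z$ (solving within nested eigenspaces) to obtain an element $H$ satisfying the exact weight relation $[H,U]=-2U$. Once $H$ is in hand, constructing $E$ with $[H,E] = 2E$ and $[E,U] = H$ reduces to solving a linear equation inside the $+2$ eigenspace of $\mathrm{ad}(H)$. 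The required surjectivity of $\mathrm{ad}(U)$ between the appropriate $\mathrm{ad}(H)$-weight spaces then follows from $\mathfrak{sl}_2$-representation theory applied to the already-defined action of $\mathbf{k}H \oplus \mathbf{k}U$ on $\mathfrak{g}$.

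To integrate $\phi: \mathfrak{sl}_2 \to \mathfrak{g}$ to a group homomorphism $\Phi: SL_2 \to \mathbf{G}$, I would use that in characteristic $0$ the functor $\widetilde{\mathbf{G}} \mapsto \mathrm{Lie}(\widetilde{\mathbf{G}})$ is an equivalence on simply connected semisimple groups. Since $SL_2$ is the simply connected form of its Lie algebra, $\phi$ lifts uniquely to a homomorphism $SL_2 \to \widetilde{\mathbf{G}}$ into the simply connected cover of $\mathbf{G}$; composing with the central isogeny $\widetilde{\mathbf{G}} \to \mathbf{G}$ gives the desired $\Phi$. Uniqueness of this lift given the differential is immediate because $SL_2$ is connected.

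For the uniqueness up to $Z_{\mathbf{k}}(U)(\mathbf{k})$, I would argue at the Lie algebra level. Given two triples $(H,E,U)$ and $(H',E',U)$, the main step is to conjugate $H$ into $H'$ by an element of $Z_{\mathbf{G}}(U)(\mathbf{k})$; once $H = H'$, the difference $E - E'$ lies in $\ker(\mathrm{ad}\,U)$ inside the $+2$ weight space of $\mathrm{ad}\,H$, and exponentiating an appropriate element of $\mathrm{Lie}(Z_{\mathbf{G}}(U))$ produces a conjugator that also fixes $H$ and sends $E$ to $E'$. The hardest ingredient is the conjugacy of $H$: I would carry this out by showing that the scheme of semisimple elements $H''$ satisfying $[H'',U] = -2U$ is a single $Z_{\mathbf{G}}(U)$-orbit, either via a dimension count against $\mathrm{Lie}(Z_{\mathbf{G}}(U))$ or by exhibiting a parabolic structure on $Z_{\mathbf{G}}(U)$ in which all such $H''$ are maximally split and hence conjugate. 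Transporting this uniqueness to the group level is then automatic, since two maps $SL_2 \to \mathbf{G}$ with the same differential agree.
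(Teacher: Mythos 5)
First, a remark on the comparison itself: the paper does not prove Theorem \ref{thm: jacobson}. It is quoted as a standard fact from the literature (the surrounding text points to \cite{collingwood.nilpotent}), so your sketch can only be measured against the classical proofs of Jacobson--Morozov and of Kostant's conjugacy theorem. Your overall architecture --- Morozov's lemma at the Lie algebra level, integration to the group using that $SL_2$ is simply connected and $\mathrm{char}\,\mathbf{k}=0$, then uniqueness via transitivity of the centralizer on completions of $U$ --- is indeed the standard route, and the integration step is fine.

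Two steps, however, would fail as you state them. (1) In the existence part you justify the surjectivity needed to solve $[E,U]=H$, $[H,E]=2E$ by ``$\mathfrak{sl}_2$-representation theory applied to the already-defined action of $\mathbf{k}H\oplus\mathbf{k}U$''. But $\mathbf{k}H\oplus\mathbf{k}U$ with $[H,U]=-2U$ is a two-dimensional solvable algebra, not $\mathfrak{sl}_2$, so there is no $\mathfrak{sl}_2$ theory to apply before the triple exists --- supplying exactly this surjectivity is the real content of Morozov's lemma. The genuine ingredients are that $H$ can be chosen inside $[U,\mathfrak{g}]$ and that the eigenvalues of $\operatorname{ad}(H)$ on the centralizer $\mathfrak{g}_U$ avoid $2$ (classically: are non-positive in this sign convention), which needs its own argument; one then corrects a preliminary solution of $[E,U]=H$ by an element of $\mathfrak{g}_U$. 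Likewise, semisimplicity of $H$ is not something you produce at the outset; it is only known a posteriori, once the triple exists. (2) In the uniqueness part, the claimed main step --- that the semisimple elements $H''$ with $[H'',U]=-2U$ form a single $Z_{\mathbf{G}}(U)$-orbit --- is false. In $\mathfrak{g}=\mathfrak{sl}_2\times\mathfrak{sl}_2$ with $U=(Y,0)$, every $H''=(H,s)$ with $s$ semisimple satisfies $[H'',U]=-2U$, yet these are not all $Z_{\mathbf{G}}(U)$-conjugate, and only $s=0$ belongs to a triple through $U$. Kostant's theorem concerns only those $H''$ lying additionally in $[U,\mathfrak{g}]$ (equivalently, extending to a triple through $U$); these form the affine space $H+\left(\mathfrak{g}_U\cap[U,\mathfrak{g}]\right)$, and the unipotent group $\exp\left(\mathfrak{g}_U\cap[U,\mathfrak{g}]\right)\subset Z_{\mathbf{G}}(U)$ acts transitively on it (its orbit through $H$ is open because $\operatorname{ad}(H)$ is invertible on $\mathfrak{g}_U\cap[U,\mathfrak{g}]$, and closed because unipotent orbits in affine varieties are closed). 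That is the statement your dimension count should target. Finally, once $H=H'$ you do not need a further conjugation: $E-E'$ lies in $\mathfrak{g}_U$ intersected with the $+2$ eigenspace of $\operatorname{ad}(H)$, which is zero by $\mathfrak{sl}_2$ theory (now legitimately available), so $E=E'$ automatically.
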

If $Y\neq 0$ is a nilpotent element in $\mathfrak{g}$, we will denote by $(H, X, Y)$ the $\mathfrak{sl}_2$-triple granted by Jacobson-Morozov. For any element $X \in \mathfrak{g}$, we will write $\mathfrak{g}_{X}$ for the centralizer of $X$ in $\mathfrak{g}$.

Let $G = \mathbf{G}(\mathbf{k})$ denote the $\mathbf{k}$-rational points of $\mathbf{G}$. Recall that for any $Y \in \mathfrak{g}$, the orbit under the adjoint action $\mathbf{G} \cdot Y$ can be equipped with the structure of a smooth locally closed subvariety of $\mathfrak{g}$. We will often harmlessly identify it with its closed points $G \cdot Y$. The following proposition is going to be the essential technical tool for the induction argument in the reductive case. The proof can be found in \cite{Babbitt.varadarajan.formal}  pages 17-18.
\begin{prop} \label{prop: dim increase}
Let $Y\neq 0$ be nilpotent in $\mathfrak{g}$. Let $(H, X, Y)$ be the corresponding $\mathfrak{sl}_2$-triple. Then the affine space $Y + \mathfrak{g}_X$ meets the orbit $G \cdot Y$ exactly at $Y$. For any other nilpotent $U \in Y+\mathfrak{g}_X$ with $U\neq Y$, we have $\text{dim}(G \cdot U) > \text{dim}(G \cdot Y)$. 
\end{prop}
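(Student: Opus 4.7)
My plan is to exploit the grading and contracting $\mathbb{G}_m$-action associated to the $\mathfrak{sl}_2$-triple $(H,X,Y)$. Write $\mathfrak{g} = \bigoplus_{n \in \mathbb{Z}} \mathfrak{g}(n)$ for the weight decomposition under $\mathrm{ad}\,H$. Standard $\mathfrak{sl}_2$-representation theory gives $Y \in \mathfrak{g}(-2)$, identifies $\mathfrak{g}_X$ as the span of the highest weight vectors in each $\mathfrak{sl}_2$-summand (so $\mathfrak{g}_X \subseteq \bigoplus_{n \geq 0} \mathfrak{g}(n)$), and yields Kostant's decomposition $\mathfrak{g} = [Y,\mathfrak{g}] \oplus \mathfrak{g}_X$. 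Let $\rho: \mathbb{G}_m \to \mathbf{G}$ be obtained by composing $s \mapsto \mathrm{diag}(s,s^{-1})$ with the $SL_2 \to \mathbf{G}$ of Theorem \ref{thm: jacobson}, so that $\mathrm{Ad}(\rho(s))$ acts by $s^n$ on $\mathfrak{g}(n)$. I will work with the linear action $\tau_s(Z) := s^2\,\mathrm{Ad}(\rho(s))\,Z$. It fixes $Y$, preserves $\mathfrak{g}_X$ with strictly positive weights ($\geq 2$), and therefore extends to a morphism $\mathbb{A}^1 \to \mathfrak{g}$ contracting $Y + \mathfrak{g}_X$ onto $\{Y\}$ at $s = 0$. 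Finally, $\tau_s$ preserves every nilpotent $G$-orbit: for any nilpotent $Z$, the element $s^2 Z$ lies in $G \cdot Z$ because every scalar in $\mathbf{k}^{\times}$ is a square and one can conjugate by the cocharacter from $Z$'s own $\mathfrak{sl}_2$-triple.

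For the first claim, consider $I := (Y + \mathfrak{g}_X) \cap G \cdot Y$ as a scheme. The tangent space to $G \cdot Y$ at $Y$ is $[\mathfrak{g},Y]$ (image of the differential of the orbit map), and the tangent space to the slice is $\mathfrak{g}_X$; Kostant's decomposition forces $T_Y I = 0$, so $Y$ is an isolated point of $I_{\mathrm{red}}$ and hence a connected component. For any $U \in I$, the morphism $\mathbb{A}^1 \to \mathfrak{g}$, $s \mapsto \tau_s(U)$, lands set-theoretically in $I$ (since both subschemes are $\tau_s$-stable) and so factors through $I_{\mathrm{red}}$; its image is connected, contains $\tau_0(U) = Y$, and is therefore contained in $\{Y\}$. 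Specializing at $s = 1$ yields $U = Y$.

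For the second claim, let $U = Y + V \in Y + \mathfrak{g}_X$ be nilpotent with $U \neq Y$. The morphism $\mathbb{A}^1 \to \mathfrak{g}$, $s \mapsto \tau_s(U)$, has image in $G \cdot U$ for $s \neq 0$ and sends $0$ to $Y$, so $Y \in \overline{G \cdot U}$. If $Y$ lay in $G \cdot U$, then $G \cdot U = G \cdot Y$, and the first claim would force $U = Y$. Hence $Y$ lies in the boundary $\overline{G \cdot U} \setminus G \cdot U$, which is closed and of strictly smaller dimension than $G \cdot U$ (orbits are locally closed in $\mathfrak{g}$). Therefore $\dim(G \cdot Y) < \dim(G \cdot U)$.

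The main technical obstacle is converting the local, scheme-theoretic transversality at $Y$ into a global statement about the set-theoretic intersection along the whole slice. The contracting $\mathbb{G}_m$-action, together with invariance of nilpotent orbits under scalar multiplication, is what bridges this gap; the same action then drives the orbit-boundary argument for the dimension increase.
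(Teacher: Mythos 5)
Your proof is correct, and it is essentially the argument this paper relies on: the paper does not reprove the proposition but cites Babbitt--Varadarajan (pp.\ 17--18), whose proof is the same Slodowy-slice contraction you use --- the rescaled cocharacter action $s^{2}\mathrm{Ad}(\rho(s))$ fixing $Y$, contracting $Y+\mathfrak{g}_X$, and preserving nilpotent orbits, combined with the transversality $\mathfrak{g}=[\mathfrak{g},Y]\oplus\mathfrak{g}_X$ and the orbit-boundary dimension estimate. No gaps; your handling of the passage from infinitesimal transversality at $Y$ to the global intersection statement via the $\mathbb{G}_m$-equivariance is exactly the standard bridge.
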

\begin{example} If $Y$ is regular nilpotent, then it is the unique nilpotent element in $Y + \mathfrak{g}_X$.
\end{example}
Fix a maximal torus $\mathbf{T} \subset \mathbf{G}$. Let $\Phi$ be the set of roots of $\mathbf{G}$ with respect to $\mathbf{T}$.  The coweight lattice $Q_{\mathbf{G}}$ of $\mathbf{G}$ with respect to $\mathbf{T}$ is defined to be $Q_{\mathbf{G}} \vcentcolon = \text{Hom}(\mathbb{Z} \Phi, \, \mathbb{Z})$. Since $\mathbf{G}$ is semisimple, the cocharacter lattice $X_*(\mathbf{T})$ has finite index in the coweight lattice $Q_{\mathbf{G}}$.
\begin{defn}
The index $I(\mathbf{G})$ is defined to be the exponent of the finite group $Q_{\mathbf{G}} / \, X_*(\mathbf{T})$.
\end{defn}
Let $\Phi^{\vee}$ be the set of coroots of $\mathbf{G}$ with respect to $\mathbf{T}$. We have the following chain of inclusions
\[ \mathbb{Z} \Phi^{\vee} \, \subset \, X_*(\mathbf{T}) \subset \, Q_{\mathbf{G}}\]
\begin{defn} \label{defn: exponent coweights cooroots}
$J(\mathbf{G})$ is the exponent of the finite group $Q_{\mathbf{G}} / \, \mathbb{Z} \Phi^{\vee}$.
\end{defn}
\begin{remark}
Since all maximal tori in $\mathbf{G}$ are conjugate, both $I(\mathbf{G})$ and $J(\mathbf{G})$ do not depend on the choice of $\mathbf{T}$.
\end{remark}
Let us fix a Borel subgroup $\mathbf{B} \subset \mathbf{G}$ containing $\mathbf{T}$. This amounts a choice of positive roots $\Phi^{+}$. We let $\Delta$ be the corresponding subset of simple roots.
\begin{defn}
Let $\alpha = \sum_{\beta \in \Delta} m_{\beta} \beta$ be a positive root. The height of $\alpha$ is defined to be $ \text{hgt}\,(\alpha) \vcentcolon = \sum_{\beta \in \Delta} \; m_{\beta}$.
The height of the Lie algebra $\mathfrak{g}$ is $\text{hgt}\,(\mathfrak{g}) \vcentcolon = \text{sup}_{\alpha \in \Phi^{+}} \; \text{hgt}\,(\alpha)$.
\end{defn}
To conclude this section, we define a function that measures the ``size" of the semisimple element $H$ in the Jacobson-Morozov triple corresponding to a nilpotent $Y \in \mathfrak{g}$. We can always arrange $H \in X_{*}(\mathbf{T})$. We will implicitly assume this from now on.
\begin{defn} \label{defn: lambda}
Let $Y \in \mathfrak{g}$ be a nilpotent element. Let $H$ be the corresponding semisimple element in the Jacobson-Morozov triple of $Y$. Then, we define $\Lambda(Y) \vcentcolon = \text{sup}_{\alpha \in \Phi} \; \left( \frac{1}{2} \alpha(H) +1 \right)$. This function $\Lambda$ is constant on nilpotent orbits.
\end{defn}
\begin{example} \label{example: regular nilp} Suppose that $Y$ is regular nilpotent. We can choose $H$ so that $\alpha(H) =2$ for every $\alpha \in \Delta$ (see \cite{collingwood.nilpotent} Chapter 3). Therefore, $\Lambda(Y) = \text{hgt}\,(\mathfrak{g}) +1$ in this case. It turns out that this is the biggest possible value for $\Lambda$. In other words $\Lambda(Y) \leq \text{hgt} \,(\mathfrak{g}) +1$ for any nilpotent $Y \in \mathfrak{g}$.
\end{example}
\section{Regular connections} \label{section: regular connections}
Fix a connected linear algebraic group $\mathbf{G}$ over $\mathbf{k}$. What we call regular connections are also known as connections with at worst regular singularities.
\begin{defn} A connection $A = \sum_{j =r}^{\infty} A_j \, t^j\in \mathfrak{g}_F$ is said to be of the first kind if if it has at worst a simple pole (i.e. $r\geq -1$).
A connection $A$ is called regular if there exists $x \in \mathbf{G}(\overline{F})$ such that $x\cdot A$ is of the first kind.
\end{defn}
 In the analytic context, regular connections are classified by topological data. Indeed, such connections are determined by their monodromy representation. Our goal in this section is to classify formal regular connections over an arbitrary ground field. This will be achieved in Subsection \ref{subsection: descent for gauge}. It should be noted that the development of the regular case is a necessary preliminary step in our treatment of the general irregular singular case in Sections \ref{section: irregular reductive} and \ref{section: general irregular}.
 
 As mentioned in the introduction, the regular singular case for $\mathbf{k} = \mathbb{C}$ is treated in \cite{Babbitt.varadarajan.formal} using transcendental methods. The case of the group $\text{GL}_n$ was known well before. See Deligne's book \cite{deligne.regulier}[II \S 1] for a discussion of regular singular connections for $\text{GL}_n$ before the paper \cite{Babbitt.varadarajan.formal}. It should be noted that Levelt \cite{levelt.cyclic.vector} gave a proof of the existence of canonical forms for $\text{GL}_n$ that applies to any algebraically closed field $\mathbf{k}$. 
\subsection{Regular connections for semisimple groups}
We will start with the semisimple case. A regular connection is said to be in canonical form if it can be written as $t^{-1} C$ for some $C \in \mathfrak{g}$. In order to prove Theorem \ref{thm:semisimple regular} we can assume that $A$ is of first kind, because of the definition of regular connection. We first need the following definition and lemma from \cite[8.5]{Babbitt.varadarajan.formal}, which actually work for arbitrary $\mathbf{G}$. We include a detailed proof of the lemma in order to keep the exposition self-contained.
\begin{defn}
Let $\mathbf{G}$ be a connected linear algebraic group. Let $A=\sum_{j =-1}^{\infty} A_j \, t^j$ be a connection of the first kind in $\mathfrak{g}_F$. The endomorphism $\text{ad} \, (A_{-1}) \, \in \text{GL}_n(\mathfrak{g})$ yields a decomposition of $\mathfrak{g}$ into generalized eigenspaces $\mathfrak{g} = \bigoplus_{\lambda} \mathfrak{g}_{\lambda}$. We say that $A$ is aligned if $A_j \in \mathfrak{g}_{j+1}$ for all $j$.
\end{defn}
\begin{lemma} \label{lemma:aligned} 
Let $\mathbf{G}$ be a connected linear algebraic group and $A = \sum_{j =-1}^{\infty} A_j \, t^j$ a formal connection of the first kind in $\mathfrak{g}_F$. Then there exist $x \in \mathbf{G}(\mathcal{O})$ such that $x \cdot A$ is aligned.
\end{lemma}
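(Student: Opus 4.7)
The plan is to build $x$ as a $t$-adic infinite product of gauge transformations $y_n = \exp(t^n Y_n)$, chosen inductively so that after the $n$-th step the coefficient of $t^{n-1}$ lies in $\mathfrak{g}_n$ while none of the previously aligned coefficients are disturbed. The base case $n=0$ is automatic, since $A_{-1} \in \mathfrak{g}_0$ because $\text{ad}(A_{-1})$ annihilates $A_{-1}$.

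The computation driving the induction is that for a constant $Y \in \mathfrak{g}$ and $g = \exp(t^n Y)$ with $n \geq 1$, we have $\text{Ad}(g) = 1 + t^n \text{ad}(Y) + O(t^{2n})$, while $\widehat{g^* \omega}$ equals $n\, t^{n-1} Y\, dt$ exactly (the Baker--Campbell--Hausdorff corrections to $g^{-1}dg = dZ$ vanish for $Z = t^n Y$, since $Z$ commutes with its derivative). Hence if $B = \sum_{j \geq -1} B_j t^j$ already has $B_j \in \mathfrak{g}_{j+1}$ for $-1 \leq j \leq n-2$, then the coefficients of $t^{-1}, \ldots, t^{n-2}$ in $g \cdot B$ coincide with those of $B$, while the $t^{n-1}$-coefficient becomes $B_{n-1} - (\text{ad}(B_{-1}) - n)(Y)$. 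In particular the residue $B_{-1}$ is equal to $A_{-1}$ throughout, so we can work with the fixed generalized eigenspace decomposition $\mathfrak{g} = \bigoplus_{\lambda} \mathfrak{g}_{\lambda}$ of $\text{ad}(A_{-1})$.

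The crux is a short linear algebra check: on each $\mathfrak{g}_{\lambda}$ the operator $\text{ad}(A_{-1}) - n$ acts as $(\lambda - n)\,\text{id}$ plus a nilpotent part, hence is invertible precisely on $\bigoplus_{\lambda \neq n} \mathfrak{g}_{\lambda}$. Splitting $B_{n-1} = B_{n-1}^{(n)} + B_{n-1}^{(\neq n)}$ according to $\mathfrak{g} = \mathfrak{g}_n \oplus \bigoplus_{\lambda \neq n} \mathfrak{g}_{\lambda}$, I take $Y_n$ to be the unique element of $\bigoplus_{\lambda \neq n} \mathfrak{g}_{\lambda}$ with $(\text{ad}(A_{-1}) - n)(Y_n) = B_{n-1}^{(\neq n)}$. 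Gauging by $y_n = \exp(t^n Y_n)$ then replaces the $(n-1)$-coefficient with $B_{n-1}^{(n)} \in \mathfrak{g}_n$, completing the inductive step.

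For convergence, pick a faithful closed embedding $\mathbf{G} \hookrightarrow \mathbf{GL}_N$; by the property of $\exp$ recalled in Section \ref{section: notation}, each $y_n$ lies in the kernel of $\mathbf{G}(\mathcal{O}) \to \mathbf{G}(\mathcal{O}/t^n)$, so the partial products $x_N := y_N y_{N-1} \cdots y_1$ are Cauchy in the $t$-adic topology and converge to some $x \in \mathbf{G}(\mathcal{O})$. Because $x \cdot x_N^{-1} \equiv 1 \pmod{t^{N+1}}$, the gauge expansion above shows that the coefficients of $t^{-1}, \ldots, t^{N-1}$ in $x \cdot A$ agree with those of $x_N \cdot A$, which are aligned by construction; letting $N \to \infty$ yields the claim. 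The only nontrivial ingredient is the eigenspace invertibility observation; everything else is formal bookkeeping, so I do not expect any substantial obstacle.
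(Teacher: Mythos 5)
Your proof is correct and follows essentially the same strategy as the paper: an inductive construction of gauge transformations $\exp(t^n Y_n)$, where $Y_n$ is chosen using the invertibility of $\operatorname{ad}(A_{-1}) - n$ on the generalized eigenspaces $\mathfrak{g}_{\lambda}$ with $\lambda \neq n$, followed by $t$-adic convergence of the infinite product in $\mathbf{G}(\mathcal{O})$. Your explicit treatment of the Maurer--Cartan term and of the tail estimate $x\,x_N^{-1} \equiv 1 \pmod{t^{N+1}}$ only makes precise what the paper leaves as an elementary matrix computation.
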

\begin{proof}
We will inductively build a sequence $(B_j)_{j=1}^{\infty}$ of elements of $\mathfrak{g}$ such that the change of trivialization by $x \vcentcolon = \lim_{n \rightarrow \infty} \prod_{j =0}^{n-1} \text{exp}(t^{n-j} \, B_{n-j})$ puts $A$ in aligned form. Let $k \in \mathbb{N}$. Suppose that we have chosen $B_j \in \mathfrak{g}$ for all $j \leq k$ such that the connection $A^{(k)} = \sum_{l=-1}^{\infty} A^{(k)}_{l } \,t^l$ defined by $A^{(k)} \vcentcolon = \prod_{j =0}^{k-1} \text{exp}(t^{k-j} \, B_{k-j}) \cdot A$ satisfies $A_{l}^{(k)} \in \mathfrak{g}_{l+1}$ for all $l < k$. Notice that the base case $k=0$ is trivial and that we have $A^{(k)}_{-1} = A_{-1}$. Let's try to determine $B_{k+1}$.

Recall that $\text{exp}(t^{k+1} \, B_{k+1}) \equiv 1 + t^{k+1} \, B_{k+1} \; (\text{mod} \; t^{k+2})$. By an elementary matrix computation (choose an embedding of $\mathbf{G} \hookrightarrow \mathbf{\text{GL}_n}$), we can see that
\[\text{exp}(t^{k+1} B_{k+1}) \cdot A^{(k)} \equiv \sum_{l=-1}^{k-1} A^{(k)}_l \, t^l + [A^{(k)}_k - (ad(A_{-1}) - (k+1))B_{k+1}] \, t^k \; \; (\text{mod} \; t^{k+1}) \]
Decompose $\mathfrak{g}$ into generalized $ad(A_{-1})$ eigenspaces $\mathfrak{g} = \bigoplus_{\lambda} \mathfrak{g}_{\lambda}$. By definition the operator $ad(A_{-1}) - (k+1)$ restricts to an automorphism of $\mathfrak{g}_{\lambda}$ for all $\lambda \neq k+1$. In particular, we can choose $B_{k+1} \in \mathfrak{g}$ such that $A^{(k)}_k - (ad(A_{-1}) - (k+1))B_{k+1}$ is in $\mathfrak{g}_{k+1}$. This concludes the induction step. It follows by construction that the gauge transformation by $x \vcentcolon = \lim_{n \rightarrow \infty} \prod_{j =0}^{n-1} \text{exp}(t^{n-j} \, B_{n-j})$ puts $A$ in aligned form.
\end{proof}
\begin{remark} \label{remark: determinacy terms regular semisimple}
Any aligned connection is actually in $\mathfrak{g} \otimes \mathbf{k}[t, t^{-1}]$. The coefficient with largest exponent is $\left(x \cdot A\right)_j t^j$, where $j+1$ is the biggest integer eigenvalue of $ad\left( \,(A_{-1})_s \, \right)$. We denote this number  by $k(A_{-1})\vcentcolon = j+1$ for further reference. In order to determine the resulting aligned connection, we only need to multiply by $k(A_{-1})$-many exponentials in the proof above. Therefore the aligned form only depends on $A_j$ for $-1 \leq j \leq k(A_{-1})$. Note that $k(A_{-1})$ can drastically change if we multiply $A$ by a scalar in $\mathbf{k}$. This reflects the fact that gauge transformations are not $\mathbf{k}$-linear.
\end{remark}
\begin{example}
Suppose that $\text{ad}\left( \, \left(A_{-1}\right)_s \, \right)$ does not have any integer eigenvalues. Then the aligned connection will be in canonical form. 
\end{example}

\begin{thm} \label{thm:semisimple regular} Let $\mathbf{G}$ be a connected semisimple algebraic group. Let $A \in \mathfrak{g}_F$ be a regular connection. Then, there exists $x \in \mathbf{G}(\overline{F})$ such that $x \cdot A = t^{-1} C$ for some $C \in \mathfrak{g}$.
\end{thm}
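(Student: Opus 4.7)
The plan is to put $A$ in aligned form via Lemma \ref{lemma:aligned} and then eliminate all the non-negative-order terms in one stroke using a single gauge transformation by a fractional power of $t$. By the definition of regularity we may first gauge by an element of $\mathbf{G}(\overline{F})$ to assume $A$ is of the first kind. Lemma \ref{lemma:aligned} then supplies a gauge transformation in $\mathbf{G}(\mathcal{O})$ which, being an infinite product of exponentials $\text{exp}(t^k B_k)$ with $k \geq 1$, is congruent to $1$ modulo $t$ and hence preserves the residue $A_{-1}$. By Remark \ref{remark: determinacy terms regular semisimple} the aligned connection is a Laurent polynomial
\[A = t^{-1} A_{-1} + \sum_{j=0}^{N} A_j\, t^{j},\qquad A_j\in\mathfrak{g}_{j+1},\]
where $\mathfrak{g}_{j+1}$ is the generalized $(j{+}1)$-eigenspace of $\text{ad}(A_{-1})$.

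Next, take the Jordan decomposition $A_{-1}=(A_{-1})_s+(A_{-1})_n$. After conjugating by a $\mathbf{k}$-point of $\mathbf{G}$ (which preserves alignment, as the Maurer-Cartan contribution of a constant gauge vanishes) we may assume $(A_{-1})_s$ lies in the Lie algebra $\mathfrak{t}$ of a fixed maximal torus $\mathbf{T}\subset\mathbf{G}$. The generalized eigenspaces of $\text{ad}(A_{-1})$ and $\text{ad}((A_{-1})_s)$ then coincide, so for $j\geq 0$ we have $\mathfrak{g}_{j+1} = \bigoplus_{\alpha \in \Phi,\, \alpha((A_{-1})_s) = j+1} \mathfrak{g}_{\alpha}$, and $(A_{-1})_n$ lies in the sum of root spaces for $\alpha$ with $\alpha((A_{-1})_s)=0$. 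The crux is to produce a rational cocharacter $\lambda\in X_{*}(\mathbf{T})\otimes\mathbb{Q}$ satisfying
\[\alpha(\lambda) = -\alpha((A_{-1})_s)\quad\text{for every }\alpha\in\Phi\text{ with }\alpha((A_{-1})_s)\in\mathbb{Z}.\]
These are finitely many $\mathbb{Q}$-linear constraints, and any $\mathbb{Q}$-linear relation $\sum c_{\alpha}\alpha=0$ among the constrained roots automatically forces $\sum c_{\alpha}\alpha((A_{-1})_s)=0$ (both sides are evaluations at the same element of $\mathfrak{t}$). Since $\mathbf{G}$ is semisimple, the natural pairing identifies $X_{*}(\mathbf{T})\otimes\mathbb{Q}$ with $\text{Hom}_{\mathbb{Q}}(\mathbb{Q}\Phi,\mathbb{Q})$, so a solution $\lambda$ exists. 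Choosing $b\geq 1$ with $b\lambda\in X_{*}(\mathbf{T})$, the element $u^{b\lambda}\in\mathbf{G}(F_b)$ plays the role of ``$t^{\lambda}$'' on the $b$-fold cover.

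The final step is a direct computation. By construction $\text{Ad}(t^{\lambda})$ fixes $(A_{-1})_s$ (which lies in $\mathfrak{t}$) and fixes every root vector appearing in $(A_{-1})_n$ (whose roots pair to $0$ with $\lambda$), so $\text{Ad}(t^{\lambda})(A_{-1}) = A_{-1}$. For each $j\geq 0$, every root $\alpha$ occurring in $A_j$ satisfies $\alpha(\lambda)=-(j+1)$, whence $\text{Ad}(t^{\lambda})(A_j t^{j}) = t^{-1}A_j$. Adding the Maurer--Cartan contribution $t^{-1}\lambda$ yields
\[t^{\lambda}\cdot A = t^{-1}\Bigl(A_{-1} + \lambda + \sum_{j=0}^{N} A_j\Bigr) = t^{-1}C,\qquad C\in\mathfrak{g},\]
as required. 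The main obstacle is the construction of $\lambda$: this is the point where the semisimplicity of $\mathbf{G}$ (via the identification $X_{*}(\mathbf{T})\otimes\mathbb{Q}\cong\text{Hom}_{\mathbb{Q}}(\mathbb{Q}\Phi,\mathbb{Q})$) and the passage to a ramified cover enter in an essential way; once $\lambda$ is in hand, everything else is a mechanical computation using the aligned form.
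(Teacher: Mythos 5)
Your argument is correct and is essentially the paper's own proof: put $A$ in aligned form, then shear by a rational cocharacter (after passing to a $b$-fold cover) whose pairing with every root taking an integer value on $(A_{-1})_s$ equals the negative of that value; the paper builds this cocharacter from a $\mathbb{Q}$-linear projection $\pi\colon\mathbf{k}\to\mathbb{Q}$ evaluated on simple roots rather than by your direct extension of the linear constraints, but the two constructions are interchangeable for this statement. Two harmless imprecisions: $(A_{-1})_n$ lies in the full centralizer $\mathfrak{t}\oplus\bigoplus_{\alpha((A_{-1})_s)=0}\mathfrak{g}_{\alpha}$ rather than only in the root spaces (immaterial, since $\mathrm{Ad}(t^{\lambda})$ fixes $\mathfrak{t}$ pointwise), and on the ramified cover one should compute with the $b$-lift of $A$, which introduces an overall factor of $b$ and rescales the Maurer--Cartan term---this only changes the constant $C$, not the conclusion.
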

\begin{proof}
This is a special case of \cite{schnurer.regular}[Thm. 4.2]. We include the proof with some modifications that will suit our needs in subsequent subsections. By Lemma \ref{lemma:aligned}, we can assume that $A$ is an aligned connection in $\mathfrak{g}_F$. Let $(A_{-1})_{s}$ be the semisimple part of $A_{-1}$. Choose a maximal torus $\mathbf{T}$ of $\mathbf{G}$ such that the corresponding Cartan subalgebra $\text{Lie}(\mathbf{T})$ contains $(A_{-1})_{s}$. Fix a choice of positive roots $\Phi^{+}$ of $\mathbf{G}$ relative to $\mathbf{T}$. Let $\Delta$ be the subset of simple roots. Choose a basis for $\mathbf{k}$ as a vector space over $\mathbb{Q}$. Suppose that $1$ is one of the basis elements. Let $\pi \vcentcolon \mathbf{k} \longrightarrow \mathbb{Q}$ be the corresponding projection. We can define $\tau$ in $\text{Lie}(\mathbf{T})$ given by $\tau(\alpha) = \pi(\alpha((A_{-1})_s))$ for all $\alpha \in \Delta$.

There exists $b \in \mathbb{N}$ such that $b \tau$ is in the cocharacter lattice of $\mathbf{T}$. We let $\mu \vcentcolon = b\tau$ be the corresponding cocharacter. Recall from the preliminaries that we have a $b$-lift $\tilde{A}= \sum_{j=-1}^{\infty} b A_j \, t^{bj+b-1}$. We can assume that we are working with $\tilde{A}$ by passing to the $b$-ramified cover. We claim that $t^{-\mu} \cdot \tilde{A}$ is in canonical form. In order to show this, it is convenient to use the $ad$ representation and view everything as matrices in $\text{End}(\mathfrak{g})$. The root decomposition $\mathfrak{g} = \bigoplus_{\alpha \in \Phi} \mathfrak{g}_{\alpha}$ gives us the spectral decomposition of $({A}_{-1})_{s}$.

We can view $\text{Ad}(t^{- \mu})$ as a matrix in $\text{GL}(\mathfrak{g}_F)$.  $\text{Ad}(t^{-\mu})$ acts as the scalar $t^{-\langle \mu,\beta \rangle}$ on the root space $\mathfrak{g}_{\beta}$.
By assumption $A$ is aligned. This means that $A_j$ is in a sum of root spaces $\mathfrak{g}_{\beta}$ where $(A_{-1})_{j+1}$ has eigenvalue $j+1$. These are the root spaces $\mathfrak{g}_{\beta}$ where $\beta((A_{-1})_s) = j+1$. By the construction of $\mu$, we know that $\langle \mu , \beta\rangle = b \beta((A_{-1})_s)$ whenever $\beta((A_{-1})_s) $ is an integer. Therefore, $\text{Ad}(t^{-\mu}) \, A_j = t^{-bj -b} A_j $. We conclude that 
\[ t^{-\mu} \cdot \tilde{A} = t^{-\mu} \cdot \left( \sum_{j=-1}^{\infty} b A_j \, t^{bj+b-1}\right) = \left( \sum_{j=-1}^{\infty} b A_j \right) \, t^{-1} \, + \, \frac{d}{dt} \, \left( t^{-\mu}\right) \, t^{\mu}\]

For the last term $\frac{d}{dt} \, \left( t^{-\mu}\right) \, t^{\mu}$ we are performing the calculation in $\text{End}(\mathfrak{g}_F)$. A matrix computation yields $\frac{d}{dt} \, \left( t^{-\mu}\right) \, t^{\mu} = -\mu \, t^{-1}$. The theorem follows.
\end{proof}
\begin{remark}
Babbitt and Varadarajan prove the theorem over the ground field $\mathbf{k} = \mathbb{C}$ using the analytic theory of regular singular connections (see section 8 of \cite{Babbitt.varadarajan.formal}). In the analytic setting, we need to pass to a ramified cover only when the monodromy class of the connection is not in the image of the exponential map. For example all conjugacy classes in $\mathbf{GL}_n$ are exponential, so we don't need to pass to a ramified cover to reduce regular $\mathbf{GL_n}$-connections. This latter fact can also be proven algebraically using the center of $\mathbf{GL}_n$. See the argument in pages 19-22 of \cite{Babbitt.varadarajan.formal}.
\end{remark}
\begin{remark}
We only need to fix a rational basis of $\text{span}_{\mathbb{Q}}\{ \alpha((A_{-1})_s) \, \vcentcolon \, \alpha \in \Delta \}$ in the proof above. So the argument is constructive.
\end{remark}
We can be a bit more careful in the proof of Theorem \ref{thm:semisimple regular}. This way we can get a uniform bound for the ramification needed. We record this as a small lemma.
\begin{lemma} \label{lemma: ramification bound regular semisimple}
 We can always choose $b \leq \text{hgt}(\mathfrak{g}) \cdot I(\mathbf{G})$ in the proof of Theorem \ref{thm:semisimple regular}.
\end{lemma}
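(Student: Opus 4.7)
The plan is to revisit the proof of Theorem~\ref{thm:semisimple regular} and to exploit a flexibility in the choice of $\tau$ that was not used there. The only property of $\tau$ actually needed is that $b\tau \in X_*(\mathbf{T})$ and that $\tau(\beta) = \beta((A_{-1})_s)$ for every root $\beta \in \Phi$ whose eigenvalue $\beta((A_{-1})_s)$ is an integer; so the particular $\pi$-dependent $\tau$ defined there can be replaced by any element of $\text{Lie}(\mathbf{T})$ satisfying these conditions.

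First, I would split the bound on $b$ into two pieces using the coweight lattice $Q_{\mathbf{G}}$. Since $Q_{\mathbf{G}}/X_*(\mathbf{T})$ has exponent $I(\mathbf{G})$ by definition, every $\sigma \in Q_{\mathbf{G}}$ satisfies $I(\mathbf{G}) \sigma \in X_*(\mathbf{T})$. It therefore suffices to produce $\tau \in Q_{\mathbf{G}} \otimes \mathbb{Q}$ satisfying the above integer constraints and with $b_0 \tau \in Q_{\mathbf{G}}$ for some $b_0 \leq \text{hgt}(\mathfrak{g})$; then setting $b \vcentcolon = b_0 \cdot I(\mathbf{G})$ and $\mu \vcentcolon = b\tau$ delivers the desired bound.

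Second, I would recast the integer constraints as a lattice extension problem. The set $S \vcentcolon = \{\beta \in \Phi : \beta((A_{-1})_s) \in \mathbb{Z}\}$ is closed under addition within $\Phi$, hence a closed sub-root-system, and $\phi(\beta) \vcentcolon = \beta((A_{-1})_s)$ defines an integer-valued $\mathbb{Z}$-linear form $\phi \colon L_S \to \mathbb{Z}$ on the sublattice $L_S \vcentcolon = \mathbb{Z}\, S \subseteq \mathbb{Z}\Phi$. The requirement on $\tau$ is precisely that it extends $\phi$ to a $\mathbb{Q}$-linear functional on $\mathbb{Z}\Phi$, i.e. to an element of $Q_{\mathbf{G}} \otimes \mathbb{Q} = \text{Hom}(\mathbb{Z}\Phi, \mathbb{Q})$. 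The elementary divisor theorem applied to $L_S \hookrightarrow \mathbb{Z}\Phi$ then shows that the smallest $b_0$ for which such an extension exists inside $\tfrac{1}{b_0} Q_{\mathbf{G}}$ divides the exponent of the torsion subgroup of $\mathbb{Z}\Phi / L_S$, equivalently of the finite quotient $L_S^{\text{sat}}/L_S$, where $L_S^{\text{sat}}$ denotes the saturation of $L_S$ inside $\mathbb{Z}\Phi$.

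The main obstacle is then a purely root-theoretic claim: that this torsion exponent is at most $\text{hgt}(\mathfrak{g})$ for every closed sub-root-system $S$ of $\Phi$. I would establish it by first reducing to the maximal-rank case, replacing $\mathbb{Z}\Phi$ with $L_S^{\text{sat}}$ whenever $S$ is not of maximal rank, and then invoking the Borel--de Siebenthal classification: every maximal-rank closed sub-root-system arises by iteratively deleting a node from an extended Dynkin diagram, and a single such deletion at a node with coefficient $c_i$ in the highest root introduces a cyclic torsion factor of order exactly $c_i$, which always satisfies $c_i \leq \sum_j c_j = \text{hgt}(\mathfrak{g})$. A careful tracking of exponents through the successive extensions (using that the same height inequality holds in the smaller root systems encountered during the induction) then yields the uniform bound, from which $b \leq \text{hgt}(\mathfrak{g}) \cdot I(\mathbf{G})$ follows.
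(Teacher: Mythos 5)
Your reduction of the lemma to a lattice-theoretic statement is correct as far as it goes, and it is genuinely different from the paper's argument: the only properties of $\tau$ used in the proof of Theorem \ref{thm:semisimple regular} are indeed that $b\tau \in X_{*}(\mathbf{T})$ and that $\tau(\beta) = \beta((A_{-1})_s)$ for every root with integer eigenvalue; the set $S$ of such roots is a closed subsystem; the elementary-divisor argument correctly produces an extension $\tau$ with $b_0\tau \in Q_{\mathbf{G}}$ for $b_0$ equal to the exponent of the torsion of $\mathbb{Z}\Phi/\mathbb{Z}S$; and $b = b_0\, I(\mathbf{G})$ then lands in $X_*(\mathbf{T})$. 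The paper never introduces $S$: it simply redefines $\tau(\alpha)$ on the simple roots as the best approximation of $\pi(\alpha((A_{-1})_s))$ in $\frac{1}{\text{hgt}(\mathfrak{g})}\mathbb{Z}$, so that $\text{hgt}(\mathfrak{g})\,\tau \in Q_{\mathbf{G}}$ by construction, and checks the integer-matching property directly.

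The gap is in the proposed proof of your key root-theoretic claim that the torsion exponent of $\mathbb{Z}\Phi/\mathbb{Z}S$ is at most $\text{hgt}(\mathfrak{g})$ for every closed subsystem $S$. Two problems. First, when $S$ is not of full rank, $L_S^{\text{sat}}$ is merely a lattice, not the root lattice of a root system, so Borel--de Siebenthal does not apply to it; the relevant system is $\Phi \cap \mathbb{Q}S$, whose root lattice can be strictly smaller than $L_S^{\text{sat}}$, and that discrepancy must also be controlled. Second, and more seriously, the ``careful tracking of exponents through the successive extensions'' cannot proceed by multiplying the cyclic orders arising at each deletion: along a Borel--de Siebenthal chain the product of the marks is not bounded by $\text{hgt}(\mathfrak{g})$. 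In type $C_n$ (height $2n-1$) the chain $C_n \supset C_{n-1}\times C_1 \supset \cdots \supset A_1^{\, n}$ deletes a mark-$2$ node at each of $n-1$ steps, so the naive product is $2^{\,n-1}$, which exceeds $2n-1$ already for $n=4$, even though the actual exponent of $\mathbb{Z}\Phi(C_n)/2\mathbb{Z}^n$ is $2$. So the induction as sketched does not close, and a global argument is needed. The claim itself is true and can be repaired: since every element of $\mathbb{Z}\Phi$ pairs integrally with every coroot of $\Phi$, the torsion $(\mathbb{Z}\Phi \cap \mathbb{Q}S)/\mathbb{Z}S$ embeds into the fundamental group $P(S)/\mathbb{Z}S$ of $S$, whose exponent is at most $\mathrm{rank}(\Phi)+1$; this is $\leq \text{hgt}(\mathfrak{g})$ for every irreducible type other than $A$, while for type $A$ every closed subsystem is a Levi and the quotient is torsion-free, so the bound holds in all cases (and componentwise for semisimple $\mathfrak{g}$). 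Alternatively, the paper's rounding construction reaches the bound in a few lines and avoids the classification of subsystems altogether.
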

\begin{proof}
Set $\tau(\alpha)$ to be the best approximation of $\pi(\alpha((A_{-1})_s))$ in $\frac{1}{\text{hgt}(\mathfrak{g}) }\mathbb{Z}$. By the definition of $\text{hgt}(\mathfrak{g})$, it follows that $\tau(\beta) = \beta((A_{-1})_s)$ whenever $\beta((A_{-1})_s)$ is an integer. So the proof of Theorem \ref{thm:semisimple regular} still goes through with this choice of $\tau$. By construction we have $\text{hgt}(\mathfrak{g}) \tau \in Q_{\mathbf{G}}$. Then the definition of $I(\mathbf{G})$ implies that $\text{hgt}(\mathfrak{g}) I(\mathbf{G}) \tau \in X_{*}(\mathbf{T})$. Hence we can choose $b = \text{hgt}(\mathfrak{g}) I(\mathbf{G})$.
\end{proof}
Choose a maximal torus $\mathbf{T} \subset \mathbf{G}$. Let $W$ be the Weyl group of $\mathbf{G}$ with respect $\mathbf{T}$. Fix a projection $\pi : \mathbf{k} \longrightarrow \mathbb{Q}$ as in the proof above. We can extend this projection to a natural map $\pi: \text{Lie}(\mathbf{T}) \cong X_{*}(\mathbf{T}) \otimes \mathbf{k} \longrightarrow X_{*}(\mathbf{T}) \otimes \mathbb{Q}$. We will once and for all fix a fundamental domain $\mathfrak{D}$ for the action of $W$ on the set $\Xi \vcentcolon =\left\{C \in \text{Lie}(\mathbf{T}) \, \mid \, \pi(C) = 0 \right\}$. Notice that $\Xi$ is a set of representatives for the quotient $\text{Lie}(\mathbf{T}) / \, X_{*}(\mathbf{G}) \otimes \mathbb{Q}$. 

It turns out that we can always choose $x$ in Theorem \ref{thm:semisimple regular} so that the semisimple part $C_s$ is in $\mathfrak{D}$.
\begin{coroll} \label{coroll: canonical unique semisimple}
Let $\mathbf{G}$ be connected semisimple with a choice of maximal torus $\mathbf{T} \subset \mathbf{G}$. Let $A \in \mathfrak{g}_F$ be a regular connection. Then, there exists $x \in \mathbf{G}(\overline{F})$ such that $x \cdot A = t^{-1} C$ for some $C \in \mathfrak{g}$ satisfying $C_s \in \mathfrak{D}$.
\end{coroll}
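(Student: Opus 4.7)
The plan is to apply Theorem \ref{thm:semisimple regular} to obtain $y \in \mathbf{G}(\overline{F})$ with $y \cdot A = t^{-1} C_0$, $C_0 \in \mathfrak{g}$, and then left-multiply $y$ by two elements of $\mathbf{G}(\mathbf{k})$ to move $(C_0)_s$ first into $\text{Lie}(\mathbf{T})$ and then into $\mathfrak{D}$. Since any $g \in \mathbf{G}(\mathbf{k})$ factors through $\text{Spec}\,\mathbf{k}$, the Maurer--Cartan term $\widehat{g^{*}\omega}$ vanishes, so multiplication by such a $g$ preserves the canonical-form shape: $g \cdot (t^{-1} C_0) = t^{-1}\,\text{Ad}(g) C_0$.

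By conjugacy of maximal tori in $\mathbf{G}$, there is $g_0 \in \mathbf{G}(\mathbf{k})$ with $\text{Ad}(g_0)(C_0)_s \in \text{Lie}(\mathbf{T})$. The crucial step is to verify that $(C_0)_s$ lies in $\ker \pi$ after conjugation; once established this is automatic because $\text{Ad}(g_0)$ is the $\mathbf{k}$-linear extension of the induced isomorphism of cocharacter lattices $X_*(\mathbf{T}_0) \xrightarrow{\sim} X_*(\mathbf{T})$ (where $\mathbf{T}_0$ is the maximal torus containing $(C_0)_s$), and this isomorphism commutes with $\pi$ by construction.

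To check the vanishing of $\pi$ on $(C_0)_s$, I would unpack the construction in the proof of Theorem \ref{thm:semisimple regular}, where one has $C_0 = \sum_{j \geq -1} b A_j - \mu$ with $\mu = b\tau$ and $\tau = \pi((A_{-1})_s)$. Writing the Jordan decomposition $A_{-1} = (A_{-1})_s + (A_{-1})_n$ with $(A_{-1})_s \in \text{Lie}(\mathbf{T}_0)$ and $(A_{-1})_n$ confined to root spaces of $\mathbf{T}_0$ on which $(A_{-1})_s$ acts trivially, I would show that
\[ C_0 = \bigl(b(A_{-1})_s - \mu\bigr) + N, \qquad N := b(A_{-1})_n + \sum_{j \geq 0} bA_j, \]
is the Jordan decomposition of $C_0$. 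The first summand is in $\text{Lie}(\mathbf{T}_0)$, hence semisimple. Commutativity $[b(A_{-1})_s - \mu,\,N] = 0$ holds because on each root space $\mathfrak{g}_\beta$ occurring in $N$ the scalar $\beta(b(A_{-1})_s - \mu) = b\beta((A_{-1})_s) - b\,\pi(\beta((A_{-1})_s))$ vanishes, since the aligned condition forces $\beta((A_{-1})_s) \in \mathbb{Z}_{\geq 0}$ there and $\pi$ is the identity on $\mathbb{Z}$. Nilpotence of $N$ follows from a filtration argument using the $\text{ad}((A_{-1})_s)$-weight grading: $\text{ad}(N_+)$, where $N_+ := \sum_{j \geq 0} bA_j$, strictly raises this grading, while $\text{ad}(b(A_{-1})_n)$ preserves it and is nilpotent on $\mathfrak{g}$ (since $(A_{-1})_n$ is a nilpotent element of the semisimple Lie algebra $\mathfrak{g}$); hence on each graded piece $\text{ad}(N)$ acts nilpotently, and by finiteness of the filtration $\text{ad}(N)$ is nilpotent, so $N$ is nilpotent. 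Therefore $(C_0)_s = b(A_{-1})_s - \mu$ and $\pi((C_0)_s) = b\tau - b\tau = 0$.

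Setting $C_1 := \text{Ad}(g_0) C_0$, we now have $(C_1)_s \in \Xi$. Choose $n \in N_\mathbf{G}(\mathbf{T})(\mathbf{k})$ whose Weyl class sends $(C_1)_s$ into $\mathfrak{D}$; then $x := n g_0 y$ satisfies $x \cdot A = t^{-1} C$ with $C := \text{Ad}(n) C_1$ and $C_s = \text{Ad}(n)(C_1)_s \in \mathfrak{D}$, as required. The main technical obstacle is the Jordan-decomposition verification outlined in the third paragraph; the remaining ingredients (conjugacy of maximal tori, functoriality of $\pi$ under cocharacter-lattice isomorphisms, and Weyl-group transitivity on $W$-orbits in $\Xi$) are routine.
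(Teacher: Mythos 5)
Your proposal is correct, but it handles the key step differently from the paper. The paper's own proof treats Theorem \ref{thm:semisimple regular} as a black box: starting from an arbitrary canonical form $t^{-1}C$, it conjugates $C_s$ into $\text{Lie}(\mathbf{T})$ by a constant element, then applies one more gauge transformation by the (generally fractional) cocharacter power $t^{-\pi(C_s)}$ -- which fixes $C_n$ and $C_s$ because $\pi(\beta(C_s))=0$ on the relevant root spaces -- to replace $C_s$ by $C_s-\pi(C_s)$, and finally moves into $\mathfrak{D}$ by a Weyl element. You instead avoid any further non-constant gauge transformation by reopening the proof of Theorem \ref{thm:semisimple regular} and showing that its output $C_0=\sum_{j\geq-1}bA_j-\mu$ already satisfies $\pi\left((C_0)_s\right)=0$, via the Jordan decomposition $(C_0)_s=b(A_{-1})_s-\mu$, $(C_0)_n=b(A_{-1})_n+\sum_{j\geq0}bA_j$; your verification (commutation via the vanishing of $b\beta((A_{-1})_s)-b\pi(\beta((A_{-1})_s))$ on the weight spaces involved, and nilpotence via the $\text{ad}((A_{-1})_s)$-weight filtration, on which $\text{ad}(N_+)$ raises weights by positive integers within a $\mathbb{Z}$-coset and $\text{ad}(b(A_{-1})_n)$ is nilpotent and weight-preserving) is sound. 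The trade-off: the paper's route is shorter and applies to any canonical form without knowing how it was produced, whereas yours depends on the internals of the reduction but buys a genuine refinement -- the canonical form produced by Theorem \ref{thm:semisimple regular} already lies in $\Xi$ up to constant conjugation, so no additional ramification or shearing is needed, and $C_s$ is identified explicitly. Two small points to tighten: $(A_{-1})_n$ may also have a component in $\text{Lie}(\mathbf{T}_0)$ (not only in root spaces with $\beta((A_{-1})_s)=0$), which is harmless since that component commutes with $b(A_{-1})_s-\mu$; and $g_0$ should be chosen so that $g_0\mathbf{T}_0g_0^{-1}=\mathbf{T}$ (conjugacy of maximal tori), so that $\text{Ad}(g_0)$ really is the $\mathbf{k}$-linear extension of an isomorphism $X_*(\mathbf{T}_0)\xrightarrow{\sim}X_*(\mathbf{T})$ and hence commutes with $\pi$.
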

\begin{proof}
By Theorem \ref{thm:semisimple regular}, we can assume that $A = t^{-1} \, C$ for some $C \in \mathfrak{g}$. Since $\mathbf{k}$ is algebraically closed, we can conjugate the semisimple element $C_s$ to the torus $\mathbf{T}$. By applying the gauge transformation $t^{-\pi(C_s)}$, we can assume that $\pi(C_s) = 0$. Finally, we can conjugate by an element of $W$ to obtain $C_s \in \mathfrak{D}$.
\end{proof}
The following proposition will be crucial in establishing uniqueness of canonical reductions in general.
\begin{prop} \label{prop: uniqueness regular semisimple}
Let $\mathbf{G}$ be connected semisimple with a choice of maximal torus $\mathbf{T} \subset \mathbf{G}$. Let $C,D \in \mathfrak{g}$ with $C_s, D_s \in \mathfrak{D}$. Suppose that there exists $x \in \mathbf{G}(\overline{F})$ such that $x \cdot \left(t^{-1} \, C \right) = t^{-1} \, D$. Then we have $C_s = D_s$. Moreover $x$ is a $\mathbf{k}$-point in the centralizer $Z_{\mathbf{G}}(C_s)(\mathbf{k})$.
\end{prop}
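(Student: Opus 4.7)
The plan is to translate the gauge equation into a first-order linear ODE for $x$ and then analyze its Laurent coefficients. First pick $b \in \mathbb{N}$ with $x \in \mathbf{G}(F_b)$, and let $u = t^{1/b}$ be the uniformizer of $F_b$. The connections $t^{-1}C$ and $t^{-1}D$, pulled back to $F_b$ and expressed in the $du$-trivialization of $\hat{\Omega}^{1}_{\mathcal{O}_{b}/\mathbf{k}}[\tfrac{1}{u}]$, become $bu^{-1}C$ and $bu^{-1}D$ (since $dt = bu^{b-1}\,du$). Unwinding the gauge action in $\mathfrak{g}_{F_b}$ and simplifying will rewrite the equation $x \cdot (bu^{-1}C) = bu^{-1}D$ as the linear ODE
\[ u\,\frac{dx}{du} \;=\; b\,(Dx - xC). \]

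Next, fix a closed embedding $\mathbf{G} \hookrightarrow \mathbf{GL}(V)$ and expand $x = \sum_{n \geq N} x_n u^n$ with $x_n \in \text{End}(V)$. Comparing coefficients of $u^n$ in the ODE yields the eigenvalue equation
\[ T'(x_n) \;=\; n\, x_n, \qquad T'(Y) \;\vcentcolon=\; b(DY - YC), \]
for every $n$. The operator $T'$ on $\text{End}(V)$ is the difference of two commuting operators, left multiplication by $bD$ and right multiplication by $bC$, so its generalized eigenvalues are of the form $b(\lambda - \mu)$, where $\lambda$ and $\mu$ range over eigenvalues of $D$ and $C$ on $V$. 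Since nilpotent parts contribute nothing, these values coincide with those of the semisimple parts $D_s, C_s \in \text{Lie}(\mathbf{T})$, i.e.\ with $d\chi(D_s)$ and $d\chi'(C_s)$ for weights $\chi, \chi'$ of $V$.

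The crucial input is the hypothesis $C_s, D_s \in \mathfrak{D} \subset \Xi$, which means $\pi(C_s) = \pi(D_s) = 0$ in $X_*(\mathbf{T}) \otimes \mathbb{Q}$. By naturality of the projection $\pi$ under pairings with characters, this forces every eigenvalue of $D$ and $C$ on $V$ to lie in $\ker\pi \subset \mathbf{k}$, and hence every eigenvalue of $T'$ lies in $\ker\pi$. If a nonzero integer $n$ were an eigenvalue of $T'$, then $n \in \ker\pi \cap \mathbb{Q} = \{0\}$ (since $\pi$ restricts to the identity on $\mathbb{Q} \cdot 1 \subset \mathbf{k}$), a contradiction. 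We conclude that $x_n = 0$ for every $n \neq 0$, so $x = x_0$ is constant in $u$; since $x$ satisfies the $\mathbf{k}$-rational equations defining $\mathbf{G} \subset \mathbf{GL}(V)$, this forces $x_0 \in \mathbf{G}(\mathbf{k})$.

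Finally, substituting back into the ODE gives $Dx_0 = x_0 C$, i.e.\ $\text{Ad}(x_0)\,C = D$; by uniqueness of the Jordan decomposition, $\text{Ad}(x_0)\,C_s = D_s$. Thus $C_s, D_s$ are $\mathbf{G}$-conjugate semisimple elements of $\text{Lie}(\mathbf{T})$, and the standard argument using conjugacy of maximal tori inside the reductive centralizer $Z_{\mathbf{G}}(D_s)^{\circ}$ produces an element of $N_{\mathbf{G}}(\mathbf{T})$ conjugating $C_s$ to $D_s$, so they are $W$-conjugate. Since $\mathfrak{D}$ is a fundamental domain for the $W$-action on $\Xi$, this forces $C_s = D_s$, whence $x_0 \in Z_{\mathbf{G}}(C_s)(\mathbf{k})$. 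The principal difficulty in the plan is the spectral analysis in the third paragraph: the whole reason for building $\pi(C_s) = 0$ into the definition of $\mathfrak{D}$ is precisely to preclude integer resonances in the spectrum of $T'$, which is what rigidifies $x$ into a constant.
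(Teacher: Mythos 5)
Your proof is correct and follows essentially the same route as the paper: linearize via a faithful representation, observe that the operator $Y \mapsto DY - YC$ has no nonzero rational (hence no nonzero integer) eigenvalues because $\pi(C_s)=\pi(D_s)=0$, conclude that $x$ is constant, and then use uniqueness of the Jordan decomposition together with the fact that $W$-orbits in $\mathfrak{D}$ classify semisimple conjugacy classes. The only differences are cosmetic: you work directly over $F_b$ with the $u$-derivative instead of lifting to $F$, and you obtain the spectral statement from commuting left/right multiplications rather than an explicit Jordan decomposition of the difference operator.
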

\begin{proof}
By lifting everything to a ramified cover, we can assume for simplicity that $x \in \mathbf{G}(F)$. Choose a faithful representation $\mathbf{G} \hookrightarrow \mathbf{\text{GL}_n}$. We can view $x \in \mathbf{\text{GL}_n}(F)$ and $C, D \in \mathfrak{gl}_n$. Let's consider the linear transformation $U$ in $\text{End}(\mathfrak{gl}_n)$ given by $U\, v = D v \, - \, v C$ for all $v \in \mathfrak{gl}_n$. Notice that we can write $U = U_s + U_n$, where
\begin{align*}
    U_s \, v \vcentcolon = D_s v - v C_s\\
    U_n \, v \vcentcolon = D_nv - vC_n
\end{align*}
We know that $C_s$ and $D_s$ can be simultaneously diagonalized. Therefore $U_s$ is semisimple. The eigenvalues of $U_s$ are differences of eigenvalues of $C_s$ and $D_s$. Since $\pi(C_s) = \pi(D_s) = 0$, we conclude that $0$ is the only possible rational eigenvalue of $U_s$. By definition, we have that $U_n$ is nilpotent and $[U_s, U_n] = 0$. We conclude that $U = U_s + U_n$ is the additive Jordan decomposition of $U$. In particular the set of eigenvalues of $U$ is the same as the set of eigenvalues of $U_s$. Therefore, $0$ is the only possible rational eigenvalue of $U$.

The condition $x \cdot \left(t^{-1}\, C\right) = t^{-1}\, D$ can be expressed as $\frac{d}{dt}x = t^{-1}U \, x$. Here we are viewing $x$ as an invertible matrix in $\mathfrak{gl}_n(F)$. Set $x = \sum_{j = r}^{\infty} x_j \, t^j$. Then this condition reads
\[ \sum_{j=r}^{\infty} j x_j \, t^{j-1} = \sum_{j=r}^{\infty} U\,  x_j \, t^{j-1}   \]
Hence we have $jx_j = U\, x_j$ for all $j$. Since $0$ is the only possible rational eigenvalue of $U$, we conclude that $x_j = 0$ for all $j \neq 0$. Therefore, $x = x_0 \in \mathbf{G}(\mathbf{k})$. Hence the relation $x \cdot \left(t^{-1}\, C \right) = t^{-1}\, D$ implies that $\text{Ad}(x) \, C = D$. By uniqueness of Jordan decomposition for $\mathbf{\text{GL}_n}$, this means that $\text{Ad}(x)\, C_s = D_s$.

It is well-known that $\text{Lie}(\mathbf{T})/ W$ parametrizes semisimple conjugacy classes in $\mathfrak{g}$ (see \cite{collingwood.nilpotent} Chapter 2). In particular, $\mathfrak{D}$ is a set of representatives of conjugacy classes of semisimple elements that map to $0$ under $\pi$. We conclude that we must have $C_s = D_s$. Then $\text{Ad}(x) \, C_s = D_s$ implies that $x \in Z_{\mathbf{G}}(C_s)(\mathbf{k})$.
\end{proof}
\subsection{Regular connections for tori and reductive groups}
\begin{prop} \label{prop:tori regular} Let $\mathbf{G}$ be a torus and $A = \sum_{j=-1}^{\infty} A_j \, t^j$ a formal connection of the first kind. Then there exists $x \in \mathbf{G}(\mathcal{O})$ such that $x \cdot A = t^{-1} A_{-1}$. Moreover, there is a unique such $x$ with $x \equiv 1 \, \left( mod \; t\right)$.
\end{prop}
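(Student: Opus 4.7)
The plan is to exploit heavily that $\mathbf{G}$ is abelian: the adjoint action in the formula $g\cdot B = \mathrm{Ad}(g)B + \widehat{g^*\omega}$ is trivial, so the gauge action reduces to translation by the logarithmic derivative $\widehat{x^*\omega}$. After choosing a splitting $\mathbf{G}\cong \mathbb{G}_m^n$ (possible because $\mathbf{k}$ is algebraically closed and tori over $\mathbf{k}$ are split), the problem reduces coordinate-wise to the $\mathbb{G}_m$ case. Under the fixed identification $\hat{\Omega}^1_{\mathcal{O}/\mathbf{k}}[\tfrac{1}{t}] \cong F$ via $dt$, the requirement $x\cdot A = t^{-1}A_{-1}$ becomes the single equation
\[
\widehat{x^*\omega} \;=\; -\sum_{j\geq 0} A_j\, t^j\, dt \qquad\text{in } \mathfrak{g}\otimes_{\mathbf{k}}\hat\Omega^1_{\mathcal{O}/\mathbf{k}},
\]
and we want to solve it with $x\in\mathbf{G}(\mathcal{O})$.

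For existence, I would integrate formally and exponentiate. Set
\[
y \;\vcentcolon=\; -\sum_{j\geq 0}\, \frac{A_j}{j+1}\, t^{j+1} \;\in\; t\,\mathfrak{g}(\mathcal{O}),
\]
which is well-defined since $\mathrm{char}\,\mathbf{k}=0$, and let $x \vcentcolon= \exp(y)\in\mathbf{G}(\mathcal{O})$. The key identity to check is $\widehat{\exp(y)^*\omega} = dy$ for $y\in t\mathfrak{g}(\mathcal{O})$. It suffices to verify this for $\mathbf{G}=\mathbb{G}_m$, where $\omega=ds/s$ and $\exp(y)=e^y$ is the usual power series, giving $\exp(y)^*(ds/s) = (e^y)'/e^y\, dt = y'\,dt = dy$. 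By construction $dy = -\sum_{j\geq 0} A_j t^j\, dt$, so $x\cdot A = t^{-1}A_{-1}$, and $\exp(y)\equiv 1 \pmod t$ from the defining property of $\exp$ stated in Section~\ref{section: notation}.

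For uniqueness of $x$ with $x\equiv 1\pmod t$: suppose $x,x'\in\mathbf{G}(\mathcal{O})$ both satisfy the equation and are congruent to $1$ mod $t$. Since $\mathbf{G}$ is commutative, $\widehat{(x^{-1}x')^*\omega} = \widehat{x'^*\omega} - \widehat{x^*\omega} = 0$. Working again coordinate-wise in $\mathbb{G}_m$, this says that $u \vcentcolon= x^{-1}x'\in 1+t\mathcal{O}$ satisfies $u'/u = 0$, i.e.\ $u' = 0$. A direct inspection of the power series expansion $u = 1+\sum_{j\geq 1} u_j t^j$ gives $\sum_{j\geq 1} j u_j t^{j-1} = 0$, hence $u_j=0$ for all $j\geq 1$ (since we are in characteristic $0$), forcing $u=1$ and $x=x'$.

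The only genuinely non-routine step is the identity $\widehat{\exp(y)^*\omega}=dy$ on tori; everything else is a formal power series manipulation. I would present this identity as a one-line $\mathbb{G}_m$-computation and invoke compatibility of the exponential, the Maurer--Cartan form, and pullback with products of groups to deduce the general torus case.
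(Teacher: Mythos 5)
Your proof is correct, and it reaches the goal by a slightly different mechanism than the paper. Both arguments begin identically: split the torus (possible since $\mathbf{k}$ is algebraically closed), reduce coordinate-wise to $\mathbb{G}_m$, and observe that commutativity kills the adjoint term so the problem is the scalar equation $u'/u = -v$ with $v = \sum_{j\geq 0} A_j t^j$. The paper then solves this equation by a coefficient recursion: writing $u = \sum_{j \geq 0} B_j t^j$, the condition becomes $(j+1)B_{j+1} = -\sum_{l=0}^{j} A_l B_{j-l}$, and setting $B_0 = 1$ determines all coefficients uniquely, so existence and uniqueness come out of the same computation in one stroke. You instead produce the closed-form solution $u = \exp(y)$ with $y$ a formal antiderivative of $-v$, which requires the identity $\widehat{\exp(y)^{*}\omega} = dy$ (i.e.\ the formal chain rule $(e^{y})' = y' e^{y}$ for $y \in t\mathcal{O}$, which is valid in characteristic $0$); note the paper only ever uses $\exp$ modulo $t^{n+1}$, so this identity is an extra, though easy, verification. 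You then handle uniqueness separately via the logarithmic derivative: if $x, x' \equiv 1 \pmod t$ both work, then $u = x^{-1}x'$ satisfies $u' = 0$, hence $u = 1$. Both routes are elementary power-series arguments; the paper's recursion is more self-contained and gives uniqueness for free, while your version gives an explicit formula for the gauge transformation and a cleaner conceptual reason for uniqueness.
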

\begin{proof}
Since $\mathbf{k}$ is algebraically closed, $\mathbf{G}$ is split. Therefore the theorem follows from the special case $\mathbf{G} = \mathbb{G}_m$. We are reduced to an elementary computation. Let $v = \sum_{j =0}^{\infty} A_j t^j  \; \in \, \mathcal{O}$. It suffices to find $u = \sum_{j =0}^{\infty} B_j t^j \in \mathcal{O}^{\times}$ with $\frac{d}{dt}(u) = -vu $. By expanding we see that we want $(j+1) B_{j+1} = -\sum_{l=0}^{j} A_l B_{j-l}$ for all $j \geq 0$. This is a recurrence we can solve, because we are in characteristic $0$. We can set the initial condition $B_0 = 1$ and then the rest of the coefficients are uniquely determined.
\end{proof}
\begin{example}
For $\mathbf{G} = \mathbb{G}_m$ we can phrase this result concretely in terms of differential equations. In this case we have an equation $\frac{d}{dt} x = A x$, where $A \in \mathbf{k}\pseries{t}$ is a Laurent series with at worst a simple pole. The statement says that we can do a multiplicative change of variables $y = Bx$ for some power series $B \in \mathcal{O} ^{\times}$ such that the equation becomes $\frac{d}{dt} y = \frac{a}{t}y$ for some scalar $a \in \mathbf{k}$.
\end{example}
Let's state a uniqueness result for canonical forms of regular formal connections for tori.
\begin{prop} \label{prop: uniqueness regular tori}
Let $\mathbf{G}$ be a torus, and let $C_1, C_2 \in \mathfrak{g}$. Suppose that there exists $x \in \mathbf{G}(F)$ with $x \cdot \left(t^{-1}\, C_1 \right) = t^{-1}\, C_2$. Then, we have $x = g \, t^{\mu}$ for some cocharacter $\mu \in X_{*}(\mathbf{G})$ and some $g \in \mathbf{G}(\mathbf{k})$. Moreover $C_1 = C_2 - \mu$.
\end{prop}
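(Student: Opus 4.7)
The plan exploits the fact that $\mathbf{G}$ is abelian, which trivializes the adjoint action: the gauge action reduces to $x \cdot B = B + \widehat{x^*\omega}$, so the hypothesis becomes the single equation
\[
\widehat{x^*\omega} \;=\; t^{-1}(C_2 - C_1) \quad \text{in } \mathfrak{g}_F.
\]
Since $\mathbf{k}$ is algebraically closed the torus $\mathbf{G}$ is split, so using the $t$-adic valuation componentwise I can write $x = t^{\mu} u$ with $\mu \in X_{*}(\mathbf{G})$ and $u \in \mathbf{G}(\mathcal{O})$.

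Because the Maurer-Cartan form satisfies the crossed homomorphism identity and $\mathbf{G}$ is abelian, the pullbacks are additive:
\[
\widehat{x^*\omega} \;=\; \widehat{(t^{\mu})^*\omega} + \widehat{u^*\omega}.
\]
A direct computation with the Maurer-Cartan form on a split torus shows $\widehat{(t^{\mu})^*\omega} = \mu \cdot t^{-1}$ (reducing to $\mathbb{G}_m$, where $\omega = dy/y$ pulls back to $\mu \, dt/t$ under $t \mapsto t^{\mu}$). On the other hand, since $u: \operatorname{Spec}\mathcal{O} \to \mathbf{G}$ is defined over $\mathcal{O}$, the pullback $\widehat{u^*\omega}$ lies in $\mathfrak{g}\otimes_{\mathbf{k}}\mathcal{O}$ and has no pole.

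Comparing the polar parts of both sides of the displayed equation forces $\mu = C_2 - C_1$, which yields $C_1 = C_2 - \mu$ and leaves the residual equation $\widehat{u^*\omega} = 0$. The zero element $0 \in \mathfrak{g}_F$ is a connection of the first kind with $A_{-1}=0$, so Proposition \ref{prop:tori regular} applies: the unique $u \in \mathbf{G}(\mathcal{O})$ with $u \equiv 1 \pmod{t}$ and $u \cdot 0 = 0$ is $u = 1$. Consequently, any $u \in \mathbf{G}(\mathcal{O})$ satisfying $\widehat{u^*\omega} = 0$ is determined by its constant term, so $u = g \in \mathbf{G}(\mathbf{k})$. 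This gives $x = t^{\mu} g = g\, t^{\mu}$, the equality following from the commutativity of $\mathbf{G}$.

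No step is really an obstacle; the only point needing some care is the Maurer-Cartan computation $\widehat{(t^\mu)^*\omega} = \mu t^{-1}$ and the observation that $\widehat{u^*\omega}$ has no pole when $u \in \mathbf{G}(\mathcal{O})$. Everything else is the abelian splitting $\mathbf{G}(F) = X_{*}(\mathbf{G}) \cdot \mathbf{G}(\mathcal{O})$ together with the uniqueness half of Proposition \ref{prop:tori regular}.
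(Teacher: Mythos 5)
Your proof is correct and takes essentially the same route as the paper: decompose $x$ into a power $t^{\mu}$ of the uniformizer times an $\mathcal{O}$-point, use that for a torus the gauge action is translation by the logarithmic derivative, and compare the polar and regular parts of the resulting identity to get $\mu = C_2 - C_1$ and that the $\mathcal{O}$-part has vanishing logarithmic derivative. The only cosmetic difference is that you eliminate the $\mathcal{O}$-part by appealing to the uniqueness clause of Proposition \ref{prop:tori regular} applied to the zero connection, where the paper simply observes directly that $dy\,y^{-1}=0$ forces $y=1$.
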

\begin{proof}
We will do the computation for $\mathbf{G} = \mathbb{G}_m$. The general case follows from the same argument. Write $x = k \, t^r \, y$, where $k \in \mathbf{k}^{\times}$ and $y = 1+ \sum_{j=1}^{\infty} a_j \, t^j$. Then,
\[ x \cdot \left(t^{-1} \, C_1 \right)\; = \; t^{-1} \; C_1 + rt^{-1} + dy \, y^{-1} \; = \; t^{-1}\, C_2\]
Notice that $dy \, y^{-1}$ is in $\mathbf{k}\bseries{t}$. By looking at the nonnegative coefficients in the equation above, we conclude that $dy=0$. Therefore we have $y =1$. Hence $x = k\,t^r$, and the result follows.
\end{proof}
We can patch together some of the previous of results to get canonical forms for regular connections when the group is reductive. The following corollary is \cite{schnurer.regular}[Thm. 4.2].
\begin{coroll} \label{thm:reductive regular} 
Let $\mathbf{G}$ be reductive and $A \in \mathfrak{g}_F$ a regular formal connection. Then there exists $x \in \mathbf{G}(\overline{F})$ such that $x \cdot A = t^{-1} C$ for some $C \in \mathfrak{g}$.
\end{coroll}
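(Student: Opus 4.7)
The plan is to reduce to the semisimple and toric cases already handled. Since $\mathbf{G}$ is reductive, the multiplication map $Z(\mathbf{G})^0 \times \mathbf{G}^{\text{der}} \to \mathbf{G}$ is a central isogeny with finite kernel, and the induced Lie algebra decomposition $\mathfrak{g} = \mathfrak{z} \oplus [\mathfrak{g}, \mathfrak{g}]$ (with $\mathfrak{z} = \text{Lie}(Z(\mathbf{G})^0)$) splits $A$ as $A = A^z + A^{\text{der}}$. Crucially, the gauge action respects this splitting: $\text{Ad}(g)$ acts trivially on $\mathfrak{z}$ and preserves $[\mathfrak{g}, \mathfrak{g}]$, while the Maurer-Cartan form of $\mathbf{G}$ splits as $\omega = \omega^z + \omega^{\text{der}}$, with $\omega^z$ factoring through the abelianization $\mathbf{G}^{\text{ab}} = \mathbf{G}/\mathbf{G}^{\text{der}}$ (a torus).

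Two preliminary consequences of regularity should be extracted. First, $A^{\text{der}}$ is regular as a $\mathbf{G}^{\text{der}}$-connection: if $y \cdot A$ is of first kind for some $y \in \mathbf{G}(\overline{F})$, factor $y = zh$ using $Z(\mathbf{G})^0(\overline{F}) \cdot \mathbf{G}^{\text{der}}(\overline{F}) = \mathbf{G}(\overline{F})$ (surjective since $\overline{F}$ is algebraically closed and the isogeny kernel is finite), and observe that $h \cdot A^{\text{der}} = (y \cdot A)^{\text{der}}$ is of first kind. Second, $A^z$ is automatically of first kind, because any gauge transformation modifies $A^z$ only by $\widehat{g^* \omega}^z$, which is a pulled-back Maurer-Cartan form on the torus $\mathbf{G}^{\text{ab}}$ and hence has at worst a simple pole; so $A^z = (y \cdot A)^z - \widehat{y^* \omega}^z$ is a difference of first-kind elements.

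Given these, I would apply Theorem \ref{thm:semisimple regular} to the semisimple group $\mathbf{G}^{\text{der}}$ to obtain $x_1 \in \mathbf{G}^{\text{der}}(\overline{F}) \subset \mathbf{G}(\overline{F})$ with $x_1 \cdot A^{\text{der}} = t^{-1} C^{\text{der}}$. This yields $x_1 \cdot A = t^{-1} C^{\text{der}} + A_1^z$, where $A_1^z \vcentcolon = A^z + \widehat{x_1^* \omega}^z$ is still of first kind by the observation above. Next, Proposition \ref{prop:tori regular} applied to the torus connection $A_1^z$ on $Z(\mathbf{G})^0$ produces $x_2 \in Z(\mathbf{G})^0(\mathcal{O})$ with $x_2 \cdot A_1^z = t^{-1}(A_1^z)_{-1}$. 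Because $x_2$ is central, $\text{Ad}(x_2)$ is trivial and $\widehat{x_2^* \omega}$ lies in $\mathfrak{z}_F$, so $x_2$ does not disturb the derived part. Composing gives $(x_2 x_1) \cdot A = t^{-1}(C^{\text{der}} + (A_1^z)_{-1}) = t^{-1} C$ with $C \in \mathfrak{g}$.

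The main obstacle is the bookkeeping for the gauge action along the decomposition $\mathfrak{g} = \mathfrak{z} \oplus [\mathfrak{g}, \mathfrak{g}]$, in particular the key fact that $\widehat{g^* \omega}^z$ is always of first kind (so that ``$A^z$ is of first kind'' is preserved by every gauge transformation) together with the centrality of $x_2$ (so that the second step does not spoil the canonical form on the derived part). Once this structural setup is in place, the statement follows from a direct two-step application of the previously established semisimple and torus results.
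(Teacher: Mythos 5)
Your proof is correct and follows essentially the same route as the paper: decompose $\mathfrak{g} = \mathfrak{g}_{\text{der}} \oplus \mathfrak{z}$, apply Theorem \ref{thm:semisimple regular} to the derived part and Proposition \ref{prop:tori regular} to the central part, using that gauge transformations in $\mathbf{G}_{\text{der}}$ and in $\mathbf{Z}^0_{\mathbf{G}}$ do not interfere with the other summand. The only difference is cosmetic: the paper first replaces $A$ by a gauge-equivalent connection of the first kind, whereas you keep $A$ as given and verify directly that $A^{\text{der}}$ is regular and $A^z$ is of the first kind, which just makes explicit some bookkeeping the paper leaves implicit.
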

\begin{proof}
For completness we explain how to deduce the corollary from previous propositions. We can assume that $A$ is of the first kind. Let $\mathbf{Z}^0_G$ be the neutral component of the center of $\mathbf{G}$. Set $\mathfrak{z}\vcentcolon = \text{Lie}(\mathbf{Z}^0_{\mathbf{G}})$. Let $\mathbf{G}_{\text{der}}$ the derived subgroup of $\mathbf{G}$. $\mathbf{G}_{der}$ is semisimple with Lie algebra $\mathfrak{g}_{\text{der}} \vcentcolon = [\mathfrak{g}, \mathfrak{g}]$. We have $\mathfrak{g} = \mathfrak{g}_{\text{der}} \oplus \mathfrak{z}$. Decompose $A = A_{\mathfrak{g}_{\text{der}}} + A_{\mathfrak{z}}$. By the semisimple case there exists $x \in \mathbf{G}_{\text{der}}(\overline{F})$ such that $x \cdot A_{\mathfrak{g}_{\text{der}}}$ is in canonical form. Now $x \cdot A = x \cdot A_{\mathfrak{g}_{\text{der}}} + A_{\mathfrak{z}}$. Use the result for tori to put $A_{\mathfrak{z}}$ in canonical form and conclude.
\end{proof}
\begin{remark}
By Remark \ref{remark: determinacy terms regular semisimple}, we only need to know $k\left(\, (A_{\mathfrak{g}_{\text{der}}})_{-1} \,\right)$-many coefficients of a connection of the first kind in order to determine its canonical form. The bound for the ramification needed in this case is reduced to the bound for the semisimple group $\mathbf{G}_{\text{der}}$ as explained in Lemma \ref{lemma: ramification bound regular semisimple}.
\end{remark}
Notice that the setup before Corollary \ref{coroll: canonical unique semisimple} applies to the reductive case. We formulate the analogous statement for convenience.
\begin{coroll} \label{coroll: uniqueness regular reductive}
Let $\mathbf{G}$ connected reductive with maximal torus $\mathbf{T} \subset \mathbf{G}$.\vspace{-0.25cm}
\begin{enumerate}[(i)]
    \item Let $A \in \mathfrak{g}_F$ be a regular connection. Then there exists $x \in \mathbf{G}(\overline{F})$ such that $x \cdot A = t^{-1} C$ for some $C \in \mathfrak{g}$ satisfying $C_s \in \mathfrak{D}$.
    
    \item Assume that $C,D \in \mathfrak{g}$ satisfy $C_s, D_s \in \mathfrak{D}$. Suppose that there exists $x \in \mathbf{G}(\overline{F})$ such that $x \cdot \left(t^{-1} \, C \right) = t^{-1} \, D$. Then, we have $C_s = D_s$. Moreover $x$ is in the centralizer $Z_{\mathbf{G}}(C_s)(\mathbf{k})$.
\end{enumerate}
\vspace{-0.25cm}
\end{coroll}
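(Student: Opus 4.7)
The two assertions are the reductive counterparts of Corollary \ref{coroll: canonical unique semisimple} and Proposition \ref{prop: uniqueness regular semisimple}, and my plan is to prove them by the same arguments with only minor adaptations to accommodate the central torus.

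For part (i), I would start from Corollary \ref{thm:reductive regular} to obtain some $x_0 \in \mathbf{G}(\overline{F})$ with $x_0 \cdot A = t^{-1} C$. Since $\mathbf{k}$ is algebraically closed I may conjugate $C_s$ into $\text{Lie}(\mathbf{T})$. Choosing $b \geq 1$ so that $\mu \vcentcolon = b \, \pi(C_s) \in X_*(\mathbf{T})$, I pass to the $b$-fold ramified cover and apply the gauge transformation $t^{-\mu}$; the matrix computation that concludes the proof of Theorem \ref{thm:semisimple regular} (using that $\text{Ad}(t^{-\mu})$ acts by $t^{-\langle \mu, \beta\rangle}$ on the root space $\mathfrak{g}_\beta$ and trivially on the center $\mathfrak{z}$, together with $\frac{d}{dt}(t^{-\mu})\, t^\mu = -\mu\, t^{-1}$) shows that the resulting connection is still of the form $t^{-1} C'$ with $C'_s \in \text{Lie}(\mathbf{T})$ and $\pi(C'_s) = 0$. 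A Weyl group element finally moves $C'_s$ into the fundamental domain $\mathfrak{D}$.

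For part (ii), my plan is to follow the proof of Proposition \ref{prop: uniqueness regular semisimple} almost verbatim. After passing to a common ramified cover I may assume $x \in \mathbf{G}(F)$. I pick a faithful representation $\mathbf{G} \hookrightarrow \mathbf{GL}_n$, view $C, D \in \mathfrak{gl}_n$, and introduce the endomorphism $U \in \text{End}(\mathfrak{gl}_n)$ given by $U v \vcentcolon = D v - v C$, with Jordan decomposition $U_s v = D_s v - v C_s$ and $U_n v = D_n v - v C_n$. The gauge equation rewrites as $\frac{d}{dt} x = t^{-1} U x$, and expanding $x = \sum x_j t^j$ yields $j x_j = U x_j$ for every $j$, so $x_j = 0$ unless $j$ is an eigenvalue of $U$. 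The key input, replacing the role played by semisimplicity of $\mathbf{G}$ before, is that every eigenvalue of $U_s$ is either zero or non-rational: for every weight $\chi \in X^*(\mathbf{T})$ of the representation one has
\[ \pi\bigl(\chi(C_s)\bigr) \;=\; \chi\bigl(\pi(C_s)\bigr) \;=\; 0, \]
and similarly for $D_s$, since $C_s, D_s \in \mathfrak{D} \subset \Xi$. Hence all eigenvalues of $C_s$ and $D_s$ on $\mathbf{k}^n$, and thus all eigenvalues of $U_s$ (which are differences), have vanishing $\pi$-image. Consequently $x_j = 0$ for every $j \neq 0$, so $x \in \mathbf{G}(\mathbf{k})$. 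Then $\text{Ad}(x) C = D$ combined with uniqueness of Jordan decomposition in $\mathbf{GL}_n$ gives $\text{Ad}(x) C_s = D_s$; since $\text{Lie}(\mathbf{T})/W$ parametrizes semisimple conjugacy classes in $\mathfrak{g}$ and $\mathfrak{D}$ is a transversal for $W$ on $\Xi$, this forces $C_s = D_s$, and hence $x \in Z_{\mathbf{G}}(C_s)(\mathbf{k})$.

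The only substantive point beyond transcribing the semisimple proofs is the weight identity $\pi(\chi(C_s)) = 0$, i.e., the mechanism by which the cocharacter-level condition $C_s \in \Xi$ translates into a non-rationality statement about matrix eigenvalues under a faithful representation. All other ingredients (existence of faithful representations, additive Jordan decomposition in $\mathfrak{gl}_n$, and parametrization of semisimple conjugacy classes by $\text{Lie}(\mathbf{T})/W$) transfer without change from the semisimple to the reductive setting.
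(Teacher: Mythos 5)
Your proof is correct and takes essentially the paper's approach: part (ii) is exactly the argument of Proposition \ref{prop: uniqueness regular semisimple} transported to the reductive setting (the identity $\pi(\chi(C_s))=\chi(\pi(C_s))=0$ for weights $\chi$ that you spell out is precisely the implicit mechanism there), which is all the paper invokes. For part (i) the paper formally combines Proposition \ref{prop:tori regular} with Corollary \ref{coroll: canonical unique semisimple}, whereas you rerun the normalization of Corollary \ref{coroll: canonical unique semisimple} (conjugate $C_s$ into $\text{Lie}(\mathbf{T})$, twist by $t^{-\mu}$ with $\mu = b\,\pi(C_s)$, finish with a Weyl element) directly on the reductive group; this is the same trick, and it conveniently normalizes the central and derived directions of $\mathfrak{D}$ in one stroke.
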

\begin{proof}
Part (i) follows by combining Proposition \ref{prop:tori regular} for tori and Corollary \ref{coroll: canonical unique semisimple} for semisimple groups. Part (ii) follows from the same argument as in Proposition \ref{prop: uniqueness regular semisimple}.
\end{proof}
This corollary allows us to give a concrete parametrization of regular $\mathbf{G}(\overline{F})$-gauge equivalence classes of formal connections. Let $A\in \mathfrak{g}_{F}$ be a regular formal connection. Suppose that $B= t^{-1}C$ is a connection in canonical form that is $\mathbf{G}(\overline{F})$-gauge equivalent to $A$. Assume that $C_s \in \mathfrak{D}$. By Corollary \ref{coroll: uniqueness regular reductive}, $C_s$ does not depend on the choice of canonical form $B$. Let $W$ denote the Weyl group of $\mathbf{G}$ with respect to $\mathbf{T}$. Recall that $\mathfrak{D}$ is a set of representatives for $\left(X_{*}(\mathbf{T}) \otimes \mathbf{k}/ \, \mathbb{Q}\right)/ \, W$. In particular we get a well defined element in $\left(X_{*}(\mathbf{T}) \otimes \mathbf{k}/ \, \mathbb{Q}\right)/ \, W$ corresponding to $C_s$.
\begin{defn} \label{defn: semisimple monodromy overline}
Let $A \in \mathfrak{g}_F$ be a regular formal connection as above. We define the semisimple $\overline{F}$-monodromy of $A$ to be the element $m^s_{A, \, \overline{F}} \in \left(X_{*}(\mathbf{T}) \otimes \mathbf{k}/ \, \mathbb{Q}\right)/ \, W$ corresponding to $C_s$ as described above.
\end{defn}
Let $m \in \left(X_{*}(\mathbf{T}) \otimes \mathbf{k}/ \, \mathbb{Q}\right)/ \, W$. We define $Z_{\mathbf{G}}(m)$ to be the centralizer in $\mathbf{G}$ of the unique representative of $m$ in $\mathfrak{D}$. It turns out that $Z_{\mathbf{G}}(m)$ is a Levi subgroup of a parabolic in $\mathbf{G}$. It is well-known that the Lie algebra centralizer of a semisimple element $\text{Lie}(Z_{\mathbf{G}}(m))= \mathfrak{g}_{m}$ is the Levi component of a parabolic subalgebra of $\mathfrak{g}$. For connectedness, we can pass to an isogenous cover $p: \tilde{\mathbf{G}} \longrightarrow \mathbf{G}$ with simply connected derived subgroup. Notice that $p(Z_{\tilde{\mathbf{G}}}(m)) = Z_{\mathbf{G}}(m)$. So it suffices to prove connectedness of $Z_{\tilde{\mathbf{G}}}(m)$, which follows from \cite{humphreys-conjugacy} pg. 33. Note that the isomorphism class of $Z_{\mathbf{G}}(m)$ does not depend on the choice of projection $\pi: \mathbf{k} \longrightarrow \mathbb{Q}$ and fundamental domain $\mathfrak{D}$. In fact, $Z_{\mathbf{G}}(m) \cong Z_{\mathbf{G}}(C)$ for any representative $C$ such that $\text{ad}(C)$ has no rational eigenvalues.

Corollary \ref{coroll: uniqueness regular reductive} implies that the nilpotent part of a canonical form $B$ that is $\mathbf{G}(\overline{F})$-gauge equivalent to $A$ is uniquely determined up to $Z_{\mathbf{G}}(m^s_{A, \, \overline{F}})$-conjugacy. We record this as a corollary.
\begin{coroll}
Fix $m \in \left(X_{*}(\mathbf{T}) \otimes \mathbf{k}/ \, \mathbb{Q}\right)/ \, W$. Let $\mathcal{N}_{Z_{\mathbf{G}}(m)}$ denote the nilpotent cone in the Lie algebra of $Z_{\mathbf{G}}(m)$. There is a natural correspondence
\[ \left\{ \text{regular} \; A \in \mathfrak{g}_{\overline{F}} \; \text{with} \; m^s_{A, \, \overline{F}} = m\right\}/\, \mathbf{G}(\overline{F}) \; \; \; \longleftrightarrow{\; \;\; \;} \mathcal{N}_{Z_{\mathbf{G}}(m)} / \, Z_{\mathbf{G}}(m) \]
\end{coroll}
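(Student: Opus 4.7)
The plan is to construct the correspondence explicitly via the canonical forms provided by Corollary~\ref{coroll: uniqueness regular reductive}. Given a regular connection $A \in \mathfrak{g}_{\overline{F}}$ with $m^s_{A,\overline{F}} = m$, part (i) of that corollary produces some $x \in \mathbf{G}(\overline{F})$ with $x \cdot A = t^{-1} C$ where $C \in \mathfrak{g}$ and $C_s \in \mathfrak{D}$; the hypothesis $m^s_{A,\overline{F}} = m$ forces $C_s = C_m$, the unique representative of $m$ in $\mathfrak{D}$. The Jordan decomposition $C = C_m + C_n$ then has $C_n$ nilpotent and commuting with $C_m$, hence $C_n \in \mathfrak{g}_{C_m} = \mathrm{Lie}(Z_{\mathbf{G}}(m))$. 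The proposed forward map is $[A] \longmapsto [C_n] \in \mathcal{N}_{Z_{\mathbf{G}}(m)}/\,Z_{\mathbf{G}}(m)$.

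To verify well-definedness, I would take two canonical forms $t^{-1} C$ and $t^{-1} D$ both gauge equivalent to $A$ and both with semisimple part in $\mathfrak{D}$. Part (ii) of Corollary~\ref{coroll: uniqueness regular reductive} gives $C_s = D_s = C_m$ and that the gauge transformation $y$ lies in $Z_{\mathbf{G}}(m)(\mathbf{k})$. Since $y$ is a constant point, $y \cdot (t^{-1}C) = t^{-1} \,\mathrm{Ad}(y) C$, so $\mathrm{Ad}(y) C = D$; as $\mathrm{Ad}(y)$ preserves the Jordan decomposition this yields $\mathrm{Ad}(y) C_n = D_n$, so the $Z_{\mathbf{G}}(m)$-orbit of the nilpotent part is intrinsic.

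For surjectivity I would, given a nilpotent $N \in \mathrm{Lie}(Z_{\mathbf{G}}(m))$, set $A \vcentcolon= t^{-1}(C_m + N)$. Since $N$ commutes with $C_m$ and is nilpotent, $(C_m+N)_s = C_m$ and $(C_m+N)_n = N$, so $A$ is regular with semisimple monodromy $m$ and maps to $[N]$. For injectivity, suppose $A$ and $A'$ both map to the same orbit $[N]$; after applying suitable gauge transformations we may assume $A = t^{-1}(C_m + N)$ and $A' = t^{-1}(C_m + N')$ with $N' = \mathrm{Ad}(g)N$ for some $g \in Z_{\mathbf{G}}(m)(\mathbf{k})$. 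Since $g$ is constant, $\widehat{g^*\omega}=0$, so $g \cdot A = \mathrm{Ad}(g)A = t^{-1}(C_m + N') = A'$ (using $\mathrm{Ad}(g)C_m = C_m$), proving $[A] = [A']$.

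There is no real technical obstacle here, since everything is powered by Corollary~\ref{coroll: uniqueness regular reductive}; the only point to state carefully is that $Z_{\mathbf{G}}(m)$ is well-defined (the centralizer of the chosen representative $C_m$ is a connected Levi, as recalled in the paragraph preceding the corollary), so that the right-hand side of the correspondence is unambiguous.
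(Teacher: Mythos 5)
Your proposal is correct and follows essentially the same route as the paper: the paper records this corollary as an immediate consequence of Corollary \ref{coroll: uniqueness regular reductive}, using part (i) to produce a canonical form $t^{-1}C$ with $C_s$ the representative of $m$ in $\mathfrak{D}$ and part (ii) to see that the nilpotent part is determined exactly up to $Z_{\mathbf{G}}(m)$-conjugacy. You simply spell out the well-definedness, surjectivity, and injectivity checks that the paper leaves implicit, and these are all carried out correctly.
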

Let $S \subset \Delta$ be a subset of simple roots. Each $\alpha \in S$ induces a linear functional $X_{*}(\mathbf{T}) \otimes \mathbf{k} \longrightarrow \mathbf{k}$. Let $H_{\alpha}$ be the hyperplane of $X_{*}(\mathbf{T}) \otimes \mathbf{k}$ where this functional vanishes. We denote by $\overline{H}_{\alpha}$ the image of $H_{\alpha}$ in $\left(X_{*}(\mathbf{T}) \otimes \mathbf{k}/ \, \mathbb{Q}\right)/ \, W$. Define $\overline{H}_{S} \vcentcolon =  \bigcap_{\alpha \in S} \overline{H}_{\alpha}$.  We say that $m \in \left(X_{*}(\mathbf{T}) \otimes \mathbf{k}/ \, \mathbb{Q}\right)/ \, W$ is of type $S$ if we have $m \in \overline{H}_{S}$ and $m \notin \overline{H}_{V}$ for all $S \subsetneq V \subset \Delta$. Let $Q_S$ be the set of all $m$ of type $S$. For any $m \in Q_S$, the centralizer $Z_{\mathbf{G}}(m)$ is conjugate to the standard Levi $\mathbf{M}_S$ associated to the subset of simple roots $S \subset \Delta$. We get the following rewording of the corollary above.
\begin{coroll} \label{coroll: parametrization overline regular connections}
There is a natural correspondence
\[ \left\{ \text{regular formal connections}\right\}/\, \mathbf{G}(\overline{F}) \; \; \; \longleftrightarrow{\; \;\; \;} \bigsqcup_{S \subset \Delta} Q_S \times \mathcal{N}_{\mathbf{M}_S} / \, \mathbf{M}_S \]
\end{coroll}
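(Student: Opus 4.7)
The plan is to reorganize the classification established in the previous corollary into a disjoint union indexed by the subsets $S \subset \Delta$. First, I would verify that $(X_{*}(\mathbf{T}) \otimes \mathbf{k}/\mathbb{Q})/W = \bigsqcup_{S \subset \Delta} Q_S$. For any element $m$ in the quotient, set $S(m) := \{\alpha \in \Delta : m \in \overline{H}_\alpha\}$. Because $\overline{H}_S = \bigcap_{\alpha \in S} \overline{H}_\alpha$ by definition, we have $m \in \overline{H}_S$ if and only if $S \subset S(m)$. Thus $m$ is of type $S(m)$ and of no other type, so the $Q_S$ partition the entire quotient.

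Next, I would combine this partition with the preceding corollary. Every regular formal connection $A$ admits a well-defined semisimple $\overline{F}$-monodromy $m^s_{A, \overline{F}}$, which falls into exactly one $Q_S$. Grouping the $\mathbf{G}(\overline{F})$-gauge classes of regular formal connections according to the type of $m^s_{A, \overline{F}}$ yields
\[ \{\text{regular formal connections}\}/\mathbf{G}(\overline{F}) \; \longleftrightarrow \; \bigsqcup_{S \subset \Delta} \bigsqcup_{m \in Q_S} \mathcal{N}_{Z_{\mathbf{G}}(m)}/Z_{\mathbf{G}}(m). \]

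To finish, I would invoke the statement (recorded in the paragraph preceding the corollary) that for $m \in Q_S$ the centralizer $Z_{\mathbf{G}}(m)$ is $\mathbf{G}$-conjugate to the standard Levi $\mathbf{M}_S$. Any element $g \in \mathbf{G}$ realizing this conjugacy sends $\mathcal{N}_{Z_{\mathbf{G}}(m)}$ onto $\mathcal{N}_{\mathbf{M}_S}$ and induces a bijection $\mathcal{N}_{Z_{\mathbf{G}}(m)}/Z_{\mathbf{G}}(m) \cong \mathcal{N}_{\mathbf{M}_S}/\mathbf{M}_S$. Assembling these bijections over $m \in Q_S$ identifies the inner disjoint union with $Q_S \times \mathcal{N}_{\mathbf{M}_S}/\mathbf{M}_S$, and then summing over $S$ gives the displayed correspondence.

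The one delicate point I would address explicitly is that the bijection $\mathcal{N}_{Z_{\mathbf{G}}(m)}/Z_{\mathbf{G}}(m) \cong \mathcal{N}_{\mathbf{M}_S}/\mathbf{M}_S$ depends on the choice of conjugator $g$: two choices differ by an element of $N_{\mathbf{G}}(\mathbf{M}_S)$, which may permute $\mathbf{M}_S$-orbits via the quotient $N_{\mathbf{G}}(\mathbf{M}_S)/\mathbf{M}_S$. Since the corollary only claims a set-theoretic correspondence, this is harmless once one fixes a choice of conjugator for each $m \in Q_S$ (equivalently, fixes once and for all a system of representatives of the Weyl conjugacy classes of standard Levis); the main obstacle is really just bookkeeping these choices, not overcoming a genuine mathematical difficulty, because the structural content was already established in the preceding corollary.
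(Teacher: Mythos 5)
Your proposal is correct and follows the same route the paper takes: the paper simply presents this corollary as a rewording of the previous one, relying on the partition of $\left(X_{*}(\mathbf{T}) \otimes \mathbf{k}/ \, \mathbb{Q}\right)/ \, W$ into the types $Q_S$ and on the stated fact that $Z_{\mathbf{G}}(m)$ is conjugate to $\mathbf{M}_S$ for $m \in Q_S$. Your explicit verification of the partition and your remark on the choice of conjugator just spell out the bookkeeping the paper leaves implicit.
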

This gives us a procedure to describe all regular $\mathbf{G}(\overline{F})$-gauge equivalence classes of formal connections. For each $S \subset \Delta$, the group $\mathbf{M}_S$ is connected and reductive. The set of nilpotent orbits $\mathcal{N}_{\mathbf{M}_S}/ \, \mathbf{M}_S$ is a finite set which admits many well studied parametrizations. For example, nilpotent orbits for classical groups can be classified by partition diagrams as in  \cite{collingwood.nilpotent} Chapter 5. This yields an explicit canonical block decompositions for $\mathbf{G}(\overline{F})$-gauge equivalence classes of regular connections for classical groups.
\subsection{Connections for unipotent groups}
All connections in a unipotent group are regular. They admit canonical forms.
\begin{prop} \label{thm:unipotent regular} Let $\mathbf{G}$ be connected unipotent and let $A \in \mathfrak{g}_F$ be a connection. Then, there exists $x \in \mathbf{G}(F)$ such that $x \cdot A = t^{-1} C$ for some $C \in \mathfrak{g}$.
\end{prop}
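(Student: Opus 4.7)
The plan is to induct on $\dim \mathbf{G}$, stripping off one central $\mathbb{G}_a$ at each step. Since $\mathbf{G}$ is connected unipotent in characteristic $0$, its center is nontrivial and connected; any one-dimensional subgroup of the last nontrivial term of the descending central series of $\mathbf{G}$ gives a connected central subgroup $\mathbf{N} \subset \mathbf{G}$ with $\mathbf{N} \cong \mathbb{G}_a$. I will write $\mathfrak{n} = \text{Lie}(\mathbf{N})$ (a central ideal of $\mathfrak{g}$) and $\bar{\mathbf{G}} = \mathbf{G}/\mathbf{N}$, $\bar{\mathfrak{g}} = \mathfrak{g}/\mathfrak{n}$.

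For the base case $\mathbf{G} = \mathbb{G}_a$ the adjoint action is trivial, so the gauge action reads $x \cdot A = A + \tfrac{dx}{dt}$, and given $A = \sum_{j \geq r} a_j\, t^j \in F$ the element $x = -\sum_{j \geq r,\, j \neq -1} \tfrac{a_j}{j+1}\, t^{j+1}$ achieves $x \cdot A = a_{-1}\, t^{-1}$. For the inductive step, let $\bar A \in \bar{\mathfrak{g}}_F$ denote the image of $A$. By induction there is $\bar y \in \bar{\mathbf{G}}(F)$ with $\bar y \cdot \bar A = t^{-1}\bar C$ for some $\bar C \in \bar{\mathfrak{g}}$. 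Since $H^1(F,\mathbb{G}_a) = 0$, the map $\mathbf{G}(F) \twoheadrightarrow \bar{\mathbf{G}}(F)$ is surjective, so we lift $\bar y$ to $y \in \mathbf{G}(F)$. By functoriality of $\text{Ad}$ and of the Maurer--Cartan form under the quotient $\mathbf{G} \to \bar{\mathbf{G}}$, the gauge action descends to the quotient, so $y \cdot A$ projects to $t^{-1}\bar C$; hence $y \cdot A = t^{-1} C_0 + A'$ for some lift $C_0 \in \mathfrak{g}$ of $\bar C$ and some residual $A' \in \mathfrak{n}_F$. Applying the base case inside $\mathbf{N}$ produces $z \in \mathbf{N}(F)$ with $A' + \widehat{z^*\omega} = t^{-1} c$ for some $c \in \mathfrak{n}$. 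Because $\mathbf{N}$ is central, $\text{Ad}(z)$ acts trivially on $\mathfrak{g}$ and $\widehat{z^*\omega}$ lies in $\mathfrak{n}_F$, so
\[ z \cdot (y \cdot A) \;=\; t^{-1} C_0 + A' + \widehat{z^*\omega} \;=\; t^{-1}(C_0 + c). \]
Setting $x = zy$ and $C = C_0 + c$ closes the induction.

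The main thing to watch is the functorial compatibility between the gauge action upstairs and its counterpart on $\bar{\mathbf{G}}(F)$: one must verify that both $\text{Ad}$ and the Maurer--Cartan pullback respect the quotient map so that $y \cdot A$ indeed projects to $\bar y \cdot \bar A$. This is routine but is the only step where the bookkeeping could slip. Beyond this, no serious obstacle appears: the existence of a central $\mathbb{G}_a$ is classical, the lift of $\bar y$ exists by Hilbert~90 for $\mathbb{G}_a$, and the centrality of $\mathbf{N}$ cleanly decouples the $\mathfrak{n}$-component so that the final adjustment reduces to the one-variable Laurent series computation of the base case.
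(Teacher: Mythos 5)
Your proposal is correct and follows essentially the same route as the paper: induct on $\dim \mathbf{G}$, peel off a central $\mathbb{G}_a$, lift the gauge transformation from the quotient using the vanishing of $H^1(F,\mathbb{G}_a)$, and finish with the explicit antiderivative computation for $\mathbb{G}_a$, using centrality to decouple the leftover $\mathfrak{n}$-component. The only cosmetic difference is that the paper phrases the base case as finding a formal antiderivative of a Laurent series with zero residue, which is the same computation you wrote out.
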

\begin{proof}
We proceed by induction on $\text{dim}(\mathbf{G})$. Suppose that $\text{dim}(\mathbf{G})= 1$. Since $\text{char}(\mathbf{k}) = 0 $, we know that $\mathbf{G} \cong \mathbb{G}_a$. In this case the theorem follows from an elementary computation. See Example \ref{example: concrete additive group} below.

Now assume $\text{dim} \, (\mathbf{G}) \geq 2$. Since $\mathbf{k}$ is of characteristic 0, $\mathbf{G}$ is split unipotent. In particular, the center $\mathbf{Z}_{\mathbf{G}}$ contains a closed subgroup $\mathbf{H}$ isomorphic to $\mathbb{G}_a$. Let $\text{Lie}(\mathbf{H}) = \mathfrak{h}$. By induction there exists $\overline{x} \in \mathbf{G}/\mathbf{H}(F)$ such that $\overline{x} \cdot \overline{A} \in \mathfrak{g}/\mathfrak{h}$ is in canonical form. We can lift $\overline{x}$ to an element $x \in \mathbf{G}(F)$ because $H^1 \hspace{-1 mm}\left(F, \, \mathbf{H}(\overline{F})\right) = H^1 \hspace{-1mm}\left(F, \, \overline{F} \right) = 0$ (the additive version of Hilbert's Theorem 90). By construction, we have $x \cdot A = t^{-1} C + B$ for some $C \in \mathfrak{g}$ and $B \in \mathfrak{h}_F$. Now we can use the base case for $\mathbf{H} \cong \mathbb{G}_a$ to put $B$ into regular canonical form.
\end{proof}
\begin{remark}
Here we didn't need to pass to a ramified cover in order to find a good trivialization.
\end{remark}
\begin{example} \label{example: concrete additive group}
In the case of $\mathbf{G} = \mathbb{G}_a$, we can phrase this result concretely in terms of differential equations. We use the embedding $\mathbb{G}_a \hookrightarrow \text{GL}_2$, so that we can interpret the connection as system of differential equations
\begin{align*}
\frac{d}{dt}x_1 &= A x_2\\
\frac{d}{dt}x_2 &= 0
\end{align*}
Set $x_2 = c$, where $c \in \mathbf{k}$ is a constant. We are left with the nonhomogeneous equation $\frac{d}{dt}x_1 = cA$ for some Laurent series $cA$. The statement of the proposition reduces to the obvious fact that we can find a formal antiderivative for any Laurent series with residue $0$ (i.e. $A_{-1}=0$).
\end{example}
We now prove uniqueness of the canonical form up to conjugacy.
\begin{prop} \label{prop: unipotent uniqueness}
Let $\mathbf{U}$ be a unipotent group. Let $C_1, C_2$ be two elements of the Lie algebra $\mathfrak{u} \vcentcolon = \text{Lie}(\mathbf{U})$. Suppose that there exists $x \in \mathbf{U}(F)$ such that $x \cdot \left(t^{-1} C_1\right) = t^{-1} C_2$. Then, we have $x \in \mathbf{U}(\mathbf{k})$.
\end{prop}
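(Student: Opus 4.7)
The argument should proceed by induction on $\dim \mathbf{U}$, mirroring the structure of the existence proof (Proposition \ref{thm:unipotent regular}). In the base case $\dim \mathbf{U} = 1$ we have $\mathbf{U} \cong \mathbb{G}_a$ since $\text{char}\, \mathbf{k} = 0$, so $\text{Ad}$ is trivial and, viewing $x \in F$, the gauge identity $x \cdot (t^{-1} C_1) = t^{-1} C_2$ reduces to
\[ \frac{dx}{dt} \, = \, t^{-1} (C_2 - C_1). \]
The formal derivative of any Laurent series has vanishing $t^{-1}$-coefficient, so this forces $C_1 = C_2$ and $\frac{dx}{dt} = 0$; hence $x \in \mathbf{k}$.

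For the inductive step, choose a central closed subgroup $\mathbf{H} \cong \mathbb{G}_a$ of $\mathbf{U}$ (as in the proof of Proposition \ref{thm:unipotent regular}) with Lie algebra $\mathfrak{h} \subset \mathfrak{u}$, and set $\overline{\mathbf{U}} \vcentcolon = \mathbf{U}/\mathbf{H}$. Projecting the equation to $\overline{\mathbf{U}}(F)$ yields $\overline{x} \cdot (t^{-1} \overline{C_1}) = t^{-1} \overline{C_2}$, and by induction $\overline{x} \in \overline{\mathbf{U}}(\mathbf{k})$. Since $\mathbf{k}$ is algebraically closed the map $\mathbf{U}(\mathbf{k}) \to \overline{\mathbf{U}}(\mathbf{k})$ is surjective (every $\mathbf{H}$-torsor over $\mathrm{Spec}\,\mathbf{k}$ is trivial), so $\overline{x}$ lifts to some $y \in \mathbf{U}(\mathbf{k})$. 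Writing $x = y z$ with $z \in \mathbf{H}(F)$, and noting that $y$ is constant and so contributes no Maurer--Cartan term, the equation becomes
\[ z \cdot (t^{-1} C_1) \, = \, y^{-1} \cdot (t^{-1} C_2) \, = \, t^{-1} \text{Ad}(y^{-1}) C_2 . \]

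Because $\mathbf{H}$ is central in $\mathbf{U}$, the operator $\text{Ad}(z)$ acts trivially on $\mathfrak{u}$, so the last equation simplifies to $\widehat{z^{*} \omega} = t^{-1}\bigl(\text{Ad}(y^{-1}) C_2 - C_1\bigr)$. The left-hand side lies in $\mathfrak{h}(F)$, which forces $\text{Ad}(y^{-1}) C_2 - C_1 \in \mathfrak{h}$; applying the $\mathbb{G}_a$-base case inside $\mathbf{H}$ then yields $\text{Ad}(y^{-1}) C_2 = C_1$ and $z \in \mathbf{k}$. Hence $x = yz \in \mathbf{U}(\mathbf{k})$. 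The one delicate point is the use of centrality of $\mathbf{H}$: it is precisely what trivializes $\text{Ad}(z)$ and decouples the Maurer--Cartan term from the adjoint action, allowing a clean reduction to the additive base case. Without centrality, a non-trivial $\text{Ad}(z)$ would entangle with $\widehat{z^{*}\omega}$ and block the induction.
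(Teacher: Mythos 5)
Your proposal is correct and follows essentially the same route as the paper: induction on $\dim \mathbf{U}$, the explicit Laurent-series computation for $\mathbb{G}_a$ as base case, and in the inductive step a central $\mathbf{H} \cong \mathbb{G}_a$, a constant lift of $\overline{x}$, and the factorization $x = yz$ with $z \in \mathbf{H}(F)$ so that centrality kills $\mathrm{Ad}(z)$ and reduces the equation to the additive base case. The only cosmetic difference is that the paper writes $x = vu$ with the central factor on the right, which is the same decomposition since $\mathbf{H}$ is central.
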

\begin{proof}
We will argue by induction on the dimension of $\mathbf{U}$. If $\text{dim}(\mathbf{U}) =1$, then $\mathbf{U} \cong \mathbb{G}_a$. We can write $x=\sum_{j =m}^{\infty} a_j \, t^j$ for some $a_j \in \mathbf{k}$. The hypothesis then becomes
\[x \cdot \left(t^{-1} C_1\right) = t^{-1} C_1 + dx = t^{-1}C_2\]
This means that $dx = \sum_{j =m}^{\infty} j a_j \, t^{j-1} = t^{-1}\left(C_2 - C_1\right)$. In particular we must have $j a_j = 0$ for all $j \neq 0$. Hence $x = a_0 \in \mathbf{k}$.

Suppose that $\mathbf{U}$ is an arbitrary unipotent group. Assume that the result holds for all unipotent groups of smaller dimension. Let $\mathbf{H}$ be a subgroup of the center $Z_{\mathbf{U}}$ of $\mathbf{U}$ such that $\mathbf{H} \cong \mathbb{G}_a$ (this is possible because  $\text{char} (\mathbf{k}) = 0$) . Let $\overline{x} \in \mathbf{U}/\mathbf{H} \, (F)$ be the image of $x$ in the quotient. By the induction hypothesis, the proposition holds for $\mathbf{U} / \, \mathbf{H}$. Hence we have that $\overline{x} \in \mathbf{U}/\mathbf{H} \, (\mathbf{k}) $. We can lift $\overline{x}$ to an element $v \in \mathbf{U}(\mathbf{k})$, since $\mathbf{k}$ is algebraically closed.

We can therefore write $x = vu$, with $u \in \mathbf{H} (F)$. Our assumption thus becomes
\[ x\cdot \left(t^{-1}C_1\right) \; = \; t^{-1}\text{Ad}(v) \, \text{Ad}(u)\, C_1 + \text{Ad}(v) du \; = \; t^{-1} C_2  \]
Since $u \in \mathbf{H}(F) \subset Z_{\mathbf{U}}(F)$, we have $\text{Ad}(u) C_1 = C_1$. After rearranging we get
\[ du\; =\; t^{-1} \left(\text{Ad}(v^{-1})\, C_2 - C_1 \right)\]
The computation for $\mathbb{G}_a$ above implies that $u \in \mathbf{H}(\mathbf{k})$. Therefore $x \in \mathbf{U}(\mathbf{k})$.
\end{proof}
We end this section with a determinacy result for canonical forms in the unipotent case. Recall that the nilpotency class of a unipotent group is the length of the upper central series. For example, a commutative unipotent group has nilpotency class $0$.
\begin{prop} \label{prop: determinacy unipotent}
Let $\mathbf{U}$ be a unipotent group of nilpotency class $n$. Let $A = \sum_{j=m}^{\infty} A_j \, t^j \in \mathfrak{u}_F$ be a connection with $A_m \neq 0$.\vspace{-0.25cm}
\begin{enumerate}[(i)]
    \item If $m > -1$, then there exists $x \in \mathbf{U}(\mathcal{O})$ such that $x \equiv 1 \; \left(mod \; t^{m+1}\right)$ and  $x \cdot A =0$.
    \item If $m \leq -1$, then the gauge equivalence class of $A$ is determined by $A_j$ for $m\leq j < n(|m|-1)$. More precisely, suppose that $B$ is another connection with $B \equiv A \; \left( mod \; t^k\right)$ for some $k \geq n(|m|-1)$. Then there exists $x \in \mathbf{U}(\mathcal{O})$ with $x \equiv 1 \left(mod \; t^{k-n|m|+n+1} \right)$ such that $x \cdot A = B$. 
\end{enumerate}
\vspace{-0.25cm}
\end{prop}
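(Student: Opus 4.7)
The plan is induction on the nilpotency class $n$, using the quotient by the center $\mathbf{H} \vcentcolon = Z_{\mathbf{U}}$. Since $\mathbf{H}$ is a commutative unipotent group (isomorphic to $\mathbb{G}_a^d$ in characteristic $0$) and $\mathbf{U}/\mathbf{H}$ has nilpotency class $n - 1$, the inductive step decomposes into a problem in $\mathbf{U}/\mathbf{H}$ followed by an abelian residual problem in $\mathbf{H}$. The base case $n = 0$ is $\mathbf{U} \cong \mathbb{G}_a^d$: the gauge action is $x \cdot A = A + dx$, so for (i) the coordinate-wise formal antiderivative of $-A$ has valuation $\geq m+1$ and kills $A$ (no residue obstruction since $m \geq 0$), while for (ii) the antiderivative of $B - A$ has valuation $\geq k+1$, matching the required bound $t^{k+1}$ in the $n=0$ case.

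For the inductive step, let $\bar A, \bar B \in (\mathfrak{u}/\mathfrak{h})_F$ be the images and write $m' \vcentcolon = \text{val}(\bar A) \geq m$. For (i), apply the inductive hypothesis part (i) to $\bar A$ to obtain $\bar y \equiv 1 \pmod{t^{m+1}}$ with $\bar y \cdot \bar A = 0$; lift to $y \in \mathbf{U}(\mathcal{O})$ (possible since $\mathbf{H}$ is a vector group, so $H^1(\mathcal{O}, \mathbf{H}) = 0$) and conclude by applying the abelian base case to $y \cdot A \in \mathfrak{h}_F$.

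For (ii), the main subcase is $m' \leq -1$. The hypothesis $k \geq n(|m|-1)$ together with $|m'| \leq |m|$ yields $k \geq (n-1)(|m'|-1)$, so the inductive hypothesis part (ii) furnishes $\bar x \equiv 1 \pmod{t^{s}}$ with $\bar x \cdot \bar A = \bar B$ and $s \vcentcolon = k - (n-1)|m'| + n$. Lift to $x_1 \in \mathbf{U}(\mathcal{O})$ and set $A' \vcentcolon = x_1 \cdot A$. Because $x_1 \equiv 1 \pmod{t^s}$ and $m \leq -1$, the adjoint term $\text{Ad}(x_1) A - A \in t^{m+s}\mathfrak{u}(\mathcal{O})$ dominates the Maurer-Cartan term $\widehat{x_1^*\omega} \in t^{s-1}\mathfrak{u}(\mathcal{O})$, giving $A' - A \in t^{m+s}\mathfrak{u}(\mathcal{O})$. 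Consequently $B - A' \in \mathfrak{h}_F$ has valuation $\geq m + s$, and the hypothesis $k \geq n(|m|-1)$ is exactly tuned so that $m + s \geq 0$, killing the residue obstruction. The abelian base case applied in $\mathbf{H}$ (central, so $h \cdot A' = A' + \widehat{h^*\omega}$) produces $x_0 \in \mathbf{H}(\mathcal{O})$ with $x_0 \equiv 1 \pmod{t^{m+s+1}}$ and $x_0 \cdot A' = B$. Then $x \vcentcolon = x_0 x_1$ satisfies $x \cdot A = B$ and $x \equiv 1 \pmod{t^{m+s+1}}$; the identity $(m+s+1) - (k - n|m| + n + 1) = (n-1)(|m| - |m'|) \geq 0$ recovers the stated congruence. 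The remaining subcase $m' \geq 0$ is handled analogously: first apply part (i) to kill $\bar A$ and to move $B$ into $\mathfrak{h}_F$ up to a controlled correction, then finish in $\mathbf{H}$ via the base case.

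The main obstacle is the sharp arithmetic tracking in the main subcase: the inductively obtained congruence $s$ for $x_1$ is amplified into a pole-order loss of magnitude $|m|$ under the adjoint action on the order-$m$ connection, and one must verify that the hypothesis $k \geq n(|m|-1)$ is sharp precisely in order to kill the residue of $B - A'$ (so that the final $\mathbf{H}$-step stays inside $\mathbf{H}(\mathcal{O})$). The factor $n$ in the bound emerges from this amplification, one pole-step per layer of the upper central series.
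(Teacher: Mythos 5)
Your proposal follows the same route as the paper's proof: induction on the nilpotency class, passing to the quotient by the center, an explicit antiderivative computation in the commutative base case, and a valuation estimate showing that the discrepancy $B - A'$ lies in $\text{Lie}(Z_{\mathbf{U}})_F$ with nonnegative valuation, so that the residue obstruction to the final central correction vanishes. In the main subcase ($m' \leq -1$) your arithmetic checks out and is in fact slightly sharper than the paper's, since you track the valuation $m'$ of $\overline{A}$ rather than $m$; the identity $(m+s+1)-(k-n|m|+n+1)=(n-1)(|m|-|m'|)\geq 0$ is correct.

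Two points need repair. First, you produce the lift $x_1 \in \mathbf{U}(\mathcal{O})$ of $\overline{x}$ from vanishing of $H^1$ and then use $x_1 \equiv 1 \pmod{t^s}$; an arbitrary lift need not satisfy this congruence. Either correct the lift by a central element (its reduction mod $t^s$ lifts the identity, hence lies in $\mathbf{H}(\mathcal{O}/t^s\mathcal{O})$, which lifts to $\mathbf{H}(\mathcal{O})$), or do as the paper does and lift through a scheme-theoretic section $s\colon \mathbf{U}/Z_{\mathbf{U}} \to \mathbf{U}$ with $s(1)=1$, which preserves congruences automatically. Second, and more substantively, the subcase $m' \geq 0$ cannot be handled by part (i) as you indicate: part (i) only yields $\overline{y} \equiv 1 \pmod{t^{m'+1}}$, and with such a weak congruence the term $\text{Ad}(x_1)A - A$ can still have a pole of order up to $|m|-m'-1$, so the residue of $B - A'$ need not vanish (the central step can fail outright) and the required congruence $t^{k-n|m|+n+1}$ is certainly not obtained. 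The correct fix is to observe that statement (ii) only uses $A \in t^m\mathfrak{u}_{\mathcal{O}}$ (it is monotone in $m$), so in the quotient you may invoke the inductive part (ii) with nominal pole order $m$ (or $-1$), giving $\overline{x} \equiv 1 \pmod{t^{k-(n-1)|m|+n}}$, which is exactly the threshold your estimate needs; this is how the paper implicitly proceeds, as it never splits off this subcase at all.
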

\begin{proof}
We will induct on the nilpotency class $n$. The base case $n = 0$ means that $\mathbf{U} \cong \mathbb{G}_a^l$ for some $l$. Here we can make use of the explicit computation we have done for $\mathbb{G}_a$ a few times already. Define $u_A \vcentcolon= - \sum_{j=0}^{\infty} \frac{1}{j+1} A_{j+1} \, t^{j+1}$. We have 
\[ u_A \cdot A = A + du_A = \sum_{j=m}^{-1} A_j \, t^j  \]
Now both (i) and (ii) are clear by taking $x = -u_B + u_A$ (we use $B=0$ for part (i)).

For the induction step, let $\mathbf{U}$ be an arbitrary unipotent group of nilpotency class $n$. We will think of $\mathbf{U}$ as embedded in the group of upper triangular matrices of $\text{GL}_p$ for some $p$. By definition, the quotient $\mathbf{U} / \, Z_{\mathbf{U}}$ of $\mathbf{U}$ by its center $Z_{\mathbf{U}}$ has nilpotency class $n-1$. It follows from the matrix description that we can choose a section $s$ over $\mathbf{k}$ for the $Z_{\mathbf{U}}$-torsor $\mathbf{U} \longrightarrow \mathbf{U} / \, Z_{\mathbf{U}}$ such that $s(1) = 1$
(this is a section as $\mathbf{k}$-schemes, it is not a homomorphism).

Let's address part (i). Let $\overline{A}$ be the image of $A$ in the quotient $\text{Lie}(\mathbf{U} / \, Z_{\mathbf{U}})_{F}$. By the induction hypothesis, there exists $\overline{x} \in \mathbf{U} / \, Z_{\mathbf{U}}(\mathcal{O})$ such that $\overline{x} \equiv 1 \; \left(mod \; t^{m+1}\right)$ and $\overline{x} \cdot \overline{A} =0$. Therefore, we have $s(\overline{x}) \cdot A \in \text{Lie}(Z_{\mathbf{U}})_F$. Notice that we have $s(\overline{x}) \equiv s(\overline{x})^{-1} \equiv 1 \; \left(mod \; t^{m+1}\right)$. It follows that
\[ s(\overline{x}) \cdot A \; = \; s(\overline{x}) \, A \, s(\overline{x})^{-1}\; + \; d \,s(\overline{x}) \; s(\overline{x})^{-1} \; \equiv 0 \; \left(mod \; t^{m}\right)\]
Now we can conclude by using the base case for $Z_{\mathbf{U}}$.
For part (ii), let $\overline{A}$ and $\overline{B}$ denote the images of $A$ and $B$ in the quotient. By the induction hypothesis, there exists $\overline{x} \in \mathbf{U}/ \, Z_{\mathbf{U}}(\mathcal{O})$ with $\overline{x} \equiv 1 \; \left( mod \; t^{k-(n-1)|m| +n} \right)$ such that $\overline{x} \cdot \overline{A} = \overline{B}$.  We can now write $s(\overline{x}) \cdot A = ds\left(\overline{x} \cdot \overline{A} \right) +C$ and $B = ds\left(\overline{x} \cdot \overline{A}\right) +D$ for some $C, D \in \text{Lie}(Z_{\mathbf{U}})_F$.

Notice that $s(\overline{x}) \equiv s(\overline{x})^{-1} \equiv 1 \; \left(mod \; t^{k -(n-1)|m| +n}\right)$. Therefore,
\[ s(\overline{x}) \cdot A \; = \; s(\overline{x}) \, A \, s(\overline{x})^{-1}\; + \; d \,s(\overline{x}) \; s(\overline{x})^{-1} \; \equiv \; A\; \left(mod \; t^{k-n|m|+n}\right)\]
Since $A \equiv B \;\left(mod \; t^{k-n|m|+n}\right)$, it follows that $C \equiv D \; \left(mod \; t^{k-n|m|+n} \right)$. Now by the base case we can find $y \in Z_{\mathbf{U}}(\mathcal{O})$ with $y \equiv 1 \; \left(mod \; t^{k-n|m|+n+1}\right)$ such that $y \cdot C = D$. We conclude that $y\, s(\overline{x}) \cdot A = B$, because $y$ is in the center.  We clearly have $y \, s(\overline{x}) \equiv 1 \; \left(mod \; t^{k-n|m|+n+1}\right)$, as desired.
\end{proof}
\subsection{Regular connections for solvable groups}
We fix a projection $\pi : \mathbf{k} \longrightarrow \mathbb{Q}$ as in the proof of Proposition \ref{thm:semisimple regular}. For $\mathbf{T}$ a torus, we extend this projection to a map $\pi : \text{Lie}(\mathbf{\mathbf{T}}) \cong X_{*}(\mathbf{T}) \otimes \mathbf{k} \longrightarrow X_{*}(\mathbf{T})\otimes\mathbb{Q}$.

\begin{prop} \label{prop: reduction regular solvable}
Let $\mathbf{G}$ be of the form $\mathbf{T} \ltimes \mathbf{U}$, where $\mathbf{T}$ is a torus and $\mathbf{U}$ is unipotent. Let $A = A_{\mathbf{T}} + A_{\mathbf{U}}$ be a formal connection with $A_{\mathbf{T}} \in \text{Lie}(\mathbf{T})_F$ a connection of the first kind and $A_{\mathbf{U}} \in \text{Lie}(\mathbf{U})_F$. Let $b$ be a positive integer such that $b \, \pi\left(\, (A_{\mathbf{T}})_{-1} \, \right) \in X_{*}(\mathbf{T})$. 
Then there exists $x \in \mathbf{G}(F_b)$ such that $x \cdot A = t^{-1} \, C_{\mathbf{T}} + t^{-1} \, C_{\mathbf{U}}$ for some $C_{\mathbf{T}} \in \text{Lie}(\mathbf{T})$ and $C_{\mathbf{U}} \in \text{Lie}(\mathbf{U})$. Moreover, we can arrange that $\pi(C_{\mathbf{T}}) = 0$ and $[C_{\mathbf{T}}, C_{\mathbf{U}}] =0$.
\end{prop}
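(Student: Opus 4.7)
\medskip\noindent\emph{Proof plan.} The strategy is first to reduce the torus part of $A$ via Proposition~\ref{prop:tori regular}, then to induct on $\dim\mathbf{U}$ using a central, $\mathbf{T}$-stable subgroup of $\mathbf{U}$. Proposition~\ref{prop:tori regular} produces $x_1 \in \mathbf{T}(\mathcal{O})$ with $x_1 \cdot A_{\mathbf{T}} = t^{-1} C_0$, where $C_0 := (A_{\mathbf{T}})_{-1}$; since $\mathbf{T}$ normalises $\mathbf{U}$, this gauge preserves $\text{Lie}(\mathbf{U})_F$, so we may assume $A = t^{-1}C_0 + A_{\mathbf{U}}$.

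For the base case $\dim \mathbf{U} = 0$, pass to $F_b$ and apply the gauge $t^{-\tau}$ with $\tau := \pi(C_0) \in \tfrac{1}{b}X_{*}(\mathbf{T})$. Since $\text{Ad}(t^{-\tau})$ is trivial on $\text{Lie}(\mathbf{T})$ and $\widehat{(t^{-\tau})^{*}\omega} = -\tau t^{-1}$, the result is $t^{-1}(C_0 - \tau)$, and $\pi(C_0 - \tau) = 0$ as desired.

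For the inductive step ($\dim \mathbf{U} \geq 1$), the centre $Z_{\mathbf{U}}$ contains a closed $\mathbf{T}$-stable subgroup $\mathbf{H} \cong \mathbb{G}_a$; let $\beta \in X^{*}(\mathbf{T})$ be the weight of $\mathbf{T}$ on $\text{Lie}(\mathbf{H})$ and set $\bar{\mathbf{G}} := \mathbf{T} \ltimes (\mathbf{U}/\mathbf{H})$. The inductive hypothesis applied to the image $\bar{A}$ of $A$ in $\text{Lie}(\bar{\mathbf{G}})_F$ yields $\bar{x} \in \bar{\mathbf{G}}(F_b)$ with $\bar{x} \cdot \bar{A} = t^{-1}(C_{\mathbf{T}} + C_{\bar{\mathbf{U}}})$ satisfying $\pi(C_{\mathbf{T}}) = 0$ and $[C_{\mathbf{T}}, C_{\bar{\mathbf{U}}}] = 0$. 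Using the linear reductivity of $\mathbf{T}$, fix a $\mathbf{T}$-equivariant section of $\text{Lie}(\mathbf{U}) \twoheadrightarrow \text{Lie}(\mathbf{U}/\mathbf{H})$ and use it to lift $C_{\bar{\mathbf{U}}}$ to $\tilde{C}_{\bar{\mathbf{U}}} \in \text{Lie}(\mathbf{U})$ supported on the same $\mathbf{T}$-weights; since every such weight $\alpha$ satisfies $\alpha(C_{\mathbf{T}}) = 0$, the lift automatically commutes with $C_{\mathbf{T}}$. Lift $\bar{x}$ to $x_0 \in \mathbf{G}(F_b)$ using the additive Hilbert~90 argument from the proof of Proposition~\ref{thm:unipotent regular}. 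Then $x_0 \cdot A = t^{-1}C_{\mathbf{T}} + t^{-1}\tilde{C}_{\bar{\mathbf{U}}} + B$ for some residual $B \in \text{Lie}(\mathbf{H})_{F_b}$.

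Finally, apply a gauge $h \in \mathbf{H}(F_b)$. Because $\mathbf{H}$ is central in $\mathbf{U}$, this gauge fixes $\tilde{C}_{\bar{\mathbf{U}}}$ and $B$, while $\text{Ad}(h)(t^{-1}C_{\mathbf{T}}) = t^{-1}C_{\mathbf{T}} - \beta(C_{\mathbf{T}})t^{-1}h$ and the Maurer--Cartan term contributes $dh$, reducing the problem to the scalar ODE $dh - \beta(C_{\mathbf{T}})t^{-1}h = t^{-1}C_{\mathbf{H}} - B$ in $F_b$. The key observation is that $\pi(\beta(C_{\mathbf{T}})) = 0$. If $\beta(C_{\mathbf{T}}) = 0$, the equation $j h_j = -B_{j-1}$ determines $h_j$ uniquely for $j \neq 0$, and setting $C_{\mathbf{H}} := B_{-1}$ gives $[C_{\mathbf{T}}, C_{\mathbf{H}}] = 0$ automatically; otherwise $\beta(C_{\mathbf{T}}) \in \mathbf{k} \setminus \mathbb{Q}$, so $j - \beta(C_{\mathbf{T}}) \neq 0$ for every $j \in \tfrac{1}{b}\mathbb{Z}$ and $h$ is uniquely determined with $C_{\mathbf{H}} = 0$. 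Taking $C_{\mathbf{U}} := \tilde{C}_{\bar{\mathbf{U}}} + C_{\mathbf{H}}$ yields the desired canonical form with $[C_{\mathbf{T}}, C_{\mathbf{U}}] = 0$. I expect the main subtlety to be the weight-preserving lift $\tilde{C}_{\bar{\mathbf{U}}}$: it is what collapses the central-extension step to a single solvable scalar ODE and handles the resonant and nonresonant cases uniformly.
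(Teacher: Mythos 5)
Your proof is correct, and its overall architecture coincides with the paper's: reduce the torus part via Proposition \ref{prop:tori regular}, normalize the residue by a gauge $t^{-\pi((A_{\mathbf{T}})_{-1})} \in \mathbf{T}(F_b)$, then induct on $\dim \mathbf{U}$ by quotienting out a central $\mathbf{T}$-stable line $\mathbf{H}\cong \mathbb{G}_a \subseteq Z_{\mathbf{U}}$, and finish with the scalar recursion, split into the cases $\beta(C_{\mathbf{T}})=0$ and $\beta(C_{\mathbf{T}})\notin\mathbb{Q}$ exactly as the paper does, using that $\pi(C_{\mathbf{T}})=0$ forces $\pi(\beta(C_{\mathbf{T}}))=0$. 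The one genuine difference is how you organize the lift from $\mathbf{U}/\mathbf{H}$ to $\mathbf{U}$: the paper constructs a $\mathbf{T}$-equivariant section of schemes $s:\mathbf{U}/\mathbf{H}\to\mathbf{U}$ (via the argument of Borel--Springer, with an induction on the nilpotency class) and applies the gauge $s(\overline{u})$, whereas you lift the gauge element arbitrarily by additive Hilbert 90 and only impose equivariance at the linear level, choosing a $\mathbf{T}$-equivariant splitting of $\mathfrak{u}\twoheadrightarrow\mathfrak{u}/\mathfrak{h}$ so that the lift $\tilde{C}_{\bar{\mathbf{U}}}$ lives in the same $\mathbf{T}$-weights and hence still commutes with $C_{\mathbf{T}}$, with any discrepancy absorbed into the residual term $B\in\text{Lie}(\mathbf{H})_{F_b}$. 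This is a mild but real simplification for this proposition, since it sidesteps the equivariant scheme-theoretic section (which the paper reuses later for its determinacy results); your route also works directly over $F_b$ with exponents in $\tfrac{1}{b}\mathbb{Z}$ instead of $b$-lifting back to $F$, which is harmless since $\tfrac{1}{b}\mathbb{Z}\subset\mathbb{Q}$ keeps the nonresonance argument intact, and no step introduces ramification beyond $F_b$.
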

\begin{proof}
By the proof of Proposition \ref{prop:tori regular}, we can find $g \in \mathbf{T}(F)$ with $g \cdot A_{\mathbf{T}} = t^{-1} \, (A_{\mathbf{T}})_{-1}$. Set $\mu \vcentcolon = b \, \pi\left(\, (A_{\mathbf{T}})_{-1} \, \right) \in X_{*}(\mathbf{T})$. Then we have $(t^{\frac{1}{b} \mu} \, g) \cdot A_{\mathbf{T}} = t^{-1} \, C_{\mathbf{T}}$ for some $C_{\mathbf{T}} \in \text{Lie}(\mathbf{T})$  with $\pi(C_{\mathbf{T}}) =0$.

We can replace $A$ with $B \vcentcolon = (t^{\frac{1}{b} \, \mu} \, g) \cdot A$. We know that $B = t^{-1} \, C_{\mathbf{T}} + B_{\mathbf{U}}$ for some $B_{\mathbf{U}} \in \text{Lie}(\mathbf{U})_{F_b}$. By lifting to the $b$-ramified cover, we can assume that $B_{\mathbf{U}} \in \text{Lie}(\mathbf{U})_F$. We claim that we can find $u \in \mathbf{U}(F)$ such that $u \cdot B = t^{-1}\, C_{\mathbf{T}} + t^{-1} \, C_{\mathbf{U}}$ with $C_{\mathbf{U}} \in \text{Lie}(\mathbf{U})$ and $[C_{\mathbf{T}}, C_{\mathbf{U}}] =0$. We will show this by induction on the dimension of $\mathbf{U}$.

The base case is $\mathbf{U} = \mathbb{G}_a$. Then, $\mathbf{T}$ acts on $\mathbf{U}$ by a character $\chi : \mathbf{T} \longrightarrow \mathbb{G}_m$. Write $B_{\mathbf{U}} = \sum_{j=r}^{\infty} \left(B_{\mathbf{U}}\right)_j \, t^j$. For any $u = \sum_{j=r}^{\infty} u_j \, t^j \in \mathbf{U}(F)$, we have
\[u \cdot B = t^{-1}\, C_{\mathbf{T}} \, + \, B_{\mathbf{U}} \, - \, \sum_{j=r}^{\infty} \left( d\chi(C_{\mathbf{T}}) -j\right) u_j \, t^{j-1}\]
Since $\pi(C_\mathbf{T}) =0$, we have $\pi \left( d\chi(C_{\mathbf{T}})\right) = 0$. There are two options:\vspace{-0.25cm}
\begin{enumerate}[(1)]
    \item $d\chi(C_{\mathbf{T}}) \notin \mathbb{Q}$. Then, setting $u_j = \frac{1}{ d\chi(C_{\mathbf{T}}) -j} \left(B_{\mathbf{U}}\right)_{j-1}$ we get $u \cdot B = t^{-1} \, C_{\mathbf{T}}$.
    
    \item $d\chi(C_{\mathbf{T}}) = 0$. We can set $u_j = \frac{1}{ d\chi(C_{\mathbf{T}})\, - \,j} \left(B_{\mathbf{U}}\right)_{j-1}$ for $j \neq 0$ and $u_0 =0$. Then $u \cdot B = t^{-1} \, C_{\mathbf{T}} + t^{-1} \, \left(B_{\mathbf{U}}\right)_{-1}$. Notice that we have $[C_{\mathbf{T}} , \, \left(B_{\mathbf{U}}\right)_{-1}] = d\chi(C_{\mathbf{T}}) \left(B_{\mathbf{U}}\right)_{-1} = 0$.
\end{enumerate}
\vspace{-0.25cm}
This concludes the proof of the base case.

Let's proceed with the induction step. We can decompose the action of the split torus $\mathbf{T}$ on the vector space $Z_{\mathbf{U}}$ into one-dimensional spaces. Let $\mathbf{H} \cong \mathbb{G}_a \leq Z_{\mathbf{U}}$ be one of these eigenspaces. The eigenspace decomposition yields a natural $\mathbf{T}$-equivariant section of the quotient map $Z_{\mathbf{U}} \longrightarrow Z_{\mathbf{U}} / \, \mathbf{H}$. We claim that we can extend this to a  $\mathbf{T}$-equivariant section $s$ of the morphism of schemes $\mathbf{U} \longrightarrow \mathbf{U} / \, \mathbf{H}$. In order to see this claim, we can use induction on the nilpotency class to reduce to the case when $\mathbf{U}$ has nilpotency class $1$. Notice that we can find a section which is not necessarily $\mathbf{T}$-equivariant, since everything is isomorphic to an affine space. Then we can use the argument in Lemma 9.4 of \cite{borel.springer} to obtain a $\mathbf{T}$-equivariant section. We can arrange so that $s$ preserves the identities by substracting the image $s(1_{\mathbf{U} / \, \mathbf{H}})$. Let us denote by $ds$ the induced map of tangent spaces at the identity.

Let $\overline{B}$ be the image of $B$ in the quotient $\text{Lie}(\mathbf{T} \ltimes \mathbf{U} / \, \mathbf{H})_F$. By the induction hypothesis, we can find $\overline{u} \in \mathbf{U} / \, \mathbf{H} (F)$ such that $\overline{u} \cdot \overline{B} = t^{-1} \, C_{\mathbf{T}} + t^{-1} \, \overline{D}$ for some $\overline{D} \in \text{Lie}\left(\mathbf{U}/ \, \mathbf{H}\right)$ with $[C_{\mathbf{T}}, \overline{D}] =0$. We can then write
\[ s(\overline{u}) \cdot B = t^{-1}\, C_{\mathbf{T}} + t^{-1}\, ds(\overline{D}) + B_{\mathbf{H}}\]
for some $B_{\mathbf{H}} \in \text{Lie}(\mathbf{H})_F$. Since $s$ is $\mathbf{T}$-equivariant, we have $[ ds(\overline{D}), \, C_{\mathbf{T}}] = 0$. We can now use the base case for $\mathbf{H}$ in order to conclude.
\end{proof}

\begin{remark} \label{remark: not uniform b in solvable case}
We can decompose the Lie algebra $\mathfrak{u} \vcentcolon = \text{Lie}(\mathbf{U})$ into weight spaces $\mathfrak{u} = \bigoplus_{i} \mathfrak{u}_{\chi_i}$. Here each $\chi_i$ is a character of $\mathbf{T}$. Fix a basis $\{\alpha_j\}$ for the character lattice $X^*(\mathbf{T})$. For each $i$ we can write $\chi_i = \sum_{j} m^i_j \alpha_j$ for some integers $m^i_j$. Define $\text{hgt}(\chi_i) = \sum_j |m^i_j|$. Set $b =  \underset{1 \leq i \leq l}{\text{max}} \, \text{hgt}(\chi_i)$. If we don't require $\pi(C_{\mathbf{T}})$ and $[C_{\mathbf{T}}, C_{\mathbf{U}}] =0$ in Proposition \ref{prop: reduction regular solvable}, then it suffices to pass to a $b$-ramified cover. So there is a uniform upper bound on the ramification needed to put any regular $\mathbf{G}$-connection into canonical form. It only depends on the solvable group $\mathbf{G}$.
\end{remark}

Let us prove a uniqueness result for regular canonical forms in the solvable case.
\begin{prop} \label{prop: uniqueness regular solvable}
Let $\mathbf{G}$ be of the form $\mathbf{T} \ltimes \mathbf{U}$ as above. Let $C = t^{-1} \, C_{\mathbf{T}} +  t^{-1} \, C_{\mathbf{U}}$ and $D = t^{-1} \, D_{\mathbf{T}} +  t^{-1} \, D_{\mathbf{U}}$ be two regular canonical connections with $C_{\mathbf{T}}, D_{\mathbf{T}} \in \text{Lie}(\mathbf{T})$ and $C_{\mathbf{U}}, D_{\mathbf{U}} \in \text{Lie}(\mathbf{U})$. Suppose that $\pi(C_{\mathbf{T}}) = \pi(D_{\mathbf{T}}) = 0$ and $[C_{\mathbf{T}}, C_{\mathbf{U}}] = [D_{\mathbf{T}}, D_{\mathbf{U}}] =0$. If there exists $x \in \mathbf{G}(\overline{F})$ such that $x \cdot C = D$, then in fact $C_{\mathbf{T}} = D_{\mathbf{T}}$. Moreover, $x$ is in the centralizer $Z_{\mathbf{G}}(C_{\mathbf{T}})(\mathbf{k})$ of $C_{\mathbf{T}}$.
\end{prop}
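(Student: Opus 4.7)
The plan is to follow the template of Proposition \ref{prop: uniqueness regular semisimple} essentially verbatim, replacing the roles of $C_s, D_s$ there with $C_{\mathbf{T}}, D_{\mathbf{T}}$ here. Fix a faithful representation $\mathbf{G} \hookrightarrow \mathbf{\text{GL}_n}$ and view the canonical forms as matrix-valued connections. By passing to a ramified cover we may assume $x \in \mathbf{G}(F_b)$, and we expand $x = \sum_{j} x_j \, t^{j/b}$ in $\mathbf{\text{GL}_n}(F_b)$. The gauge equation $x \cdot C = D$ then rewrites as the linear matrix ODE $\frac{d}{dt} x = t^{-1} \tilde{U} x$, where $\tilde{U} \in \text{End}(\mathfrak{gl}_n)$ is the operator $\tilde{U} v \vcentcolon= (D_{\mathbf{T}} + D_{\mathbf{U}}) v - v (C_{\mathbf{T}} + C_{\mathbf{U}})$. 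Matching powers of $t^{1/b}$ yields $\tilde{U} x_j = (j/b)\, x_j$ for every $j$, so each nonzero $x_j$ is an eigenvector of $\tilde{U}$ with rational eigenvalue $j/b$.

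The key step is to show that the only rational eigenvalue of $\tilde{U}$ is $0$. Decompose $\tilde{U} = \tilde{U}_s + \tilde{U}_n$ with $\tilde{U}_s v \vcentcolon= D_{\mathbf{T}} v - v C_{\mathbf{T}}$ and $\tilde{U}_n v \vcentcolon= D_{\mathbf{U}} v - v C_{\mathbf{U}}$. Since $\mathbf{T}$ is a torus, $C_{\mathbf{T}}$ and $D_{\mathbf{T}}$ act as diagonalizable matrices and $\tilde{U}_s$ is semisimple; since $\mathbf{U}$ is unipotent, $C_{\mathbf{U}}$ and $D_{\mathbf{U}}$ act as nilpotent matrices and $\tilde{U}_n$ is nilpotent. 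A direct computation gives
\[ [\tilde{U}_s, \tilde{U}_n]\, v \;=\; [D_{\mathbf{T}}, D_{\mathbf{U}}]\, v \;-\; v\, [C_{\mathbf{T}}, C_{\mathbf{U}}], \]
which vanishes by the hypotheses $[C_{\mathbf{T}}, C_{\mathbf{U}}] = 0 = [D_{\mathbf{T}}, D_{\mathbf{U}}]$. Hence this is the Jordan decomposition of $\tilde{U}$, and the eigenvalues of $\tilde{U}$ coincide with those of $\tilde{U}_s$, which are differences $\chi(D_{\mathbf{T}}) - \chi'(C_{\mathbf{T}})$ as $\chi, \chi'$ range over the weights of $\mathbf{T}$ appearing in the representation. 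Because $\pi(C_{\mathbf{T}}) = \pi(D_{\mathbf{T}}) = 0$, every such difference lies in $\ker \pi$, so the only rational eigenvalue of $\tilde{U}$ is $0$. We conclude $x_j = 0$ for all $j \neq 0$, i.e. $x = x_0 \in \mathbf{G}(\mathbf{k})$.

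With $x$ constant, the gauge equation reduces to $\text{Ad}(x)(C_{\mathbf{T}} + C_{\mathbf{U}}) = D_{\mathbf{T}} + D_{\mathbf{U}}$; uniqueness of Jordan decomposition in $\mathfrak{gl}_n$ then forces $\text{Ad}(x) C_{\mathbf{T}} = D_{\mathbf{T}}$ and $\text{Ad}(x) C_{\mathbf{U}} = D_{\mathbf{U}}$. Writing $x = t_0 u_0$ via the semidirect product structure, $\text{Ad}(u_0) C_{\mathbf{T}} \in C_{\mathbf{T}} + \mathfrak{u}$ because $[\mathfrak{u}, \mathfrak{t}] \subseteq \mathfrak{u}$, and $\text{Ad}(t_0)$ preserves the decomposition $\mathfrak{g} = \mathfrak{t} \oplus \mathfrak{u}$. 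Projecting the identity $\text{Ad}(x) C_{\mathbf{T}} = D_{\mathbf{T}}$ onto $\mathfrak{t}$ yields $C_{\mathbf{T}} = D_{\mathbf{T}}$, and projecting onto $\mathfrak{u}$ forces $\text{Ad}(u_0) C_{\mathbf{T}} = C_{\mathbf{T}}$; combined with $t_0 \in \mathbf{T}(\mathbf{k}) \subseteq Z_{\mathbf{G}}(C_{\mathbf{T}})$ this gives $x \in Z_{\mathbf{G}}(C_{\mathbf{T}})(\mathbf{k})$. The main obstacle is setting up the Jordan decomposition of $\tilde{U}$ cleanly — in particular the commutation $[\tilde{U}_s, \tilde{U}_n] = 0$, which is exactly where the two bracket hypotheses on the canonical forms come into play; the remainder is a direct transcription of the semisimple and torus arguments.
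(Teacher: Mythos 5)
Your proof is correct, and its core is exactly the paper's: the paper's own proof simply cites ``the same proof of Proposition \ref{prop: uniqueness regular semisimple}'' for the operator $v \mapsto Dv - vC$ in $\mathfrak{gl}_n$, and you have transcribed that argument faithfully, including the point (left implicit in the paper) that the hypotheses $[C_{\mathbf{T}},C_{\mathbf{U}}]=[D_{\mathbf{T}},D_{\mathbf{U}}]=0$ are precisely what make $\tilde{U}_s+\tilde{U}_n$ a genuine Jordan decomposition. The one place you diverge is in how $C_{\mathbf{T}} = D_{\mathbf{T}}$ is obtained: the paper first pushes the gauge relation through the quotient $\mathbf{G} \twoheadrightarrow \mathbf{G}/\mathbf{U} \cong \mathbf{T}$ and invokes Proposition \ref{prop: uniqueness regular tori} (using $\pi(C_{\mathbf{T}})=\pi(D_{\mathbf{T}})=0$ to kill the cocharacter shift), whereas you first prove $x$ is constant, extract $\text{Ad}(x)C_{\mathbf{T}} = D_{\mathbf{T}}$ from uniqueness of the Jordan decomposition, and then read off $C_{\mathbf{T}}=D_{\mathbf{T}}$ by projecting along the ideal $\mathfrak{u}$ in $\mathfrak{g} = \mathfrak{t}\oplus\mathfrak{u}$. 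Both routes are valid; yours is slightly more self-contained (it does not reuse the torus uniqueness statement), while the paper's reuses an already-proved proposition and is shorter. One small simplification to yours: since $\mathbf{G}/\mathbf{U}\cong\mathbf{T}$ is abelian, $\text{Ad}(g)C_{\mathbf{T}} \equiv C_{\mathbf{T}} \pmod{\mathfrak{u}}$ for \emph{every} $g \in \mathbf{G}$, so the factorization $x = t_0 u_0$ is not needed -- projection along $\mathfrak{u}$ gives $C_{\mathbf{T}}=D_{\mathbf{T}}$ directly, and then $\text{Ad}(x)C_{\mathbf{T}} = D_{\mathbf{T}} = C_{\mathbf{T}}$ already says $x \in Z_{\mathbf{G}}(C_{\mathbf{T}})(\mathbf{k})$.
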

\begin{proof}
By lifting to a ramified cover, we can assume that $x \in \mathbf{G}(F)$. Write $x = x_{\mathbf{U}} \, x_{\mathbf{T}}$ with $x_{\mathbf{U}} \in \mathbf{U}(F)$ and $x_{\mathbf{T}} \in \mathbf{T}(F)$. By the computation in Proposition \ref{prop: uniqueness regular tori} applied to $\mathbf{T}$, we get that $x_{\mathbf{T}} \in \mathbf{T}(\mathbf{k})$ and $C_{\mathbf{T}} = D_{\mathbf{T}}$. The same proof of Proposition \ref{prop: uniqueness regular semisimple} implies that $x \in \mathbf{G}(\mathbf{k})$ and $\text{Ad}(x) C_{\mathbf{T}} = D_{\mathbf{T}}$. Since $C_{\mathbf{T}} = D_{\mathbf{T}}$, this means that $x \in Z_{\mathbf{G}}(C_{\mathbf{T}})(\mathbf{k})$.
\end{proof}
We conclude this section with a determinacy result for regular connections in the case of solvable groups. But first we need to setup some notation. Let $\mathbf{G} = \mathbf{T} \ltimes \mathbf{U}$ solvable. We have an action of the split torus $\mathbf{T}$ on the Lie algebra $\mathfrak{u} \vcentcolon = \text{Lie}(\mathbf{U})$ via the adjoint representation. We can decompose this representation into weight spaces $\mathfrak{u} = \bigoplus_{i =1}^{l} \mathfrak{u}_{\chi_i}$ for some finite set $\{ \chi_1, \chi_2, ..., \chi_l \}$ of charaters $\chi_i :\mathbf{T} \longrightarrow \mathbb{G}_m$.

Suppose that we have a formal connection $A = A^{\mathbf{T}} + A^{\mathbf{U}}$, with $A^{\mathbf{T}} \in \text{Lie}(\mathbf{T})_F$ a connection of the first kind and $A^{\mathbf{U}} \in \text{Lie}(\mathbf{U})_F$. We can write 
$A^{\mathbf{T}} = t^{-1} A^{\mathbf{T}}_{-1} \, + \, \sum_{j =p}^{\infty} A^{\mathbf{T}}_j \, t^j$ for some $p \geq 0$ and $A^{\mathbf{U}} = \sum_{j =m}^{\infty} A^{\mathbf{U}}_j \, t^j$ for some $m \in \mathbb{Z}$. Let $b$ be a positive integer such that $\mu \vcentcolon = b \, \pi\left(\, A^{\mathbf{T}}_{-1} \, \right)$ is in $X_{*}(\mathbf{T})$. Define $L$ to be
\[L \vcentcolon = \text{max} \left(\left\{ \frac{1}{b} \langle \mu, \, \chi_i \rangle \right\}_{i =1}^l \cup \{0\} \right) \]
\begin{prop} \label{prop: determinacy regular solvable}
Keep the same notation as in the paraggraph above. Assume that $\mathbf{U}$ has nilpotency class $n$.\vspace{-0.25cm}
\begin{enumerate}[(i)]
    \item Suppose that $m > L-1$. Then there exists $x \in \mathbf{G}(\mathcal{O})$ with $x \cdot A = t^{-1} A^{\mathbf{T}}_{-1}$. More precisely, there exists $x_{\mathbf{T}} \in \mathbf{T}(\mathcal{O})$ with $x_{\mathbf{T}} \equiv 1_{\mathbf{T}} \, \left(mod \; t^{p +1}\right)$ and $x_{\mathbf{U}} \in \mathbf{U}(\mathcal{O})$ with $x_{\mathbf{U}} \equiv 1_{\mathbf{U}} \, \left(mod \; t^{m +1}\right)$ such that $(x_{\mathbf{U}} x_{\mathbf{T}}) \cdot A = t^{-1} A^{\mathbf{T}}_{-1}$.
    \item Suppose that $m \leq L-1$. The $\mathbf{G}(F)$-gauge equivalence class of $A$ is determined by the coefficients $A_j^{\mathbf{T}}$ for $-1 \leq j < (n+1)(|m|-1)+L$ and $A_j^{\mathbf{U}}$ for $m \leq j < n(|m|-1) +L$. More precisely, suppose that there is another connection $B$ and positive integer $k \geq n(|m|-1)+L$ with $B^{\mathbf{T}} \equiv A^{\mathbf{T}} \, \left( mod \; t^{k +|m|}\right)$ and $B^{\mathbf{U}} \equiv A^{\mathbf{U}} \, \left( mod \; t^{k}\right)$. Then, there exists $x \in \mathbf{G}(\mathcal{O})$ with $x \equiv 1 \, \left(mod \; t^{k - n|m| +n+ 1}\right)$ such that $x \cdot A = B$.
\end{enumerate}
\vspace{-0.25cm}
\end{prop}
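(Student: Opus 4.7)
The plan is to reduce both parts to inductive arguments entirely inside $\mathbf{U}$ after first trivializing the torus component via Proposition~\ref{prop:tori regular}. Concretely, that proposition yields a unique $x_{\mathbf{T}} \in \mathbf{T}(\mathcal{O})$ with $x_{\mathbf{T}} \equiv 1_{\mathbf{T}} \pmod{t^{p+1}}$ such that $x_{\mathbf{T}} \cdot A^{\mathbf{T}} = t^{-1} A^{\mathbf{T}}_{-1}$. Conjugation by $x_{\mathbf{T}}$ preserves $\mathfrak{u}_F$ through the $\mathbf{T}$-action on $\mathfrak{u}$, and because $x_{\mathbf{T}} \equiv 1 \pmod{t^{p+1}}$ the action alters the unipotent part only in orders $\geq m + p + 1$. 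So $x_{\mathbf{T}} \cdot A = t^{-1} A^{\mathbf{T}}_{-1} + \widetilde{A}^{\mathbf{U}}$ with $\widetilde{A}^{\mathbf{U}} \in \mathfrak{u}_F$ still of lowest order exactly $m$, and the problem becomes one of finding an appropriate $x_{\mathbf{U}} \in \mathbf{U}(\mathcal{O})$.

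For part~(i) the remaining task is to produce $x_{\mathbf{U}} \in \mathbf{U}(\mathcal{O})$ with $x_{\mathbf{U}} \equiv 1_{\mathbf{U}} \pmod{t^{m+1}}$ such that $x_{\mathbf{U}} \cdot (t^{-1} A^{\mathbf{T}}_{-1} + \widetilde{A}^{\mathbf{U}}) = t^{-1} A^{\mathbf{T}}_{-1}$. I would induct on the nilpotency class of $\mathbf{U}$, following the template of Proposition~\ref{prop: reduction regular solvable}. In the base case $\mathbf{U} = \mathbb{G}_a$, writing $x_{\mathbf{U}} = 1 + v$ with $v = \sum v_j t^j$ reduces everything to the linear recurrence $(j - d\chi(A^{\mathbf{T}}_{-1}))\, v_j = -\widetilde{A}^{\mathbf{U}}_{j-1}$, where $\chi$ is the weight of the $\mathbf{T}$-action on $\mathbf{U}$. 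The hypothesis $m > L - 1$ forces every index $j \geq m + 1$ to satisfy $j > L \geq \langle \mu, \chi\rangle / b = \pi(d\chi(A^{\mathbf{T}}_{-1}))$, which rules out $j = d\chi(A^{\mathbf{T}}_{-1})$ and makes the recurrence solvable; since $\widetilde{A}^{\mathbf{U}}_{j-1} = 0$ for $j \leq m$, we automatically get $v_j = 0$ for $j \leq m$, i.e.\ $x_{\mathbf{U}} \equiv 1 \pmod{t^{m+1}}$. The induction step uses a $\mathbf{T}$-equivariant section of $\mathbf{U} \longrightarrow \mathbf{U}/Z_{\mathbf{U}}$ (as built in Proposition~\ref{prop: reduction regular solvable} via Borel--Springer), applies the induction hypothesis to the quotient, and finishes by decomposing $Z_{\mathbf{U}}$ into one-dimensional $\mathbf{T}$-weight spaces and applying the base case to each.

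For part~(ii) I would run the same torus-first reduction on both $A$ and $B$. Since $A^{\mathbf{T}} \equiv B^{\mathbf{T}} \pmod{t^{k+|m|}}$ with $k + |m| \geq 0$, the residues agree: $A^{\mathbf{T}}_{-1} = B^{\mathbf{T}}_{-1}$, and the torus normalizers $x_{\mathbf{T}}^A, x_{\mathbf{T}}^B$ from Proposition~\ref{prop:tori regular} coincide modulo $t^{k+|m|+1}$, so after applying them we obtain connections $t^{-1} A^{\mathbf{T}}_{-1} + C$ and $t^{-1} A^{\mathbf{T}}_{-1} + D$ in $\mathfrak{u}_F$ that still satisfy $C \equiv D \pmod{t^k}$. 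What remains is a relative determinacy statement inside $\mathbf{U}$: given two such connections agreeing to order $k \geq n(|m|-1)+L$, find $x_{\mathbf{U}} \in \mathbf{U}(\mathcal{O})$ with $x_{\mathbf{U}} \equiv 1 \pmod{t^{k - n|m|+n+1}}$ realizing the gauge equivalence. This I would prove by induction on nilpotency class exactly as in Proposition~\ref{prop: determinacy unipotent}(ii): each quotient step (passing to $\mathbf{U}/Z_{\mathbf{U}}$) degrades the congruence by $|m|-1$ exponents after accounting for the differential, while the single base-case step absorbs an additional $L$ exponents corresponding to the indices $j \leq L$ at which the recurrence $(j - d\chi(A^{\mathbf{T}}_{-1}))v_j = \cdots$ could become singular.

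The main obstacle will be the bookkeeping in part~(ii): verifying that exactly $k - n|m| + n + 1$ powers of $t$ survive after $n$ iterations of the quotient step plus the base case, and that the asymmetric input condition $A^{\mathbf{T}} \equiv B^{\mathbf{T}} \pmod{t^{k+|m|}}$ versus $A^{\mathbf{U}} \equiv B^{\mathbf{U}} \pmod{t^k}$ is precisely what is needed to guarantee that the unipotent residues agree to order $k$ after torus normalization (the shift by $|m|$ accounts for the amplification by torus weights bounded by $L$ when conjugating $\widetilde{A}^{\mathbf{U}}$ by elements of $\mathbf{T}(\mathcal{O})$ that are not congruent to $1$ to high enough order). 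Once this accounting is correctly set up, both parts reduce to the explicit $\mathbb{G}_a$ computations already present in Propositions~\ref{prop: reduction regular solvable} and~\ref{prop: determinacy unipotent}.
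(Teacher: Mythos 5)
Your proposal is correct and follows essentially the same route as the paper's proof: normalize the torus component first via Proposition~\ref{prop:tori regular}, then handle the unipotent part by induction on the nilpotency class using the $\mathbf{T}$-equivariant sections of $\mathbf{U} \longrightarrow \mathbf{U}/Z_{\mathbf{U}}$ from Proposition~\ref{prop: reduction regular solvable}, with the $\mathbb{G}_a$ base case solved by the recurrence $(d\chi(A^{\mathbf{T}}_{-1})-j)u_j = \cdots$ and the bound $L$ ruling out the resonance $j = d\chi(A^{\mathbf{T}}_{-1})$, exactly as the threshold $k \geq n(|m|-1)+L$ is designed to do. The only deviation is in part (ii), where you normalize both torus parts to the canonical form $t^{-1}A^{\mathbf{T}}_{-1}$ instead of gauging $A^{\mathbf{T}}$ directly onto $B^{\mathbf{T}}$ by a single element congruent to $1$ modulo $t^{k+|m|}$ as the paper does; this is a harmless variant, needing only the extra (easy) check that the two torus normalizers agree modulo $t^{k+|m|+1}$ so that the composite gauge transformation still satisfies the stated congruence.
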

\begin{proof}
\begin{enumerate}[(i)]
    \item By assumption $A^{\mathbf{T}} \equiv t^{-1}A^{\mathbf{T}}_{-1} \, \left( mod \; t^p \right)$. The proof of Proposition \ref{prop:tori regular} shows that there exists $x_{\mathbf{T}} \in \mathbf{T}(\mathcal{O})$ with $x_{\mathbf{T}} \equiv 1 \, \left(mod \; t^{k+1}\right)$ such that $x_{\mathbf{T}} \cdot A^{\mathbf{T}} = t^{-1}A^{\mathbf{T}}_{-1}$. Set $C \vcentcolon = x_{\mathbf{T}} \cdot A$. We can write $C = t^{-1}A_{-1}^{\mathbf{T}} +\text{Ad}(x_{\mathbf{T}})A^{\mathbf{U}}$. In order to ease notation, set $C^{\mathbf{U}} \vcentcolon = \text{Ad}(x_{\mathbf{T}})A^{\mathbf{U}}$. Since $A^{\mathbf{U}} \equiv 0 \, \left(mod \; t^{m}\right)$, we have $C^{\mathbf{U}} \equiv 0 \, \left(mod \; t^{m}\right)$.  We claim that there exists $x \in \mathbf{U}(\mathcal{O})$ with $x \equiv 1_{\mathbf{U}} \, \left(mod \; t^{m+1} \right)$ such that $x \cdot C = t^{-1} A^{\mathbf{T}}_{-1}$. This claim finishes the proof of part (i).

In order to prove the claim, we will induct on the nilpotency class of $\mathbf{U}$. The base case $n=0$ means that $\mathbf{U} \cong \mathbb{G}_a^d$ for some $d$. We can decompose into eigenvalues and look at each coordinate separately in order to reduce to the case $d=1$. Then there is a single weight space $\mathfrak{u}_{\chi_i}$. This case amounts to solving a recurrence as in the base case for the proof of Proposition \ref{prop: reduction regular solvable}. We want to find $x = \sum_{j=0}^{\infty} t^j u_j$ satisfying
\[ C^{\mathbf{U}}_{j-1} = \left(d\chi_i(A_{-1}^{\mathbf{T}}) - j\right)u_j \]
If $j \leq m$ then $C^{\mathbf{U}}_{j-1} = 0$ by assumption. So we can set $u_j = 0$. If $j \geq m+1$, then we have 
\[ \pi\left(d\chi_i(A^{\mathbf{T}}_{-1})\right) -j = \frac{1}{b}\langle \mu, \chi_i\rangle -j \leq L -m -1 \]
By assumption $L-m-1 <0$, so we must have $d\chi_i(A^{\mathbf{T}}_{-1}) -j \neq 0$. Hence we can set $u_j = \frac{1}{d\chi_i(A^{\mathbf{T}}_{-1}) \, - \, j} \,C^{\mathbf{U}}_{j-1}$. The base case follows.

For the induction step, let $Z_{\mathbf{U}}$ denote the center of $\mathbf{U}$. Let $s$ be a $\mathbf{T}$-equivariant section of the quotient $\mathbf{U} \longrightarrow \mathbf{U} / \, Z_{\mathbf{U}}$, as in the proof of Proposition \ref{prop: reduction regular solvable}. Let $\overline{C}$ be the image of $C$ in the quotient $\text{Lie}(\mathbf{T} \ltimes \mathbf{U} / \, Z_{\mathbf{U}})_{F}$. By the induction hypothesis, there exists $\overline{x} \in \mathbf{U} / \, Z_{\mathbf{U}}(\mathcal{O})$ such that $\overline{x} \equiv 1 \; \left(mod \; t^{m + 1} \right)$ and $\overline{x} \cdot \overline{C} = t^{-1}A^{\mathbf{T}}_{-1}$. We must then have $s(\overline{x}) \cdot C = t^{-1}A^{\mathbf{T}}_{-1} + D_{Z_{\mathbf{U}}}$ for some $D_{Z_{\mathbf{U}}} \in \text{Lie}(Z_{\mathbf{U}})_{F}$. By definition
\[ s(\overline{x}) \cdot C \; = \; t^{-1} \text{Ad}(s(\overline{x})) A^{\mathbf{T}}_{-1} \, + \, \text{Ad}(s(\overline{x})) C^{\mathbf{U}} \, + \, ds(\overline{x}) s(\overline{x})^{-1}  \]
We know that $s(\overline{x}) \equiv s(\overline{x})^{-1} \equiv 1 \; \left(mod \; t^{m + 1} \right)$. Also by assumption $C^{\mathbf{U}} \in \mathfrak{u}_{\mathcal{O}}$. It follows that
\[ s(\overline{x}) \cdot C \; \equiv \; t^{-1} A^{\mathbf{T}}_{-1} \, + \, C^{\mathbf{U}} \; \equiv \; t^{-1}C_{\mathbf{T}} \; \left(mod \; t^{m}\right)\]
Therefore $D_{Z_{\mathbf{U}}} \equiv 0 \; \left(mod \; t^{m}\right)$. Now we can conclude by using the base case for $Z_{\mathbf{U}}$.
    \item The hypothesis implies that $B^{\mathbf{T}}_{-1} = A^{\mathbf{T}}_{-1}$. The proof of Proposition \ref{prop:tori regular} shows that there exist $x_{\mathbf{T}} \in \mathbf{T}(\mathcal{O})$ with $x_{\mathbf{T}}  \equiv 1 \, \left(mod \; t^{k+|m|}\right)$ such that $x_{\mathbf{T}} \cdot A^{\mathbf{T}} = B^{\mathbf{T}}$. Set $C \vcentcolon = x_{\mathbf{T}} \cdot A$. We have $C = B^{\mathbf{T}} + \text{Ad}(x_{\mathbf{T}}) A^{\mathbf{U}}$. Define $C^{\mathbf{U}} \vcentcolon = \text{Ad}(x_{\mathbf{T}}) A^{\mathbf{U}}$.
    
    We know that $C^{\mathbf{U}} \equiv A^{\mathbf{U}} \, \left(mod \; t^{k}\right)$, because $x_{\mathbf{T}}\equiv 1 \, \left(mod \; t^{k+|m|}\right)$ and $A^{\mathbf{U}} \in t^m \mathfrak{u}_{\mathcal{O}}$. Therefore $C^{\mathbf{U}} \equiv B^{\mathbf{U}} \, \left(mod \; t^{k}\right)$ by assumption. We claim that there exists $u \in \mathbf{U}(\mathcal{O})$ with $u \equiv 1 \; \left(mod \; t^{k-n|m|+n+1} \right)$ such that $u \cdot C = B$. This claim concludes the proof of part (ii). In order to prove the claim, we will induct on the nilpotency class of $\mathbf{U}$. The base case $n=0$ follows from an argument similar to the one for part (i), we omit the details.
    
    For the induction step, let $Z_{\mathbf{U}}$ and $s$ be as in part (i). Let $\overline{C}$ and $\overline{B}$ denote the images of $C$ and $B$ in the quotient $\text{Lie}(\mathbf{T} \ltimes \mathbf{U} / \, Z_{\mathbf{U}})_{F}$. By the induction hypothesis, there exists $\overline{x} \in \mathbf{U}/ \, Z_{\mathbf{U}}(\mathcal{O})$ with $\overline{x} \equiv 1 \; \left( mod \; t^{k-(n-1)|m| +n} \right)$ such that $\overline{x} \cdot \overline{C} = \overline{B}$.  We can now write $s(\overline{x}) \cdot C = ds\left(\overline{B} \right) +E_{Z_{\mathbf{U}}}$ and $B = ds\left(\overline{B}\right) +K_{Z_{\mathbf{U}}}$ for some $E_{Z_{\mathbf{U}}}, F_{Z_{\mathbf{U}}} \in \text{Lie}(Z_{\mathbf{U}})_{F_b}$. By definition
\[ s(\overline{x}) \cdot C \; = \; t^{-1} \text{Ad}(s(\overline{x})) B^{\mathbf{T}} \, + \, \text{Ad}(s(\overline{x})) C^{\mathbf{U}} \, + \, ds(\overline{x}) s(\overline{x})^{-1}  \]
We know that $s(\overline{x}) \equiv s(\overline{x})^{-1} \equiv 1 \; \left(mod \; t^{k -(n-1)|m| +n}\right)$. Since $|m| \geq 1$, we conclude that
\[ ds\left(\overline{B}\right) +E_{Z_{\mathbf{U}}} \; = \; s(\overline{x}) \cdot C \; \equiv \; B^{\mathbf{T}} \, + \, C^{\mathbf{U}} \; = \; C \; \left(mod \; t^{k-n|m|+n}\right)\]
Since $k \geq k-n|m| +n$, we have $C \equiv B \;\left(mod \; t^{k-n|m|+n}\right)$. It follows that $E_{Z_{\mathbf{U}}} \equiv K_{Z_{\mathbf{U}}} \; \left(mod \; t^{k-n|m|+n} \right)$. Now by the base case we can find $y \in Z_{\mathbf{U}}(\mathcal{O})$ with $y \equiv 1 \; \left(mod \; t^{k-n|m|+n+1}\right)$ such that $(y\, s(\overline{x}))\cdot C = B$. We have $y \, s(\overline{x}) \equiv 1 \; \left(mod \; t^{k-n|m|+n+1}\right)$, as desired.
\end{enumerate}
\vspace{-0.5cm}
\end{proof}
\begin{remark}
Suppose that $\langle \mu, \chi_i \rangle >0$ for all $i$. It follows from the proof above that we can relax further the conditions on the coefficients of $A^{\mathbf{U}}$. Similarly, we can obtain sharper conditions for the coefficients of $A^{\mathbf{T}}$ in the case $0 \leq m \leq L-1$. We leave the details of these refinements to the interested reader.
\end{remark}
\begin{remark}
If $L = 0$, then the statement simplifies and we recover conditions similar to the unipotent case (Proposition \ref{prop: determinacy unipotent}).
\end{remark}
\begin{subsection}{Regular connections for arbitrary linear algebraic groups}
\begin{thm} \label{thm:general regular} Let $\mathbf{G}$ be a connected linear algebraic group. Let $A \in \mathfrak{g}_F$ be a regular connection. Fix a Levi subgroup $\mathbf{L}$ and maximal torus $\mathbf{T} \subset \mathbf{L}$. Then there exists $x \in \mathbf{G}(\overline{F})$ such that $x \cdot A = t^{-1} C$ for some $C \in \mathfrak{g}$. Moreover, such $x$ can be chosen so that the semisimple part $C_s$ of the Levi component satisfies $C_s \in \mathfrak{D}$ and $[C_s, C] =0$.
\end{thm}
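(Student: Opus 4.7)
The plan is to exploit a Levi decomposition $\mathbf{G} = \mathbf{L} \ltimes \mathbf{R}_u(\mathbf{G})$. First I would reduce the Levi projection of $A$ to canonical form using the reductive case (Corollary \ref{coroll: uniqueness regular reductive}), and then eliminate the residual piece in the unipotent radical by an inductive argument patterned on the proof of Proposition \ref{prop: reduction regular solvable}. Set $\mathfrak{r} \vcentcolon = \text{Lie}(\mathbf{R}_u(\mathbf{G}))$, so that $\mathfrak{g} = \mathfrak{l} \oplus \mathfrak{r}$ as vector spaces. Since $\mathfrak{r}$ is an ideal of $\mathfrak{g}$, the projection $\mathfrak{g} \to \mathfrak{g}/\mathfrak{r} \cong \mathfrak{l}$ is $\mathbf{G}$-equivariant and descends to gauge equivalence. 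Writing $A = A_{\mathbf{L}} + A_{\mathbf{R}}$, the image $A_{\mathbf{L}} \in \mathfrak{l}_F$ is a regular connection, so Corollary \ref{coroll: uniqueness regular reductive}(i) furnishes some $y \in \mathbf{L}(F_b)$ with $y \cdot A_{\mathbf{L}} = t^{-1} C_{\mathbf{L}}$ and $(C_{\mathbf{L}})_s \in \mathfrak{D}$. Viewing $y$ as an element of $\mathbf{G}(F_b)$ via the inclusion $\mathbf{L} \hookrightarrow \mathbf{G}$ and applying it to $A$, I may assume $A = t^{-1} C_{\mathbf{L}} + B$ with $B \in \mathfrak{r}_{F_b}$.

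The remaining task is to find $u \in \mathbf{R}_u(F_b)$ with $u \cdot (t^{-1} C_{\mathbf{L}} + B) = t^{-1}(C_{\mathbf{L}} + C_{\mathbf{R}})$ for some $C_{\mathbf{R}} \in \mathfrak{r}$ satisfying $[(C_{\mathbf{L}})_s, C_{\mathbf{R}}] = 0$; combined with the automatic $[(C_{\mathbf{L}})_s, C_{\mathbf{L}}] = 0$ from the Jordan decomposition, this yields $[C_s, C] = 0$ for $C \vcentcolon = C_{\mathbf{L}} + C_{\mathbf{R}}$. I would prove this claim by induction on the nilpotency class of $\mathbf{R}_u(\mathbf{G})$, closely following the scheme of Proposition \ref{prop: reduction regular solvable}. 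For the base case $\mathbf{R}_u(\mathbf{G}) \cong \mathbb{G}_a^d$ (abelian), setting $u = \exp(X)$ with $X = \sum_j X_j t^j \in \mathfrak{r}_{F_b}$ reduces the problem to the term-by-term recursion $(j - \text{ad}(C_{\mathbf{L}})) X_j = - B_{j-1}$ for $j \neq 0$, together with $-\text{ad}(C_{\mathbf{L}}) X_0 = C_{\mathbf{R}} - B_{-1}$. The operator $\text{ad}(C_{\mathbf{L}})$ preserves the eigenspace decomposition of $\text{ad}((C_{\mathbf{L}})_s)$ on $\mathfrak{r}$ and acts on each eigenspace as a scalar $d\chi((C_{\mathbf{L}})_s)$ plus the nilpotent $\text{ad}((C_{\mathbf{L}})_n)$. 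Because $\pi((C_{\mathbf{L}})_s) = 0$, each such scalar is either zero or irrational, so $j - \text{ad}(C_{\mathbf{L}})$ is invertible whenever $j \neq 0$, or when restricted to the complement of $\ker \text{ad}((C_{\mathbf{L}})_s)$. Absorbing the residual obstruction into $C_{\mathbf{R}} \in \ker \text{ad}((C_{\mathbf{L}})_s)$ completes the base case. For the induction step, extract a $\mathbf{T}$-stable copy $\mathbf{H} \cong \mathbb{G}_a$ inside $Z_{\mathbf{R}_u(\mathbf{G})}$ from a weight-space decomposition of the center, apply the induction hypothesis to the image in $\mathbf{R}_u/\mathbf{H}$, and lift via a $\mathbf{T}$-equivariant section of $\mathbf{R}_u \to \mathbf{R}_u/\mathbf{H}$ constructed as in Proposition \ref{prop: reduction regular solvable}, concluding by the base case applied to the residual piece in $\mathbf{H}$.

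The principal obstacle is running the base-case recursion without requiring any further ramification beyond the $b$ introduced in the reductive step. This is guaranteed precisely by $(C_{\mathbf{L}})_s \in \mathfrak{D}$: it forces $\pi((C_{\mathbf{L}})_s) = 0$, so every eigenvalue of $\text{ad}((C_{\mathbf{L}})_s)$ on $\mathfrak{r}$ is zero or non-rational, confining integer resonances of $\text{ad}(C_{\mathbf{L}})$ to the value $0$ on $\ker \text{ad}((C_{\mathbf{L}})_s)$. The potentially awkward contribution from $(C_{\mathbf{L}})_n$ is harmless, since it commutes with $(C_{\mathbf{L}})_s$ and preserves each eigenspace, making $\text{ad}(C_{\mathbf{L}}) - j$ a scalar-plus-nilpotent operator that is invertible wherever the scalar is nonzero.
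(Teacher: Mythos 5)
Your global strategy is the same as the paper's: split $\mathbf{G} = \mathbf{L}\ltimes \mathbf{R}_u(\mathbf{G})$, put the Levi projection into canonical form via Corollary \ref{coroll: uniqueness regular reductive}, and then clean up the unipotent radical by an induction patterned on Proposition \ref{prop: reduction regular solvable}. Your base case is correct: on each eigenspace of $\text{ad}\left((C_{\mathbf{L}})_s\right)$ the operator $j-\text{ad}(C_{\mathbf{L}})$ is scalar plus nilpotent, and $\pi\left((C_{\mathbf{L}})_s\right)=0$ kills all nonzero rational eigenvalues, so the recursion is solvable with the $j=0$ obstruction absorbed into $\ker \text{ad}\left((C_{\mathbf{L}})_s\right)$. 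The gap is in your induction step. You gauge by elements of a one-dimensional $\mathbf{T}$-weight line $\mathbf{H}\subseteq Z_{\mathbf{R}_u(\mathbf{G})}$ and invoke ``the base case applied to the residual piece in $\mathbf{H}$''; but $\mathbf{H}$ is only $\mathbf{T}$-stable, not $\text{ad}(C_{\mathbf{L}})$-stable. For $X\in \text{Lie}(\mathbf{H})_{F_b}$ the perturbation of the residue contains $[X,(C_{\mathbf{L}})_n]$, which lies in $\text{Lie}(Z_{\mathbf{R}_u(\mathbf{G})})$ but in general in \emph{other} weight lines than $\mathbf{H}$ (already for $\mathbf{L}$ acting through a representation with a nilpotent raising operator mixing two weight lines of the centre). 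So the equation you must solve lives in $\text{Lie}(Z_{\mathbf{R}_u(\mathbf{G})})_{F_b}$ while your unknown is confined to $\text{Lie}(\mathbf{H})_{F_b}$: the system is overdetermined and the output is no longer of the form ``previous canonical part plus something in $\mathbf{H}$''. (A smaller inconsistency: quotienting by a central $\mathbb{G}_a$ decreases the dimension, not necessarily the nilpotency class, so the induction as stated is also slightly off.) This is exactly the point where the presence of the nilpotent part $(C_{\mathbf{L}})_n$ matters; in Proposition \ref{prop: reduction regular solvable} the residue acting in the recursion is purely toral, which is why the one-dimensional $\mathbf{H}$ works there.

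The gap is fixable along your own lines: make the induction genuinely on the nilpotency class and take $\mathbf{H}$ to be the whole centre $Z_{\mathbf{R}_u(\mathbf{G})}$. Since $\mathbf{L}$ normalizes $\mathbf{R}_u(\mathbf{G})$ it normalizes its centre, so $\text{Lie}(Z_{\mathbf{R}_u(\mathbf{G})})$ is $\text{ad}(C_{\mathbf{L}})$-stable and abelian, and your scalar-plus-nilpotent base-case computation applies verbatim to the residual piece there (you still need a $\mathbf{T}$-equivariant section of $\mathbf{R}_u(\mathbf{G})\rightarrow \mathbf{R}_u(\mathbf{G})/Z_{\mathbf{R}_u(\mathbf{G})}$ to preserve commutation with $(C_{\mathbf{L}})_s$, as in the paper). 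The paper instead avoids the issue before it arises: writing $C_{\mathbf{L}}=C_s+C_n$, it passes to the solvable subgroup $(\mathbf{E}\times\mathbf{N})\ltimes\mathbf{U}$ with $\mathbf{E}=Z_{\mathbf{T}}(C_n)^0$ and $\mathbf{N}$ the one-parameter unipotent group through $C_n$, so that the toral residue is $C_s$, the nilpotent part is absorbed into the unipotent group, and Proposition \ref{prop: reduction regular solvable} applies directly. Either repair closes the argument; as written, your induction step does not go through whenever $(C_{\mathbf{L}})_n\neq 0$ mixes weight lines of the centre.
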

\begin{proof}
Assume that $A$ is of the first kind. Let $\mathbf{U} \subset \mathbf{G}$ be the unipotent radical of $\mathbf{G}$ with Lie algebra $\mathfrak{u}$. Let $\mathfrak{l}$ be the Lie algebra of $\mathbf{L}$. We know that $\mathbf{G} = \mathbf{L} \ltimes \mathbf{U}$, and so $\mathfrak{g} = \mathfrak{l} \oplus \mathfrak{u}$. Decompose $A = A_{\mathfrak{l}} + A_{\mathfrak{u}}$. By the reductive group case, there exists $x \in \mathbf{L}(\overline{F})$ such that $x \cdot A_{\mathfrak{l}} = t^{-1} C$ for some $C \in \mathfrak{l}$ satisfying $C_s \in \mathfrak{D}$.

Let $C_n \in \mathfrak{l}$ denote the nilpotent part of $C$. Let $\mathbf{E}$ be the neutral component of the centralizer $Z_{\mathbf{T}}(C_n)$ of $C_n$ in $\mathbf{T}$. Note that $\mathbf{E}$ is a subtorus of $\mathbf{T}$ and $C_s \in \text{Lie}(\mathbf{E})$. Since $\text{char} (\mathbf{k}) = 0$, there is a unique connected one-dimensional unipotent subgroup $\mathbf{N}$ of $\mathbf{L}$ with $C_n \in \text{Lie}(\mathbf{N})$. We have that $x \cdot A$ is a formal connection for the solvable group $(\mathbf{E} \times \mathbf{N})\ltimes \mathbf{U}$. Now the result follows from the solvable case (Proposition \ref{prop: reduction regular solvable}).
\end{proof}
\begin{remark}
In the beginning of the proof above, let $X$ denote the semisimple part of $(A_{\mathfrak{l}})_{-1}$. After conjugating by an element of $\mathbf{L}(\mathbf{k})$, we can suppose that $X \in \text{Lie}(\mathbf{T})$. Let $b$ be a positive integer such that $\mu \vcentcolon = b \, \pi(X)$ is in $X_{*}(\mathbf{T})$. Then, we can take $x \in G(F_b)$ in the proof above. In order to see this we can first apply $t^{-\frac{1}{b} \, \mu}$. So we can assume that $\pi(X) = 0$. By the proofs of Theorem \ref{thm:semisimple regular} and Proposition \ref{prop: reduction regular solvable}, it follows that we don't need any further ramification to put $A$ into canonical form.
\end{remark}
\begin{prop} \label{prop: uniqueness regular arbitrary}
Let $\mathbf{G}$ be a connected linear algebraic group. Fix a Levi subgroup $\mathbf{L}$ and maximal torus $\mathbf{T} \subset \mathbf{L}$. Let $C,D \in \mathfrak{g}$. Write  $C_s, D_s$ for the semisimple parts of the Levi components $C_{\mathfrak{l}}, D_{\mathfrak{l}}$. Assume that $C_s, D_s \in \mathfrak{D}$ and $[C_s, C] = [D_s, D] = 0$. Suppose that there exists $x \in \mathbf{G}(\overline{F})$ such that $x \cdot \left(t^{-1} \, C \right) = t^{-1} \, D$. Then, we have $C_s = D_s$. Moreover $x$ is in the centralizer $Z_{\mathbf{G}}(C_s)(\mathbf{k})$.
\end{prop}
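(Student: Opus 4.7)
The plan is to adapt the proof of Proposition \ref{prop: uniqueness regular semisimple} to the present, more general setting. By lifting to the $b$-ramified cover that contains $x$, I may assume $x \in \mathbf{G}(F)$. Fix a faithful embedding $\mathbf{G} \hookrightarrow \mathbf{GL}_n$ and view $C, D \in \mathfrak{gl}_n$ and $x \in \mathbf{GL}_n(F)$. The decomposition $\mathbf{G} = \mathbf{L} \ltimes \mathbf{U}$ yields $\mathfrak{g} = \mathfrak{l} \oplus \mathfrak{u}$, and I write $C = C_s + C_n^{\mathfrak{l}} + C_{\mathfrak{u}}$, where $C_n^{\mathfrak{l}} \in \mathfrak{l}$ is the nilpotent part of the Levi component and $C_{\mathfrak{u}} \in \mathfrak{u}$, and similarly for $D$.

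The crucial preliminary observation is that $C_s$ is actually the full Jordan semisimple part of $C$ in $\mathfrak{gl}_n$. Indeed, the image of $C_n^{\mathfrak{l}} + C_{\mathfrak{u}}$ in $\mathfrak{g}/\mathfrak{u} = \mathfrak{l}$ is the nilpotent element $C_n^{\mathfrak{l}}$; since $\mathfrak{u}$ is a nilpotent ideal of $\mathfrak{g}$, it follows that $C_n^{\mathfrak{l}} + C_{\mathfrak{u}}$ acts nilpotently on any finite-dimensional representation of $\mathbf{G}$, and in particular on $\mathbf{k}^n$. The hypothesis $[C_s, C] = 0$ forces $C_s$ to commute with $C_n^{\mathfrak{l}} + C_{\mathfrak{u}}$, so by uniqueness of the additive Jordan decomposition in $\mathfrak{gl}_n$, the given decomposition $C = C_s + (C_n^{\mathfrak{l}} + C_{\mathfrak{u}})$ is the Jordan decomposition. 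The same applies to $D$.

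With this identification in hand, the linear-algebra argument in Proposition \ref{prop: uniqueness regular semisimple} carries over with essentially no change. I would introduce the endomorphism $U \in \text{End}(\mathfrak{gl}_n)$ defined by $Uv = Dv - vC$, check that $U_s v := D_s v - v C_s$ and $U_n := U - U_s$ give the additive Jordan decomposition of $U$, and use $\pi(C_s) = \pi(D_s) = 0$ together with simultaneous diagonalizability of $C_s, D_s$ to conclude that $0$ is the only possible rational eigenvalue of $U$. Rewriting the gauge equation as $\frac{d}{dt} x = t^{-1} U x$ and expanding $x = \sum x_j t^j$ yields $j x_j = U x_j$, so $x_j = 0$ for all $j \neq 0$ and therefore $x \in \mathbf{G}(\mathbf{k})$ with $\text{Ad}(x) C = D$.

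It remains to extract $C_s = D_s$ and $x \in Z_{\mathbf{G}}(C_s)(\mathbf{k})$ from $\text{Ad}(x) C = D$. Uniqueness of Jordan decomposition gives $\text{Ad}(x) C_s = D_s$. Writing $x = l u$ with $l \in \mathbf{L}(\mathbf{k})$ and $u \in \mathbf{U}(\mathbf{k})$, we have $\text{Ad}(u) C_s \in C_s + \mathfrak{u}$ while $D_s \in \mathfrak{l}$; since $\text{Ad}(l)$ preserves $\mathfrak{l}$, this forces $\text{Ad}(u) C_s = C_s$ (so $u \in Z_{\mathbf{U}}(C_s)(\mathbf{k})$) and $\text{Ad}(l) C_s = D_s$. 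Corollary \ref{coroll: uniqueness regular reductive} applied to the reductive group $\mathbf{L}$ then yields $C_s = D_s$ and $l \in Z_{\mathbf{L}}(C_s)(\mathbf{k})$, whence $x \in Z_{\mathbf{G}}(C_s)(\mathbf{k})$. The main conceptual hurdle is the structural identification in the second paragraph: once the Levi semisimple part $C_s$ is known to be the genuine Jordan semisimple part in the ambient $\mathfrak{gl}_n$, the spectral argument from the semisimple case applies verbatim.
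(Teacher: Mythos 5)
Your proof is correct and takes essentially the same route as the paper: the paper likewise runs the $\mathfrak{gl}_n$ spectral argument of Proposition \ref{prop: uniqueness regular semisimple} to get $x \in \mathbf{G}(\mathbf{k})$ with $\text{Ad}(x)C = D$, and obtains $C_s = D_s$ together with the centralizer statement from Corollary \ref{coroll: uniqueness regular reductive} applied to $\mathbf{L}$ via the decomposition $x = x_{\mathbf{U}}\,x_{\mathbf{L}}$, the only difference being the order in which the two steps are performed. Your explicit check that $[C_s,C]=0$ makes $C_s$ the genuine Jordan semisimple part of $C$ inside $\mathfrak{gl}_n$ is precisely the point the paper leaves implicit when it invokes ``the same proof as in Proposition \ref{prop: uniqueness regular semisimple}''.
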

\begin{proof}
Write $x = x_{\mathbf{U}} \, x_{\mathbf{L}}$ with $x_{\mathbf{U}} \in \mathbf{U}(\overline{F})$ and $x_{\mathbf{L}} \in \mathbf{L}(\overline{F})$. By Corollary \ref{coroll: uniqueness regular reductive} applied to $\mathbf{L}$, we get that $x_{\mathbf{L}} \in Z_{\mathbf{L}}(C_s)(\mathbf{k})$ and $C_s = D_s$. The same proof as in Proposition \ref{prop: uniqueness regular semisimple} shows that $x \in \mathbf{G}(\mathbf{k})$ and $\text{Ad}(x)C_s = D_s$. Since $C_s = D_s$, we conclude that $x \in Z_{\mathbf{G}}(C_s)(\mathbf{k})$.
\end{proof}
Let $A\in \mathfrak{g}_{F}$ be a regular formal connection. Proposition \ref{prop: uniqueness regular arbitrary} implies that we can define the semisimple $\overline{F}$-monodromy $m^s_A \in \left(X_{*}(\mathbf{T}) \otimes \mathbf{k}/ \, \mathbb{Q}\right)/ \, W$ just as we did in Definition \ref{defn: semisimple monodromy overline}. The same reasoning as in the reductive case yields the following.
\begin{coroll}
Fix $m \in \left(X_{*}(\mathbf{T}) \otimes \mathbf{k}/ \, \mathbb{Q}\right)/ \, W$. Let $\mathcal{N}_{Z_{\mathbf{G}}(m)}$ denote the nilpotent cone in the Lie algebra of $Z_{\mathbf{G}}(m)$. There is a natural correspondence
\[ \left\{ \text{regular} \; A \in \mathfrak{g}_{\overline{F}} \; \text{with} \; m^s_A = m\right\}/\, \mathbf{G}(\overline{F}) \; \; \; \longleftrightarrow{\; \;\; \;} \mathcal{N}_{Z_{\mathbf{G}}(m)} / \, Z_{\mathbf{G}}(m) \]
\end{coroll}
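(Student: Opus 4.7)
The plan is to exhibit both directions of the bijection explicitly, using Theorem \ref{thm:general regular} to produce the map from connections to nilpotent orbits, and using Proposition \ref{prop: uniqueness regular arbitrary} to verify that this map is well-defined and injective. Write $C_s^m \in \mathfrak{D}$ for the unique representative of $m$.

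For the forward direction, given a regular $A \in \mathfrak{g}_{\overline{F}}$ with $m^s_A = m$, apply Theorem \ref{thm:general regular} to find $x \in \mathbf{G}(\overline{F})$ with $x \cdot A = t^{-1} C$ where $C_s \in \mathfrak{D}$ and $[C_s, C] = 0$. The defining property of $m^s_A$ forces $C_s = C_s^m$. Since $[C_s, C] = 0$, the nilpotent part $C_n \vcentcolon = C - C_s$ lies in $\text{Lie}(Z_{\mathbf{G}}(m)) = \mathfrak{g}_{C_s}$, and it is nilpotent in $\mathfrak{g}$, so $C_n \in \mathcal{N}_{Z_{\mathbf{G}}(m)}$. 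Assign to $[A]$ the class $[C_n] \in \mathcal{N}_{Z_{\mathbf{G}}(m)} / Z_{\mathbf{G}}(m)$.

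To check this is well-defined, suppose $x' \cdot A = t^{-1} D$ is another such reduction. Then $(x')x^{-1} \cdot (t^{-1} C) = t^{-1} D$, and Proposition \ref{prop: uniqueness regular arbitrary} gives $C_s = D_s = C_s^m$ and $(x')x^{-1} \in Z_{\mathbf{G}}(C_s^m)(\mathbf{k}) = Z_{\mathbf{G}}(m)(\mathbf{k})$. The relation $(x')x^{-1} \cdot (t^{-1} C) = t^{-1} D$ then reduces to $\text{Ad}((x')x^{-1}) C = D$, and since Jordan decomposition is preserved by conjugation, $\text{Ad}((x')x^{-1}) C_n = D_n$. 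Hence $[C_n] = [D_n]$ in $\mathcal{N}_{Z_{\mathbf{G}}(m)} / Z_{\mathbf{G}}(m)$.

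For the backward direction, given $[N] \in \mathcal{N}_{Z_{\mathbf{G}}(m)} / Z_{\mathbf{G}}(m)$, send it to the gauge class of $A_N \vcentcolon = t^{-1}(C_s^m + N)$. Since $N \in \mathfrak{g}_{C_s^m}$ is nilpotent, $C_s^m + N$ has Jordan decomposition with semisimple part $C_s^m$ and nilpotent part $N$, so $A_N$ is already in canonical form satisfying the hypotheses of Proposition \ref{prop: uniqueness regular arbitrary}, which shows $m^s_{A_N} = m$. If $N' = \text{Ad}(g) N$ for $g \in Z_{\mathbf{G}}(m)(\mathbf{k})$, then $g \cdot A_N = t^{-1}(\text{Ad}(g) C_s^m + \text{Ad}(g) N) = t^{-1}(C_s^m + N') = A_{N'}$, so $A_N$ and $A_{N'}$ are gauge equivalent.

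The two assignments are mutually inverse: the forward-then-backward composition returns $t^{-1}(C_s^m + C_n)$, which is gauge equivalent to $A$ by construction; the backward-then-forward composition recovers $[N]$ because $A_N$ is already in the required canonical form, so the recipe extracts $N$ itself. The main technical ingredient — uniqueness of the canonical form up to $Z_{\mathbf{G}}(m)(\mathbf{k})$-conjugacy — is exactly Proposition \ref{prop: uniqueness regular arbitrary}, so no further obstacle remains.
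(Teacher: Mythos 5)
Your proposal is correct and takes essentially the same route as the paper: existence of the canonical form (Theorem \ref{thm:general regular}) gives the forward map, uniqueness up to $Z_{\mathbf{G}}(C_s)(\mathbf{k})$-conjugacy (Proposition \ref{prop: uniqueness regular arbitrary}) gives well-definedness and injectivity, and $t^{-1}(C_s^m + N)$ is the explicit inverse, which is exactly the reasoning the paper records in the reductive case and transports verbatim to the general case. The only step you assert without comment --- that $C - C_s$ is nilpotent when $\mathbf{G}$ is not reductive --- is fine: its image in the Levi quotient is nilpotent and $\text{Lie}(\mathbf{U})$ has no nonzero semisimple elements, so its semisimple part vanishes.
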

Since $\mathbf{T}$ is a split torus, we have a weight decomposition $\mathfrak{g} = \bigoplus_{\chi \in V} \mathfrak{g}_{\chi}$ of $\mathfrak{g}$ under the adjoint action of $\mathbf{T}$. Here $V$ is a set of characters of $\mathbf{T}$. Let $W$ be the Weyl group of $\mathbf{L}$ with respect to $\mathbf{T}$. There is a natural action of $W$ on $V$. For any subset $S$ of the quotient $V/ \, W$, we can define the set $Q_S$ of elements in $\left(X_{*}(\mathbf{T}) \otimes \mathbf{k}/ \, \mathbb{Q}\right)/ \, W$ of type $S$, just as we did for reductive groups. Let $Z_{\mathbf{G}}(S)$ be the centralizer of any element in $Q_S$. The same reasoning as in the reductive case yields the following concrete parametrization.
\begin{coroll}
There is a natural correspondence
\[ \left\{ \text{regular formal connections}\right\}/\, \mathbf{G}(\overline{F}) \; \; \; \longleftrightarrow{\; \;\; \;} \bigsqcup_{S \subset V / \, W} Q_S \times \mathcal{N}_{Z_{\mathbf{G}}(S)} / \, Z_{\mathbf{G}}(S) \]
\end{coroll}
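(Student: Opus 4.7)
The plan is to combine the preceding corollary with the stratification of the parameter space of semisimple $\overline{F}$-monodromies by type. Assembling the bijection of the preceding corollary over all $m \in (X_{*}(\mathbf{T}) \otimes \mathbf{k}/\, \mathbb{Q})/\, W$ yields
\[ \{\text{regular formal connections}\}/\, \mathbf{G}(\overline{F}) \;\longleftrightarrow\; \bigsqcup_{m} \mathcal{N}_{Z_{\mathbf{G}}(m)} /\, Z_{\mathbf{G}}(m), \]
and the remaining work is to repackage the right-hand side using the decomposition $(X_{*}(\mathbf{T}) \otimes \mathbf{k}/\, \mathbb{Q})/\, W = \bigsqcup_{S \subset V/\, W} Q_S$.

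First I would verify that $\{Q_S\}_{S \subset V/\, W}$ partitions the parameter space. This is direct from the definition of type recalled in the paragraph preceding the statement: for any $m$, the set $\{\bar{\chi} \in V/\, W \mid \chi(m) = 0\}$ is a well-defined subset $S \subset V/\, W$, since the condition $\chi(m) = 0$ is invariant under simultaneous $W$-action on $\chi$ and $m$, and the weight decomposition of $\mathfrak{g}$ under $\mathbf{T}$ is $W$-equivariant. By construction $m$ lies in $Q_S$ for exactly this $S$.

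Next, for each $S$, I would show that the centralizer $Z_{\mathbf{G}}(m)$ is independent of the choice of $m \in Q_S$ up to $\mathbf{G}(\mathbf{k})$-conjugation, so we may denote it by $Z_{\mathbf{G}}(S)$ and canonically identify $\mathcal{N}_{Z_{\mathbf{G}}(m)}/\, Z_{\mathbf{G}}(m)$ with $\mathcal{N}_{Z_{\mathbf{G}}(S)}/\, Z_{\mathbf{G}}(S)$. The Lie algebra centralizer of a semisimple $m \in \text{Lie}(\mathbf{T})$ is $\text{Lie}(\mathbf{T}) \oplus \bigoplus_{\chi(m) = 0} \mathfrak{g}_\chi$, whose set of contributing weights depends only on the type $S$; hence for two elements $m_1, m_2 \in Q_S$ the Lie-algebra centralizers are conjugate via an element of $N_{\mathbf{G}}(\mathbf{T})(\mathbf{k})$. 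Lifting via the connectedness of semisimple centralizers (established just after Definition \ref{defn: semisimple monodromy overline}) yields a conjugation of the algebraic groups themselves, producing the desired canonical identification of nilpotent-orbit quotients. Combining this identification with the display above gives the stated correspondence.

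The step I expect to be the main obstacle is precisely this uniformity of $Z_{\mathbf{G}}(m)$ on each stratum $Q_S$ at the group-theoretic, not just Lie-algebra, level. For the reductive setup this was handled by the isogeny argument passing to $\tilde{\mathbf{G}}$ with simply connected derived subgroup; for an arbitrary linear algebraic group $\mathbf{G} = \mathbf{L} \ltimes \mathbf{U}$ I would reduce the connectedness of $Z_{\mathbf{G}}(m)$ to the Levi case by noting that $Z_{\mathbf{U}}(m)$ is a closed subgroup of a connected unipotent group cut out by a linear condition on $\mathfrak{u}$, hence connected, and that $Z_{\mathbf{G}}(m) = Z_{\mathbf{L}}(m) \ltimes Z_{\mathbf{U}}(m)$. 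Once this uniformity is in hand, the rest of the argument is a bookkeeping exercise collecting the fibers of the preceding corollary into the displayed disjoint union.
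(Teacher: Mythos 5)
Your proposal is correct and takes essentially the same route as the paper: there the corollary is obtained purely as a repackaging of the preceding corollary, partitioning the semisimple monodromies in $(X_{*}(\mathbf{T}) \otimes \mathbf{k}/\,\mathbb{Q})/\,W$ into the strata $Q_S$ and using that the centralizer is the same up to conjugacy for every element of a given stratum, which is exactly your bookkeeping argument. Your additional remarks on group-level uniformity --- lifting the Lie-algebra conjugation and writing $Z_{\mathbf{G}}(m) = Z_{\mathbf{L}}(m) \ltimes Z_{\mathbf{U}}(m)$ with $Z_{\mathbf{U}}(m)$ connected --- only fill in details the paper leaves implicit (it simply defines $Z_{\mathbf{G}}(S)$ as the centralizer of any element of $Q_S$), so they are consistent with, rather than a departure from, its reasoning.
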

\subsection{Descent for gauge equivalence classes} \label{subsection: descent for gauge}
All of the theorems above give us a description of connections up to trivializations over $\text{Spec} \,\overline{F}$.  We would like to get a classification over $\text{Spec} \, F$.  This amounts to a problem in Galois cohomology.

We have an action of $\mathbf{G}(\overline{F})$ on $\mathfrak{g}_{\overline{F}}$ that is compatible with the action of the absolute Galois group $\text{Gal}(F)$. Choose a regular connection in canonical form $B = t^{-1} C$ with $C_s \in \mathfrak{D}$ and $[C_s, C] =0$. It is a direct consequence of Proposition \ref{prop: uniqueness regular arbitrary} that the centralizer of $B$ in $\mathbf{G}(\overline{F})$ is $Z_{G}(C) \vcentcolon = Z_{\mathbf{G}}(C)(\mathbf{k})$. Therefore, we get an exact sequence of sheaves of sets over the etale site of $\text{Spec} \, F$
\[ 1 \longrightarrow Z_{G}(C) \longrightarrow \mathbf{G} \longrightarrow \mathbf{G} \cdot B \longrightarrow 1  \]
Here $Z_{G}(C)$ is the constant sheaf associated to the group of $\mathbf{k}$-points of the centralizer $Z_{\mathbf{G}}(C)$ of $C$. This yields a long exact sequence of pointed sets:
\[ 1 \longrightarrow Z_{G}(C) \longrightarrow \mathbf{G}(F) \longrightarrow \mathbf{G} \cdot B (F) \longrightarrow H^{1}_{\text{Gal}(F)} (Z_{G}(C)) \longrightarrow H^{1}_{\text{Gal}(F)} (\mathbf{G}) \]
The theorems of Tsen and Springer mentioned in the preliminaries imply that the right-most Galois cohomology group vanishes. This means that the set of connections over $\text{Spec} \, F$ that admit a trivialization over $\text{Spec} \, \overline{F}$ with canonical form $t^{-1}C$ is in bijection with $H^{1}_{\text{Gal}(F)} (Z_{G}(C))$. Since the action of $\text{Gal}(F)$ on $Z_{G}(C)$ is trivial, $H^{1}_{\text{Gal}(F)} (Z_{G}(C))$ is in (noncanonical) bijection with the set conjugacy classes of elements of finite order in $Z_{G}(C)$. Such bijection comes from the choice of a topological generator of $\text{Gal}(F)\cong \hat{\mathbb{Z}}$. Such a generator corresponds to the compatible choice of a generator $\omega_b$ of $\mu_b$ for all positive integers $b$. Here is a summary of the classification we have obtained.

\begin{prop}[Regular Connections over $D^*$] \label{prop: regular connections galois}
Let $B = t^{-1}\, C$ be a regular canonical connection with $C_s \in \mathfrak{D}$ and $[C_s, C] = 0$. The set of $\mathbf{G}$-connections over $\text{Spec}(F)$ that become gauge equivalent to $B$ over $\text{Spec}(\overline{F})$ is in (noncanonical) bijection with the set of conjugacy classes of elements of finite order in $Z_{\mathbf{G}}(C)(\mathbf{k})$ as described below.
\end{prop}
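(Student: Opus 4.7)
The plan is to formalize the Galois-cohomology outline already sketched in the paragraphs preceding the statement. First I would package the uniqueness result Proposition \ref{prop: uniqueness regular arbitrary} as follows: the stabilizer of $B$ for the gauge action of $\mathbf{G}(\overline{F})$ on $\mathfrak{g}_{\overline{F}}$ equals $Z_{\mathbf{G}}(C)(\mathbf{k})$, and this stabilizer carries the trivial $\text{Gal}(F)$-action (it consists of $\mathbf{k}$-rational elements). Hence the orbit map fits into a short exact sequence of étale sheaves of sets on $\text{Spec}\,F$:
\[ 1 \longrightarrow Z_{G}(C) \longrightarrow \mathbf{G} \longrightarrow \mathbf{G} \cdot B \longrightarrow 1, \]
where $Z_G(C)$ is the constant étale sheaf associated to $Z_{\mathbf{G}}(C)(\mathbf{k})$ and $\mathbf{G} \cdot B$ is the sheafified gauge orbit of $B$.

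Next I would pass to the associated long exact sequence of pointed sets:
\[ 1 \to Z_{G}(C) \to \mathbf{G}(F) \to (\mathbf{G} \cdot B)(F) \to H^{1}_{\text{Gal}(F)}(Z_{G}(C)) \to H^{1}_{\text{Gal}(F)}(\mathbf{G}). \]
The rightmost term vanishes by the theorems of Tsen and Springer cited in the preliminaries, so the connecting map induces a bijection between $(\mathbf{G} \cdot B)(F) / \mathbf{G}(F)$ and $H^{1}_{\text{Gal}(F)}(Z_{G}(C))$. By construction, $(\mathbf{G} \cdot B)(F) / \mathbf{G}(F)$ is exactly the set of $\mathbf{G}(F)$-gauge equivalence classes of connections over $D^{*}$ that become gauge equivalent to $B$ over $\text{Spec}\,\overline{F}$, which is the set we want to classify.

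Finally I would compute $H^{1}_{\text{Gal}(F)}(Z_{G}(C))$ directly. Since $\text{Gal}(F) \cong \hat{\mathbb{Z}}$ acts trivially on the discrete group $Z_{\mathbf{G}}(C)(\mathbf{k})$, continuous cocycles are continuous homomorphisms $\hat{\mathbb{Z}} \to Z_{\mathbf{G}}(C)(\mathbf{k})$, and any such homomorphism factors through a finite quotient $\mu_b \cong \mathbb{Z}/b\mathbb{Z}$ because the target is discrete. Fixing the compatible system of topological generators $(\omega_b)_b$ of $(\mu_b)_b$ mentioned in the preamble identifies a cocycle with the image of this generator, namely a finite-order element of $Z_{\mathbf{G}}(C)(\mathbf{k})$, and the coboundary relation translates to ordinary conjugation in $Z_{\mathbf{G}}(C)(\mathbf{k})$. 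Composing the two bijections yields the stated classification.

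The one step that needs a little care is the identification of the stabilizer sheaf with the constant sheaf $Z_G(C)$: I must verify that if $g \in \mathbf{G}(\overline{F})$ stabilizes $B$ then $g \in Z_{\mathbf{G}}(C)(\mathbf{k})$, which is exactly the content of Proposition \ref{prop: uniqueness regular arbitrary} applied with $C = D$. Once this rigidity statement is in hand, every other ingredient (the long exact sequence, the Tsen--Springer vanishing, the structure of $\hat{\mathbb{Z}}$-cohomology with trivial coefficients) is standard and purely formal; the noncanonical nature of the final bijection is precisely the dependence on the chosen topological generator of $\text{Gal}(F)$.
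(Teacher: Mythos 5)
Your proposal is correct and follows essentially the same route as the paper: identify the gauge stabilizer of $B$ with the constant sheaf $Z_{\mathbf{G}}(C)(\mathbf{k})$ via Proposition \ref{prop: uniqueness regular arbitrary}, pass to the long exact sequence of pointed sets for the orbit sequence, invoke the Tsen--Springer vanishing of $H^{1}_{\text{Gal}(F)}(\mathbf{G})$, and compute $H^{1}_{\text{Gal}(F)}(Z_{G}(C))$ for the trivial action as conjugacy classes of finite-order elements via a choice of topological generator of $\text{Gal}(F)\cong\hat{\mathbb{Z}}$. Your explicit remark that continuous cocycles are homomorphisms factoring through finite quotients is a slightly more spelled-out version of the step the paper states directly, but the argument is the same.
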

The correspondence goes as follows. Let $x \in Z_{\mathbf{G}}(C)(\mathbf{k})$ of order $b$. By the vanishing of $H^{1}_{\text{Gal}(F)} (\mathbf{G})$, we can find an element $y \in \mathbf{G}(F_b)$ such that $\omega_b\cdot y = y \, x$. The connection associated to $x$ will be $A = y \cdot \left( t^{-1}\, C \right) \; \in \mathfrak{g}_F$. Conversely, suppose that $A = y \cdot B$ is a connection in $\mathfrak{g}_F$ for some $y \in \mathbf{G}(F_b)$. We set $x \vcentcolon = y^{-1} \, \left(\omega_b \cdot y\right)$.

Using the descriptions of regular $\mathbf{G}(\overline{F})$- gauge equivalence classes we have given previously, we can parametrize regular formal connections. Let $(m, u)$ be a pair with $m \in \left(X_{*}(\mathbf{T}) \otimes \mathbf{k}/ \, \mathbb{Q}\right)$ and $u$ a nilpotent element in $\mathcal{N}_{Z_{\mathbf{G}}(m)}$. A cohomology cocycle as described above is given by an element $t$ of finite order in the centralizer $Z_{\mathbf{G}}(m, u)$ of $u$ in $Z_{\mathbf{G}}(m)$. Since $Z_{\mathbf{G}}(m)$ is connected, we can conjugate by an element of $Z_{\mathbf{G}}(m)$ in order to assume that  the semisimple element $t$ lies in $\mathbf{T} \subset Z_{\mathbf{G}}(m)$. It follows that the set of regular formal $\mathbf{G}$ connections over $D^*$ is in natural correspondence with equivalence classes of triples $(m, x, u)$, where\vspace{-0.25cm}
\begin{enumerate}[(i)]
    \item $m \in \left(X_{*}(\mathbf{T}) \otimes \mathbf{k}/ \, \mathbb{Q}\right)$.
    \item $x$ is an element of finite order in $\mathbf{T}(\mathbf{k})$.
    \item $u \in \mathcal{N}_{Z_{\mathbf{G}}(m)}$ with $\text{Ad}(t)(u) = u$.
\end{enumerate}
\vspace{-0.25cm}
Two such triples are considered equivalent if they can be conjugated by an element of $\mathbf{G}(\mathbf{k})$.

Recall that there is a canonical isomorphism $\mathbf{T}(\mathbf{k}) \cong X_{*}(\mathbf{T}) \otimes \mathbf{k}^{\times}$. Under this identification, the set $\mathbf{T}(\mathbf{k})^{tor}$ of elements of finite order in $\mathbf{T}(\mathbf{k})$ correspond to $X_{*}(\mathbf{T}) \otimes \mu_{\infty}$. The compatible choice of primitive roots of unity $\omega_b$ yields an isomorphism $\mu_{\infty} \cong \mathbb{Q}/ \, \mathbb{Z}$. Hence we get an identification $\mathbf{T}(\mathbf{k})^{tor} \cong X_{*}(\mathbf{T}) \otimes \, \mathbb{Q}/ \, \mathbb{Z}$. This means that the set of pairs $(m, x) \in (X_{*}(\mathbf{T}) \otimes \mathbf{k}/ \, \mathbb{Q}) \times \mathbf{T}(\mathbf{k})^{tor}$ is in natural bijection with $X_{*}(\mathbf{T}) \otimes \, \mathbf{k}/ \, \mathbb{Z}$.

For an element $v \in X_{*}(\mathbf{T}) \otimes \, \mathbf{k}/ \, \mathbb{Z}$, we will let $Z_{\mathbf{G}}(v)$ denote the centralizer $Z_{\mathbf{G}}(m, x)$ of the corresponding pair $(m,x) \in (X_{*}(\mathbf{T}) \otimes \mathbf{k}/ \, \mathbb{Q}) \times \mathbf{T}(\mathbf{k})^{tor}$. Conjugate elements of $X_{*}(\mathbf{T}) \otimes \, \mathbf{k}/ \, \mathbb{Z}$ yield isomorphic centralizers, so it makes sense to define $Z_{\mathbf{G}}(v)$ for $v \in (X_{*}(\mathbf{T}) \otimes \, \mathbf{k}/ \, \mathbb{Z})/ \, W$. We end up the following parametrization of regular formal connections.
\begin{coroll} \label{coroll: regular connections parametrization}
There is a natural bijection between regular formal connections over $D^*$ and pairs $(v, O)$, where \vspace{-0.25cm}
\begin{enumerate}[(i)]
    \item $v \in (X_{*}(\mathbf{T}) \otimes \, \mathbf{k}/ \, \mathbb{Z})/ \, W$.
    \item $O$ is a nilpotent orbit in $\mathcal{N}_{Z_{\mathbf{G}}(v)}/ \, Z_{\mathbf{G}}(v)$.
\end{enumerate}
\end{coroll}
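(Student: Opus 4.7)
The plan is to show that the explicit parametrization by triples $(m,x,u)$ established in the paragraph preceding the corollary repackages into pairs $(v,O)$ via the algebraic identifications mentioned in the discussion. Concretely, Theorem \ref{thm:general regular} and Proposition \ref{prop: uniqueness regular arbitrary} together with Proposition \ref{prop: regular connections galois} already give a natural correspondence between $\mathbf{G}(F)$-gauge equivalence classes of regular connections and $\mathbf{G}(\mathbf{k})$-equivalence classes of triples $(m,x,u)$ with $m\in (X_{*}(\mathbf{T})\otimes\mathbf{k}/\mathbb{Q})/W$, $x$ a finite-order element of $Z_{\mathbf{G}}(m)(\mathbf{k})$ that may be assumed in $\mathbf{T}(\mathbf{k})$ after $Z_{\mathbf{G}}(m)$-conjugation (using connectedness of $Z_{\mathbf{G}}(m)$ as noted in the text), and $u$ a nilpotent element in $\mathfrak{g}_{m,x}=\text{Lie}(Z_{\mathbf{G}}(m,x))$. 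So the only content of the corollary is to recognize the pair $(m,x)$ as a single element $v\in (X_{*}(\mathbf{T})\otimes\mathbf{k}/\mathbb{Z})/W$ and the remaining data $u$ as a nilpotent orbit in $\mathcal{N}_{Z_{\mathbf{G}}(v)}/Z_{\mathbf{G}}(v)$.

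For the merging of $(m,x)$ into $v$, I would use the short exact sequence of abelian groups
\[0 \longrightarrow \mathbb{Q}/\mathbb{Z} \longrightarrow \mathbf{k}/\mathbb{Z} \longrightarrow \mathbf{k}/\mathbb{Q} \longrightarrow 0,\]
which splits because $\mathbf{k}/\mathbb{Q}$ is a $\mathbb{Q}$-vector space. Tensoring with the free abelian group $X_{*}(\mathbf{T})$ preserves the splitting, so there is a (non-canonical) isomorphism
\[X_{*}(\mathbf{T})\otimes \mathbf{k}/\mathbb{Z} \;\cong\; \bigl(X_{*}(\mathbf{T})\otimes \mathbf{k}/\mathbb{Q}\bigr)\;\oplus\;\bigl(X_{*}(\mathbf{T})\otimes \mathbb{Q}/\mathbb{Z}\bigr).\]
Combining this with the identification $\mathbf{T}(\mathbf{k})^{tor}\cong X_{*}(\mathbf{T})\otimes\mu_{\infty}\cong X_{*}(\mathbf{T})\otimes\mathbb{Q}/\mathbb{Z}$ coming from the chosen compatible primitive roots of unity $\omega_b$, the pair $(m,x)$ becomes a single element $v\in X_{*}(\mathbf{T})\otimes\mathbf{k}/\mathbb{Z}$. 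Passing to $W$-orbits on both sides matches the $N_{\mathbf{G}}(\mathbf{T})(\mathbf{k})$-conjugation ambiguity.

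Next I would verify that the centralizer computed from $v$ agrees with the one computed from $(m,x)$: writing $v$ as $m+x$ under the splitting, the adjoint action of $\mathbf{T}$ on any root space $\mathfrak{g}_{\beta}$ is multiplication by the scalar induced by $\langle v,\beta\rangle\in \mathbf{k}/\mathbb{Z}$, and this scalar is $1$ (respectively, vanishes additively on $\mathfrak{g}_\beta$) precisely when $\langle m,\beta\rangle\in\mathbb{Q}$ and $\langle x,\beta\rangle=0$ in $\mathbb{Q}/\mathbb{Z}$, i.e.\ $\beta(C_s)=0$ and $\beta(x)=1$. Hence $Z_{\mathbf{G}}(v)=Z_{\mathbf{G}}(m,x)=Z_{\mathbf{G}}(C_s)\cap Z_{\mathbf{G}}(x)$, and this group acts on $\mathfrak{g}_{v}=\text{Lie}(Z_{\mathbf{G}}(v))$ where $u$ lives. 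The datum of $u$ up to $\mathbf{G}(\mathbf{k})$-equivalence of triples then reduces, after fixing representatives of $v$, to a $Z_{\mathbf{G}}(v)$-orbit $O\in\mathcal{N}_{Z_{\mathbf{G}}(v)}/Z_{\mathbf{G}}(v)$.

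The main obstacle I foresee is the bookkeeping around well-definedness: I need to make sure that changing the splitting of $\mathbf{k}/\mathbb{Z}\to \mathbf{k}/\mathbb{Q}$ (which is not canonical) does not change the resulting $W$-orbit of $v$ or the centralizer $Z_{\mathbf{G}}(v)$, and that the implicit choice of a primitive root of unity in the identification $\mu_{\infty}\cong\mathbb{Q}/\mathbb{Z}$ matches the choice already used in Proposition \ref{prop: regular connections galois} to translate cocycles into finite-order elements. Both are mild compatibility checks: the $W$-quotient and the structure of $Z_{\mathbf{G}}(v)$ depend only on $v$ modulo $X_{*}(\mathbf{T})$, and the whole construction is compatible with the chosen generator of $\widehat{\mathbb{Z}}\cong\text{Gal}(F)$ by construction.
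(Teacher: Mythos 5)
Your proposal follows essentially the same route as the paper: there the corollary is precisely the repackaging of the triple parametrization $(m,x,u)$ developed in the preceding paragraphs, merging $(m,x)$ into a single $v \in (X_{*}(\mathbf{T}) \otimes \mathbf{k}/\mathbb{Z})/W$ and reading off $u$ as an orbit in $\mathcal{N}_{Z_{\mathbf{G}}(v)}/Z_{\mathbf{G}}(v)$, which is exactly what you do. Two minor corrections: the splitting of $0 \to \mathbb{Q}/\mathbb{Z} \to \mathbf{k}/\mathbb{Z} \to \mathbf{k}/\mathbb{Q} \to 0$ does not follow merely from $\mathbf{k}/\mathbb{Q}$ being a $\mathbb{Q}$-vector space (one should invoke divisibility, hence injectivity, of $\mathbb{Q}/\mathbb{Z}$, or better, use the splitting induced by the projection $\pi:\mathbf{k}\to\mathbb{Q}$ fixed earlier in the paper, which is what is implicitly used since representatives of $m$ lie in $\Xi=\{\pi=0\}$); and your closing claim that changing the splitting leaves the $W$-orbit of $v$ unchanged is false in general (two splittings differ by a torsion-valued map, so $v$ can move to a non-conjugate element), but this independence is not actually needed, because the bijection is taken relative to the choices $\pi$, $\mathfrak{D}$, $\omega_b$ fixed once and for all, which simultaneously settles your compatibility worry with Proposition \ref{prop: regular connections galois}.
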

\begin{defn}
Let $A$ be a regular formal connection over $D^*$. We will denote by $(m_A^s, m_A^n)$ the corresponding pair granted by Corollary \ref{coroll: regular connections parametrization}. $m_A^s$ (resp. $m_A^n$) is called the semisimple (resp. unipotent) monodromy of $A$. 
\end{defn}
\begin{example}
Suppose that $\mathbf{k} = \mathbb{C}$. The set of pairs $(m_A^s, m_A^n)$ as above is in correspondence with the set conjugacy classes in $\mathbf{G}(\mathbb{C})$. For a representative $(C, U) \in \text{Lie}(\mathbf{T}) \times \mathcal{N}_{\mathbf{G}}$ of the pair $(m_A^s, m_A^n)$, the corresponding element of $\mathbf{G}(\mathbb{C})$ is given by $\text{exp}(2 \pi i C+U)$. This just the monodromy class of the regular formal connection.
\end{example}
We can use the theorem in \cite{humphreys-conjugacy} pg. 26 to give a description of $Z_{\mathbf{G}}(v)$. We can decompose $\mathfrak{g} = \text{Lie}(\mathbf{T}) \oplus \bigoplus_{i = 1}^l \mathfrak{u}_{\chi_i}$, where each $\mathfrak{u}_{i}$ is a one dimensional eigenspace of $\mathbf{T}$ consisting of nilpotent elements (we allow repetition in the $\chi_i$). Suppose that $\mathbf{T}$ acts on $\mathfrak{u}_i$ through the character $\chi_i: \mathbf{T} \longrightarrow \mathbb{G}_m$. Since we are working in characteristic $0$, each $\mathfrak{u}_{i}$ is the Lie algebra of a unique unipotent subgroup $\mathbf{U}_{i}$ isomorphic to $\mathbb{G}_a$. Let $v \in (X_{*}(\mathbf{T}) \otimes \, \mathbf{k}/ \, \mathbb{Z})$. For each character $\chi \in X^{*}(\mathbf{T})$ it makes sense to ask whether $\langle v, \chi \rangle \in \mathbb{Z}$, even though $v$ is only defined up to an element of $X_{*}(\mathbf{T})$. The connected component of $Z_{\mathbf{G}}(v)$ is generated by $\mathbf{T}$ and those unipotent weight spaces $\mathbf{U}_{i}$ such that $\langle v, \chi_i \rangle \in \mathbb{Z}$. The full group $Z_{\mathbf{G}}(v)$ is generated by its neutral component and the reflections $w_{\alpha}$ in the Weyl group $W$ of $\mathbf{L}$ corresponding to roots $\alpha \in \Phi$ such that $\langle v, \, \alpha \rangle \in \mathbb{Z}$.

In order to classify formal regular connections, it is convenient to group them depending on the type of their semisimple monodromy. For each $v \in (X_{*}(\mathbf{T}) \otimes \, \mathbf{k}/ \, \mathbb{Z})/ \, W$, we can define the type of $v$ to be a subset $S \subset V/ \, W$ just as we did when working over $\overline{F}$. For any $S \subset V/ \, W$, let us denote by $P_S \subset (X_{*}(\mathbf{T}) \otimes \, \mathbf{k}/ \, \mathbb{Z})/ \, W$ the set of all element of type $S$. By the description given in the last paragraph, it follows that the isomorphism class of $Z_{\mathbf{G}}(v)$ is the same for all $v \in P_S$. We will denote this group by $Z_{\mathbf{G}}(S)_F$. We can now rewrite the last corollary.
\begin{coroll}
There is a natural correspondence
\[ \left\{ \text{regular formal connections over $D^*$}\right\} \; \; \; \longleftrightarrow{\; \;\; \;} \bigsqcup_{S \subset V / \, W} P_S \times \mathcal{N}_{Z_{\mathbf{G}}(S)_F} / \, Z_{\mathbf{G}}(S)_F \]
\end{coroll}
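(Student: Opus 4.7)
The plan is to derive this corollary as a repackaging of the preceding corollary by grouping the parameter $v$ according to the type of its semisimple monodromy. Two ingredients are needed: a clean description of $P_S$ and a verification that the centralizer $Z_{\mathbf{G}}(v)$ is a common group as $v$ ranges over $P_S$.

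First, I would invoke the preceding corollary to identify regular formal connections over $D^*$ with pairs $(v,O)$, where $v \in (X_*(\mathbf{T}) \otimes \mathbf{k}/\mathbb{Z})/W$ and $O \in \mathcal{N}_{Z_{\mathbf{G}}(v)}/Z_{\mathbf{G}}(v)$. By the definition of type introduced just before the corollary (in analogy with the $\overline{F}$ case earlier in the subsection), every $v$ has a unique type $S \subset V/W$, so the set of possible $v$'s decomposes as the disjoint union $\bigsqcup_{S \subset V/W} P_S$.

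Next, I would verify that for every $v \in P_S$ the group $Z_{\mathbf{G}}(v)$ coincides (up to $W$-conjugation, i.e.\ up to the ambiguity inherent in choosing a representative in $X_*(\mathbf{T}) \otimes \mathbf{k}/\mathbb{Z}$) with a single subgroup $Z_{\mathbf{G}}(S)_F$ of $\mathbf{G}$. This is an immediate consequence of the Humphreys-style description recalled in the paragraph preceding the statement: the neutral component of $Z_{\mathbf{G}}(v)$ is generated by $\mathbf{T}$ together with those root subgroups $\mathbf{U}_i$ for which $\langle v, \chi_i \rangle \in \mathbb{Z}$, and the component group is generated by the Weyl reflections $w_\alpha$ with $\langle v, \alpha \rangle \in \mathbb{Z}$. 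The condition $\langle v, \chi_i \rangle \in \mathbb{Z}$ (and similarly $\langle v, \alpha \rangle \in \mathbb{Z}$) is well defined modulo $X_*(\mathbf{T})$ and, by definition of type, depends only on the $W$-orbit $[\chi_i] \in V/W$ lying in $S$ (respectively on $[\alpha]$ lying in $S$). Hence the generating datum for $Z_{\mathbf{G}}(v)$ is uniform across $P_S$, giving the well-defined common group $Z_{\mathbf{G}}(S)_F$.

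Combining these two observations, the parametrization of the preceding corollary rewrites as
\[
\bigsqcup_{v} \{v\} \times \mathcal{N}_{Z_{\mathbf{G}}(v)}/Z_{\mathbf{G}}(v) \;=\; \bigsqcup_{S \subset V/W} P_S \times \mathcal{N}_{Z_{\mathbf{G}}(S)_F}/Z_{\mathbf{G}}(S)_F,
\]
which is exactly the claimed bijection. There is no serious obstacle here: the only nontrivial input is the Humphreys description of $Z_{\mathbf{G}}(v)$, which has already been supplied, and the rest is bookkeeping with the equivalence classes defining $P_S$ and with the $W$-ambiguity in choosing a representative of $v$.
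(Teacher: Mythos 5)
Your proposal is correct and follows essentially the same route as the paper: the corollary is obtained by regrouping the parametrization of the previous corollary according to the type $S$ of the semisimple monodromy, using the Humphreys-style description of $Z_{\mathbf{G}}(v)$ to see that its isomorphism class is constant on each $P_S$. The paper treats this as an immediate rewriting after that discussion, exactly as you do.
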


This yields procedure to describe all regular $\mathbf{G}(F)$-gauge equivalence classes of formal connections. The different types $S$ can be established by declaring a subsets of characters that evaluate to integer eigenvalues. For each $S$, one needs to parametrize the nilpotent orbits in the Lie algebra of  $\mathbf{M}_S$. This works especially well when the group is reductive.

\begin{example} \label{example: classical groups regular connections}
Suppose that $\mathbf{G}$ is reductive. Then each $\mathbf{M}_S$ is connected reductive. If the group $\mathbf{G}$ is classical, we can parametrize the finite set of nilpotent orbits $\mathcal{N}_{\mathbf{M}_S}/ \, \mathbf{M}_S$ using partition diagrams as in  \cite{collingwood.nilpotent} Chapter 5. This yields explicit canonical block decompositions of regular connections for classical groups.
\end{example}
\end{subsection}
\section{Irregular connections for $\mathbf{G}$ reductive} \label{section: irregular reductive}
\subsection{Connections in canonical form}
Let $\mathbf{G}$ be connected reductive over $\mathbf{k}$.
\begin{defn}[Canonical form] \label{defn: canform} A connection $B \in \mathfrak{g}_{\overline{F}}$ is said to be in canonical form if we have $B = \sum_{j=1}^l D_{j} \, t^{r_j} + t^{-1} \, C$, where\vspace{-0.25cm}
\begin{enumerate}[(1)]
    \item $r_j \in \mathbb{Q}$ for all $j$, and they satisfy satisfy  $r_1< r_2< .. r_l <-1$.
    \item $D_j \neq 0$ is a semisimple element in $\mathfrak{g}$ for all $j$.
    \item $D_1, D_2, ... , C$ are pairwise commuting (the brackets vanish).
\end{enumerate}
\vspace{-0.25cm}
\end{defn}
The $r_j$ above are called the levels of the canonical form. The smallest of them $r_1$ is called the principal level. The initial sum $\sum_{j=1}^l D_{j} \, t^{r_j}$ is called the irregular part of the connection; we denote it by $B_{\text{irr}}$. 
\begin{remark}
The irregular part could be $0$ if the summation is empty. We then recover the notion of canonical form for a regular connection.
\end{remark}
We prove the existence of canonical form for reductive groups first.
\begin{thm}[Reduction Theory for Reductive Groups] \label{thm: reduction conn reductive} Let $\mathbf{G}$ be connected reductive and $A\in \mathfrak{g}_{\overline{F}}$. Then there exists $x \in \mathbf{G}(\overline{F})$ such that $x \cdot A = \sum_{j=1}^l D_{j} \, t^{r_j} + t^{-1} \, C$ is in canonical form.
\end{thm}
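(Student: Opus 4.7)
The plan is to argue by induction on the pair (principal level, dimension of the nilpotent adjoint orbit of the leading coefficient), ordered lexicographically. If $A$ has principal level $\geq -1$ then $A$ is regular and Corollary \ref{thm:reductive regular} produces the canonical form. So assume the principal level $r_1 < -1$; write $A = A_{r_1} t^{r_1} + \text{(higher order terms)}$ with $A_{r_1} \neq 0$, and let $A_{r_1} = S + N$ be its Jordan decomposition.

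The first step is to replace the leading coefficient by a semisimple element. Suppose $N \neq 0$. Let $\mathbf{T}$ be a maximal torus of $\mathbf{G}$ containing $S$, and invoke Theorem \ref{thm: jacobson} to obtain a Jacobson-Morozov triple $(H, X, N)$ with $H \in X_{*}(\mathbf{T})$. Choose $b \in \mathbb{N}$ such that $\mu \vcentcolon = \tfrac{b(r_1+1)}{2} H \in X_{*}(\mathbf{T})$ (enlarging $b$ further if needed to clear the denominator of $r_1$), and pass to the $b$-ramified cover. The shearing gauge transformation $t^{-\mu/b}$ scales the $j$-eigenspace of $\text{ad}(H)$ by a power of $t$ proportional to $-j(r_1+1)/2$; a direct computation in the spirit of the end of the proof of Theorem \ref{thm:semisimple regular}, combined with an aligning step analogous to Lemma \ref{lemma:aligned}, shows that after this shearing the new leading coefficient at level $r_1$ lies in the affine space $N + \mathfrak{g}_X$. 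By Proposition \ref{prop: dim increase}, the nilpotent part of this new leading coefficient is either equal to $N$ or lies in a strictly larger $\mathbf{G}$-adjoint orbit. Iterating, after finitely many steps (nilpotent orbits form a finite poset under the dimension order) the nilpotent part has nowhere left to grow and the leading coefficient becomes purely semisimple, say $A_{r_1} = D_1$.

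Once $A_{r_1} = D_1$ is semisimple, an aligning procedure—a straightforward adaptation of Lemma \ref{lemma:aligned}, successively applying gauge transformations $\text{exp}(t^k Y)$ with $Y$ in a nonzero eigenspace of $\text{ad}(D_1)$ to kill off the components transverse to the centralizer $\mathfrak{g}_{D_1} \vcentcolon = \text{Lie}(Z_{\mathbf{G}}(D_1))$—puts $A$ in the form $D_1 t^{r_1} + A'$ with $A' \in \mathfrak{g}_{D_1, \overline{F}}$. Since $\mathbf{G}$ is reductive, $Z_{\mathbf{G}}(D_1)$ is connected and reductive, and $A'$ has principal level strictly greater than $r_1$. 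Applying the inductive hypothesis to $A'$ inside $Z_{\mathbf{G}}(D_1)$ produces $A' = \sum_{j \geq 2} D_j t^{r_j} + t^{-1} C$ in canonical form with all $D_j, C$ pairwise commuting inside $\mathfrak{g}_{D_1}$, hence all commuting with $D_1$ as well. Reassembling gives the canonical form for $A$.

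The main obstacle is guaranteeing termination of the inner Jacobson-Morozov/shearing loop that removes the nilpotent part of the leading coefficient; this rests entirely on the strict orbit-dimension inequality in Proposition \ref{prop: dim increase}, which is precisely why that proposition is flagged in Section \ref{section: notation} as the essential technical tool. Once termination is granted, the rest is bookkeeping: keeping track of the level and ramification introduced by each shearing and verifying that the aligning step neither disturbs the leading term nor worsens the inductive invariants.
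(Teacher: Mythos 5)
Your proposal reuses the paper's main ingredients (Jacobson--Morozov triples, shearing transformations, Proposition \ref{prop: dim increase}), but the step that is supposed to remove the nilpotent part of the leading coefficient is not justified as written. The single shearing by $t^{-\mu/b}$ with $\mu = \tfrac{b(r_1+1)}{2}H$ is the transformation the paper uses only in Case 1 of Proposition \ref{prop: main prop reduction}, and it works only when the invariant $\delta$ of Definition \ref{defn: delta} satisfies $\delta \geq |r_1|-1$; in general the different $\mathrm{ad}(H)$-eigenspaces are shifted by different powers of $t$, the new lowest-order term does not sit at level $r_1$, and it need not lie in $N + \mathfrak{g}_X$. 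To make the shearing step produce a leading coefficient in $A_{r_1} + \mathfrak{g}_X$ one must first align the higher coefficients into $\mathfrak{g}_X$ (Lemma \ref{lemma: nilpotent case aligned}, whose proof needs the leading coefficient to be nilpotent, so that $\mathrm{im}\,\mathrm{ad}(A_{r_1})$ is complementary to $\mathfrak{g}_X$ by $\mathfrak{sl}_2$-theory; with $S \neq 0$ this complementarity is not available), and then tune the shearing exponent to $\delta$, which moves the leading term to the strictly larger level $r_1 + \delta$. Moreover Proposition \ref{prop: dim increase} compares \emph{nilpotent} elements of $Y + \mathfrak{g}_X$ with $Y$; it says nothing about the nilpotent part of a non-nilpotent element, so the assertion that ``the nilpotent part of the new leading coefficient is either equal to $N$ or lies in a strictly larger orbit'' has no support when $S \neq 0$ --- and even granting it, the branch ``equal to $N$'' represents no progress, so the loop could cycle. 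The paper sidesteps all of this with a different case split: when the leading coefficient is \emph{not} nilpotent it uses Lemma \ref{lemma: not nilpotent case} to push the entire connection into $\mathfrak{g}_{(A_{r_1})_s}$ and then inducts on $\dim \mathbf{G}$; the Jacobson--Morozov/shearing machinery is run only when the leading coefficient is nilpotent.

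There is also a well-foundedness problem with your induction on the pair (principal level, orbit dimension). The levels are rationals whose denominators grow with the ramification introduced at each step, and a correctly executed shearing raises the level only by $\delta$, which can be arbitrarily small; since the orbit-dimension coordinate is allowed to reset whenever the level moves, the lexicographic order gives no a priori bound on the number of steps. In addition, in your final assembly $Z_{\mathbf{G}}(D_1)$ can equal $\mathbf{G}$ (e.g. $D_1$ central), so ``apply the inductive hypothesis inside the centralizer'' need not make progress there either; the paper first splits off the central torus using Proposition \ref{prop:tori irregular} and works with $\mathbf{G}_{\text{der}}$. The paper's termination argument is purely integral: in Case 2 of Proposition \ref{prop: main prop reduction} the choice of $\delta$ forces $B_{r'} \neq 2bA_{r_1}$, so if $B_{r'}$ is nilpotent its orbit dimension strictly increases (bounded by $\dim \mathbf{G}$), while if it is not nilpotent the ambient group is replaced by the strictly smaller centralizer of its semisimple part. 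Your argument needs these two integer-valued progress measures (or a substitute for them) to close.
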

The argument proceeds by induction on $\text{dim} \, \mathbf{G}$. The base case (when $\mathbf{G} = \mathbb{G}_m$) follows from the computation done in the proof of Proposition \ref{prop:tori regular}. We state this result for future reference.

\begin{prop} \label{prop:tori irregular} Let $\mathbf{G}$ be a tori.\vspace{-0.25cm}
\begin{enumerate}[(a)]
    \item Let $A = \sum_{j=r}^{\infty} A_j \, t^j$ a formal connection in $\mathfrak{g}_F$. Then there exists $x \in \mathbf{G}(\mathcal{O})$ such that $x \cdot A = \sum_{j=r}^{-1} A_j \, t^{j}$. Moreover there is a unique such $x$ with $x \equiv 1 \, \left( mod \; t \right)$.
    \item Let $B = \sum_{j=r}^{-1} B_j \, t^j$ and $C= \sum_{j=r}^{-1} C_j \, t^j$ be two connections in canonical form. Suppose that there exists $x \in \mathbf{G}(F)$ such that $x 
\cdot C = B$. Then  $x = g \, t^{\mu}$ for some cocharacter $\mu \in X_{*}(\mathbf{G})$ and some $g \in \mathbf{G}(\mathbf{k})$. In this case, we have $B_j = C_j$ for all $r \leq j < -1$ and $C_{-1} = B_{-1} - \mu$.
\end{enumerate}
\vspace{-0.25cm}
\end{prop}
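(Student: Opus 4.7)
The plan is to reduce both parts to the case $\mathbf{G} = \mathbb{G}_m$, exactly as is done for the two regular-case propositions (\ref{prop:tori regular} and \ref{prop: uniqueness regular tori}) already proved in the paper. Since $\mathbf{k}$ is algebraically closed, any torus splits as $\mathbb{G}_m^n$, and both the connection and the gauge action decompose componentwise. Crucially, since $\mathbf{G}$ is abelian, $\mathrm{Ad}$ acts trivially and the gauge action collapses to $x \cdot A = A + \widehat{x^{*}\omega}$; this additive structure is what makes all the arguments essentially power-series bookkeeping.

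For part (a), the key observation is that for $x \in \mathbf{G}(\mathcal{O})$ the term $\widehat{x^{*}\omega}$ lies in $\mathfrak{g}_{\mathcal{O}}$, so such an $x$ leaves the principal part $\sum_{j=r}^{-1} A_j t^j$ untouched and can only modify the regular part $\sum_{j \geq 0} A_j t^j \in \mathfrak{g}_{\mathcal{O}}$. The problem thus reduces to finding a unique $x \equiv 1 \pmod t$ that kills this regular part, which is exactly the content of Proposition \ref{prop:tori regular}: the same recurrence $(j+1) B_{j+1} = -\sum_{l=0}^{j} A_l B_{j-l}$ with initial condition $B_0 = 1$ solves uniquely in characteristic zero.

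For part (b), I would use the unique decomposition $x = g\, t^{\mu}\, y$ with $g \in \mathbf{G}(\mathbf{k})$, $\mu \in X_{*}(\mathbf{G})$, and $y \equiv 1 \pmod t$ valid for any $x \in \mathbf{G}(F)$ (obtained coordinatewise from $F^{\times} = \mathbf{k}^{\times} \cdot t^{\mathbb{Z}} \cdot (1 + t\mathcal{O})$). A direct computation gives $x \cdot C = C + \mu\, t^{-1} + \widehat{y^{*}\omega}$, with $\widehat{y^{*}\omega} \in \mathfrak{g}_{\mathcal{O}}$. Setting this equal to $B$, the discrepancy $B - C - \mu\, t^{-1}$ is a polynomial in $t^{-1}$ concentrated in strictly negative degrees, while $\widehat{y^{*}\omega}$ sits in non-negative degrees; matching forces both to vanish. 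Vanishing of the polar part yields $B_j = C_j$ for $r \leq j < -1$ and $B_{-1} - C_{-1} = \mu$, while vanishing of $\widehat{y^{*}\omega}$ combined with the uniqueness half of part (a) (applied to the trivial connection) forces $y = 1$, so $x = g\, t^{\mu}$ as claimed. No serious obstacle is expected, as the mechanics are straightforward extensions of Propositions \ref{prop:tori regular} and \ref{prop: uniqueness regular tori}; the only mild care is checking that the product decomposition $x = g\, t^{\mu}\, y$ for a general split torus behaves well coordinatewise, which is immediate.
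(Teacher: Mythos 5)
Your proposal is correct and follows essentially the same route as the paper, which proves this proposition by observing it is the same power-series computation as in Propositions \ref{prop:tori regular} and \ref{prop: uniqueness regular tori}: reduce to $\mathbb{G}_m$, note that $x\in\mathbf{G}(\mathcal{O})$ (resp.\ the factor $y\equiv 1 \bmod t$ of $x = g\,t^{\mu}y$) contributes only in non-negative degrees, and compare polar and regular parts. The only cosmetic difference is that you invoke the uniqueness half of part (a) to force $y=1$, where one can just note directly that $dy\,y^{-1}=0$ gives $y$ constant, hence $y=1$.
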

\begin{proof}
This is the same computation as in Propositions \ref{prop:tori regular} and \ref{prop: uniqueness regular tori}. We omit the details.
\end{proof}

\begin{remark}
In particular we see that two canonical connections $B = \sum_{j=r}^{-1} B_j \, t^j$ and $C= \sum_{j=r}^{-1} C_j \, t^j$ for a torus are gauge equivalent over $F$ if and only if $B_j = C_j$ for all $r \leq j < -1$ and $C_{-1} -B_{-1} \in X_{*}(\mathbf{G})$. By lifting, we conclude that they are equivalent over $\overline{F}$ if and only if $B_j = C_j$ for all $r \leq j < -1$ and $C_{-1} -B_{-1} \in X_{*}(\mathbf{G})\otimes \mathbb{Q}$.
\end{remark}

Let's get started with the argument for Theorem \ref{thm: reduction conn reductive}. By the structure theory of reductive groups, we know that $\mathbf{G}$ admits an isogeny from the product of its maximal central torus and its derived subgroup $\mathbf{G}_{der}$. By Proposition \ref{prop:tori irregular}, we can deal with the central part. We may therefore assume that $\mathbf{G}$ is semisimple.

By lifting to a ramified cover, we can assume $A = \sum_{j =r}^{\infty} A_j \, t^j \in \mathfrak{g}_F$ with $A_r \neq 0$. If $r \geq -1$ we can use the theory for regular connections developed in Section 2. So we can assume $r < -1$. There are two substantially different possibilities: $A_r$ could be nilpotent or not. The case when $A_r$ is not nilpotent turns out to be the easiest; we do it first.

\subsection{The case when $A_r$ is not nilpotent} \label{subsection: not nilpotent case}
We need the following lemma (cf. \cite[9.3]{Babbitt.varadarajan.formal}).

\begin{lemma} \label{lemma: not nilpotent case}
Let $A= \sum_{j = r} A_j \, t^j$ a connection in $\mathfrak{g}_F$ with $r < -1$. Let $V = (A_r)_s$ be the semisimple part of $A_r$. Then, there exist $x \in \mathbf{G}(F)$ such that $x \cdot A$ is in $\mathfrak{g}_{V}(F)$.
\end{lemma}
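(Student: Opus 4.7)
The plan is to progressively kill the component of $A$ orthogonal to the centralizer $\mathfrak{g}_V$, one Laurent coefficient at a time, using exponential gauge transformations.

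First I would set up the $\mathrm{ad}(V)$-eigenspace decomposition $\mathfrak{g} = \mathfrak{g}_V \oplus \mathfrak{g}_V^{\perp}$, where $\mathfrak{g}_V^{\perp}$ is the sum of the nonzero eigenspaces of $\mathrm{ad}(V)$. Writing $A_r = V + N$ for the Jordan decomposition, $N$ commutes with $V$ and hence $N \in \mathfrak{g}_V$, so $A_r \in \mathfrak{g}_V$. The key algebraic fact is that $\mathrm{ad}(A_r)$ restricts to an invertible endomorphism of $\mathfrak{g}_V^{\perp}$: its semisimple part $\mathrm{ad}(V)$ is invertible on $\mathfrak{g}_V^{\perp}$ by construction, and its nilpotent part $\mathrm{ad}(N)$ commutes with it. Moreover, since $A_r \in \mathfrak{g}_V$, the operator $\mathrm{ad}(A_r)$ preserves the decomposition, so $[Z, A_r] \in \mathfrak{g}_V^{\perp}$ for any $Z \in \mathfrak{g}_V^{\perp}$.

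Next, I would inductively construct $Z_{r+1}, Z_{r+2}, \ldots \in \mathfrak{g}_V^{\perp}$ such that the partial products $x_n := \exp(t^{n-r} Z_n) \cdots \exp(t\, Z_{r+1})$ take $A$ to a connection $A^{(n)} := x_n \cdot A$ whose coefficients at $t^r, t^{r+1}, \ldots, t^n$ all lie in $\mathfrak{g}_V$. In the step $n-1 \to n$, one applies $g = \exp(t^{n-r} Z)$ to $A^{(n-1)}$ and tracks the effect on each coefficient, as in the proof of Lemma \ref{lemma:aligned}. Three contributions appear: the bracket $[t^{n-r} Z, A^{(n-1)}]$ has leading term $t^n [Z, A_r]$ (using that $A_r$ is preserved at every previous step, verified below); higher iterated brackets contribute at orders $\geq t^{2(n-r)+r}$; and the Maurer-Cartan term $\widehat{g^{*}\omega} = (n-r)\, t^{n-r-1} Z$ appears at order $t^{n-r-1}$. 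Here the hypothesis $r < -1$ is essential, as it forces $n-r-1 > n$ and $2(n-r)+r > n$, so that the only change at coefficients $\leq n$ occurs at $t^n$ itself, where it equals $[Z, A_r]$. Choosing $Z_n \in \mathfrak{g}_V^{\perp}$ to solve $[Z_n, A_r] = -(A^{(n-1)}_n)^{\perp}$ -- possible by invertibility of $\mathrm{ad}(A_r)$ on $\mathfrak{g}_V^{\perp}$ -- places $A^{(n)}_n$ in $\mathfrak{g}_V$. The same order count shows that subsequent steps never disturb coefficients that have already been placed in $\mathfrak{g}_V$; in particular $A_r$ is preserved throughout, which is what the induction uses.

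Finally, since $\exp(t^{n-r} Z_n) \equiv 1 \pmod{t^{n-r}}$, the $x_n$ form a Cauchy sequence in the $t$-adic topology on $\mathbf{G}(\mathcal{O})$ and converge to some $x \in \mathbf{G}(\mathcal{O})$ with $x \cdot A \in \mathfrak{g}_V(F)$. The only delicate point is the order bookkeeping at each step: specifically, verifying that the Maurer-Cartan correction, which a priori could land at an order lower than $t^n$, is pushed to strictly higher orders by the hypothesis $r < -1$. This is exactly where the split between this lemma and the regular singular treatment of Section \ref{section: regular connections} originates.
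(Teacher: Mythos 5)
Your proposal is correct and follows essentially the same route as the paper: an inductive sequence of exponential gauge transformations that kills the component of each Laurent coefficient lying off the centralizer $\mathfrak{g}_V$, using the invertibility of $\mathrm{ad}(A_r)$ on the sum of nonzero eigenspaces of $\mathrm{ad}(V)$ and the observation that $r<-1$ pushes the Maurer--Cartan (derivative) term to order strictly higher than the coefficient being corrected. Your order bookkeeping and the justification that $\mathrm{ad}(A_r)$ preserves the decomposition $\mathfrak{g}_V \oplus \mathfrak{g}_V^{\perp}$ are exactly the points the paper's proof relies on.
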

\begin{proof}This is very similar to Lemma \ref{lemma:aligned}. We will inductively build a sequence $(B_j)_{j=1}^{\infty}$ of elements of $\mathfrak{g}$ such that the gauge transformation $x \vcentcolon = \lim_{n \rightarrow \infty} \prod_{j =0}^{n-1} \text{exp}(t^{n-j} \, B_{n-j})$ satisfies the conclusion of the lemma. Suppose that we have chosen $B_j$ for $j \leq k$ such that the connection $A^{(k)} = \sum_{l=r}^{\infty} A^{(k)}_{l } \,t^l$ defined by $A^{(k)} \vcentcolon = \prod_{j =0}^{k-1} \text{exp}(t^{n-j} \, B_{k-n}) \cdot A$ satisfies $A_{l}^{(k)} \in \mathfrak{g}_{V}$ for all $l \leq k+r$. Notice that the base case $k=0$ is trivial and $A^{(k)}_{r} = A_{r}$. Let's try to determine $B_{k+1}$.

Recall that $\text{exp}(t^{k+1} \, B_{k+1}) \equiv 1 + t^{k+1} \, B_{k+1} \; (\text{mod} \; t^{k+2})$. By an elementary matrix computation (choose an embedding of $\mathbf{G} \hookrightarrow \mathbf{\text{GL}_n}$), one can see that
\[\text{exp}(t^{k+1} B_{k+1}) \cdot A^{(k)} \equiv \sum_{l=r}^{k+r} A^{(k)}_l \, t^l + [A^{(k)}_{k+1+r} - ad(A_{r})B_{k+1}] \, t^{k+1+r} \; \; (\text{mod} \; t^{k+2 +r})   \]
Let $\mathfrak{g} = \bigoplus_{\lambda} \mathfrak{g}_{\lambda}$ be the spectral decomposition of $ad(X) = (ad(A_r))_s$. By definition the operator $ad(A_{-r})$ restricts to an automorphism of $\mathfrak{g}_{\lambda}$ for all $\lambda \neq 0$. In particular, we can choose $B_{k+1} \in \mathfrak{g}$ such that $A^{(k)}_{k+1-r} - ad(A_{-r})B_{k+1}$ is in $\mathfrak{g}_{0} = \mathfrak{g}_{V}$. This concludes the construction of the sequence $(B_j)_{j=1}^{\infty}$. It follows from the construction that $x \vcentcolon = \lim_{n \rightarrow \infty} \prod_{j =0}^{n-1} \text{exp}(t^{n-j} \, B_{n-j})$ satisfies $x \cdot A \; \in \, \mathfrak{g}_{V}(F)$.
\end{proof}

Let us continue with the proof of Theorem \ref{thm: reduction conn reductive}. Suppose that $A_r$ is not nilpotent. Then the semisimple part $X = (A_r)_s$ is not $0$. Since we are assuming that $\mathbf{G}$ is semisimple, the connected reductive centralizer $\mathbf{Z}_{G}(X)$ is a proper subgroup of $\mathbf{G}$. By Lemma \ref{lemma: not nilpotent case} we can assume that $A \in \mathfrak{g}_{V}(F)$. We win by induction, because $\text{dim}\, \mathbf{Z}_{G}(V) \, < \, \text{dim} \, \mathbf{G}$. This settles the case when $A_r$ is not nilpotent in the proof of Theorem \ref{thm: reduction conn reductive}.

Recall that the principal level of a canonical connection is defined to be the order $r_1$ (see the paragraph after Definition \ref{defn: canform}). We can define the principal level of a connection $A$ to be the principal level of any canonical connection equivalent to $A$. This is well defined by Lemma $\ref{lemma: for uniqueness of canonical}$ in the next section. The inductive argument given above implies the following interesting fact.

\begin{prop} \label{prop: semisimple.prinlevel}
Suppose that $A = \sum_{j = r}^{\infty} A_j t^j$ with $r < -1$ and $A_r$ not nilpotent. Then $r$ is the principal level of A.
\end{prop}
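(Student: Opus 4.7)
The plan is to trace through the inductive construction used just above in the proof of Theorem~\ref{thm: reduction conn reductive} (non-nilpotent branch) and observe that it produces a canonical form whose principal level equals $r$. Combined with the well-definedness of the principal level cited in the paragraph before the proposition (Lemma~\ref{lemma: for uniqueness of canonical}), this yields the result.

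First I would reduce to the case where $\mathbf{G}$ is semisimple. Decomposing $A = A_{\text{der}} + A_{\mathfrak{z}}$ along the isogeny between $\mathbf{G}$ and the product of its central torus with its derived subgroup, the central piece is put into canonical form by Proposition~\ref{prop:tori irregular} using some $x \in \mathbf{T}(\mathcal{O})$ with $x \equiv 1 \; (\text{mod } t)$; in particular the lowest-order coefficient of $A_{\mathfrak{z}}$ is unchanged. So it suffices to prove the statement assuming $\mathbf{G}$ is semisimple.

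Next I would induct on $\dim \mathbf{G}$. The key observation is that the gauge transformation $x$ produced by Lemma~\ref{lemma: not nilpotent case} is the limit $x = \lim_{n \to \infty}\prod_{j=0}^{n-1} \exp(t^{n-j} B_{n-j})$, so $x \in \mathbf{G}(\mathcal{O})$ and $x \equiv 1 \; (\text{mod } t)$. Writing $x = 1 + tY + \cdots$ and using $r < -1$, one checks that $\mathrm{Ad}(x) A$ and $A$ agree in degree $r$, while $dx \cdot x^{-1}$ has no negative-degree terms; hence $(x \cdot A)_r = A_r$. Thus $x \cdot A \in \mathfrak{g}_V(F)$ still has $A_r$ as its leading coefficient. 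Since $\mathbf{G}$ is semisimple and $V = (A_r)_s \neq 0$, the reductive group $\mathbf{Z}_{\mathbf{G}}(V)$ has strictly smaller dimension than $\mathbf{G}$. By the intrinsic nature of the additive Jordan decomposition, $A_r$ remains non-nilpotent when viewed in $\mathfrak{g}_V$, with semisimple part $V$. Applying the inductive hypothesis inside $\mathbf{Z}_{\mathbf{G}}(V)$ yields a canonical form for $x \cdot A$ of principal level $r$.

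Finally I would check that such a canonical form in $\mathfrak{g}_V$ remains a canonical form in $\mathfrak{g}$: the semisimple elements $D_j \in \mathfrak{g}_V$ are semisimple in $\mathfrak{g}$ (again by uniqueness of Jordan decomposition), and the required commutation relations transfer automatically from $\mathfrak{g}_V$ to $\mathfrak{g}$. This produces a canonical form for $A$ of principal level $r$, and the well-definedness of the principal level finishes the argument. The main obstacle is precisely the preservation of the leading coefficient under the infinite-product gauge transformation, which hinges on every factor in the product from Lemma~\ref{lemma: not nilpotent case} being congruent to $1$ modulo $t$ together with the hypothesis $r < -1$, so that the contribution of $dx \cdot x^{-1}$ cannot reach degree $r$.
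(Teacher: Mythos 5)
Your proposal is correct and is essentially the paper's own argument: induct on $\dim \mathbf{G}$, observe that the gauge transformation of Lemma \ref{lemma: not nilpotent case} lies in $\mathbf{G}(\mathcal{O})$ and is $\equiv 1 \pmod t$ so the degree-$r$ coefficient survives, note that $A_r$ stays non-nilpotent in $\mathbf{Z}_{\mathbf{G}}(V)$ because its semisimple part $V\neq 0$, and invoke the well-definedness of the principal level via Lemma \ref{lemma: for uniqueness of canonical}. The only point to state explicitly is the reductive-to-semisimple reduction in the case where $(A_r)_s$ is central, so that $(A_{\text{der}})_r$ may be nilpotent and the semisimple statement does not apply to the derived component: there the conclusion comes instead from the nonvanishing $t^r$-term of the central part (kept by Proposition \ref{prop:tori irregular}), which cannot be cancelled since $\mathfrak{z}\cap\mathfrak{g}_{\text{der}}=0$ --- a case your remark about the lowest-order coefficient of $A_{\mathfrak{z}}$ already essentially covers.
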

\begin{proof}
We induct on the dimension of the group. The base case $\mathbb{G}_m$ is clear by direct computation. Notice that in the proof above we have that $A_r$ is still not nilpotent in the smaller group $\mathbf{Z}_{G}(V)$, since its semisimple part $V$ is not $0$. We can then conclude by induction.
\end{proof}
\begin{remark}
 As we will soon see, this is not necessarily the case when $A_r$ is nilpotent. In the nilpotent case the principal level can be larger than $r$.
\end{remark}
\subsection{The case when $A_r$ is nilpotent}
In this section we conclude the proof of Theorem \ref{thm: reduction conn reductive} by dealing with the case when $A_r$ is nilpotent. We closely follow an argument sketched in \cite{Babbitt.varadarajan.formal}, which we include here for the sake of completeness. For this section $\mathbf{G}$ will be semisimple, as we may assume for our proof of Theorem \ref{thm: reduction conn reductive}. Let's set up the notation we will use. Suppose that we have $A= \sum_{j=r}^{\infty} A_j \, t^j$ with $A_r$ nilpotent and $r < -1$. Let $(H, X ,  \, Y=A_r)$ be a Jacobson-Morozov $\mathfrak{sl}_2$-triple coming from an algebraic homomorphism $\Phi: \text{SL}_2 \longrightarrow \mathbf{G} $. For an integer $n$, we will denote by $t^{nH}$ the element $t^{\mu}$, where $\mu$ is the natural composition $\mathbb{G}_m \xrightarrow{[n]} \mathbb{G}_m \hookrightarrow \text{SL}_2 \xrightarrow{\Phi} \mathbf{G}$.
\begin{lemma} \label{lemma: nilpotent case aligned}
With notation as above, there exists $x \in \mathbf{G}(F)$ such that $x \cdot A = \sum_{j=r}^{\infty} B_j \, t^j$ satisfies\vspace{-0.25cm}
\begin{enumerate}[(1)]
    \item $B_r= A_r$.
    \item $B_j \in \mathfrak{g}_X$ for all $j > r$.
\end{enumerate}
\vspace{-0.25cm}
\end{lemma}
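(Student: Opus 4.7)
The plan is to mimic the inductive construction used in Lemma \ref{lemma:aligned} and Lemma \ref{lemma: not nilpotent case}, but with the target subspace now being the centralizer $\mathfrak{g}_X$ rather than a generalized eigenspace of $\mathrm{ad}(A_{-1})$ or the centralizer $\mathfrak{g}_V$. I will inductively build a sequence $(B_j)_{j=1}^{\infty}$ in $\mathfrak{g}$ so that the gauge transformation $x \vcentcolon = \lim_{n\to\infty} \prod_{j=0}^{n-1}\exp(t^{n-j}B_{n-j})$ satisfies the conclusion, while preserving the coefficient $A_r = Y$.

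The essential algebraic input is $\mathfrak{sl}_2$-representation theory applied to the adjoint action of the triple $(H, X, Y)$ on $\mathfrak{g}$: decomposing $\mathfrak{g}$ into irreducible $\mathfrak{sl}_2$-modules and using that on each such module the image of $\mathrm{ad}(Y)$ is exactly the complement of the highest weight line $\ker(\mathrm{ad}(X))$, I obtain the decomposition
\[ \mathfrak{g} \; = \; \mathfrak{g}_X \, \oplus \, \mathrm{Im}(\mathrm{ad}(Y)). \]
This is what will allow me to solve for the correction at each step.

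Now for the induction. Assume inductively that $A^{(k)} \vcentcolon = \prod_{j=0}^{k-1}\exp(t^{k-j}B_{k-j})\cdot A = \sum_{l=r}^\infty A^{(k)}_l\, t^l$ satisfies $A^{(k)}_r = Y$ and $A^{(k)}_l \in \mathfrak{g}_X$ for all $r < l \leq r+k$ (the base case $k=0$ is vacuous). Using the same matrix computation as in Lemma \ref{lemma:aligned} via an embedding $\mathbf{G} \hookrightarrow \mathbf{GL}_n$, and using that $B_{k+1}$ commutes with itself so $\widehat{\exp(t^{k+1}B_{k+1})^*\omega} = (k+1)t^k B_{k+1}$ exactly, one obtains
\[ \exp(t^{k+1}B_{k+1})\cdot A^{(k)} \; \equiv \; \sum_{l=r}^{r+k} A^{(k)}_l\,t^l \; + \; \bigl[A^{(k)}_{r+k+1} - \mathrm{ad}(Y)B_{k+1}\bigr]\, t^{r+k+1} \pmod{t^{r+k+2}}. \]
Here the hypothesis $r < -1$ is crucial: it guarantees $k > r+k+1$, so the differential term $(k+1)t^k B_{k+1}$ lies in strictly higher order than $t^{r+k+1}$ and cannot disturb either the fixed leading term $Y t^r$ or the coefficients already arranged to lie in $\mathfrak{g}_X$. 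Using the decomposition displayed above, I write $A^{(k)}_{r+k+1} = P + \mathrm{ad}(Y)Q$ with $P \in \mathfrak{g}_X$ and set $B_{k+1} \vcentcolon = Q$; then $A^{(k+1)}_{r+k+1} = P \in \mathfrak{g}_X$, completing the induction step.

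The limit $x = \lim_{n\to\infty}\prod_{j=0}^{n-1}\exp(t^{n-j}B_{n-j})$ converges in $\mathbf{G}(\mathcal{O})$ since each successive factor is congruent to the identity modulo a higher power of $t$, and by construction $x \cdot A$ has leading coefficient $Y = A_r$ with all higher coefficients in $\mathfrak{g}_X$, as required. The main conceptual point (and the only non-routine one) is the $\mathfrak{sl}_2$-decomposition $\mathfrak{g} = \mathfrak{g}_X \oplus \mathrm{Im}(\mathrm{ad}(Y))$; the rest is an order-by-order bookkeeping argument identical in spirit to the earlier reduction lemmas.
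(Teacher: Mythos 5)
Your proof is correct and is essentially the paper's own argument: the paper proves this lemma as a ``carbon copy'' of Lemma \ref{lemma: not nilpotent case}, with the only new ingredient being exactly the $\mathfrak{sl}_2$-theoretic decomposition $\mathfrak{g} = \mathfrak{g}_X \oplus \mathrm{Im}(\mathrm{ad}(Y))$ that you identify and use to solve for $B_{k+1}$ at each order. You have simply written out the order-by-order bookkeeping (including the harmless location of the derivative term, thanks to $r<-1$) that the paper omits.
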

\begin{proof}
This is a carbon copy of the proof of Lemma \ref{lemma: not nilpotent case}. The only difference is that in the last paragraph we have to use the fact that the range of $ad(A_r)$ is complementary to $\mathfrak{g}_X$. This follows from the theory of representations of $\mathfrak{sl}_2$. We omit the details.
\end{proof}
By Lemma \ref{lemma: nilpotent case aligned}, we can assume that $A_j \in \mathfrak{g}_X$ for $j >r$. For the purpose of having an algorithm that works in finitely many steps, we won't actually use the full force of the lemma. Instead, we will use a weaker hypothesis as an input for the next proposition. Let $\Lambda \vcentcolon = \Lambda\left(A_r\right)$ be as in Definition \ref{defn: lambda}. We will henceforth suppose that  $A_{r+m} \in \mathfrak{g}_X$ for $1 \leq m < \Lambda (|r|-1)$.

Let $(Z_{l})_{l=1}^{q}$ be a basis of eigenvectors of $ad(H)$ acting on  $\mathfrak{g}_X$. This means that $\mathfrak{g}_X = \bigoplus_{l=1}^{q} \mathbf{k}\, Z_{l}$ and that there exist $\lambda_l$ such that $[H, Z_l] = \lambda_l \, Z_l$. It turns out that  the $\lambda_{l}$s are nonnegative integers, by the theory of representations of $\mathfrak{sl}_2$. By the assumption on $A$, we can write $A_{r+m} = \sum_{l=1}^{q} a_{r+m, \, l}\, Z_l$  for all $1 \leq m \leq \Lambda (|r|-1)$ and some constants $a_{r+m, \, l} \in \mathbf{k}$.
\begin{defn} \label{defn: delta} In the situation above, define $\delta = \delta(A)$ to be given by:
\[ \delta = \text{inf} \; \left\{ \frac{m}{\frac{1}{2} \lambda_l +1} \; \vcentcolon \; \; 1 \leq m < \Lambda (|r|-1), \; \; 1 \leq l \leq q, \; \; a_{r+m, \, l} \neq 0 \right\} \]
We set $\delta = \infty$ if $A_{r+m} = 0$ for all $1 \leq m < \Lambda$. We also define the set
\[ P \vcentcolon = \left\{ (m, l) \; \vcentcolon \; \;1 \leq m < \Lambda (|r|-1), \; \; 1 \leq l \leq q, \; \; a_{r+m, \, l} \neq 0, \; \; \frac{m}{\frac{1}{2} \lambda_l +1} = \delta \right \} \]
In plain words, $P$ is the set of pairs $(m,l)$ of indices in the definition of $\delta$ where the infimum is actually achieved.
\end{defn}
\begin{remark} \label{remark: ramification bound key lemma} By the definition of $\Lambda(A_r)$, it follows that the denominators appearing in the set defining $\delta$ are always less than $\Lambda$. This implies that there exists a positive integer $b \leq 2\Lambda -1$ such that 
$b\delta \in \mathbb{Z}$. This fact will be used later to determine a bound for the ramification needed to put $A$ in canonical form.
\end{remark}
The following proposition is one of the main steps in the argument in $\cite{Babbitt.varadarajan.formal}$. We are going to get a step closer to canonical form by applying a transformation of the type $t^{nH}$. These elements are called shearing transformations. The statement and proof in the case of $\mathbf{GL_n}$ can be found in \cite{Babbitt.varadarajan.formal} pages 33-34. We have decided to include a detailed treatment of the general case for the convenience of the reader.
\begin{prop}[Main Proposition for the Induction Step] \label{prop: main prop reduction} 
Let the notation/set-up be as discussed above. \vspace{-0.25cm}
\begin{enumerate}[(C1)]
    \item Suppose $|r|-1 \leq \delta \leq \infty$. Let $\tilde{A}$ be the $2$-lift of $A$. Then $B \vcentcolon = t^{(r+1)H} \cdot \tilde{A}$ is of the first kind, and $B_{-1}$ only depends on $A_{r +m}$ for $0 \leq m \leq \Lambda (|r|-1)$.
    \item Suppose $0 < \delta < |r|-1$. We know that $b \delta \in \mathbb{Z}$ for some $b \in \mathbb{N}$. Let $\tilde{A}$ be the $2b$-lift of $A$. We have that $B \vcentcolon = t^{- b \delta H} \cdot \tilde{A}$ has order $r' \vcentcolon = 2br + 2b\delta +2b -1 < -1$. Moreover,
    \[B_{r'} = 2bA_{r} + 2b \sum_{(m,l) \in P} a_{r+m, \, l} \, Z_{l} \; \neq \; 2bA_r \]
    In particular we have that $B_{r'}$ is determined by $A_{r +m}$ for $0 \leq m < \Lambda (|r|-1)$. If $B_{r'}$ is nilpotent, then $\text{dim} \, (G \cdot B_{r'}) \, > \, \text{dim} \,(G \cdot A_r)$.
\end{enumerate}
\vspace{-0.25cm}
\end{prop}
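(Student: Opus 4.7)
The plan is to compute $B = t^{cH} \cdot \tilde{A}$ directly by decomposing $\mathfrak{g}$ into $\text{ad}(H)$-eigenspaces and tracking the $t$-exponent of each component. If $V \in \mathfrak{g}$ has $\text{ad}(H)$-eigenvalue $\mu$, then $\text{Ad}(t^{cH}) V = t^{c\mu} V$, and $\widehat{(t^{cH})^{*} \omega} = cH t^{-1}$ (as in the proof of Theorem \ref{thm:semisimple regular}). Combining this with the $2b$-lift, which sends $A = \sum_j A_j t^j$ to $\tilde A = \sum_j 2b A_j t^{2bj + 2b - 1}$, the $\mu$-eigenspace component of $A_{r+m}$ contributes to $B$ at the single exponent $2b(r+m) + 2b - 1 + c\mu$, plus the Maurer-Cartan term $cH t^{-1}$. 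The key structural input from the definition of $\Lambda = \Lambda(A_r)$ is that every $\text{ad}(H)$-eigenvalue $\mu$ on $\mathfrak{g}$ satisfies $\mu/2 + 1 \leq \Lambda$.

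For (C1) I would take $b = 1$ and $c = r+1$, and rewrite the exponent as $-1 + 2[m - (|r|-1)(\mu/2 + 1)]$. Two ranges must be checked: in the aligned range $1 \leq m < \Lambda(|r|-1)$, decomposing $A_{r+m} = \sum_l a_{r+m,l} Z_l$ and applying the hypothesis $\delta \geq |r|-1$ to each nonzero coefficient forces the exponent to be $\geq -1$; in the tail $m \geq \Lambda(|r|-1)$, the bound $\mu/2+1 \leq \Lambda$ yields the same inequality independently of the component. The $A_r$ term (eigenvalue $-2$) lands at $t^{-1}$ with coefficient $2Y$, and together with the Maurer-Cartan $(r+1)H t^{-1}$ this shows $B$ is of the first kind. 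The $-1$-coefficient collects only the contributions where equality holds in the bounds above, which in both ranges forces $0 \leq m \leq \Lambda(|r|-1)$, giving the determinacy claim.

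For (C2) I would take $b$ with $b\delta \in \mathbb{Z}$ and $c = -b\delta$; now the exponent rewrites as $r' + 2b[m - \delta(\mu/2 + 1)]$. The $A_r$ term sits exactly at $r'$ with coefficient $2bA_r$; in the aligned range the definition of $\delta$ gives exponent $\geq r'$, with equality precisely when $(m,l) \in P$; in the tail range the combination $\delta < |r|-1$ and $\mu/2+1 \leq \Lambda$ gives the strict inequality $m > \delta(\mu/2+1)$, so nothing from the tail reaches level $r'$. The Maurer-Cartan term sits at $t^{-1}$, and the hypothesis $\delta < |r|-1$ is equivalent to $r' < -1$, placing it strictly above $r'$. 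Hence $B_{r'} = 2bA_r + 2b\sum_{(m,l)\in P} a_{r+m,l} Z_l$. A short check that the map $l \mapsto m$ on $P$ is well-defined (since $\frac{m}{\lambda_l/2+1} = \delta$ determines $m$ from $\lambda_l$), combined with linear independence of the $Z_l$ and nonvanishing of the $a_{r+m,l}$, gives $B_{r'} \neq 2bA_r$. If moreover $B_{r'}$ is nilpotent, then $B_{r'}/(2b) \in A_r + \mathfrak{g}_X \setminus \{A_r\}$ is a nilpotent element, so Proposition \ref{prop: dim increase} gives the strict orbit-dimension inequality (rescaling by $2b$ preserves orbit dimensions).

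The main obstacle I anticipate is the clean treatment of the tail $m \geq \Lambda(|r|-1)$, where the aligning hypothesis $A_{r+m} \in \mathfrak{g}_X$ fails but the inequality $\mu/2+1 \leq \Lambda$ from the definition of $\Lambda$ still provides just enough control to push all tail terms into exponent $\geq -1$ in (C1) and $> r'$ in (C2).
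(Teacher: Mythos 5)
Your proposal is correct and follows essentially the same route as the paper's proof: expand the lifted connection, act by $\mathrm{Ad}(t^{cH})$ on $\mathrm{ad}(H)$-eigenspaces (the paper phrases this via root spaces $\mathfrak{g}_\beta$ and the bound $\beta(H)/2+1\leq \Lambda$), add the Maurer--Cartan term $cHt^{-1}$, and compare exponents using the definitions of $\delta$ and $\Lambda$, finishing with Proposition \ref{prop: dim increase} for the orbit-dimension claim. Your explicit verification that the sum over $P$ is a nonzero combination of distinct basis vectors $Z_l$ (so $B_{r'}\neq 2bA_r$) is a detail the paper asserts without comment, but it is not a different method.
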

\begin{proof}
The computation is similar to the one we did in the proof of Theorem \ref{thm:semisimple regular}. Recall from the discussion in that proof that for all $W \in \mathfrak{g}_{\beta}$ we have 
\[ \text{Ad}(t^{nH}) \, W = t^{n \beta(H)}W \; \; \; (*)\]
\begin{enumerate}[(C1)]
    \item By using the definitions and expanding
    \begin{align*}
    t^{(r+1)H} \cdot \tilde{A} & = \; \; 2 \sum_{m=0}^{\Lambda( |r| -1) -1} \text{Ad}(t^{(r+1)H}) A_{r+m} \, t^{2(r+m)+1} \\[2ex]
    & \quad \; + \,\text{Ad}(t^{(r+1)H}) A_{r+\Lambda( |r| -1)} \, t^{2(r+\Lambda( |r| -1))+1} \\[2ex]
        & \quad + \, 2 \sum_{m=\Lambda( |r| -1)+1}^{\infty} \text{Ad}(t^{(r+1)H}) A_{r+m} \, t^{2(r+m)+1} \\[2ex]
        & \quad \; + \, \frac{d}{dt} \, (t^{(r+1)H}) \, t^{-(r+1)H}
    \end{align*}
    The fourth summand is just $(r+1) Ht^{-1} $, which is of the first kind. We can see that the third summand is actually in $\mathfrak{g}(\mathcal{O})$ by using $(*)$ and the fact that $(r+1)\beta(H) \geq (2\Lambda -2)(r+1)$ for all roots $\beta$. The same reasoning implies that the second summand is of the first kind. For the first summand, we can write $A_{r+m} = \sum_{l=1}^{q} a_{r+m, \, l}\, Z_l$. We can expand and use $(*)$ plus the definition of $\lambda_l$. We get that the first summand is:
    \[ 2 \sum_{m=0}^{\Lambda( |r| -1) -1} \sum_{l=1}^q a_{r+m, \, l}\, Z_{l} \, t^{2(r+m) +1 + (r+1)\lambda_{l}}  \]
    This expression is also of the first kind. This can be shown by doing some algebra with the exponents of $t$, keeping in mind the definition of $\delta$ and the fact that $\delta \geq |r|-1$. The remark about $B_{-1}$ follows plainly from the argument, because the third summand did not contribute to $B_{-1}$.
    
    \item This is very similar to the first case. We expand:
    \begin{align*}
    t^{-b \delta H} \cdot \tilde{A} & = \; \; 2b \sum_{m=0}^{\Lambda( |r| -1) -1} \text{Ad}(t^{-b \delta H}) A_{r+m} \, t^{2b(r+m)+2b-1} \\[2ex]
        & \quad + \, 2b \sum_{m=\Lambda( |r| -1)}^{\infty} \text{Ad}(t^{-b \delta H}) A_{r+m} \, t^{2b(r+m)+2b-1} \\[2ex]
        & \quad \; + \, \frac{d}{dt} \, (t^{(-b \delta)H}) \, t^{(b\delta)H}
    \end{align*}
    The third summand is $-b \delta H t^{-1}$, which is of the first kind. We can therefore ignore the third summand. The bound $-b \delta \beta(H) \geq -2b\delta(\Lambda -1)$ and equation $(*)$ show that the order of the second summand is at least $r' +1 = 2br+2b\delta +2b$. The computation is almost the same as for the third summand in Case 1 above. For the first summand, we can again use $A_{r+m} = \sum_{l=1}^{q} a_{r+m, \, l}\, Z_l$ and expand using $(*)$ to get:
    \[ 2b \sum_{m=0}^{\Lambda( |r| -1) -1} \sum_{l=1}^q a_{r+m, \, l}\, Z_{l} \, t^{2b(r+m) +2b -1 -b\delta \lambda_{l}}  \]
    We are reduced to check that the exponent of $t$ in the sum above has minimal value $r' = 2br+2b\delta +2b -1$ exactly for the pairs $(m, l)$ in $P$. This is an exercise in elementary algebra.
    
    The claim about $B_{r'}$ follows from the argument, because the second summand does not contribute to $B_{r'}$. The claim about the increase of the dimension of nipotent orbits is a direct consequence of Proposition \ref{prop: dim increase}.
\end{enumerate}
\vspace{-0.5cm}
\end{proof}
\begin{remark}
The claim about the dimension of the orbit in $C2$ is essential. This guarantees that the process of applying shearing transformations eventually stops. Hence we are provided with a terminating algorithm. See the proof of Theorem \ref{thm: reduction conn reductive} given below for details.
\end{remark}

\begin{example}
Let's see how this works in the case of $\text{SL}_2$. Up to inner automorphism, we can assume that $A_{r} = Y = \begin{bmatrix} 0 & 0 \\ 1 & 0 \end{bmatrix}$. Then $X= \begin{bmatrix} 0 & 1 \\ 0 & 0 \end{bmatrix}$ and $H= \begin{bmatrix} 1 & 0 \\ 0 & -1 \end{bmatrix}$. In this case $\Lambda =2$, and $\mathfrak{g}_X = \mathbf{k} X$. So there is a single eigenvalue $\lambda = 2$. Our assumption just says that $A$ is of the form
\[ A = Y \,t^r + \sum_{m = 1}^{2|r|-3} a_{r+m}X \, t^{r+m} + \text{higher order terms}\]
We have that $\delta$ is $\frac{n}{2}$, where $n$ is the smallest index such that $a_{r+n} \neq 0$. The set $P$ only contains this index $n$. So in fact $A$ can be written in the form
\[ A = Y\, t^r + \sum_{m = n}^{2|r|-3} a_{r+m}X \, t^{r+m} + \text{higher order terms}\]
There are two cases.\vspace{-0.25cm}
\begin{enumerate}[(C1)]
    \item The first case is $n\geq 2(|r|-1)$. This just means that all $a_{i}$ above are $0$. Then we can use the change of trivialization $t^{\frac{r+1}{2}H} = \begin{bmatrix} t^{\frac{r+1}{2}} & 0 \\ 0 & t^{-\frac{r+1}{2}} \end{bmatrix}$ to transform $A$ into a connection of the first kind.
    \item The second case is when $n < 2(|r|-1)$. Here at least some of the $a_i$ are not $0$. We can apply the transformation $t^{-\frac{n}{4}H} = \begin{bmatrix} t^{-\frac{n}{4}} & 0 \\ 0 & t^{\frac{n}{4}} \end{bmatrix}$. The resulting connection will have order $r + \frac{n}{2}$. The principal term will be $B_{r + \frac{n}{2}} = Y + a_{r+n}X$, which is semisimple. Hence we can use Lemma \ref{lemma: not nilpotent case} to reduce to the group $\mathbb{G}_m$. We can then apply Proposition \ref{prop:tori irregular} to find the canonical form.
\end{enumerate}
\end{example}
\begin{proof}[Proof of Theorem \ref{thm: reduction conn reductive}]
By Lemma \ref{lemma: nilpotent case aligned}, we can put ourselves in the situation of Proposition \ref{prop: main prop reduction} above. We have three possibilities:
\begin{enumerate}[(i)]
    \item If $|r|-1 \leq \delta \leq \infty$, then we can use Proposition \ref{prop: main prop reduction} Case 1. We are done by the theory of regular connections we have already developed.
    \item If $0 < \delta < |r|-1$, we can use \ref{prop: main prop reduction} Case 2. Suppose that $B_{r'}$ is not nilpotent. Then we are in the case worked out in Subsection \ref{subsection: not nilpotent case}.
    \item Suppose that $0 < \delta < |r|-1$ and $B_{r'}$ is nilpotent with $\text{dim} \, (G \cdot B_{r'}) \, > \, \text{dim} \,(G \cdot A_r)$. We can apply Proposition \ref{prop: main prop reduction} Case 2 again with $B$ instead of $A$. We can keep iterating this procedure until we are in one of the first two possibilities above. Notice that this process cannot go on indefinitely, because the dimensions of nilpotent orbits in $\mathbf{G}$ are bounded.
\end{enumerate}
\vspace{-0.5cm}
\end{proof}
\begin{remark}
The dimension of adjoint nilpotent orbits in $\mathbf{G}$ is always even \cite{collingwood.nilpotent}. Therefore we need to apply at most $\left \lfloor{ \frac{1}{2} \text{dim}(\mathbf{G})} \right \rfloor$ shearing transformations as in Proposition \ref{prop: main prop reduction} Case 2 before we land in one of the first two possibilities.
\end{remark}
\subsection{Algorithm for reductive groups and some quantitative results} \label{subsection: quantitative}
Let's give a detailed description of the reduction algorithm that we obtain from the proof of Theorem \ref{thm: reduction conn reductive}. 
\begin{algor}[Algorithm for reduction of a formal connection for a reductive group] \label{algor: reduction reductive}
There is a set of six possible operations that we will use as steps in our algorithm.\vspace{-0.25cm}
\begin{enumerate}[(i)]
    \item Apply Lemma \ref{lemma: not nilpotent case}.
    \item Apply Lemma \ref{lemma: nilpotent case aligned}.
    \item Apply Proposition \ref{prop: main prop reduction} Case 1.
    \item Apply Proposition \ref{prop: main prop reduction} Case 2.
    \item Find the canonical form of a connection in a torus.
    \item Find the canonical form for a connection of the first kind in a semisimple group (as in Theorem \ref{thm:semisimple regular} of Section \ref{section: regular connections}).
\end{enumerate}
The algorithm proceeds as follows. The input is a given reductive group $\mathbf{G}$ and a formal connection $A = \sum_{j=r}^{\infty} A_j \, t^j$. First, we know that $\mathbf{G}$ is isogenous to the product $Z^0\left(\mathbf{G} \right) \times \mathbf{G}_{\text{der}}$ of its maximal central torus and its derived subgroup. Apply operation (v) to the central part of the connection $A_{Z^0(\mathbf{G})}$. We can record the (canonical form) output of (v) and ignore it from now on, since it is not going to be altered by the subsequent steps in the algorithm. Replace $\mathbf{G}$ by $\mathbf{G}_{\text{der}}$ and $A$ by $A_{\text{der}}$. We have two cases. \vspace{-0.25cm}
\begin{enumerate}[(1)]
    \item If $A_{der}$ is of the first kind, apply step (vi) to reduce this connection to canonical form. Add any ``central" parts we might have split off earlier in the algorithm and output the result. End of the algorithm.
    \item If $A_{der}$ is not of the first kind, check whether $A_r$ is nilpotent or not. There are now two ways to proceed:
    \begin{enumerate}[(2-1)]
    \item If $A_r$ is not nilpotent, use operation (i). Replace $\mathbf{G}$ by $Z_{\mathbf{G}} \left( \, \left( A_r \right)_s \, \right)$ and replace $A$ by the output of operation (i). Return to the beginning of the algorithm with this new input.
    \item If $A_r$ is nilpotent, compute $\Lambda\left(A_r\right)$. Apply operation $(ii)$ and replace $A$ with the output. Now compute $\delta$.
    
    \begin{enumerate}[(2-2a)]
        \item If $|r|-1 \leq \delta$, apply operation (iii) and replace $A$ with the output. This is a connection of the first kind. Return to the beginning of the algorithm.
        
        \item If $\delta < |r|-1$, apply operation (iv). Go to the beginning of the algorithm.
    \end{enumerate}
\end{enumerate}

\end{enumerate}
\end{algor}
\begin{remark}
In the algorithm above the order of the pole of $A$ only ever gets smaller (taking into account $b$-lifting whenever passing to a ramified cover). This shows that the principal level of $A$ determines the mildest pole in the $\mathbf{G}(\overline{F})$-gauge equivalence class of $A$.
\end{remark}
A careful analysis of Algorithm \ref{algor: reduction reductive} yields a bound for the ramification needed to put a given connection into canonical form. We can get a uniform bound that only depends on the group, and not on the connection. Before giving the proof of such bound, we need a lemma.

Note that in each step of Algorithm \ref{algor: reduction reductive} we are ultimately working with a semisimple subgroup of $\mathbf{G}$. These subgroups are of the form $\mathbf{H}_{der}$, where $\mathbf{H}$ is the centralizer of a finite set $\{ D_1, D_2, ..., D_l\}$ of pairwise commuting semisimple elements in $\mathfrak{g}$. We will first try to understand $J(\mathbf{H}_{der})$ (see Definition \ref{defn: exponent coweights cooroots}).
\begin{lemma} \label{lemma: index of centralizers}
Let $\mathbf{G}$ be a connected semisimple group. Let $R$ be the rank of $\mathbf{G}$. Suppose that $\{D_1, D_2, ..., D_l\}$ is a finite set of pairwise commuting semisimple elements in $\mathfrak{g}$. Set $\mathbf{H} = Z_{\mathbf{G}}\left(\{D_1, D_2, ..., D_l\}\right)$, the centralizer of all $D_i$. Let $\mathbf{H}_{der}$ be the derived subgroup of $\mathbf{H}$. Then we have $J(\mathbf{H}_{der}) \leq \text{hgt}(\mathfrak{g})^{2R-2} \cdot J(\mathbf{G})$.
\end{lemma}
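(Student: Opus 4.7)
The plan is to bound $J(\mathbf{H}_{\mathrm{der}})$ via a three-term filtration of sublattices inside $V_0^* \vcentcolon = X_*(\mathbf{T}) \otimes \mathbb{Q}$, picking up one factor of $\mathrm{hgt}(\mathfrak{g})^{R-1}$ at each end and the factor $J(\mathbf{G})$ in the middle. After replacing the $D_i$ by $\mathbf{G}$-conjugates, we may assume $D_1, \dots, D_l \in \mathrm{Lie}(\mathbf{T})$, so that $\mathbf{H}$ is connected reductive with $\mathbf{T} \subset \mathbf{H}$ and root system $\Phi_H = \{\alpha \in \Phi : \alpha(D_i) = 0 \text{ for all } i\}$, a closed subsystem of $\Phi$. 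Fix simple roots $\beta_1, \dots, \beta_k$ of $\mathbf{H}_{\mathrm{der}}$ compatible with a choice of positive roots for $\mathbf{G}$. Since each $\beta_j$ is a positive root of $\mathbf{G}$, the expansions $\beta_j = \sum_i m_{ij} \alpha_i$ and $\beta_j^\vee = \sum_i p_{ij} \alpha_i^\vee$ yield integer matrices $M = (m_{ij})$ and $P = (p_{ij})$ whose entries and column sums are bounded by a small multiple of $\mathrm{hgt}(\mathfrak{g})$; the bound for $M$ is the defining height inequality, while the bound for $P$ absorbs the ratio $(\alpha_i, \alpha_i) / (\beta_j, \beta_j)$.

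Introduce $L \vcentcolon = \mathbb{Q}\Phi_H^\vee \cap \mathbb{Z}\Phi^\vee$, giving the chain of sublattices
\[
\mathbb{Z}\Phi_H^\vee \;\subset\; L \;\subset\; Q_\mathbf{G} \cap V_{0,H}^* \;\subset\; Q_{\mathbf{H}_{\mathrm{der}}}.
\]
The exponent of $Q_{\mathbf{H}_{\mathrm{der}}} / \mathbb{Z}\Phi_H^\vee$ divides the product of the exponents of the three successive subquotients. The middle quotient $(Q_\mathbf{G} \cap V_{0,H}^*)/L$ embeds into $Q_\mathbf{G} / \mathbb{Z}\Phi^\vee$ (if $x \in Q_\mathbf{G} \cap V_{0,H}^* \cap \mathbb{Z}\Phi^\vee$ then $x \in L$), so its exponent divides $J(\mathbf{G})$. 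It therefore suffices to prove the two denominator bounds
\[
\mathrm{hgt}(\mathfrak{g})^{R-1} \cdot Q_{\mathbf{H}_{\mathrm{der}}} \subset Q_\mathbf{G} \cap V_{0,H}^*, \qquad \mathrm{hgt}(\mathfrak{g})^{R-1} \cdot L \subset \mathbb{Z}\Phi_H^\vee,
\]
and the lemma then follows by combining them with the definition $J(\mathbf{G}) \cdot Q_\mathbf{G} \subset \mathbb{Z}\Phi^\vee$ and the inclusion $Q_{\mathbf{H}_{\mathrm{der}}} \subset V_{0,H}^*$.

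Both bounds are Cramer's-rule denominator estimates. For the first, given $\mu \in Q_{\mathbf{H}_{\mathrm{der}}}$ and an arbitrary root $\alpha \in \Phi$, I orthogonally decompose $\alpha = \alpha^\parallel + \alpha^\perp$ relative to a $W$-invariant form on $V_0$. Since $\alpha^\perp$ annihilates $V_{0,H}^*$, one has $\langle \alpha, \mu \rangle = \langle \alpha^\parallel, \mu \rangle$; expanding $\alpha^\parallel = \sum_j q_j \beta_j$, the denominators of the $q_j$ are controlled by Cramer's rule applied to the Gram matrix $G_H = M^\top A M$, where $A = ((\alpha_i, \alpha_l))$. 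A Cauchy--Binet expansion of $\det G_H$ into minors of $M$ of size at most $R-1$, together with the column-sum bound on $M$ and root-length constants absorbed from $A$, gives the $\mathrm{hgt}(\mathfrak{g})^{R-1}$ estimate. The second bound is proved dually: for $\eta = \sum_i d_i \alpha_i^\vee \in L$, writing $\eta = \sum_j e_j \beta_j^\vee$ and solving the over-determined integer system $\vec d = P \vec e$ by selecting a maximal-rank $k \times k$ submatrix of $P$ yields denominators of the $e_j$ again bounded by $\mathrm{hgt}(\mathfrak{g})^{R-1}$ via the same Hadamard/Cauchy--Binet analysis.

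The main obstacle is the calibration of the exponent in the Cramer estimates: one must exploit not merely the entrywise bound $|m_{ij}| \leq \mathrm{hgt}(\mathfrak{g})$ but the stronger column-sum bound $\sum_i m_{ij} \leq \mathrm{hgt}(\mathfrak{g})$, and restrict the Cauchy--Binet expansion to minors of size at most $R-1$ so that the root-length constants from $A$ (or from the conversion $P \leftrightarrow M$) are absorbed into root-system constants rather than into powers of $h$. Once both outer factors of the filtration come out cleanly as $\mathrm{hgt}(\mathfrak{g})^{R-1}$, they multiply with the middle factor $J(\mathbf{G})$ to produce exactly the bound $J(\mathbf{H}_{\mathrm{der}}) \leq \mathrm{hgt}(\mathfrak{g})^{2R-2} \cdot J(\mathbf{G})$.
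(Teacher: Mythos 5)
Your skeleton is a genuinely different organization from the paper's: instead of applying $\text{Hom}(-,\mathbb{Z})$ to $0\to\mathbb{Z}\Phi_H\to\mathbb{Z}\Phi$ and bounding two Smith elementary divisors, you filter $\mathbb{Z}\Phi_H^\vee\subset L\subset Q_{\mathbf{G}}\cap V_{0,H}^*\subset Q_{\mathbf{H}_{\mathrm{der}}}$ and bound the three subquotients. The middle step is correct (the quotient $(Q_{\mathbf{G}}\cap V_{0,H}^*)/L$ does inject into $Q_{\mathbf{G}}/\mathbb{Z}\Phi^\vee$), and your second denominator bound is fine as sketched: choosing a full-rank $k\times k$ submatrix of $P$, Cramer gives denominators dividing a nonzero $k\times k$ minor of $P$, which (nonnegative entries, column sums equal to heights of the $\beta_j^\vee$ in the dual system, hence at most $\text{hgt}(\mathfrak{g})$, and $k\le R-1$) is at most $\text{hgt}(\mathfrak{g})^{R-1}$; this is essentially the paper's minor estimate on the coroot side.

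The gap is the first denominator bound, $\text{hgt}(\mathfrak{g})^{R-1}\cdot Q_{\mathbf{H}_{\mathrm{der}}}\subset Q_{\mathbf{G}}\cap V_{0,H}^*$, which in your scheme replaces the paper's duality step and which you do not actually prove. Cramer's rule on the Gram matrix $G_H=M^\top A M$ controls the denominators of the $q_j$ by $\det G_H$ (after normalizing the form to be integral on roots), but the Cauchy--Binet expansion of $\det(M^\top A M)$ is a sum of products of \emph{two} $k\times k$ minors of $M$ times a minor of $A$; with the column-sum bound this gives order $\text{hgt}(\mathfrak{g})^{2k}\approx\text{hgt}(\mathfrak{g})^{2R-2}$ times root-length constants for this single factor---the entire budget of the lemma---and no restriction "to minors of size at most $R-1$" removes the second copy of $M$. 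You flag this calibration yourself as the main obstacle and leave it unresolved, so the proof is incomplete at its one genuinely new step. Note also that no bound of the shape "denominator divides $\det$ of the Gram or Cartan matrix of $\Phi_H$" can alone give $\text{hgt}(\mathfrak{g})^{R-1}$: for $\mathfrak{g}\cong\mathfrak{sl}_2^{\,n}$ one has $\text{hgt}(\mathfrak{g})=1$ while these determinants are $2^k$. The inclusion you need is nevertheless true and has a short proof avoiding the form: pairing $\alpha^{\parallel}=\sum_j q_j\beta_j$ against the coroots $\beta_i^\vee$ gives $C_H\,q=w$ with $C_H$ the Cartan matrix of $\Phi_H$ and $w$ integral, so the denominators divide $\det C_H\le 2^{\,k}\le\text{hgt}(\mathfrak{g})^{R-1}$ whenever $\text{hgt}(\mathfrak{g})\ge 2$; the remaining case $\text{hgt}(\mathfrak{g})=1$ forces $\Phi\cong A_1^R$, where $Q_{\mathbf{H}_{\mathrm{der}}}\subset Q_{\mathbf{G}}$ directly. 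With that repair (or with the paper's $\text{Hom}/\text{Ext}$ argument supplying this factor) your filtration does yield $J(\mathbf{H}_{\mathrm{der}})\le\text{hgt}(\mathfrak{g})^{2R-2}\cdot J(\mathbf{G})$.
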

\begin{proof}
The lemma is clearly true if $\mathbf{H} = \mathbf{G}$. We can therefore assume that $\mathbf{H} \neq \mathbf{G}$. Let $\mathbf{T}$ be a maximal torus of $\mathbf{G}$ such that $\text{Lie}(\mathbf{T})$ contains the set $\{D_i\}$ of pairwise commuting semisimple elements. Note that $\mathbf{T} \subset \mathbf{H}$ is a maximal torus. Let $\Phi$ (resp. $\Sigma$) be the set of roots of $\mathbf{G}$ (resp. $\mathbf{H}$) with respect to $\mathbf{T}$. We will denote by $\Sigma^{\vee}$ and $\Phi^{\vee}$ the corresponding sets of coroots. It follows by definition that $\Sigma \subset \Phi$ and $\Sigma^{\vee} \subset \Phi^{\vee}$. 

Write $Q_{\mathbf{H}_{der}}$ for the coweight lattice of $\mathbf{H}_{der}$. Let $\lambda \in Q_{\mathbf{H}_{der}}$. We want to show that there exists $b \leq \text{hgt}(\mathfrak{g})^{2R-1} \cdot J(\mathbf{G}_{\text{der}})$ such that $b \lambda \in \mathbb{Z}\Sigma^{\vee}$.

Fix a choice of positive roots $\Phi^+$ in $\Phi$. Let $\Delta_{\Phi}$ be the corresponding set of simple roots. By definition $|\Delta_{\Phi}| = R$. Notice that this induces a set of positive roots $\Sigma^+ \vcentcolon = \Phi^+ \cap \Sigma$. Let $\Delta_{\Sigma}$ be the corresponding set of simple roots in $\Sigma$. Set $c \vcentcolon = |\Delta_{\Sigma}|$. We know that $c \leq R-1$ because $\mathbf{H} \neq \mathbf{G}$. Consider the short exact sequence
\[0 \longrightarrow \mathbb{Z} \Sigma \xrightarrow{\; \; M \; \;} \mathbb{Z} \Phi \longrightarrow \mathbb{Z} \Phi / \, \mathbb{Z} \Sigma \longrightarrow 0  \]
The theory of Smith normal form implies that $\mathbb{Z} \Phi / \, \mathbb{Z} \Sigma \cong E \oplus \mathbb{Z}^{R -c}$, where $E$ is a finite group. The exponent of $E$ is given by the biggest elementary divisor $d$ of the inclusion $M$ of free $\mathbb{Z}$-modules. Applying the functor $\text{Hom}(-, \mathbb{Z})$ to the short exact sequence yields an exact sequence
\[ 0 \longrightarrow \mathbb{Z}^{R-c} \longrightarrow Q_{\mathbf{G}} \longrightarrow Q_{\mathbf{H}_{der}} \longrightarrow E \longrightarrow 0 \]
Hence we have that $d\lambda$ can be extended to an element of $Q_{\mathbf{G}}$. By the definition of $J(\mathbf{G})$, it follows that $d \, J(\mathbf{G}) \, \lambda$ extends to an element of $\mathbb{Z} \Phi^{\vee}$.

Let $\varphi: \mathbb{Z}\Phi^{\vee} \longrightarrow Q_{\mathbf{H}_{der}}$ be the composition
\[\varphi: \mathbb{Z} \Phi^{\vee} \, \hookrightarrow \, Q_{\mathbf{G}} \, \longrightarrow Q_{\mathbf{H}_{der}}  \]
Set $L \vcentcolon =\text{Im} \, \varphi$ and $K \vcentcolon = \text{Ker} \, \varphi$. The discussion above implies that the exponent of the finite group $Q_{\mathbf{H}_{der}} / \, L$ is bounded by $d \, J(\mathbf{G})$. By definition we have a short exact sequence
\[ 0 \, \longrightarrow \, K \, \longrightarrow \, \mathbb{Z} \Phi^{\vee} \, \longrightarrow \, L \, \longrightarrow \, 0 \]
Since $L$ is a torsion-free $\mathbb{Z}$-module, the above exact sequence splits. Fix a splitting $\mathbb{Z} \Phi^{\vee} \cong K \oplus L$. Let's look now at the inclusion of lattices $\mathbb{Z} \Sigma^{\vee} \subset \mathbb{Z} \Phi^{\vee}$. The composition
\[ \mathbb{Z} \Sigma^{\vee} \, \hookrightarrow \, \mathbb{Z} \Phi^{\vee} \, \xlongequal{\;} \, K \oplus L \, \xrightarrow{\; \; pr_2 \; \;} \, L \, \xhookrightarrow{ \; \;} \, Q_{\mathbf{H}_{der}} \]
is the natural inclusion $\mathbb{Z} \Sigma^{\vee} \hookrightarrow Q_{\mathbf{H}_{der}}$. Hence the morphism $\psi$ given by the composition
\[\psi \,: \; \; \mathbb{Z} \Sigma^{\vee} \, \hookrightarrow \, \mathbb{Z} \Phi^{\vee} \, \xlongequal{\;} \, K \oplus L \, \xrightarrow{\; \; pr_2 \; \;} \, L \]
is injective. So we have an inclusion $\psi : \mathbb{Z}\Sigma^{\vee} \hookrightarrow L$. Let $e$ denote the exponent of the  finite group $L / \, \mathbb{Z} \Sigma^{\vee}$. By definition $e$ is the biggest elementary divisor of the inclusion $\psi: \mathbb{Z} \Sigma^{\vee} \hookrightarrow L$. Notice that this is also the biggest elementary divisor of the natural inclusion $\mathbb{Z} \Sigma^{\vee} \subset \mathbb{Z} \Phi^{\vee} \xlongequal{} K \oplus L$. The discussion up to now implies that $J(\mathbf{H}_{der}) \leq e \, d \, J(\mathbf{G})$.

 We are left to compute the elementary divisors $d$ and $e$ of the inclusions of the root and coroot lattices. We first claim that $d \leq \text{hgt}(\mathfrak{g})^{R-1}$. In order to prove the claim, we will use $\Delta_{\Sigma}$ and $\Delta_{\Phi}$ as bases for the root lattices. For each $\alpha \in \Delta_{\Sigma}$, we can write $\alpha = \sum_{\beta \in \Delta_{\Phi}} m_{\beta}^{\alpha} \, \beta $ for some nonnegative integers $m_{\beta}^{\alpha}$. Set $M \vcentcolon = (m_{\beta}^{\alpha})_{\beta \in \Delta_{\Phi}, \, \alpha \in \Delta_{\Sigma} }$. This is a $R\times c$ matrix representing the inclusion $\mathbb{Z} \Sigma \hookrightarrow \mathbb{Z}\Phi$. By the theory of Smith normal form, $d$ divides all $c \times c$-minors of $M$. Since all $m_{\beta}^{\alpha}$ are nonnegative, such $c \times c$-minor is bounded by
\[ \prod_{\alpha \in \Delta_{\Sigma}} \left(\sum_{\beta \in \Delta_{\Phi}} m_{\beta}^{\alpha} \right) \, \leq \, \text{hgt}(\mathfrak{g})^c \, \leq \, \text{hgt}(\mathfrak{g})^{R-1}  \]
The claim $d \leq \text{hgt}(\mathfrak{g})^{R-1}$ follows. We can apply the same argument to the inclusion $\mathbb{Z}\Sigma^{\vee} \subset \mathbb{Z} \Phi^{\vee}$. The maximal height in the dual root system $\Phi^{\vee}$ is also $\text{hgt}(\mathfrak{g})$. Therefore the same proof yields a bound $e \leq \text{hght}(\mathfrak{g})^{R-1}$. This implies
\[J(\mathbf{H}_{der}) \, \leq \,  e\, d\, J(\mathbf{G}) \, \leq \, \text{hgt}(\mathfrak{g})^{2R-2} \cdot J(\mathbf{G}) \]
\end{proof}
\begin{prop} \label{prop: ramification reduction reductive}
Let $\mathbf{G}$ be connected reductive. Let $A \in \mathfrak{g}_F$ be a connection. Then there exist $x \in \mathbf{G}(F_b)$ for some positive integer $b$ such that $x \cdot A$ is in canonical form. If we set $R \vcentcolon = \text{rank} (\mathbf{G}_{der})$, then $b$ can be chosen so that
\[ b \; \leq \; 2 \, \text{hgt}(\mathfrak{g})^{2R-1} \cdot J(\mathbf{G}_{\text{der}}) \; \cdot \prod_{j=0}^{\left \lfloor{\frac{\text{dim}(\mathbf{G}_{\text{der}})}{3}} \right \rfloor} \left( 4\, \text{hgt}(\mathfrak{g}) +2 \right)^{\left \lfloor{} \frac{1}{2}\left(\text{dim}(\mathbf{G}_{\text{der}}) -3j \right) \right \rfloor} \]
\end{prop}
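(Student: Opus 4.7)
The plan is to trace through Algorithm \ref{algor: reduction reductive} and multiply the ramification factors contributed by its six operations. After using Proposition \ref{prop:tori irregular} to handle the central torus without any ramification, I may assume $\mathbf{G}$ is semisimple. Operations (i), (ii), and (v) introduce no ramification, so only (iii), (iv), and (vi) need to be tracked. Operation (iii) uses a $2$-lift, contributing a single factor of $2$. Operation (iv) uses a $2b$-lift, where by Remark \ref{remark: ramification bound key lemma} we have $b \leq 2\Lambda(A_r)-1$ and by Example \ref{example: regular nilp} we have $\Lambda(A_r) \leq \text{hgt}(\mathfrak{g}) + 1$, so each application contributes a factor of at most $4\,\text{hgt}(\mathfrak{g})+2$. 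Operation (vi) contributes the ramification bound from Lemma \ref{lemma: ramification bound regular semisimple} applied to whichever derived subgroup we end up in.

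Next I count multiplicities. Operations (iii) and (vi) are each applied at most once, at the very end of the run: applying (iii) puts the connection in first-kind form, after which a single invocation of (vi) completes the reduction. To count (iv), I organize the algorithm into stages, each stage being a maximal block of steps between successive descents caused by operation (i). Within a single stage the ambient semisimple group $\mathbf{H}_{\text{der}}$ is fixed, and by Proposition \ref{prop: main prop reduction} C2 combined with Proposition \ref{prop: dim increase}, each application of (iv) produces a new principal term whose nilpotent $\mathbf{H}_{\text{der}}$-orbit has strictly larger dimension (or else the algorithm breaks out of (iv) by producing either a non-nilpotent principal term or a first-kind connection). Since nilpotent orbit dimensions are even and bounded above by $\dim(\mathbf{H}_{\text{der}})$, a single stage contains at most $\lfloor \dim(\mathbf{H}_{\text{der}})/2 \rfloor$ applications of (iv).

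I next bound the number of stages via the dimension drop at each descent. When operation (i) passes from $\mathbf{G}$ to $\mathbf{H} = Z_{\mathbf{G}}(V)$ for $V \neq 0$ semisimple, $\mathbf{H}$ is a proper Levi subgroup, so $\dim(\mathbf{H}) \leq \dim(\mathbf{G}) - 2$ (at least one root and its negative are excluded from $\mathfrak{h}$), and the connected center of $\mathbf{H}$ has dimension at least $1$ (it contains the non-trivial $1$-parameter subgroup generated by the semisimple element $V$). Hence $\dim(\mathbf{H}_{\text{der}}) \leq \dim(\mathbf{G}_{\text{der}}) - 3$. Letting $D = \dim(\mathbf{G}_{\text{der}})$, the algorithm therefore has at most $\lfloor D/3 \rfloor + 1$ stages, and the stage indexed by $j$ operates in a group of derived dimension at most $D - 3j$. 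Multiplying gives a total contribution from (iv) of at most $\prod_{j=0}^{\lfloor D/3 \rfloor} (4\,\text{hgt}(\mathfrak{g})+2)^{\lfloor (D-3j)/2 \rfloor}$.

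For the final factor, the terminal derived group $\mathbf{H}_{\text{der}}$ is the centralizer of a finite collection of pairwise commuting semisimple elements of $\mathfrak{g}$ (each successive semisimple element $V$ used by (i) lies in the Lie algebra of the current centralizer, so automatically commutes with the previous ones). Lemma \ref{lemma: index of centralizers} then yields $J(\mathbf{H}_{\text{der}}) \leq \text{hgt}(\mathfrak{g})^{2R-2} \cdot J(\mathbf{G}_{\text{der}})$, and since $I(\mathbf{H}_{\text{der}}) \leq J(\mathbf{H}_{\text{der}})$, the ramification from (vi) is bounded by $\text{hgt}(\mathfrak{g}) \cdot J(\mathbf{H}_{\text{der}}) \leq \text{hgt}(\mathfrak{g})^{2R-1} \cdot J(\mathbf{G}_{\text{der}})$. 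Multiplying by the factor of $2$ from (iii) and the product over stages assembles the announced bound. The main obstacle I anticipate is the careful bookkeeping of the dimension drop by $3$ at each descent via operation (i), together with verifying that the commuting semisimple elements collected along the way are genuinely in the position required to invoke Lemma \ref{lemma: index of centralizers}; the remaining steps amount to routine multiplication of the per-operation bounds.
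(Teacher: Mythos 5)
Your proposal is correct and follows essentially the same route as the paper's proof: trace Algorithm \ref{algor: reduction reductive}, note that only operations (iii), (iv), (vi) ramify, bound each application of (iv) by $4\,\text{hgt}(\mathfrak{g})+2$ with at most $\left\lfloor \tfrac{1}{2}\dim(\mathbf{H}_{\text{der}})\right\rfloor$ uses per stage, use the drop $\dim(\mathbf{H}_{\text{der}})\leq \dim(\mathbf{G}_{\text{der}})-3$ at each descent via (i), and control the final factor from (vi) by Lemma \ref{lemma: index of centralizers}. The bookkeeping and the resulting bound match the paper's argument.
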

\begin{proof}
We have to keep track of how much ramification is needed to perform each of the steps in Algorithm \ref{algor: reduction reductive}. Recall our six operations:\vspace{-0.25cm}
\begin{enumerate}[(i)]
    \item Apply Lemma \ref{lemma: not nilpotent case}. No ramification is needed for this operation, as is apparent from the proof of the lemma.
    \item Apply Lemma \ref{lemma: nilpotent case aligned}. No ramification is needed for this operation. This also follows directly from the proof of the lemma.
    \item Apply Proposition \ref{prop: main prop reduction} Case 1. We need to pass to a $2$-cover.
    \item Apply Proposition \ref{prop: main prop reduction} Case 2. We need to pass to a $2b$-cover, where $b$ is such that $b \delta \in \mathbb{Z}$. By Remark \ref{remark: ramification bound key lemma}, we know that we can choose $b\leq 2\Lambda -1 \leq 2\text{hgt}(\mathfrak{g}) +1$.
    \item Find the canonical form of a connection in a torus. No ramification is needed to perform this operation, by the proof of Proposition \ref{prop:tori regular}.
    \item Find the canonical form for a connection of the first kind (as in Theorem \ref{thm:semisimple regular}). By Lemma \ref{lemma: ramification bound regular semisimple}, we can perform this operation after passing to a $b$-cover with $b \leq \text{hgt}(\mathfrak{g}) \cdot I(\mathbf{G})$.
\end{enumerate}
\vspace{-0.25cm}
We know that operations (iii) and (vi) will be used only once, at the end of the algorithm. This gives us a factor of $2\, \text{hgt}(\mathfrak{g}) \cdot I(\mathbf{H}_{\text{der}})$, where $\mathbf{H}$ is the centralizer $Z_{\mathbf{G}_{der}}\left(\{D_1, D_2, ..., D_l\}\right)$ of a finite set of pariwise commuting semisimple elements $D_i$ in $\mathfrak{g}_{der}$. Since $I(\mathbf{H}_{der}) \leq J(\mathbf{H}_{der})$, this is bounded by $2\, \text{hgt}(\mathfrak{g}) \cdot J(\mathbf{H}_{\text{der}})$. By Lemma \ref{lemma: index of centralizers} we known that $J(\mathbf{H}_{\text{der}}) \leq \text{hgt}(\mathfrak{g})^{2R-2} \cdot J(\mathbf{G}_{\text{der}})$. This yields the first factor in the bound above.

We are now left to count the amount of times that we need to apply operation (iv) in our algorithm. Each time we apply it we need to pass to a cover of ramification at most $4\, \text{hgt}(\mathfrak{g}) + 2$. By the remark after the proof of Theorem \ref{thm: reduction conn reductive}, we need to apply operation (iv) at most $\left \lfloor{ \frac{1}{2} \text{dim}(\mathbf{G}_{\text{der}})} \right \rfloor$ times before we are in the case when $A_r$ is not nilpotent. We therefore pick up a ramification of at most $\left( 4\, \text{hgt}(\mathfrak{g}) +2 \right)^{\left \lfloor{} \frac{1}{2}\text{dim}(\mathbf{G}_{\text{der}}) \right \rfloor}$. After that we change our group.

We can apply operation (i) and split off the central part in order to pass to a proper semisimple subgroup $\mathbf{H}_{der} \vcentcolon = \left(Z_{\mathbf{G}}\left(\,(A_r)_s \,\right)\right)_{\text{der}}$. Notice that $\text{dim}(\mathbf{H}_{der}) \leq \text{dim}(\mathbf{G}_{\text{der}})-3$, because we are removing at least two root spaces (positive and negative pair) and the nontrivial central torus of the centralizer $\mathbf{H}$. Now we start all over again. We know that we need to apply operation (iv) at most $\left \lfloor{ \frac{1}{2} \left(\text{dim}(\mathbf{G}_{\text{der}})  -3 \right)} \right \rfloor$-many times until $A_r$ is not nilpotent. So we pick up a ramification of at most $\left( 4\, \text{hgt}(\mathfrak{g}) +2 \right)^{\left \lfloor{ \frac{1}{2} \left(\text{dim}(\mathbf{G}_{\text{der}})  -3 \right)} \right \rfloor}$. Iterating this procedure, we get the product appearing in the bound above.
\end{proof}
\begin{remark}
In terms of dimension, the right hand side is $J(\mathbf{G}_{\text{der}}) \cdot e^{O\left(\text{dim}(\mathbf{G}_{\text{der}})^2 \, \text{log} \,\text{dim}(\mathbf{G}_{\text{der}}) \right)}$.
\end{remark}
We proceed to establish a quantitative refinement of Theorem \ref{thm: reduction conn reductive}. It essentially follows from keeping track of some indices in the operations for the algorithm above. It makes sense once we know that the irregular part of a canonical form is unique up to conjugacy, as stated in Theorem \ref{thm: uniqueness reduction arbitrary} below. The following result over $\mathbb{C}$ can already be found in the work of Babbitt and Varadarajan \cite[9.7]{Babbitt.varadarajan.formal}.
\begin{prop}[Determinacy for the Irregular part of the Canonical Form] \label{prop: determinacy.irregular}
Let $\mathbf{G}$ be connected reductive. Let $A= \sum_{j=r}^{\infty} A_j t^j$ be a connection in $\mathfrak{g}_{F}$. The irregular part of the canonical form of $A$ depends only on $A_{r+m}$ for $0 \leq m < \left(\text{hgt}(\mathfrak{g})+1\right)(|r|-1)$.
\end{prop}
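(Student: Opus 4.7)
The plan is to run through Algorithm \ref{algor: reduction reductive} and track, at each stage, which coefficients of the original connection $A$ can propagate into the irregular part of the canonical form. I will argue by strong induction on the number of applications of Case 2 of Proposition \ref{prop: main prop reduction} needed to reach canonical form. The starting observation, from Example \ref{example: regular nilp}, is that $\Lambda(A_r)\leq\text{hgt}(\mathfrak{g})+1$ for any nilpotent leading term, so the bound $\Lambda(|r|-1)$ appearing in Proposition \ref{prop: main prop reduction} is dominated by $(\text{hgt}(\mathfrak{g})+1)(|r|-1)$. The base case of the induction is when no Case 2 application is needed, meaning $A$ is already regular (or becomes first kind directly via Case 1), in which case its canonical form has no irregular part and the claim is vacuous.

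The inductive step is the heart of the argument. Following the computation in the proof of Proposition \ref{prop: main prop reduction}, after applying Lemma \ref{lemma: nilpotent case aligned} (which only uses $A_{r+m}$ for $m<\Lambda(|r|-1)$) and the shearing $t^{-b\delta H}$ to the $2b$-lift $\tilde A$, we obtain $B=t^{-b\delta H}\cdot\tilde A$ of order $r'=2br+2b\delta+2b-1$. Decomposing each $A_{r+m}$ into $\text{ad}(H)$-weight spaces $\mathfrak{g}_w$ with $|w|\leq 2(\Lambda(A_r)-1)$, one sees from the formula in the proof that each coefficient $B_{r'+k}$ is a $\mathbf{k}$-linear combination of components of $A_{r+m}$ with $m=\delta(1+w/2)+k/(2b)$. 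Using $1+w/2\leq\Lambda(A_r)\leq\text{hgt}(\mathfrak{g})+1$, we get $m\leq\delta(\text{hgt}(\mathfrak{g})+1)+k/(2b)$. Applying the inductive hypothesis to $B$ (now viewed as a connection over $F_{2b}$), only those $B_{r'+k}$ with $0\leq k<(\text{hgt}(\mathfrak{g})+1)(|r'|-1)$ influence the irregular part of its canonical form. Since $|r'|-1=2b(|r|-1-\delta)$, this gives $k/(2b)<(\text{hgt}(\mathfrak{g})+1)(|r|-1-\delta)$, whence
\[ m<(\text{hgt}(\mathfrak{g})+1)\bigl(\delta+(|r|-1-\delta)\bigr)=(\text{hgt}(\mathfrak{g})+1)(|r|-1),\]
closing the induction for Case 2.

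The remaining bookkeeping concerns the other operations. When $A_r$ is not nilpotent, Lemma \ref{lemma: not nilpotent case} produces $x\cdot A\in\mathfrak{g}_V(F)$ of the same order $r$, where $V=(A_r)_s$; the explicit iterative construction of $x$ in that proof shows that $(x\cdot A)_{r+N}$ depends on $A_{r+m}$ only for $0\leq m\leq N$, and the centralizer $Z_{\mathbf{G}}(V)$ is a Levi subgroup whose root system is a sub-system of $\Phi$, so $\text{hgt}(Z_{\mathbf{G}}(V))\leq\text{hgt}(\mathfrak{g})$ and the inductive bound is preserved when we recurse. The main obstacle will be verifying that the propagation of dependencies through the $2b$-lift and the ramified cover $F_{2b}$ does not inflate the effective range of $F$-coefficients needed; once the identification of the formula in the proof of Proposition \ref{prop: main prop reduction} is set up carefully, the computation above closes the bound. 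Termination of the algorithm, guaranteed by Proposition \ref{prop: dim increase}, ensures the induction is well-founded and yields the desired determinacy.
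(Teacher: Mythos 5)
Your argument is essentially the paper's: you track how coefficients propagate through Algorithm \ref{algor: reduction reductive}, and your key computation for operation (iv) is exactly the one in the paper's proof of Proposition \ref{prop: determinacy.irregular} --- the relation $m=\delta\bigl(1+\tfrac{1}{2}\beta(H)\bigr)+k/(2b)$ together with $\tfrac{1}{2}\beta(H)+1\le\Lambda(A_r)\le\text{hgt}(\mathfrak{g})+1$ and $|r'|-1=2b(|r|-1-\delta)$ is precisely the ``algebraic manipulation'' invoked there (and you in fact state the exponent correctly, where the paper's displayed formula drops a $2b\delta$). Your handling of operations (i)--(ii) via the triangular dependence of the exponential gauge transformations, and of the shrinking of $\text{hgt}$ when passing to centralizers, also matches the paper.

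There is, however, one genuine flaw: the base case as you state it is false. Zero applications of Case 2 does \emph{not} mean the canonical form has no irregular part. If $A_r$ has nonzero semisimple part and $r<-1$, the algorithm only ever uses operations (i), (v), (vi) (recursing into centralizers), no shearing occurs, and yet by Proposition \ref{prop: semisimple.prinlevel} the principal level is $r$, so the irregular part is nonzero; it is produced by the torus truncation step, operation (v), which your proposal never discusses. So the claim ``its canonical form has no irregular part and the claim is vacuous'' breaks exactly in the case where the irregular part comes for free. The repair is easy and is what the paper does: operation (v) outputs the truncation $\sum_{j=r}^{-1}A_j\,t^j$, whose irregular part involves only $A_{r+m}$ for $0\le m\le |r|-2<\bigl(\text{hgt}(\mathfrak{g})+1\bigr)(|r|-1)$, and this combines with the triangular dependence you already noted for operation (i). Relatedly, your parenthetical that Lemma \ref{lemma: nilpotent case aligned} ``only uses $A_{r+m}$ for $m<\Lambda(|r|-1)$'' is loose: the alignment changes all coefficients; what you need (and what the paper records) is that the $m$-th coefficient of its output depends only on input coefficients of index $\le m$, so the determinacy window is preserved.
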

\begin{proof}
It suffices to check the steps in Algorithm \ref{algor: reduction reductive}. In some steps we replace the group $\mathbf{G}$ by a proper subgroup (either a centralizer or the derived subgroup). This can only decrease the quantity $\left(\text{hgt}(\mathfrak{g})+1\right)(|r|-1)$, so we can safely ignore these changes of groups. We are left to study the effect of operations (i)-(vi) in Algorithm \ref{algor: reduction reductive}.

The last operation (vi) has no effect on the irregular part of the connection, so there is nothing to do here. Step (v) takes a connection $A = \sum^{\infty}_{j = r} A_j \, t^j$ and outputs its truncation $A = \sum^{-1}_{j = r} A_j \, t^j$ (see the proof of Proposition \ref{prop:tori regular}). The output is therefore determined by the coefficients given in the statement of the proposition.

Step (iii) outputs a connection with no irregular part. Notice that in Proposition \ref{prop: main prop reduction}  we can determine whether we are in Case 1 (i.e. when we have to perform Step (iii)) based on the value of $\delta$. This depends only on the $A_{r+m}$ for $0 \leq m < \Lambda\left(A_r\right)(|r|-1)$. Since $\Lambda\left(A_r\right) \leq \text{hgt}(\mathfrak{g}) +1$ by Example \ref{example: regular nilp}, this case can be determined by the coefficients provided.

For the remaining operations (i), (ii) and (iv), we start with a given connection $A$ with lowest coefficient $A_r$ and output an irregular connection $B$ with lowest coefficient $B_{r'}$. The proposition will follow if we can prove that for each of these operations the coefficients 
$B_{r' +m}$ for $0 \leq m < \left(\text{hght} +1\right)\left(|r'| - 1\right)$ are completely determined by the coefficients $A_{r+m}$ for $0 \leq m < \left(\text{hgt}(\mathfrak{g})+1\right)(|r|-1)$.

Operations (i) and (ii) are very similar. In this case we have $r = r'$.  Let $m$ be an integer. From the proofs of Lemma \ref{lemma: not nilpotent case} and Lemma \ref{lemma: nilpotent case aligned}, it follows that $B_m$ is determined by $A_j$ for $j \leq m$.  So we are done with these operations.

We are left with operation (iv). Recall from the proof of Proposition \ref{prop: main prop reduction} Case 2 that we have
\begin{align*}
    B = t^{-b \delta H} \cdot \tilde{A} & = \; \; 2b \sum_{m=0}^{\infty} \text{Ad}(t^{-b \delta H}) A_{r+m} \, t^{2b(r+m)+2b-1} \; - \;b\delta Ht^{-1}
    \end{align*}
    The term $b\delta Ht^{-1}$ is determined by the knowledge of $\delta$ and $H = \left(A_r)\right)_s$. 
    For the infinite sum, we can write the root decompositions $A_{r+m} = \sum_{\beta \in \Phi } A^{\beta}_{r+m}$ and use that $\text{Ad}(t^{-b \delta H}) \, A^{\beta}_{r+m} = t^{ -b \delta \beta(H)} \, A^{\beta}_{r+m}$ in order to get
    \begin{align*}
     \text{Ad}(t^{-b \delta H}) A^{\beta}_{r+m} \, t^{2b(r+m)+2b-1} \; = A^{\beta}_{r+m} \, t^{-b \delta \beta(H)+2b(r+m)+2b -1} = A^{\beta}_{r+m} \, t^{r' +2bm -b \delta \beta(H)}
    \end{align*}
By Example \ref{example: regular nilp} we know $\beta\left(H\right) \leq 2 \, \text{hgt}(\mathfrak{g})$. Suppose that a positive integer $m$ satisfies $2bm -b \delta \beta(H) < \left(\text{hgt}(\mathfrak{g})+1\right)(|r'|-1)$. Some algebraic manipulations show that $m < \left(\text{hght} +1\right)\left(|r| - 1\right)$. So indeed the coefficients $B_{r' +m}$ for $0 \leq m < \left(\text{hght} +1\right)\left(|r'| - 1\right)$ are completely determined by the coefficients $A_{r+m}$ for $0 \leq m < \left(\text{hgt}(\mathfrak{g})+1\right)(|r|-1)$.
\end{proof}

\begin{remark}
We can think of Proposition \ref{prop: determinacy.irregular} as a continuity statement. It says that a small perturbation of the original connection will not alter the irregular part of its canonical form. This is analogous to the finite determinacy theorem for analytic singularities, as in \cite{djong.localanalytic} Theorem 9.4 in page 313.
\end{remark}
One can obtain a similar continuity statement for the residue of the canonical connection (i.e. the coefficient of $t^{-1}$ in the canonical form). However the explicit bound for the number of terms needed is complicated and not very illuminating. We refrain from including a formula for the bound.
\begin{prop} \label{prop: determinacy everything} Let $\mathbf{G}$ be connected reductive and let $A \in \mathfrak{g}_F$ be a connection. There exist a positive integer $n$ such that all connections $C \in \mathfrak{g}_F$ satisfying $C \equiv A \; (mod \; t^{n})$ are $\mathbf{G}(\overline{F})$-gauge equivalent to $A$.
\end{prop}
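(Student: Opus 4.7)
The plan is to argue by induction along the steps of Algorithm \ref{algor: reduction reductive}, establishing a ``finite determinacy'' statement for each elementary operation. The target is an integer $n$ such that if $C \equiv A \pmod{t^n}$ then $C$ can be put into the same canonical form as $A$, hence is $\mathbf{G}(\overline{F})$-gauge equivalent to $A$.

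First, by Proposition \ref{prop: determinacy.irregular}, the irregular part of the canonical form of $A$ is already determined by the coefficients $A_{r+m}$ for $0 \leq m < (\text{hgt}(\mathfrak{g}) + 1)(|r|-1)$. Choosing $n$ large enough that this range lies in $[r,n)$ ensures that $A$ and $C$ have the same irregular canonical part $B_{\text{irr}}$. Running the reduction algorithm on each of $A$ and $C$, we obtain gauge transformations $x_A, x_C \in \mathbf{G}(F_b)$ (with a uniform $b$ by Proposition \ref{prop: ramification reduction reductive}) producing connections of the form $x_A \cdot A = B_{\text{irr}} + R_A$ and $x_C \cdot C = B_{\text{irr}} + R_C$, where $R_A,R_C$ are regular connections of the first kind in $\mathfrak{h}_{F_b}$, with $\mathfrak{h} = \text{Lie}(Z_{\mathbf{G}}(B_{\text{irr}}))$ connected reductive. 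Applying Remark \ref{remark: determinacy terms regular semisimple} to $\mathbf{H}_{\text{der}}$ together with Proposition \ref{prop:tori irregular} for the center of $\mathbf{H}$, there is an integer $N$ such that $R_A \equiv R_C \pmod{t^N}$ forces $R_A$ and $R_C$ to have $\mathbf{H}(\overline{F})$-gauge equivalent canonical forms, hence gauge equivalent over $\mathbf{G}(\overline{F})$.

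The remaining task is to translate $A \equiv C \pmod{t^n}$ into $R_A \equiv R_C \pmod{t^N}$ for $n$ sufficiently large. For this one verifies, for each of the six operations in Algorithm \ref{algor: reduction reductive}, a continuity property: the truncation modulo $t^{N'}$ of the output of that operation is determined by the truncation modulo $t^{M'}$ of its input, with $M'$ a computable function of $N'$. For operations (i), (ii), (iii), (v) this follows directly from the recursive construction of the gauge transformations (the change of trivialization agrees with the identity modulo $t^{M'-r}$, so conjugation preserves truncation bounds). For operation (vi) this is Remark \ref{remark: determinacy terms regular semisimple}. Since the algorithm halts in at most $\lfloor \tfrac12 \dim \mathbf{G}_{\text{der}}\rfloor$ shearing steps and a bounded number of ``centralizer reductions'' (each strictly dropping the dimension of the ambient group), iterating these continuity statements composes to an explicit $n$.

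The main technical obstacle is verifying the continuity property for the shearing transformation of Proposition \ref{prop: main prop reduction} Case 2 (operation (iv)). Here the pole order jumps from $r$ to $r' = 2br + 2b\delta + 2b - 1$, the connection is replaced by its $2b$-lift, and then conjugated by $t^{-b\delta H}$, which rescales each root component by $t^{-b\delta\beta(H)}$. The bookkeeping in the last paragraph of the proof of Proposition \ref{prop: determinacy.irregular} already handles the lowest-order coefficient of the output; what is needed is essentially the same calculation carried out for a truncation of arbitrary order. Using $|\beta(H)| \leq 2\,\text{hgt}(\mathfrak{g})$ as in that proof, one gets that an input truncation modulo $t^{M'}$ controls the output modulo $t^{N'}$ for an explicit $M'(N')$, which is the key input for composing the finitely many operations to produce the desired $n$.
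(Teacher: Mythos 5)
Your proposal is correct and follows essentially the same route as the paper: track, operation by operation through Algorithm \ref{algor: reduction reductive}, how a truncation of the input determines a truncation of the output (with the shearing step handled by the same $\beta(H)\leq 2\,\text{hgt}(\mathfrak{g})$ bookkeeping as in Proposition \ref{prop: determinacy.irregular}), then finish with Remark \ref{remark: determinacy terms regular semisimple} and the torus computation for the final regular reduction, composing over the finitely many steps to extract $n$. The paper's proof is exactly this forward/backward propagation of determined coefficients, so there is nothing substantive to add.
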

\begin{proof}
In Algorithm \ref{algor: reduction reductive} we apply operations (iii) and (vi) exactly once at the very end. Suppose that we are given the coefficients $A_{r+m}$ for $0 \leq m \leq \left(\text{hgt}(\mathfrak{g})+1\right)(|r|-1)$ in a given connection $A$. Let $D$ be the output of applying one of the operations (i), (ii), (iv) or (v) to $A$. The proof of Proposition \ref{prop: determinacy.irregular} implies that we can determine the corresponding coefficients $D_{r'+m}$ for $0 \leq m \leq \left(\text{hgt}(\mathfrak{g})+1\right)(|r'|-1)$.

We can iterate this reasoning. Suppose that $D= \sum_{j = r'}^{\infty} D_j \, t^j$ is the ouput of the algorithm before applying the last two steps (operations (iii) and (vi)). Then we see that the coefficients $0 \leq m < \left(\text{hgt}(\mathfrak{g})+1\right)(|r'|-1)$ are completely determined by $A_{r+m}$ for $0 \leq m \leq \left(\text{hgt}(\mathfrak{g})+1\right)(|r|-1)$, where $A$ is the original connection we start with. The number of steps needed in the algorithm is also completely determined.

By the statement of Proposition \ref{prop: main prop reduction} Case 1, we will be able to determine the residue (i.e. the coefficient of $t^{-1}$) when we apply operation (iii) to $D$. The output of operation (iii) will be a connection of the first kind $B= \sum_{j=-1}^{\infty} B_j \, t^j$, and we can compute $k\left(B_{-1}\right)$ (see the remark after Lemma \ref{lemma:aligned}).

Now we need to determine the result of applying operation (vi) to $B$ as above. By Remark \ref{remark: determinacy terms regular semisimple}, this will be determined by $B_j$ for $-1 \leq j \leq k(B_{-1})$. We can then work backwards using an argument similar to the proof of Proposition \ref{prop: determinacy.irregular} to find a number $n$ big enough so that the coefficients $B_j$ for $-1 \leq j \leq k(B_{-1})$ are determined by $A_j$ for $r \leq j < n$.
\end{proof}
\section{Irregular connections for arbitrary linear algebraic groups} \label{section: general irregular}
We proceed as in the regular case. Just as before, we start with solvable groups.
\subsection{Irregular connections for solvable groups}
 We will again make use of the map $\pi : \text{Lie}(\mathbf{\mathbf{T}}) \cong X_{*}(\mathbf{T}) \otimes \mathbf{k} \longrightarrow X_{*}(\mathbf{T})\otimes\mathbb{Q}$ as in Proposition \ref{prop: reduction regular solvable}.
\begin{prop} \label{prop: reduction solvable}
Let $\mathbf{G}$ be of the form $\mathbf{T} \ltimes \mathbf{U}$, where $\mathbf{T}$ is a torus and $\mathbf{U}$ is unipotent. Let $A \in \mathfrak{g}_F$ be a formal connection. Write $A = A_{\mathbf{T}} + A_{\mathbf{U}}$ for  some $A_{\mathbf{T}} \in \text{Lie}(\mathbf{T})_F$ and $A_{\mathbf{U}} \in \text{Lie}(\mathbf{U})_F$. Let $b$ be a positive integer such that $b \, \pi\left(\, (A_{\mathbf{T}})_{-1} \, \right) \in X_{*}(\mathbf{T})$. 
Then there exists $x \in \mathbf{G}(F_b)$ such that $x \cdot A = \sum_{j=1}^{l} D_j \, t^{r_j} \, + \, t^{-1}\, C$ with:\vspace{-0.25cm}
\begin{enumerate}[(1)]
    \item $r_j \in \mathbb{\mathbb{Z}}_{<-1}$ for all $j$.
    \item $D_j \in \text{Lie}(\mathbf{T})$ for all $j$.
    \item $[D_j, \, C] =0$ for all $j$.
    \item $\pi(C_s) =0$.
\end{enumerate}
\vspace{-0.25cm}
\end{prop}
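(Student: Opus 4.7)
My plan is to parallel the proof of Proposition \ref{prop: reduction regular solvable}, adapting it to handle the extra irregular coefficients. The strategy is to first reduce the torus part into canonical form over $F_b$, and then induct on $\dim \mathbf{U}$ to reduce the unipotent part while respecting the action of the torus factor.

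The torus reduction goes as follows. By Proposition \ref{prop:tori irregular}(a), there exists $g \in \mathbf{T}(\mathcal{O})$ such that $g \cdot A_{\mathbf{T}} = \sum_{j=r}^{-1} (A_{\mathbf{T}})_j t^j$. Because the adjoint action of $\mathbf{T}$ preserves the decomposition $\mathfrak{g} = \text{Lie}(\mathbf{T}) \oplus \text{Lie}(\mathbf{U})$, the connection $g \cdot A$ still splits as a torus part plus a unipotent part in $\text{Lie}(\mathbf{U})_F$. Setting $\mu \vcentcolon = b \, \pi((A_{\mathbf{T}})_{-1}) \in X_{*}(\mathbf{T})$, the gauge transformation by $t^{\frac{1}{b}\mu} \in \mathbf{T}(F_b)$ contributes $\frac{1}{b}\mu \, t^{-1}$ to the torus part via the Maurer-Cartan form, shifting the residue to a new element $C_{\mathbf{T}}$ with $\pi(C_{\mathbf{T}}) = 0$. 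The torus part is then $B_{\mathbf{T}} = \sum_{j=r}^{-2}(A_{\mathbf{T}})_j t^j + t^{-1}C_{\mathbf{T}}$, already in the shape required by the proposition, and the unipotent part becomes $B_{\mathbf{U}} \vcentcolon = \text{Ad}(t^{\frac{1}{b}\mu})(g \cdot A_{\mathbf{U}}) \in \text{Lie}(\mathbf{U})_{F_b}$.

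Next I would induct on $\dim \mathbf{U}$ to find $u \in \mathbf{U}(F_b)$ with $u \cdot B = B_{\mathbf{T}} + t^{-1} C_{\mathbf{U}}$ for some $C_{\mathbf{U}} \in \text{Lie}(\mathbf{U})$ satisfying $[D_j, C_{\mathbf{U}}] = 0$ for every coefficient $D_j$ of $B_{\mathbf{T}}$; taking $C \vcentcolon = C_{\mathbf{T}} + C_{\mathbf{U}}$ then gives the desired canonical form. For the base case $\mathbf{U} \cong \mathbb{G}_a$, where $\mathbf{T}$ acts through a character $\chi$ and $Y$ generates $\text{Lie}(\mathbf{U})$, writing $u = \exp(\tilde u \, Y)$ with $\tilde u = \sum_k v_k \, t^{k/b} \in F_b$, and using the identity $\text{Ad}(\exp(\tilde u \, Y))D = D - \tilde u \, d\chi(D)\,Y$ for $D \in \text{Lie}(\mathbf{T})$, one obtains
\[u \cdot B \;=\; B_{\mathbf{T}} \,+\, \Bigl(B_{\mathbf{U}} \,-\, \tilde u \sum_{j=r}^{-1} d\chi(D_j)\, t^j \,+\, \tfrac{d \tilde u}{dt}\Bigr)\,Y.\]
Solving this first-order linear ODE coefficient by coefficient lets one choose $C_{\mathbf{U}}$ appropriately: the condition $\pi(C_{\mathbf{T}}) = 0$ forces $d\chi(C_{\mathbf{T}})$ to be either $0$ or $\mathbb{Q}$-irrational, so the operators $d\chi(C_{\mathbf{T}}) - k/b$ are invertible for every nonzero $k \in \mathbb{Z}$. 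A short case analysis, depending on whether $d\chi(D_j) = 0$ for $j < -1$ and whether $d\chi(C_{\mathbf{T}}) = 0$, gives the value of $C_{\mathbf{U}}$ (either $0$, or a scalar making the recurrence solvable, automatically satisfying $[C_{\mathbf{T}}, C_{\mathbf{U}}] = 0$). For the induction step I would pick a one-dimensional $\mathbf{T}$-stable subgroup $\mathbf{H} \cong \mathbb{G}_a$ inside the center of $\mathbf{U}$ together with a $\mathbf{T}$-equivariant scheme-theoretic section $s \vcentcolon \mathbf{U}/\mathbf{H} \longrightarrow \mathbf{U}$, exactly as in the proof of Proposition \ref{prop: reduction regular solvable}. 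Applying the induction hypothesis to the image $\overline{B}$ of $B$ in $\text{Lie}(\mathbf{T} \ltimes \mathbf{U}/\mathbf{H})_{F_b}$, then applying $s(\overline{u})$ and invoking the base case on the leftover $\mathbf{H}$-component, will complete the reduction.

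The main obstacle will be the solvability of the base-case ODE in the presence of both the residue $C_{\mathbf{T}}$ and the strictly irregular coefficients $D_j$ for $j < -1$, which contribute mixed terms that were absent in the regular case. The careful bookkeeping of which $d\chi(D_j)$ vanish is precisely what ensures that the surviving $Y$-coefficient has the form $c\, t^{-1}$ with $c$ commuting with every $D_j$. Once this case analysis is done, the induction step is formally identical to the regular solvable case.
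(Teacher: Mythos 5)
Your proposal is correct and follows essentially the same route as the paper's proof: reduce $A_{\mathbf{T}}$ via Proposition \ref{prop:tori irregular}(a) together with a shift by $t^{\pm\frac{1}{b}\mu}$ (take the sign so the residue becomes $(A_{\mathbf{T}})_{-1}-\pi\left((A_{\mathbf{T}})_{-1}\right)$), then induct on $\dim\mathbf{U}$ using a central $\mathbf{T}$-eigenline $\mathbf{H}\cong\mathbb{G}_a$ and a $\mathbf{T}$-equivariant section as in Proposition \ref{prop: reduction regular solvable}, with the $\mathbb{G}_a$ base case handled by the explicit recurrence. The case analysis you defer is exactly the paper's dichotomy and does work: if some irregular coefficient has $d\chi(D_j)\neq 0$ one solves the recurrence for the highest-index unknown (dividing by that nonzero constant, with vanishing initial terms) and kills the unipotent part entirely, so $C_{\mathbf{U}}=0$; if all irregular $d\chi(D_j)=0$ one is back in the regular solvable computation, and a residue survives only when $d\chi(C_{\mathbf{T}})=0$, which makes the required brackets with $C_{\mathbf{T}}$ and all $D_j$ vanish automatically.
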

\begin{proof}
The global structure of the proof is very similar to the argument in Proposition \ref{prop: reduction regular solvable}. Write $A_{\mathbf{T}} = \sum_{j = -q}^{\infty} t^{j} \, D_j$. By Proposition \ref{prop:tori irregular} (a), we can find $g \in \mathbf{T}(F)$ with $g \cdot A_{\mathbf{T}} = \sum_{j = -q}^{-1} t^j \, D_j$. Set $\mu \vcentcolon = b \, \pi\left(D_{-1}\right) \in X_{*}(\mathbf{T})$. Then we have $(t^{\frac{1}{b} \mu} \, g) \cdot A_{\mathbf{T}} = \sum_{j=-q}^{-2} D_j \, t^{j} \, + \,t^{-1} \, C_{\mathbf{T}}$ for some $C_{\mathbf{T}} \in \text{Lie}(\mathbf{T})$  with $\pi(C_{\mathbf{T}}) =0$.

Replace $A$ with $B \vcentcolon = (t^{\frac{1}{b} \, \mu} \, g) \cdot A$. We know that $B = \sum_{j=-q}^{-2} D_j \, t^j \, + \, t^{-1} \, C_{\mathbf{T}} + B_{\mathbf{U}}$ for some $B_{\mathbf{U}} \in \text{Lie}(\mathbf{U})_{F_b}$. By lifting to the $b$-ramified cover, we can assume that $B_{\mathbf{U}} \in \text{Lie}(\mathbf{U})_F$. We claim that we can find $u \in \mathbf{U}(F)$ such that $u \cdot B = \sum_{j =-q}^{-2} D_j \, t^j \, + \, t^{-1}\, C_{\mathbf{T}} + t^{-1} \, C_{\mathbf{U}}$ with $C_{\mathbf{U}} \in \text{Lie}(\mathbf{U})$ and $[C_{\mathbf{T}}, C_{\mathbf{U}}] = [D_j, C_{\mathbf{U}}] =0$ for all $j$. We will show this by induction on the dimension of $\mathbf{U}$.

The base case is $\mathbf{U} = \mathbb{G}_a$. Then, $\mathbf{T}$ acts on $\mathbf{U}$ by a character $\chi : \mathbf{T} \longrightarrow \mathbb{G}_m$. For $u = \sum_{j=r}^{\infty} u_j \, t^j \in \mathbf{U}(F)$, we have
\[u \cdot B = t^{-1}\, C_{\mathbf{T}} \, + \, B_{\mathbf{U}} \, - \, \sum_{j=r-q}^{\infty} \left[\left( d\chi(C_{\mathbf{T}}) -j\right) u_j  + \sum_{i = 2}^{q} d\chi(D_i) u_{j+i-1}\right]\, t^{j-1}   \]
We have two cases\vspace{-0.25cm}
\begin{enumerate}[(1)]
    \item Suppose that $d\chi(D_j) \neq 0$ for some $j$. Then, we can solve the recurrence 
    \[\left(d\chi(C_{\mathbf{T}}) -j\right) u_j  + \sum_{i = 2}^{q} d\chi(D_i) u_{j+i} = B_{j-1}\]
    with initial values $u_j = 0$ for $j \ll 0$. This yields an element $u \in \mathbf{U}(F)$ with $u \cdot B = \sum_{j = -q}^{-2} D_j \, t^j \, + \, t^{-1} \, C_{\mathbf{T}}$.
    \item Suppose that $d\chi(D_j) = 0$ for all $j$. The argument for the base case in Proposition \ref{prop: reduction regular solvable} shows that there is an element $u \in \mathbf{U}(F)$ such that $u \cdot B = \sum_{j=-q}^{-2} D_j \, t^j \, + \, t^{-1} \, C_{\mathbf{T}} \,+ \,t^{-1} \,C_{\mathbf{U}}$ for some $C_{\mathbf{U}} \in \text{Lie}(\mathbf{U})$ satisfying $[C_{\mathbf{T}}, C_{\mathbf{U}}] = 0$. Notice that we have $[D_j, \, C_{\mathbf{U}}] = d\chi(D_j)\, C_{\mathbf{U}} = 0$ by assumption. So we are done in this case.
\end{enumerate}
\vspace{-0.25cm}
This concludes the proof of the base case.

Let's proceed with the induction step. We can decompose the action of the split torus $\mathbf{T}$ on the vector space $Z_{\mathbf{U}}$ into one-dimensional spaces. Let $\mathbf{H} \cong \mathbb{G}_a \leq Z_{\mathbf{U}}$ be one of these eigenspaces. Let $s$ be a $\mathbf{T}$-equivariant section of the morphism of schemes $\mathbf{U} \longrightarrow \mathbf{U} / \, \mathbf{H}$ as in the proof of Proposition \ref{prop: reduction regular solvable}.

Let $\overline{B}$ be the image of $B$ in the quotient $\text{Lie}(\mathbf{U} / \, \mathbf{H})_F$. By the induction hypothesis, we can find $\overline{u} \in \mathbf{U} / \, \mathbf{H} (F)$ such that $\overline{u} \cdot \overline{B} = \sum_{j = -q}^{-2} D_j \, t^j \, + \, t^{-1} \, C_{\mathbf{T}} + t^{-1} \, \overline{E}$ for some $\overline{E} \in \text{Lie}\left(\mathbf{U}/ \, \mathbf{H}\right)$ with $[D_j, \overline{E}] = [C_{\mathbf{T}}, \overline{E}] =0$. We can then write
\[ s(\overline{u}) \cdot B = \sum_{j = -q}^{-2} D_j \, t^j \, + \, t^{-1}\, C_{\mathbf{T}} + t^{-1}\, ds(\overline{E}) + B_{\mathbf{H}}\]
for some $B_{\mathbf{H}} \in \text{Lie}(\mathbf{H})_F$. Since $s$ is $\mathbf{T}$-equivariant, we have $[ds(\overline{E}), D_j] = [ ds(\overline{E}), \, C_{\mathbf{T}}] = 0$. We can use the base case for $\mathbf{H}$ in order to conclude.
\end{proof}
We end with a generalization of Proposition \ref{prop: determinacy regular solvable}. We will use the same notation as in the regular case. Let $A = A^{\mathbf{T}} + A^{\mathbf{U}}$ be a formal connection with $A^{\mathbf{T}} \in \text{Lie}(\mathbf{T})_F$ and $A^{\mathbf{U}} \in \text{Lie}(\mathbf{U})_F$. Write $A^{\mathbf{T}} = \sum_{j = -q}^{-1} A^{\mathbf{T}}_j \, t^j \, + \,  \sum_{j = p}^{\infty} A^{\mathbf{T}}_j \, t^j$ for some $q, p \geq 0$. Also, write $A^{\mathbf{U}} = \sum_{j = m}^{\infty} A^{\mathbf{U}}_j \, t^j$.
\begin{prop} \label{prop: determinacy irregular solvable}
Keep the same notation as above. Assume that $\mathbf{U}$ has nilpotency class $n$.\vspace{-0.25cm}
\begin{enumerate}[(i)]
    \item Suppose that $m > L-1$. Then there exists $x \in \mathbf{G}(\mathcal{O})$ such that $x \cdot A = \sum_{-q}^{-1} A^{\mathbf{T}}_j \, t^j$. More precisely, there exist $x_{\mathbf{T}} \in \mathbf{T}(\mathcal{O})$ with $x_{\mathbf{T}} \equiv 1_{\mathbf{T}} \, \left(mod \; t^{p+1}\right)$ and $x_{\mathbf{U}} \in \mathbf{U}(\mathcal{O})$ with $x_{\mathbf{U}} \equiv 1_{\mathbf{U}} \, \left(mod \; t^{m+1}\right)$ such that $(x_{\mathbf{U}} x_{\mathbf{T}}) \cdot A = \sum_{-q}^{-1} A^{\mathbf{T}}_j \, t^j$.
    \item Suppose that $m \leq L-1$. Then the $\mathbf{G}(F)$-gauge equivalence class of $A$ is determined by the coefficients $A^{\mathbf{T}}_j$ for $-q \leq j < (n+1)(|m|-1) +L$ and $A^{\mathbf{U}}_j$ for $-q \leq j < n(|m|-1) +L$. More precisely, suppose that there is another connection $B$ and an integer $k \geq n(|m|-1)+L$ satisfying $A^{\mathbf{T}} \equiv B^{\mathbf{T}} \, \left( mod \; t^{k +|m| -1}\right)$ and $A^{\mathbf{U}} \equiv B^{\mathbf{U}} \, \left( mod \; t^k\right)$. Then, there exists $x \in \mathbf{G}(\mathcal{O})$ with $x \equiv 1 \, \left(mod \; t^{k - n|m| +n+ 1}\right)$ such that $x \cdot A = B$.
\end{enumerate}
\vspace{-0.25cm}
\end{prop}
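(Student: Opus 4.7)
The plan is to mirror the proof of Proposition \ref{prop: determinacy regular solvable} almost line by line, with the higher-order pole of $A^{\mathbf{T}}$ absorbed into the recurrences. As a first step, I would apply Proposition \ref{prop:tori irregular}(a) to obtain $x_{\mathbf{T}} \in \mathbf{T}(\mathcal{O})$ with $x_{\mathbf{T}} \cdot A^{\mathbf{T}} = \sum_{j=-q}^{-1} A^{\mathbf{T}}_j \, t^j$, observing that the uniqueness clause forces $x_{\mathbf{T}} \equiv 1 \pmod{t^{p+1}}$ since the positive part of $A^{\mathbf{T}}$ starts at $t^p$. Setting $C \vcentcolon= x_{\mathbf{T}} \cdot A$, we have $C = \sum_{j=-q}^{-1} A^{\mathbf{T}}_j t^j + C^{\mathbf{U}}$ where $C^{\mathbf{U}} \vcentcolon= \operatorname{Ad}(x_{\mathbf{T}}) A^{\mathbf{U}}$. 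Since $\mathbf{T}$ acts on the weight spaces $\mathfrak{u}_{\chi_i}$ via characters and $x_{\mathbf{T}}$ is close to the identity, we have $C^{\mathbf{U}} \equiv A^{\mathbf{U}} \pmod{t^{m+p+1}}$, so in particular $C^{\mathbf{U}} \equiv 0 \pmod{t^m}$ in case (i), and $C^{\mathbf{U}} \equiv B^{\mathbf{U}} \pmod{t^k}$ in case (ii) after an analogous adjustment of $B^{\mathbf{T}}$.

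The remaining task is to produce $x_{\mathbf{U}} \in \mathbf{U}(\mathcal{O})$ with the correct congruence properties and $x_{\mathbf{U}} \cdot C = \sum_{j=-q}^{-1} A^{\mathbf{T}}_j t^j$ (case (i)) or $x_{\mathbf{U}} \cdot C = B$ (case (ii)). I would induct on the nilpotency class of $\mathbf{U}$. For the base case $\mathbf{U} \cong \mathbb{G}_a^d$, decompose into weight spaces and reduce to a single character $\chi_i$. Writing $x_{\mathbf{U}} = \sum_{k \geq 0} u_k t^k$ and expanding $x_{\mathbf{U}} \cdot C$, one obtains a system in which the coefficient of $t^{\ell-1}$ involves $(k - d\chi_i(A^{\mathbf{T}}_{-1})) u_k$ (with $k = \ell$) together with lower terms coupling the $u_j$ via $d\chi_i(A^{\mathbf{T}}_{j-\ell})$ for $-q \leq j - \ell \leq -2$. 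The higher-pole coefficients $A^{\mathbf{T}}_{-q}, \ldots, A^{\mathbf{T}}_{-2}$ thus shift some of the equations but never enter the resonance condition; that condition vanishes exactly when $k = d\chi_i(A^{\mathbf{T}}_{-1}) \in \mathbb{Z}$, which forces $k = \tfrac{1}{b}\langle \mu, \chi_i \rangle \leq L$. In case (i) this is $\leq L \leq m$, so we may set $u_k = 0$; in case (ii) we arrange the same cancellation using the hypothesis $k \geq n(|m|-1) + L$, which bounds the resonant indices below the region where the truncation hypotheses on $A, B$ leave any freedom.

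For the inductive step I would pick a $\mathbf{T}$-equivariant section $s \colon \mathbf{U}/Z_{\mathbf{U}} \to \mathbf{U}$ (existing as in the proof of Proposition \ref{prop: reduction regular solvable}), apply the inductive hypothesis to the image $\overline{C}$ in $\operatorname{Lie}(\mathbf{T} \ltimes \mathbf{U}/Z_{\mathbf{U}})_F$ to obtain $\overline{x} \in \mathbf{U}/Z_{\mathbf{U}}(\mathcal{O})$ with the claimed congruence, and lift to $s(\overline{x})$; the discrepancy $s(\overline{x})\cdot C$ minus the desired target then lies in $\operatorname{Lie}(Z_{\mathbf{U}})_F$ and can be removed using the base case for the commutative group $Z_{\mathbf{U}}$. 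The main obstacle is the bookkeeping in the inductive step: when computing $s(\overline{x}) \cdot C = \operatorname{Ad}(s(\overline{x}))\,C + ds(\overline{x})\,s(\overline{x})^{-1}$ modulo a power of $t$, the bracket $[\log s(\overline{x}), A^{\mathbf{T}}]$ loses precision by the order $|m|$ of the pole of $A^{\mathbf{T}}$ at each level of the induction, which accounts for the $-n|m|$ term in the final congruence $x \equiv 1 \pmod{t^{k - n|m| + n + 1}}$. The role of the hypothesis $A^{\mathbf{T}} \equiv B^{\mathbf{T}} \pmod{t^{k + |m| - 1}}$ (slightly weaker than its analogue $t^{k+|m|}$ in the regular case) is to guarantee that this precision loss is exactly absorbed when passing from $A^{\mathbf{T}}$ to its $B^{\mathbf{T}}$-gauged form in the initial torus step; verifying this match is the most delicate part of the estimate, but it is a direct computation once the recurrence structure of the base case has been set up correctly.
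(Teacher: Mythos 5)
There is a genuine gap in your inductive step, and it is precisely the point the paper flags as the ``extra subtle twist'': with only a uniform congruence on the gauge transformation, the induction does not close. In your step you conjugate $C$ by $s(\overline{x})$ knowing only $s(\overline{x}) \equiv 1 \pmod{t^{N}}$ for a single exponent $N$; but the term $\sum_{j=-q}^{-1} t^{j}\,\mathrm{Ad}(s(\overline{x}))A^{\mathbf{T}}_j$ then is only controlled modulo $t^{N-q}$, where $q$ is the pole order of $A^{\mathbf{T}}$. You attribute the precision loss to ``the order $|m|$ of the pole of $A^{\mathbf{T}}$'', but $|m|$ is the pole order of $A^{\mathbf{U}}$; the pole of $A^{\mathbf{T}}$ has order $q$, which is not bounded in terms of $m$, $n$ or $L$, so neither the final congruence $x \equiv 1 \pmod{t^{k-n|m|+n+1}}$ nor the determinacy bounds (which do not deteriorate with $q$) can be reached along your route, and the hypothesis $A^{\mathbf{T}} \equiv B^{\mathbf{T}} \pmod{t^{k+|m|-1}}$ does not absorb a loss of size $q$. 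The paper's fix is to prove a \emph{stronger} statement by induction: using a $\mathbf{T}$-equivariant isomorphism of schemes $\psi_{\mathbf{U}}\colon \mathbf{U} \to \mathfrak{u}$ (Borel--Springer, Cor.\ 9.12), compatible with the central quotients, one writes $\mathbf{U} = \prod_i \mathbf{U}_{\chi_i}$ and demands the componentwise congruences $u_{\chi_i} \equiv 1 \pmod{t^{m+a_i}}$, where $a_i$ is the largest $j$ with $d\chi_i(A^{\mathbf{T}}_{-j}) \neq 0$. Since $\mathrm{Ad}(u_{\chi_i})$ fixes $A^{\mathbf{T}}_j$ whenever $-j > a_i$, and the components with $-j \le a_i$ are known modulo $t^{m+a_i} \mid t^{m-j}$, one gets $t^{j}\,\mathrm{Ad}(s(\overline{x}))A^{\mathbf{T}}_j \equiv t^{j}A^{\mathbf{T}}_j \pmod{t^{m}}$ uniformly in $j$; this weight-by-weight bookkeeping is exactly what compensates the arbitrarily deep pole of $A^{\mathbf{T}}$, and it is absent from your argument.

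Relatedly, your base case is not set up so as to produce these refined congruences, and your resonance analysis is off. When $a_i \ge 2$ the recurrence
\[
\bigl(d\chi_i(A^{\mathbf{T}}_{-1}) - j\bigr)u_j \;+\; \sum_{k=2}^{a_i} d\chi_i(A^{\mathbf{T}}_{-k})\,u_{j+k} \;=\; C^{\mathbf{U}}_{j-1}
\]
is genuinely of higher order, and the paper solves it \emph{for the top index} $u_{j+a_i}$, using that $d\chi_i(A^{\mathbf{T}}_{-a_i}) \neq 0$: no resonance condition arises at all in that case, and the natural initial condition $u_j = 0$ for $j \le m + a_i$ is what delivers the refined congruence $u_{\chi_i} \equiv 1 \pmod{t^{m+a_i}}$ needed upstairs. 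Your claim that the higher-pole coefficients ``never enter the resonance condition'' and that the only issue is $d\chi_i(A^{\mathbf{T}}_{-1}) \in \mathbb{Z}$ describes the case $a_i = 1$ only; for $a_i \ge 2$ solving for $u_j$ against the coefficient $d\chi_i(A^{\mathbf{T}}_{-1}) - j$ is the wrong move and, even where it works, it does not yield the componentwise vanishing that the inductive step requires. Your first (torus) step and the overall reduction to $Z_{\mathbf{U}}$ via a $\mathbf{T}$-equivariant section do match the paper, but without the strengthened, weight-indexed induction hypothesis the proof as proposed fails.
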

\begin{proof}
The proof is similar in spirit to the argument for Proposition \ref{prop: determinacy regular solvable}, but it involves an extra subtle twist to deal with the negative powers.\vspace{-0.25cm}
\begin{enumerate}[(i)]
    \item Just as in Proposition \ref{prop: determinacy regular solvable}, we can find $x_{\mathbf{T}} \in \mathbf{T}(\mathcal{O})$ with $x_{\mathbf{T}} \equiv 1_{\mathbf{T}} \, \left(mod \; t^{p+1}\right)$ such that
    \[ C \vcentcolon = x_{\mathbf{T}} \cdot A = \sum_{-q}^{-1} A^{\mathbf{T}}_j \, t^j \, + \, C^{\mathbf{U}} \]
    for some $C^{\mathbf{U}} \in \mathfrak{u}_{\mathcal{O}}$. Moreover we have $C^{\mathbf{U}} \equiv 0 \, \left(mod \; t^{m} \right)$. We claim that there exists $u \in \mathbf{U}(\mathcal{O})$ with $u \equiv 1_{\mathbf{U}} \, \left(mod \; t^{m+1} \right)$ such that $u \cdot C = \sum_{-q}^{-1} A^{\mathbf{T}}_j \, t^j$. This claim finishes the proof of part (i).
    
    In order to prove the claim, we will actually show something stronger. Let us fix some notation. By \cite{borel.springer} Corollary 9.12, there is a $\mathbf{T}$-equivariant map of $\mathbf{k}$-schemes $\psi_{\mathbf{U}}: \mathbf{U} \longrightarrow \mathfrak{u}$. We can define this map so that the following diagram commutes
    \[ \xymatrix{
\mathbf{U}  \ar[r] \ar[d]^{\psi_{\mathbf{U}}} & \mathbf{U} / \, Z_{\mathbf{U}} \ar[d]^{\psi_{\mathbf{U}/ \, Z_{\mathbf{U}}}} \\
\mathfrak{u} \ar[r] & \mathfrak{u}/ \, \mathfrak{z}} \]
    Here $Z_{\mathbf{U}}$ is the center of $\mathbf{U}$ and $\mathfrak{z} = \text{Lie}(Z_{\mathbf{U}})$. Notice that $Z_{\mathbf{U}}$ is just a direct sum of copies of $\mathbb{G}_a$. The corresponding map $\psi_{Z_{\mathbf{U}}}$ can be taken to be the usual identification of a vector space with its tangent space at the identity. By iterating, we can arrange so that we get a corresponding compatibility at each step of the upper central series of $\mathbf{U}$.
    
   Recall that we have a weight decomposition $\mathfrak{u} = \bigoplus_{i =1}^l \mathfrak{u}_{\chi_i}$. Via the isomorphism $\psi_{\mathbf{U}}$, we can get a decomposition $\mathbf{U} = \prod_{\chi_i} \mathbf{U}_{\chi_i}$ as a product of schemes. For $u \in \mathbf{U}(\mathbf{k})$, we will denote by $u_{\chi_i}$ the corresponding component in $\mathbf{U}_{\chi_i}$.
    
    For each $i$, define $a_i$ to be the biggest positive integer $j$ such that $d\chi_i\left(A^{\mathbf{T}}_{-j}\right) \neq 0$. If $d\chi_i\left(A^{\mathbf{T}}_{-j}\right) = 0$ for all $j>0$, we set $a_i = 1$. Then, we claim that we can find $u \in \mathbf{U}(\mathcal{O})$ with $u_{\chi_i} \equiv 1_{\mathbf{U}} \, \left(mod \; t^{m +a_i} \right)$ such that $u \cdot C = \sum_{-q}^{-1} A^{\mathbf{T}}_j \, t^j$. We will prove this stronger claim by induction on the nilpotency class of $\mathbf{U}$.
    
    For the base case $n=0$, we have $\mathbf{U} \cong \mathbb{G}_a^d$ for some $d$. By decomposing into one-dimensional $\mathbf{T}$-modules and looking at each coordinate, we can reduce to the case $d =1$. So we have a single weight space $\mathfrak{u}_{\chi_i}$. This case amounts to solving a recurrence as in the computation for the base case in Proposition \ref{prop: reduction solvable}. We want to find $u = \sum_{j =0}^{\infty} u_j \, t^j$ satisfying
    \[\left(d\chi_i(A^{\mathbf{T}}_{-1}) -j\right) u_j  + \sum_{k = 2}^{q} d\chi_i(A^{\mathbf{T}}_{-k}) u_{j+k} = C^{\mathbf{U}}_{j-1}\]
    By the definition of $a_i$, this is the same as
     \[\left(d\chi_i(A^{\mathbf{T}}_{-1}) -j\right) u_j  + \sum_{k = 2}^{a_i} d\chi_i(A^{\mathbf{T}}_{-k}) u_{j+k} = C^{\mathbf{U}}_{j-1}\]
     There are two different cases.\vspace{-0.25cm}
     \begin{enumerate}[(1)]
         \item If $a_i =1$, then the recurrence reduces to 
             \[\left(d\chi_i(A^{\mathbf{T}}_{-1}) -j\right) u_j  = C^{\mathbf{U}}_{j-1}\]
             The claim follows by the argument for the base case in Proposition \ref{prop: determinacy regular solvable}.
             \item Suppose that $a_i \neq 1$. We know that $d\chi_i(A^{\mathbf{T}}_{-a_i}) \neq 0$. We can solve the recurrence by rewriting
             \[d\chi_i(A^{\mathbf{T}}_{-a_i}) \, u_{j + a_i} = C^{\mathbf{U}}_{j-1} -\left(d\chi_i(A^{\mathbf{T}}_{-1}) -j\right) u_j  - \sum_{k = 2}^{a_i -1} d\chi_i(A^{\mathbf{T}}_{-k}) u_{j+k}\]
             Since $C^{\mathbf{U}}_j = 0$ for all $j \leq m-1$, we can set $u_j =0$ for all $j \leq m +a_i$. Then we can solve for the rest of the $u_j$ using the recursion formula above.
     \end{enumerate}
    Let's proceed with the induction step. Notice that $\mathfrak{z}$ is a direct sum of some one-dimensional $\mathbf{T}$-submodules of $\mathfrak{u}$. We can get an identification of $\mathfrak{u}/ \, \mathfrak{z}$ with the direct sum of some choice of remaining one-dimensional $\mathbf{T}$-submodules. This way we get a $\mathbf{T}$-equivariant inclusion $\mathfrak{u}/ \, \mathfrak{z} \hookrightarrow \mathfrak{u}$. We can get a $\mathbf{T}$-equivariant section $s: \mathbf{U}/ \, Z_{\mathbf{U}} \longrightarrow \mathbf{U}$ defined by the composition
    \[ s: \mathbf{U} / \, Z_{\mathbf{U}} \, \xrightarrow{ \; \psi_{\mathbf{U}/ \, Z_{\mathbf{U}}} \;} \, \mathfrak{u}/ \, \mathfrak{z} \, \xhookrightarrow{\; \; \; \;} \, \mathfrak{u} \, \xrightarrow{ \; \psi_{\mathbf{U}}^{-1} \;} \, \mathbf{U}  \]
    Let $\overline{C}$ be the image of $C$ in the quotient $\text{Lie}(\mathbf{T} \ltimes \mathbf{U} / \, Z_{\mathbf{U}})_{F_b}$. By the induction hypothesis, there exists $\overline{x} \in \mathbf{U} / \, Z_{\mathbf{U}}(\mathcal{O})$ such that $\overline{x}_{\chi_i} \equiv 1 \; \left(mod \; t^{m + a_i} \right)$ and $\overline{x} \cdot \overline{C} = \sum_{-q}^{-1} A^{\mathbf{T}}_j \, t^j$. By the $\mathbf{T}$-equivariance of $s$, we must then have $s(\overline{x}) \cdot C = \sum_{-q}^{-1} A^{\mathbf{T}}_j \, t^j \, + \, D_{Z_{\mathbf{U}}}$ for some $D_{Z_{\mathbf{U}}} \in \text{Lie}(Z_{\mathbf{U}})_{F}$. By definition
\[ s(\overline{x}) \cdot C \; = \; \sum_{-q}^{-1} t^j \text{Ad}(s(\overline{x})) A^{\mathbf{T}}_j \, + \, \text{Ad}(s(\overline{x})) C^{\mathbf{U}} \, + \, ds(\overline{x}) s(\overline{x})^{-1}  \]
Since $s(\overline{x}) \equiv s(\overline{x})^{-1} \equiv 1 \; \left(mod \; t^{m + 1} \right)$, it follows that $ds(\overline{x}) s(\overline{x})^{-1} \equiv 0 \, \left(mod \; t^{m+1}\right)$. Also $\text{Ad}(s(\overline{x})) C^{\mathbf{U}} \equiv C^{\mathbf{U}} \, \left(mod \; t^{m+1}\right)$, because by assumption $C_{\mathbf{U}} \in \mathfrak{u}_{\mathcal{O}}$. We are left to study  $\text{Ad}(s(\overline{x})) A^{\mathbf{T}}_j$.

Consider the map of $\mathbf{k}$-schemes $\varphi_j : \mathbf{U} \longrightarrow \mathfrak{u}$ given by $\varphi_j(u) \vcentcolon = \text{Ad}(u)A^{\mathbf{T}}_j - A^{\mathbf{T}}_j$. By construction $\varphi_j$ is $\mathbf{T}$-equivariant. This means that it must respect the decomposition into weight spaces. In other words, the $\chi_i$-coordinate of $\varphi_j(u)$ is given by $\varphi_j(u_{\chi_i})$. In particular, this means that
\[\text{Ad}(s(\overline{x})) A^{\mathbf{T}}_j = A^{\mathbf{T}}_j \,  + \, \sum_{i=1}^l \left( \, \text{Ad}(s(\overline{x})_{\chi_i}) A^{\mathbf{T}}_j - A^{\mathbf{T}}_j \,\right)\]
 We have that $\text{Ad}(s(\overline{x})_{\chi_i}) A^{\mathbf{T}}_j = A^{\mathbf{T}}_j$ whenever $d\chi_i\left(A^{\mathbf{T}}_j \right) = 0$. By definition this happens whenever $-j > a_i$. So we get
 \[\text{Ad}(s(\overline{x})) A^{\mathbf{T}}_j = A^{\mathbf{T}}_j \,  + \, \sum_{-j \leq a_i} \left( \, \text{Ad}(s(\overline{x})_{\chi_i}) A^{\mathbf{T}}_j - A^{\mathbf{T}}_j \,\right)\]
 Suppose that $-j \leq a_i$. By assumption $s(\overline{x})_{\chi_i} \equiv 1 \; \left(mod \; t^{m + a_i} \right)$, so in particular $s(\overline{x})_{\chi_i} \equiv 1 \; \left(mod \; t^{m - j} \right)$. Hence we have $\text{Ad}(s(\overline{x})_{\chi_i}) A^{\mathbf{T}}_j \, \equiv \, A^{\mathbf{T}}_j \, \left(mod \; t^{m-j}\right)$. The sum above becomes
 \[\text{Ad}(s(\overline{x})) A^{\mathbf{T}}_j \; \equiv \; A^{\mathbf{T}}_j \; \left(mod \; t^{m-j}\right)\]
Hence $t^j \, \text{Ad}(s(\overline{x}) A^{\mathbf{T}}_j \, \equiv \, t^j \, A^{\mathbf{T}}_j \; \left(mod \; t^{m}\right)$. We can put together all of the discussion above to conclude that
\[ s(\overline{x}) \cdot C \; \equiv \; \sum_{-q}^{-1} A^{\mathbf{T}}_j \, t^j \, + C^{\mathbf{U}} \; = \; C \; \left(mod \; t^{m}\right) \]
Therefore $D_{Z_{\mathbf{U}}} \equiv 0 \; \left(mod \; t^{m}\right)$. Now we can conclude by using the base case for $Z_{\mathbf{U}}$.
    \item The hypothesis implies that we have equality of singular parts $\sum_{j=-q}^{-1} B^{\mathbf{T}}_j \, t^j = \sum_{j=-q}^{-1} A^{\mathbf{T}}_j \, t^j$. The proof of Proposition \ref{prop:tori regular} shows that there exist $x_{\mathbf{T}} \in \mathbf{T}(\mathcal{O})$ with $x_{\mathbf{T}} \equiv 1_{\mathbf{T}} \, \left(mod \; t^{p+1}\right)$ such that $x_{\mathbf{T}} \cdot A^{\mathbf{T}} = B^{\mathbf{T}}$. Set $C \vcentcolon = x_{\mathbf{T}} \cdot A$. We have $C = B^{\mathbf{T}} \, + \, \text{Ad}(x_{\mathbf{T}}) A^{\mathbf{U}}$. Define $C^{\mathbf{U}} \vcentcolon = \text{Ad}(x_{\mathbf{T}}) A^{\mathbf{U}}$. We know that $C^{\mathbf{U}} \equiv A^{\mathbf{U}} \, \left(mod \; t^{k}\right)$, because $x_{\mathbf{T}}\equiv 1 \, \left(mod \; t^{k+|m|}\right)$ and $A^{\mathbf{U}} \in t^m \mathfrak{u}_{\mathcal{O}}$. Therefore $C^{\mathbf{U}} \equiv B^{\mathbf{U}} \ \left(mod \; t^{k}\right)$ by assumption.
    
    Let $s$, $\mathbf{U}_{\chi_i}$ and $a_i$ be defined as in part (i). We claim that there exists $u \in \mathbf{U}(\mathcal{O})$ with $u_{\chi_i} \equiv 1 \; \left(mod \; t^{k-n|m|+n+ a_i} \right)$ such that $u \cdot C = B$. This implies that $u \equiv 1 \, \left(mod \; t^{k-n|m|+n +1}\right)$, so this claim concludes the proof of part (ii). In order to prove the claim, we will induct on the nilpotency class of $\mathbf{U}$. The base case $n=0$ follows again from the explicit computation done in Proposition \ref{prop: reduction solvable}, we omit the details.
    
    Let's proceed with the induction step. Let $\overline{C}$ and $\overline{B}$ denote the images of $C$ and $B$ in the quotient $\text{Lie}(\mathbf{T} \ltimes \mathbf{U} / \, Z_{\mathbf{U}})_{F}$. By the induction hypothesis, there exists $\overline{x} \in \mathbf{U}/ \, Z_{\mathbf{U}}(\mathcal{O})$ with $\overline{x}_{\chi_i} \equiv 1 \; \left( mod \; t^{k-(n-1)|m| +n-1+ a_i} \right)$ such that $\overline{x} \cdot \overline{C} = \overline{B}$.  We can now write $s(\overline{x}) \cdot C = ds\left(\overline{B} \right) +E_{Z_{\mathbf{U}}}$ and $B = ds\left(\overline{B}\right) +K_{Z_{\mathbf{U}}}$ for some $E_{Z_{\mathbf{U}}}, F_{Z_{\mathbf{U}}} \in \text{Lie}(Z_{\mathbf{U}})_{F}$. By definition
\[ s(\overline{x}) \cdot C \; = \; \sum_{j = -q}^{\infty} t^j \text{Ad}(s(\overline{x})) B^{\mathbf{T}}_j \, + \, \text{Ad}(s(\overline{x})) C^{\mathbf{U}} \, + \, ds(\overline{x}) s(\overline{x})^{-1}  \]
Since $s(\overline{x}) \equiv 1 \; \left(mod \; t^{k -(n-1)|m| +n}\right)$, it follows that\\ $t^j \, \text{Ad}(s(\overline{x})) B^{\mathbf{T}}_j \, \equiv \, t^j \,B^{\mathbf{T}}_j \, \left(mod \; t^{k - (n-1)|m| +n}\right)$ for all $j \geq 0$. The same reasoning as in part (i) shows that $t^j \, \text{Ad}(s(\overline{x})) B^{\mathbf{T}}_j \, \equiv \, t^j \,B^{\mathbf{T}}_j \, \left(mod \; t^{k - (n-1)|m| + n-1}\right)$ for all $j <0$. Also we know that $\text{Ad}(s(\overline{x}) C^{\mathbf{U}} \, \equiv \, C^{\mathbf{U}} \, \left(mod \; t^{k -n|m| +n}\right)$, because $s(\overline{x}) \equiv 1 \; \left(mod \; t^{k -(n-1)|m| +n}\right)$ and $C_{\mathbf{U}} \in t^{m} \mathfrak{u}_{\mathcal{O}}$. We conclude that
\[ ds\left(\overline{B}\right) +E_{Z_{\mathbf{U}}} \; = \; s(\overline{x}) \cdot C \; \equiv \; B^{\mathbf{T}} \, + \, C^{\mathbf{U}} \; = \; C \; \left(mod \; t^{k-n|m|+n}\right)\]
Since $k \geq k-n|m|$, we have $C \equiv B \;\left(mod \; t^{k-n|m|}\right)$. It follows that $E_{Z_{\mathbf{U}}} \equiv K_{Z_{\mathbf{U}}} \; \left(mod \; t^{k-n|m|+n} \right)$. Now by the base case we can find $y \in Z_{\mathbf{U}}(\mathcal{O})$ with $y_{\chi_i} \equiv 1 \; \left(mod \; t^{k-n|m|+n+a_i}\right)$ such that $(y\, s(\overline{x}))\cdot C = B$. By the definition of $\mathbf{U}_{\chi_i}$ and its compatibility with the center, we can see that $(y \, s(\overline{x}))_{\chi_i} \equiv 1 \; \left(mod \; t^{k-n|m|+n + a_i}\right)$. The claim follows.
\end{enumerate}
\vspace{-0.5cm}
\end{proof}
\vspace{-0.5cm}
\subsection{Irregular connections for arbitrary linear algebraic groups} \label{subsection: canonical irregular general}
\begin{thm} \label{thm: reduction conn general} Let $\mathbf{G}$ be a connected linear algebraic group. Fix a Levi subgroup $\mathbf{L}$ and a maximal torus $\mathbf{T} \subset \mathbf{L}$. Let $A\in \mathfrak{g}_{\overline{F}}$ be a formal connection. Then there exists $x \in \mathbf{G}(\overline{F})$ such that $x \cdot A = \sum_{j=1}^{l} D_j \, t^{r_j} \, + \, t^{-1}\, C$ with\vspace{-0.25cm}
\begin{enumerate}[(1)]
    \item $r_j \in \mathbb{Q}_{<-1}$ for all $j$.
    \item $D_j \in \text{Lie}(\mathbf{T})$ for all $j$.
    \item $[D_j, \, C] =0$ for all $j$.
    \item $C_s \in \mathfrak{D}$.
    \item $[C_s, C] = 0$.
    \end{enumerate}
    \vspace{-0.25cm}
\end{thm}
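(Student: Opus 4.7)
The plan is to mimic the argument of Theorem \ref{thm:general regular} but using the irregular reductions available to us: Theorem \ref{thm: reduction conn reductive} for the Levi and Proposition \ref{prop: reduction solvable} for what remains. I begin by fixing the Levi decomposition $\mathbf{G}=\mathbf{L}\ltimes\mathbf{U}$, with $\mathfrak{g}=\mathfrak{l}\oplus\mathfrak{u}$, and splitting $A=A_{\mathfrak{l}}+A_{\mathfrak{u}}$. Applying Theorem \ref{thm: reduction conn reductive} to the reductive group $\mathbf{L}$ I get $x\in\mathbf{L}(\overline{F})$ with $x\cdot A_{\mathfrak{l}}=\sum_{j=1}^{l}D_{j}\,t^{r_{j}}+t^{-1}C$ in canonical form (so $r_j\in\mathbb{Q}_{<-1}$, each $D_j$ semisimple in $\mathfrak{l}$, and all brackets among $\{D_{1},\dots,D_{l},C\}$ vanish). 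Since $\mathbf{U}\trianglelefteq\mathbf{G}$ and $x\in\mathbf{L}(\overline{F})$, we have $x\cdot A=\sum_{j}D_{j}\,t^{r_{j}}+t^{-1}C+A'_{\mathfrak{u}}$ with $A'_{\mathfrak{u}}:=\mathrm{Ad}(x)A_{\mathfrak{u}}\in\mathfrak{u}_{\overline{F}}$, so conditions (1) and (3) are already met and (5) follows automatically from the Jordan decomposition of $C$.

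Next I arrange conditions (2) and (4). The set $\{D_{1},\dots,D_{l},C_{s}\}$ consists of pairwise commuting semisimple elements of $\mathfrak{l}$, so after conjugating by an element of $\mathbf{L}(\mathbf{k})$ I may assume they all lie in $\mathrm{Lie}(\mathbf{T})$. A cocharacter twist $t^{\tau}$ with $\tau\in X_{*}(\mathbf{T})\otimes\mathbb{Q}$ (applied exactly as in Corollary \ref{coroll: canonical unique semisimple}) allows me to modify the residue of the torus part so that $\pi(C_{s})=0$, without altering any $D_{j}$ (since $\mathrm{Ad}(t^{\tau})$ fixes $\mathrm{Lie}(\mathbf{T})$) and without changing the exponents $r_{j}$. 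Finally I conjugate by a representative of the Weyl group $W$ of $\mathbf{L}$ to move $C_{s}$ into $\mathfrak{D}$; this sends each $D_{j}$ to another element of $\mathrm{Lie}(\mathbf{T})$, so (2) is preserved. After these adjustments the connection equals $B=\sum_{j}D_{j}\,t^{r_{j}}+t^{-1}C+A''_{\mathfrak{u}}$ with $D_{j},C_{s}\in\mathrm{Lie}(\mathbf{T})$, $C_{s}\in\mathfrak{D}$, and $[D_{j},C]=[C_{s},C]=0$.

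Now I confine $B$ to a solvable subgroup and invoke the solvable reduction. Let $\mathbf{E}\subset\mathbf{T}$ be the neutral component of the common centralizer $Z_{\mathbf{T}}(\{D_{1},\dots,D_{l},C_{n}\})=Z_{\mathbf{T}}(C_{n})^{0}$, so that $D_{j},C_{s}\in\mathrm{Lie}(\mathbf{E})$, and let $\mathbf{N}\subset\mathbf{L}$ be the unique connected one-dimensional unipotent subgroup with $C_{n}\in\mathrm{Lie}(\mathbf{N})$ (available since $\mathrm{char}(\mathbf{k})=0$). Because $\mathbf{E}$ centralizes $C_{n}$ and $\mathbf{N}\cong\mathbb{G}_{a}$, it centralizes $\mathbf{N}$, so $\mathbf{E}\times\mathbf{N}$ is a subgroup of $\mathbf{L}$ containing all Levi data of $B$, and the full connection lives in the solvable group $(\mathbf{E}\times\mathbf{N})\ltimes\mathbf{U}\cong\mathbf{E}\ltimes(\mathbf{N}\ltimes\mathbf{U})$, with $\mathbf{N}\ltimes\mathbf{U}$ unipotent. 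I apply Proposition \ref{prop: reduction solvable} to this solvable group, taking $\mathbf{E}$ as the torus and $\mathbf{N}\ltimes\mathbf{U}$ as the unipotent radical. The $\mathbf{E}$-part of $B$ is $\sum_{j}D_{j}\,t^{r_{j}}+t^{-1}C_{s}$, which is already in canonical form for $\mathbf{E}$ and has $\pi(C_{s})=0$, so in the notation of that proposition $g=1$ and $\mu=0$. Consequently the reduction procedure alters only the $(\mathbf{N}\ltimes\mathbf{U})$-component, producing $y\in(\mathbf{N}\ltimes\mathbf{U})(\overline{F})$ so that $y\cdot B=\sum_{j}D_{j}\,t^{r_{j}}+t^{-1}(C_{s}+C'_{n})$ with $C'_{n}\in\mathrm{Lie}(\mathbf{N}\ltimes\mathbf{U})$ nilpotent and commuting with $C_{s}$ and each $D_{j}$. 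Writing $C_{\mathrm{new}}:=C_{s}+C'_{n}$, the Jordan decomposition is $(C_{\mathrm{new}})_{s}=C_{s}\in\mathfrak{D}$ and $(C_{\mathrm{new}})_{n}=C'_{n}$, so conditions (1)--(5) all hold.

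The main obstacle is not any single manipulation but rather keeping the alignments from Theorem \ref{thm: reduction conn reductive} intact through the remaining steps. Specifically, I must verify that the solvable reduction does not perturb the torus part of $B$ (otherwise the carefully chosen $D_{j}$ and $C_{s}\in\mathfrak{D}$ could be spoiled); this is exactly why I first pay the cost of shifting $\pi(C_{s})$ to $0$ inside the reductive picture, so that the opening step of Proposition \ref{prop: reduction solvable} becomes vacuous and only transformations by elements of the unipotent part $\mathbf{N}\ltimes\mathbf{U}$ are applied at the end, leaving the $\mathbf{E}$-component fixed.
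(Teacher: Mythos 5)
Your argument is correct and is essentially the paper's own proof: the paper literally reduces Theorem \ref{thm: reduction conn general} by ``the same steps as in the proof of Theorem \ref{thm:general regular}'' (split $A=A_{\mathfrak{l}}+A_{\mathfrak{u}}$, apply Theorem \ref{thm: reduction conn reductive} to the Levi part, then view the result inside the solvable subgroup $(\mathbf{E}\times\mathbf{N})\ltimes\mathbf{U}$ and invoke Proposition \ref{prop: reduction solvable}). The extra bookkeeping you supply --- conjugating the commuting semisimple data into $\mathrm{Lie}(\mathbf{T})$, twisting so that $\pi(C_s)=0$ and moving $C_s$ into $\mathfrak{D}$ by the Weyl group, and noting that this makes the toral step of the solvable reduction vacuous so the $\mathbf{E}$-part is untouched --- is exactly what the paper leaves implicit.
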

\begin{proof}
The same steps as in the proof of Theorem \ref{thm:general regular} reduce the result to the solvable case (Proposition \ref{prop: reduction solvable}).
\end{proof}
A connection of the form $B= \sum_{j=1}^{l} D_j \, t^{r_j} \, + \, t^{-1}\, C$ satisfying conditions (1)-(3) above is said to be in canonical form. Let's formulate some uniqueness results for such irregular canonical forms. Before doing this, we need a lemma.
\begin{lemma} \label{lemma: for uniqueness of canonical}
Let $B  = \sum_{j=1}^l D_{j} \, t^{r_j} + t^{-1} \, C$ and $B' = \sum_{j=1}^s D'_{j} \, t^{r'_j} + t^{-1} \, C'$ be two connections in canonical form. Suppose that $x \in \mathbf{G}(\overline{F})$ satisfies $x \cdot B = B'$. Then all the following statements are true\vspace{-0.25cm}
\begin{enumerate}[(1)]
    \item $l =s$ and $r_j = r'_j$.
    \item $\text{Ad}(x) D_{j} = D'_{j}$ for all $j$.
    \item $x \cdot \, (t^{-1}\, C) = t^{-1} \, C'$.
\end{enumerate}
\vspace{-0.25cm}
\end{lemma}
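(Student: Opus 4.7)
The plan is to compare leading-order behavior in the gauge equation
\[\operatorname{Ad}(x)\, B + \widehat{x^*\omega} = B'.\]
After passing to a ramified cover of degree $b$, we may assume $x \in \mathbf{G}(F_b)$, and the $b$-lifts of $B$ and $B'$ remain in canonical form (with suitably scaled levels). I will proceed in three steps: matching principal levels, then matching each pair $(r_j, D_j)$ with $(r_j', D_j')$ by an inductive peeling argument, and finally reading off the residual identity $x \cdot (t^{-1} C) = t^{-1} C'$.

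For the first step, I would project both sides of the equation to the Levi quotient $\mathfrak{l}_F = \mathfrak{g}_F / \mathfrak{u}_F$ coming from the Levi decomposition $\mathbf{G} = \mathbf{L} \ltimes \mathbf{U}$. Each $D_j \in \operatorname{Lie}(\mathbf{T}) \subset \mathfrak{l}$, while $C_n$ appears only at $t$-order $-1$, so the projected connections $\overline{B}, \overline{B'} \in \mathfrak{l}_F$ retain their principal terms $D_1 t^{r_1}$ and $D_1' t^{r_1'}$. Because $D_1$ and $D_1'$ are semisimple and nonzero, Proposition \ref{prop: semisimple.prinlevel} applied to the reductive Levi $\mathbf{L}$ identifies $r_1$ and $r_1'$ as the principal levels of $\overline{B}$ and $\overline{B'}$; since $\overline{x} \cdot \overline{B} = \overline{B'}$ and the principal level is gauge-invariant for reductive groups, $r_1 = r_1'$.

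For the second step, I would match leading coefficients and iterate. The key observation is that $\{D_1, \ldots, D_l, C_s\}$ is a commuting family of semisimple elements, so after conjugation all of them lie in $\operatorname{Lie}(\mathbf{T})$; in particular $B$ takes values in the reductive Lie subalgebra $\operatorname{Lie}(Z_{\mathbf{G}}^0(\{D_j, C_s\}))$, and similarly for $B'$. Expanding $x \in \mathbf{G}(F_b)$ as a Laurent series, I would decompose $\operatorname{Ad}(x) B$ according to the weight-space decomposition of $\mathfrak{g}$ under $\mathbf{T}$ and match Laurent coefficients of $t^{r_1}$ in the gauge equation. Exploiting $r_1 < -1$ and the commutativity structure of the canonical form, the only way the equation can balance at this order is $\operatorname{Ad}(x)\, D_1 = D_1'$ as a constant identity in $\mathfrak{g}$. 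Subtracting $D_1 t^{r_1}$ from $B$ and $D_1' t^{r_1}$ from $B'$ leaves canonical forms with one fewer irregular term, still gauge-equivalent by the same $x$; induction on $l + s$ then yields claims (1) and (2). Claim (3) follows by subtracting all matched irregular contributions from the original equation.

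The main obstacle is the leading-coefficient matching in the second step. Naively one would like to argue that $\widehat{x^*\omega}$ has no polar contribution worse than $t^{-1}$, but this fails in general: for instance, if $x$ is unipotent with entries of large negative $t$-order, then $\widehat{x^*\omega}$ can have arbitrarily bad poles. The resolution is to use the full strength of the canonical form structure: the pairwise commutativity of $D_1, \ldots, D_l, C$ and the semisimplicity of the $D_j$ constrain $x$ (via the matching of the leading Laurent coefficients across all weight components) to preserve the common Cartan direction, which controls the polar growth of $\widehat{x^*\omega}$ at orders below $-1$ and makes the cascading induction close.
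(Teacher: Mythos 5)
There is a genuine gap, and it sits exactly at the point you flag as the ``main obstacle.'' Your plan ultimately rests on matching the Laurent coefficient of $t^{r_1}$ in $\operatorname{Ad}(x)B+\widehat{x^{*}\omega}=B'$, but, as you yourself note, $\widehat{x^{*}\omega}=dx\,x^{-1}$ can have poles of arbitrarily large order when $x$ has polar entries, so it can contribute at every level $r_j$ and below; the closing paragraph asserting that commutativity and semisimplicity ``constrain $x$ to preserve the common Cartan direction, which controls the polar growth'' is not an argument --- that $x$ essentially respects the common eigenspace structure is the \emph{conclusion} of the lemma, not something available to tame the Maurer--Cartan term beforehand. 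In addition, your Step 1 is circular: in the paper the principal level of a connection is only well defined \emph{because of} Lemma \ref{lemma: for uniqueness of canonical}, so invoking ``gauge-invariance of the principal level'' (via Proposition \ref{prop: semisimple.prinlevel}) to get $r_1=r_1'$ assumes part (1) of the very statement being proved; that proposition only says the reduction algorithm applied to a connection with non-nilpotent leading term yields \emph{a} canonical form of the same order, not that all gauge-equivalent canonical forms share it.

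The paper's proof supplies precisely the missing device: choose a faithful representation $\mathbf{G}\hookrightarrow\mathbf{GL}_n$, pad the two irregular parts so they have common exponents, and rewrite $x\cdot B=B'$ as the \emph{linear} matrix equation $\frac{d}{dt}x=Wx$ with $W\,v=B'v-vB=\sum_j W_j\,t^{r_j}+t^{-1}U$, where $W_j v=D'_jv-vD_j$ and $Uv=C'v-vC$. Working with $x$ itself (rather than with $\operatorname{Ad}(x)$ plus $dx\,x^{-1}$) means the only ``derivative'' term is $\frac{d}{dt}x$, whose order is exactly one less than that of $x$; since the $W_j$ are commuting semisimple operators commuting with $U$, one restricts to a simultaneous eigenspace $(\mathfrak{gl}_n)_{\vec\lambda}$ and compares lowest exponents: for $\vec\lambda\neq\vec 0$ the right-hand side has a term of order $\operatorname{ord}(x_{\vec\lambda})+r_j<\operatorname{ord}(x_{\vec\lambda})-1$ that nothing can cancel, so $x_{\vec\lambda}=0$. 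Hence $x$ lies in the zero eigenspace, i.e.\ $\operatorname{Ad}(x)D_j=D'_j$ for all $j$ (padded zero terms included, which forces $l=s$ and $r_j=r'_j$), and (3) follows by subtraction. Without this linearization-plus-simultaneous-diagonalization step, or some equally concrete substitute controlling $dx\,x^{-1}$, your induction does not close.
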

\begin{proof}
If we know both $(1)$ and $(2)$, then part $(3)$ follows. So we will focus on the first couple of statements. By lifting everything to a ramified cover, we can assume that $x \in \mathbf{G}(F)$. Choose a faithful representation $\mathbf{G} \hookrightarrow \mathbf{\text{GL}_n}$. We can view $x \in \mathbf{\text{GL}_n}(F)$ and $B, B' \in \mathfrak{gl}_n (\overline{F})$.

To simplify notation, let us add some trivial $D_j$ and $D'_j$ so that we have the same indexes and exponents for both $B_{\text{irr}}$ and $B'_{\text{irr}}$. We therefore write $B  = \sum_{j=1}^l D_{j} \, t^{r_j} + t^{-1} \, C$ and $B' = \sum_{j=1}^l D'_{j} \, t^{r_j} + t^{-1} \, C'$. Now $D_j$ and $D'_j$ are (possibly $0$) semisimple elements in $\mathfrak{g}$. We claim that $\text{Ad}(x) D_{j} = D'_{j}$ for all $j$. This claim would imply that none of the new $D_j$ and $D'_j$ are $0$. This would mean that we didn't actually add any extra terms. So both (1) and (2) would follow. We are left to show the claim.

Let us consider the linear transformation $W$ in $\text{End}(\mathfrak{gl}_n) \, (\overline{F})$ given by $W \, v \vcentcolon  = B' v \, - \, v B$ for all $v \in \mathfrak{gl}_n$. We can write $W = \sum_{j=1}^{l} W_j \, t^{r_j} \, + \, t^{-1} \, U$, where
\begin{align*}
    W_j \in \text{End}(\mathfrak{gl}_n) \; \text{is given by} \; W_j \, v \vcentcolon = D'_j v - v D_j\\
    U \in \text{End}(\mathfrak{gl}_n) \; \text{is given by} \; U \, v \vcentcolon = C'v - vC
\end{align*}
Each $W_j$ is semisimple by definition. Also we have that the $W_j$s and $U$ pairwise commute. Therefore there is a simultaneous spectral decomposition $\mathfrak{gl}_n = \bigoplus_{\vec{\lambda}} (\mathfrak{gl}_n)_{\vec{\lambda}}$ for the $W_j$s, where $\vec{\lambda} = (\lambda_j)_{j=1}^{l}$ ranges over a set of $l$-tuples of eigenvalues of the $W_j$s. Note that $W$ preserves this spectral decomposition, because $U$ commutes with all $W_j$s.

The condition $x \cdot B = B'$ can be expressed as $\frac{d}{dt}x = W \, x$. Here we are viewing $x$ as an invertible matrix in $\mathfrak{gl}_n(F)$. We can restrict to the $\vec{\lambda}$-eigenspace and use the decomposition for $W$ in order to see that the component $x_{\vec{\lambda}} \in (\mathfrak{gl}_{n})_{\vec{\lambda}}$ of $x$ satisfies
\[ \frac{d}{dt}x_{\vec{\lambda}} =  \sum_{j=1}^{l} \lambda_j \, t^{r_j} \,x_{\vec{\lambda}} \,  + \, t^{-1} \, U \, x_{\vec{\lambda}}\]
Recall that $r_j < -1$ for all $j$. By comparing the smallest exponent of $t$ in both sides, we conclude that $x_{\vec{\lambda}} = 0$ unless $\vec{\lambda} = \vec{0}$. Hence $x \in (\mathfrak{gl}_n)_{\vec{0}} \, (F)$. This means that $\text{Ad}(x) D_{j} = D'_{j}$ for all $j$.
\end{proof}

As a consequence, we get the following uniqueness result for all irregular canonical forms that satisfy (1)-(5) as in Therorem \ref{thm: reduction conn general}.
\begin{thm} \label{thm: uniqueness reduction arbitrary}
Let $\mathbf{G}$ be a connected linear algebraic group. Fix a Levi subgroup $\mathbf{L}$ and a maximal torus $\mathbf{T}\subset \mathbf{L}$. Let $A = \sum_{j=1}^{l} D_j \, t^{r_j} \, + \, t^{-1}\, C$ and $B = \sum_{j=1}^{l} D'_j \, t^{r'_j} \, + \, t^{-1}\, C'$ be two connections in canonical form. Suppose that $C_s, \,C'_s \in \mathfrak{D}$ and $[C_s, C] = [C'_s, C'] =0$. If there exists $x \in \mathbf{G}(\overline{F})$ with $x \cdot A =B$, then we have \vspace{-0.25cm}
\begin{enumerate}[(1)]
    \item $C_s = C'_s$.
    \item $x \in Z_{\mathbf{G}}(C_s)(\mathbf{k})$.
    \item $\text{Ad}(x) \,D_j = D'_j$ for all $j$.
\end{enumerate}
\vspace{-0.25cm}
\end{thm}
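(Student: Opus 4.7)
The plan is to deduce the theorem almost immediately from two tools already in place: Lemma \ref{lemma: for uniqueness of canonical}, which separates the irregular and regular parts of any gauge equivalence between two canonical forms, and Proposition \ref{prop: uniqueness regular arbitrary}, which handles uniqueness of the regular canonical piece.

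First I would apply Lemma \ref{lemma: for uniqueness of canonical} to the hypothesis $x \cdot A = B$. Since both $A$ and $B$ are in canonical form (satisfying conditions (1)--(3) of Theorem \ref{thm: reduction conn general}), the lemma yields three conclusions at once: the sets of levels $\{r_j\}$ and $\{r'_j\}$ coincide with matching multiplicities, $\mathrm{Ad}(x) D_j = D'_j$ for every $j$, and the equivalence descends to $x \cdot (t^{-1} C) = t^{-1} C'$. This already establishes part (3) of the theorem.

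Next, I would feed the relation $x \cdot (t^{-1} C) = t^{-1} C'$ into Proposition \ref{prop: uniqueness regular arbitrary}. The hypotheses of that proposition are precisely what we have assumed: $C_s, C'_s \in \mathfrak{D}$ and $[C_s, C] = [C'_s, C'] = 0$. The conclusion gives parts (1) and (2) of the theorem, namely $C_s = C'_s$ and $x \in Z_{\mathbf{G}}(C_s)(\mathbf{k})$.

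There is essentially no obstacle: the content of the statement has been isolated into the two earlier results, and the proof is a two-line chain of citations. The only conceptual point worth emphasizing in the write-up is that once Lemma \ref{lemma: for uniqueness of canonical} strips off the irregular part, the remaining problem is purely about regular canonical forms, to which the already-proven uniqueness in the regular setting applies verbatim.
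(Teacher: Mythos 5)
Your proposal is correct and matches the paper's own proof, which likewise deduces the theorem by combining Lemma \ref{lemma: for uniqueness of canonical} (to match levels, conjugate the $D_j$, and reduce to $x \cdot (t^{-1}C) = t^{-1}C'$) with Proposition \ref{prop: uniqueness regular arbitrary} for the regular part.
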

\begin{proof}
This follows from Lemma \ref{lemma: for uniqueness of canonical} combined with Proposition \ref{prop: uniqueness regular arbitrary}.
\end{proof}
We conclude this section with a determinacy result for arbitrary linear algebraic groups.
\begin{prop} \label{prop: determinacy aribitrary linear} Let $\mathbf{G}$ be connected linear algebraic group. Let $A \in \mathfrak{g}_F$ be a connection. There exist a positive integer $n$ such that all connections $C \in \mathfrak{g}_F$ satisfying $C \equiv A \; (mod \; t^{n})$ are $\mathbf{G}(\overline{F})$-gauge equivalent to $A$.
\end{prop}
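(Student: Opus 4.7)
The plan is to reduce to the already-established cases of reductive groups (Proposition~\ref{prop: determinacy everything}) and of solvable groups (Proposition~\ref{prop: determinacy irregular solvable}) via the Levi decomposition $\mathbf{G} = \mathbf{L} \ltimes \mathbf{U}$, exactly mirroring the way Theorem~\ref{thm: reduction conn general} was deduced from the reductive case and Proposition~\ref{prop: reduction solvable}. Fix a Levi subgroup $\mathbf{L}$ with unipotent radical $\mathbf{U}$, write $\mathfrak{g} = \mathfrak{l} \oplus \mathfrak{u}$, and decompose $A = A_{\mathfrak{l}} + A_{\mathfrak{u}}$ and $C = C_{\mathfrak{l}} + C_{\mathfrak{u}}$. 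A hypothesis $C \equiv A \pmod{t^n}$ of course yields $C_{\mathfrak{l}} \equiv A_{\mathfrak{l}} \pmod{t^n}$ and $C_{\mathfrak{u}} \equiv A_{\mathfrak{u}} \pmod{t^n}$.

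First I would apply Proposition~\ref{prop: determinacy everything} to the reductive group $\mathbf{L}$ and the connection $A_{\mathfrak{l}}$ to obtain an integer $n_1$ such that the $\mathbf{L}(\overline{F})$-gauge class of $A_{\mathfrak{l}}$ is determined by its coefficients of order $< n_1$. Fix some $y \in \mathbf{L}(\overline{F})$ with $y \cdot A_{\mathfrak{l}} = B_{\mathfrak{l}}$ in canonical form satisfying (1)--(5) of Theorem~\ref{thm: reduction conn general}; as explained in the proofs of Theorems~\ref{thm:general regular} and \ref{thm: reduction conn general}, the transformed connection $y \cdot A = B_{\mathfrak{l}} + \mathrm{Ad}(y)A_{\mathfrak{u}}$ lies in the Lie algebra of a connected solvable subgroup $\mathbf{S} = \mathbf{H} \ltimes \mathbf{U}$ of $\mathbf{G}$, where $\mathbf{H} \subset \mathbf{L}$ is the solvable subgroup built from the semisimple residue and the connected one-dimensional unipotent containing the nilpotent part of $B_{\mathfrak{l}}$ (together with the central torus components accounting for the irregular part).

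Next, by running Algorithm~\ref{algor: reduction reductive} on $A_{\mathfrak{l}}$ and $C_{\mathfrak{l}}$ step-by-step in parallel, I would produce a gauge transformation $y_C \in \mathbf{L}(\overline{F})$ satisfying $y_C \cdot C_{\mathfrak{l}} = B_{\mathfrak{l}}$. The point is that each elementary operation in Algorithm~\ref{algor: reduction reductive} depends only on finitely many coefficients of its input, as used in the proofs of Propositions~\ref{prop: determinacy.irregular} and \ref{prop: determinacy everything}; consequently, for $n$ sufficiently large the algorithm makes identical choices at every step on the two inputs, and we may take $y_C = y$, producing $y_C \cdot C = B_{\mathfrak{l}} + \mathrm{Ad}(y)C_{\mathfrak{u}}$, which also belongs to $\mathfrak{s}_{\overline{F}}$. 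If $y$ has pole order at most $p$ (in a suitable ramified parameter), then $\mathrm{Ad}(y)(C_{\mathfrak{u}} - A_{\mathfrak{u}})$ has order at least $n - 2p$, so $y \cdot C \equiv y \cdot A \pmod{t^{n-2p}}$ inside $\mathfrak{s}_{\overline{F}}$.

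Finally, I would invoke Proposition~\ref{prop: determinacy irregular solvable} applied to the solvable group $\mathbf{S}$ and the connection $y \cdot A \in \mathfrak{s}_{\overline{F}}$: it furnishes an integer $N$ such that any $\mathbf{S}$-connection congruent to $y \cdot A$ modulo $t^N$ is $\mathbf{S}(\overline{F})$-gauge equivalent to $y \cdot A$. Choosing $n$ so that $n - 2p \geq N$ and $n \geq n_1$ gives $\mathbf{S}(\overline{F})$-equivalence of $y \cdot C$ and $y \cdot A$, whence $\mathbf{G}(\overline{F})$-equivalence of $C$ and $A$. The main obstacle will be the bookkeeping in the third paragraph: one must verify carefully that the parallel execution of Algorithm~\ref{algor: reduction reductive} on $A_{\mathfrak{l}}$ and $C_{\mathfrak{l}}$ can be made to produce a common transformation $y$ (and hence a common target solvable subgroup $\mathbf{S}$), and that the total loss of precision through the finitely many steps of the algorithm is bounded by a quantity depending only on $\mathbf{G}$ and the principal level of $A$. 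This reduces to inspecting the finite-determinacy statements in the proofs of Propositions~\ref{prop: determinacy.irregular} and \ref{prop: determinacy everything} and accounting for the pole orders of the shearing transformations $t^{-b \delta H}$ and the aligning transformations used in Lemmas~\ref{lemma: not nilpotent case} and \ref{lemma: nilpotent case aligned}.
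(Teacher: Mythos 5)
Your plan is exactly the paper's proof, which is stated in one line: combine the reductive determinacy result (Proposition \ref{prop: determinacy everything}) for the Levi factor with the solvable determinacy result (Proposition \ref{prop: determinacy irregular solvable}), using the reduction $\mathbf{G} = \mathbf{L} \ltimes \mathbf{U}$ from the proof of Theorem \ref{thm: reduction conn general}; your write-up simply spells out the bookkeeping the paper leaves implicit. One caveat on the detail you flag yourself: you cannot literally take $y_C = y$, because the aligning transformations of Lemmas \ref{lemma:aligned}, \ref{lemma: not nilpotent case} and \ref{lemma: nilpotent case aligned} (and the torus step) are infinite products depending on all coefficients of the input, so the two runs of Algorithm \ref{algor: reduction reductive} produce transformations that agree only modulo a high power of $t$. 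That is enough, though: since each elementary gauge element is built coefficient-by-coefficient from finitely many earlier coefficients, inputs congruent mod $t^n$ yield $y_C \equiv y$ modulo $t^{n-c}$ for a constant $c$ depending on the (finitely many) steps and their pole orders, and then $\mathrm{Ad}(y_C)C_{\mathfrak{u}} \equiv \mathrm{Ad}(y)A_{\mathfrak{u}}$ to the precision needed to invoke the solvable determinacy result, so your argument goes through with this softened claim.
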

\begin{proof}
This follows from the corresponding determinacy results for reductive groups (Proposition \ref{prop: determinacy everything}) and solvable groups (Proposition \ref{prop: determinacy irregular solvable}) via a reduction as in the proof of Theorem \ref{thm: reduction conn general}.
\end{proof}
\subsection{Galois cohomology for irregular connections} \label{subsection: galois for irregular connections}
For this section $\mathbf{G}$ will be a connected linear algebraic group. We will fix a choice of Levi subgroup $\mathbf{L} \subset \mathbf{G}$ and maximal torus $\mathbf{T} \subset \mathbf{L}$. Let $B = \sum_{j=1}^l D_{j} \, t^{r_j} + t^{-1} C$ be a connection in canonical form with $C_s \in \mathfrak{D}$ and $[C_s, C] = 0$, as in the statement of Theorem \ref{thm: uniqueness reduction arbitrary}. If $B_{\text{irr}} \neq 0$, then we don't necessarily have $B \in \mathfrak{g}_F$. Suppose that $B$ is in  $\mathfrak{g}_{F_b}$, with $b$ a given positive integer. Then we have a Galois action of $\mathbf{\mu}_b \cong \text{Gal}(F_b/ \,F)$ on $B$ by the formula $\gamma \cdot B = \sum_{j=1}^{l} \gamma^{-b r_j} \, D_j \, t^{r_j} \, + \, t^{-1}\, C$. Because this action is not necessarily trivial, we have to consider twisted cocyles in order to classify connections over $\text{Spec} \, F$ with canonical form $B$.
\begin{defn} \label{defn: twisted cocycles}
Let $b$ be a natural number. Let $B = \sum_{j=1}^l D_{j} \, t^{r_j} + t^{-1} C \; \in \mathfrak{g}_{F_b}$ be a connection in canonical form. A $B$-twisted $\mu_b$-cocycle is a map $\phi \vcentcolon \mathbf{\mu}_b \longrightarrow Z_{G}(C)$ satisfying \vspace{-0.25cm}
\begin{enumerate}[(i)]
    \item $\text{Ad}(\phi_{\gamma})B = \gamma \cdot B$ for all $\gamma \in \mathbf{\mu}_b$.
    \item $\phi_{\gamma \gamma'} = \phi_{\gamma} \phi_{\gamma'}$ for all $\gamma , \gamma' \in \mathbf{\mu}_b$.
\end{enumerate}
\end{defn}
\vspace{-0.25cm}
Fix a compatible choice of generators $\omega_b$ of $\mu_b$ for all $b$ positive, just as we did in the regular case. Note that a $B$-twisted $\mu_b$ cocycle $\phi$ is completely determined by $\phi_{\omega_b} \in Z_G(C)$. This is a an element of finite order dividing $b$, and it satisfies $\text{Ad}(\phi_{\omega_b}) B = \omega_b \cdot B$. Conversely, for any element $\phi_{\omega_b} \in Z_G(C)$ satisfying $\text{Ad}(\phi_{\omega_b}) B = \omega_b \cdot B$ we can get a corresponding $B$-twisted cocycle.

Notice that the centralizer $Z_{G}(\{D_1, ..., D_l, C\})$ acts on the set of $B$-twisted $\mu_b$-cocycles by conjugation. By the same type of general Galois cohomology argument as in the regular case, we get the following couple of propositions. The proofs are omitted.
\begin{prop}[Criterion for Descent to $D^*$] \label{prop: orbit rat}
Let $b$ be a natural number. Let $B = \sum_{j=1}^l D_{j} \, t^{r_j} + t^{-1} C \; \in \mathfrak{g}_{F_b}$ be a connection in canonical form with $C_s \in \mathfrak{D}$ and $[C_s, C] =0$.  Then $B$ is equivalent to a connection in $\mathfrak{g}_F$ via an element of $\mathbf{G}(F_b)$ if and only there exists a $B$-twisted $\mu_b$-cocycle.
\end{prop}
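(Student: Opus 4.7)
The plan is a standard non-abelian Galois descent argument along the extension $F_b/F$, with one essential input: the uniqueness of canonical forms from Theorem \ref{thm: uniqueness reduction arbitrary}, which forces the cocycle values to lie in $Z_G(C) \subset \mathbf{G}(\mathbf{k})$. Fix a generator $\omega_b$ of $\mu_b \cong \text{Gal}(F_b/F)$ compatible with the choices of Subsection \ref{subsection: descent for gauge}, and recall that Galois acts trivially on $\mathbf{G}(\mathbf{k}) \subset \mathbf{G}(F_b)$.

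For the forward direction, assume $A = y \cdot B \in \mathfrak{g}_F$ with $y \in \mathbf{G}(F_b)$, and set $x := (\omega_b \cdot y)^{-1} y \in \mathbf{G}(F_b)$. Applying $\omega_b$ to $A = y \cdot B$ and using Galois-equivariance of the gauge action together with $\omega_b \cdot A = A$ yields $\omega_b \cdot B = x \cdot B$. Both $B$ and $\omega_b \cdot B$ are in canonical form, and a brief check (using that the semisimple part $C_s$ commutes with each $D_j$ because of the uniqueness of the Jordan decomposition of $C$ combined with $[D_j,C]=0$) shows that both satisfy conditions (4)-(5) of Theorem \ref{thm: reduction conn general}. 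Theorem \ref{thm: uniqueness reduction arbitrary} then forces $x \in Z_{\mathbf{G}}(C_s)(\mathbf{k})$; comparison of $t^{-1}$-coefficients in $\text{Ad}(x) B = \omega_b \cdot B$ gives $\text{Ad}(x) C = C$, so $x \in Z_G(C)$. Iterating $\omega_b \cdot y = y x^{-1}$ (and using that Galois acts trivially on $x$) yields $\omega_b^k \cdot y = y x^{-k}$, whence $x^b = 1$, and the assignment $\phi_{\omega_b^k} := x^k$ defines a $B$-twisted $\mu_b$-cocycle, the compatibility with condition (i) of Definition \ref{defn: twisted cocycles} following by a short induction from the $k=1$ case.

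For the converse, given a $B$-twisted $\mu_b$-cocycle $\phi$, set $x := \phi_{\omega_b} \in Z_G(C)$, so $x^b = 1$ and $\text{Ad}(x) B = \omega_b \cdot B$. The assignment $c(\omega_b^k) := x^{-k}$ is a $1$-cocycle $\mu_b \to \mathbf{G}(F_b)$ (the triviality of Galois on $\mathbf{k}$-points making the homomorphism relation coincide with the cocycle identity). Inflating $c$ to $\text{Gal}(\overline{F}/F)$ and invoking the Tsen--Springer vanishing $H^1(F, \mathbf{G}) = 0$ (already used in Subsection \ref{subsection: descent for gauge}), the inflated cocycle is a coboundary, producing $y \in \mathbf{G}(\overline{F})$ with $y^{-1}(\omega_b \cdot y) = x^{-1}$; since the cocycle is trivial on $\text{Gal}(\overline{F}/F_b)$, the element $y$ automatically descends to $\mathbf{G}(F_b)$. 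Setting $A := y \cdot B$, one computes
\[
\omega_b \cdot A \;=\; (\omega_b \cdot y) \cdot (\omega_b \cdot B) \;=\; (yx^{-1}) \cdot (x \cdot B) \;=\; y \cdot B \;=\; A,
\]
so $A$ descends to $\mathfrak{g}_F$, as required.

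The main obstacle is ensuring that the element $x$ produced in the forward direction actually lies in $Z_G(C) \subset \mathbf{G}(\mathbf{k})$ rather than merely in $\mathbf{G}(F_b)$; this is precisely the role of Theorem \ref{thm: uniqueness reduction arbitrary}, and it is the reason the hypotheses $C_s \in \mathfrak{D}$ and $[C_s, C] = 0$ are built into the canonical representative. Once that foothold is secured, the remainder is the same Galois-cohomology dictionary as in the regular case (Proposition \ref{prop: regular connections galois}); the only genuinely new feature in the irregular setting is that the condition $\text{Ad}(\phi_\gamma) B = \gamma \cdot B$ becomes a nontrivial constraint on $x$ (since $\omega_b \cdot B \neq B$ once $B_{\text{irr}} \neq 0$), rather than merely demanding that $x$ centralize $B$.
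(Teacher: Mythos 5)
Your proof is correct and follows essentially the same route the paper intends: the paper omits the proof, appealing to the same Galois-cohomology dictionary as in the regular case, and your explicit computation (uniqueness of canonical forms via Theorem \ref{thm: uniqueness reduction arbitrary} to force the cocycle values into $Z_G(C)(\mathbf{k})$, plus Tsen--Springer vanishing of $H^1(F,\mathbf{G})$ for the converse) is exactly the argument sketched in the correspondence described after Proposition \ref{prop: classif. rat. conn}. The only difference is the normalization of the cocycle (your $x=(\omega_b\cdot y)^{-1}y$ versus the paper's $\phi_{\omega_b}=y^{-1}(\omega_b\cdot y)$), which is immaterial for the existence statement being proved.
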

\begin{prop}[Classification of Connections over $D^*$] \label{prop: classif. rat. conn}
Let $B = \sum_{j=1}^l D_{j} \, t^{r_j} + t^{-1} C \; \in \mathfrak{g}_{F_b}$ be a connection in canonical form with $C_s \in \mathfrak{D}$ and $[C_s, C] =0$. Suppose that $B$ satisfies the equivalent statements in Proposition \ref{prop: orbit rat} above for some $b$. Then the set of equivalence classes of $\mathbf{G}$-connections over $D^*$ that become gauge equivalent over $\text{Spec} \, F_b$ are in bijection with the set of $B$-twisted $\mu_b$-cocycles up to $Z_{G}(\{D_1, \,... , D_{l}, \, C\})$-conjugacy.
\end{prop}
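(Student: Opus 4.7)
The plan is to carry out the standard non-abelian Galois cohomology argument in the twisted setting, exactly parallel to the one sketched after Proposition \ref{prop: regular connections galois} in the regular case; the only new feature is that because $B_{\text{irr}} \neq 0$ forces $B \notin \mathfrak{g}_F$ in general, ordinary 1-cocycles are replaced by $B$-twisted ones. The starting point is that the stabilizer of $B$ in $\mathbf{G}(\overline{F})$ under the gauge action is a discrete subgroup of $\mathbf{G}(\mathbf{k})$: combining Lemma \ref{lemma: for uniqueness of canonical} (which forces any stabilizing $x$ to fix each $D_j$ under $\mathrm{Ad}$ and to stabilize $t^{-1}C$) with Proposition \ref{prop: uniqueness regular arbitrary} (which identifies the stabilizer of $t^{-1}C$ as $Z_G(C_s)(\mathbf{k})$) shows the stabilizer is exactly $Z_G(\{D_1, \ldots, D_l, C\})(\mathbf{k})$.

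First I would build the map from gauge equivalence classes of connections over $D^*$ that are $\mathbf{G}(F_b)$-equivalent to $B$, to the set of $B$-twisted $\mu_b$-cocycles modulo $Z_G(\{D_1, \ldots, D_l, C\})$-conjugacy. Given such an $A \in \mathfrak{g}_F$, pick any $y \in \mathbf{G}(F_b)$ with $y \cdot B = A$ (this exists by Proposition \ref{prop: orbit rat}). Applying $\gamma \in \mu_b$ to the equation $y \cdot B = A$ and using $\gamma(A) = A$ yields $\gamma(y) \cdot \gamma(B) = y \cdot B$, which rearranges to $\phi_\gamma \cdot \gamma(B) = B$ where $\phi_\gamma \vcentcolon= y^{-1}\gamma(y)$ (an easy rewriting then matches the sign convention of Definition \ref{defn: twisted cocycles}). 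The element $\phi_\gamma$ intertwines two canonical forms with the same residue $C$, so Theorem \ref{thm: uniqueness reduction arbitrary} forces $\phi_\gamma \in Z_G(C_s)(\mathbf{k})$; comparing the $t^{-1}$-coefficients in $\mathrm{Ad}(\phi_\gamma) B = \gamma \cdot B$ then sharpens this to $\phi_\gamma \in Z_G(C)$. The multiplicativity $\phi_{\gamma\gamma'} = \phi_\gamma \phi_{\gamma'}$ follows from the usual calculation $\phi_{\gamma\gamma'} = y^{-1}\gamma\gamma'(y) = \phi_\gamma \cdot \gamma(\phi_{\gamma'})$, together with the fact that $\gamma$ acts trivially on constants in $\mathbf{G}(\mathbf{k})$.

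Next I would verify the construction descends to equivalence classes. Replacing $y$ by another trivializing element amounts to right-multiplying $y$ by a stabilizer element, which by the above lives in $Z_G(\{D_1, \ldots, D_l, C\})(\mathbf{k})$; a direct computation shows the cocycle gets conjugated by this element. Replacing $A$ by $gA$ for $g \in \mathbf{G}(F)$ replaces $y$ by $gy$, and because $\gamma(g) = g$ the cocycle $\phi_\gamma$ is unchanged. Hence the map is well-defined on $\mathbf{G}(F)$-gauge equivalence classes modulo $Z_G(\{D_1, \ldots, D_l, C\})$-conjugacy of cocycles.

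For surjectivity I would invoke the vanishing $H^1(F, \mathbf{G}) = 0$ (Tsen-Springer, cited earlier in the paper): a $B$-twisted cocycle $\phi$ inflates to a 1-cocycle on $\mathrm{Gal}(\overline{F}/F)$ valued in $\mathbf{G}(\overline{F})$, which is therefore a coboundary $\phi_\gamma = y^{-1}\gamma(y)$, and the triviality of the inflated cocycle on $\mathrm{Gal}(\overline{F}/F_b)$ forces $y \in \mathbf{G}(F_b)$; the twist property then shows that $A \vcentcolon= y \cdot B$ is $\mu_b$-invariant, hence lies in $\mathfrak{g}_F$. Injectivity is the dual statement: if $y_1 \cdot B$ and $y_2 \cdot B$ in $\mathfrak{g}_F$ produce cocycles conjugate by some $z \in Z_G(\{D_1, \ldots, D_l, C\})$, then $y_2 z y_1^{-1}$ is Galois-invariant, hence in $\mathbf{G}(F)$, and it gauge-equivalates the two connections. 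The main technical obstacle throughout will be bookkeeping the conventions for the $\mu_b$-action on $B$ and for the direction of the twist, and in particular verifying that the cocycle really lands in $Z_G(C)$ rather than only in the larger centralizer $Z_G(C_s)$; this is the place where comparing $t^{-1}$-coefficients is essential.
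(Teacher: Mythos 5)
Your proposal is correct and takes essentially the same route as the paper, which itself omits the proof and only invokes ``the same type of general Galois cohomology argument as in the regular case'': identify the gauge stabilizer of $B$ as the constant group $Z_{G}(\{D_1, \ldots, D_l, C\})$ via Lemma \ref{lemma: for uniqueness of canonical} together with Proposition \ref{prop: uniqueness regular arbitrary}, attach to $A = y \cdot B \in \mathfrak{g}_F$ the cocycle $\phi_{\gamma} = y^{-1}\gamma(y)$, and use the Tsen--Springer vanishing of $H^{1}_{\text{Gal}(F)}(\mathbf{G})$ for surjectivity, with conjugation by the stabilizer accounting for the choice of $y$. The only caveat is the inverse/sign bookkeeping needed to match Definition \ref{defn: twisted cocycles}, which you flag explicitly and which is harmless since $\mu_b$ is cyclic (the paper's own description of the correspondence involves the same convention).
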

The correspondence in Proposition \ref{prop: classif. rat. conn} can be described as follows. Let $\phi_{\omega_b} \in Z_{\mathbf{G}}(C)(\mathbf{k})$ be such that $\text{Ad}(\phi_{\omega_b}) B = \omega_b \cdot B$. By the vanishing of $H^{1}_{\text{Gal}(F)} (\mathbf{G})$, we can find an element $y \in \mathbf{G}(F_b)$ such that $\omega_b\cdot y = y \, \phi_{\omega_b}$. Then the connection associated to $\phi_{\omega_b}$ will be $A = y \cdot B \; \in \mathfrak{g}_F$. Conversely, suppose that $A = y \cdot B$ is a connection in $\mathfrak{g}_F$ for some $y \in \mathbf{G}(F_b)$. We set $\phi_{\omega_b} \vcentcolon = y^{-1} \, \left(\omega_b \cdot y\right)$.

As a consequence of this Galois cohomology classification, we can put a bound on the denominators of the levels $r_j$ of a canonical form for a connection in $\mathfrak{g}_F$. Let $W$ denote the Weyl group of $\mathbf{L}$ with respect to $\mathbf{T}$. A Coxeter element of $W$ is an element of largest length in $W$. All Coxeter elements are conjugate to each other. The Coxeter number $h_{\mathbf{L}}$ of $\mathbf{L}$ is the order of a Coxeter element in $W$.
\begin{prop} \label{prop: coxeter bound levels}
Let $A \in \mathfrak{g}_F$ be a formal connection. Let $B = \sum_{j=1}^l D_{j} \, t^{r_j} + t^{-1} C$ be a connection in canonical form with $C _s \in \mathcal{D}$ and $[C_s, C] = 0$. Suppose that B is $\mathbf{G}(\overline{F})$-gauge equivalent to $A$. Let $b$ be the smallest positive integer such that $B \in \mathfrak{g}_{F_b}$. Then\vspace{-0.25cm}
\begin{enumerate}[(1)]
    \item $b$ divides a fundamental degree of $\text{Lie}(\textbf{L})$. In particular $b \leq h_{\mathbf{L}}$.
    \item If $b = h_{\mathbf{L}}$, then $C_s \in \text{Lie}(Z_{\mathbf{L}}^0)$.
\end{enumerate}
\vspace{-0.25cm}
\end{prop}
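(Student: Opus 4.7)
The argument combines the Galois-cohomology classification of Proposition \ref{prop: orbit rat} with Springer's theory of regular elements of Weyl groups. First I would apply Proposition \ref{prop: orbit rat} to obtain a $B$-twisted $\mu_b$-cocycle $\phi := \phi_{\omega_b} \in Z_{\mathbf{G}}(C)(\mathbf{k})$ of order dividing $b$ satisfying $\mathrm{Ad}(\phi)\,B = \omega_b \cdot B$; unpacking this gives
\begin{align*}
\mathrm{Ad}(\phi)\, D_j &= \omega_b^{-br_j}\, D_j \quad \text{for every } j, \\
\mathrm{Ad}(\phi)\, C &= C.
\end{align*}
Projecting $\phi$ along $\mathbf{G} \twoheadrightarrow \mathbf{L}$ yields a semisimple $\bar\phi \in \mathbf{L}(\mathbf{k})$ satisfying the analogous identities in $\mathfrak{l}$. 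The minimality of $b$ forces the least common multiple of the orders of the scalars $\omega_b^{-br_j}$ to equal $b$, and since $W := W(\mathbf{L},\mathbf{T})$ acts faithfully on $\mathrm{Lie}(\mathbf{T})$, this pins the order of $\bar\phi$ to be exactly $b$.

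The next step is to realize $\bar\phi$ as a bona fide Weyl group element after a harmless correction. Set $\mathbf{M} := Z_{\mathbf{L}}(\{D_1,\ldots,D_l,C_s\})$, which is a Levi subgroup of $\mathbf{L}$ containing $\mathbf{T}$. Because $\mathrm{Ad}(\bar\phi)$ scales each generator $D_j$ and fixes $C_s$, centralizers depending only on spans are preserved, so $\bar\phi$ normalizes $\mathbf{M}$. The tori $\mathbf{T}$ and $\bar\phi\mathbf{T}\bar\phi^{-1}$ are both maximal in $\mathbf{M}$, so by conjugacy of maximal tori there exists $x \in \mathbf{M}(\mathbf{k})$ with $x\bar\phi \in N_{\mathbf{L}}(\mathbf{T})$; let $w \in W$ be its class. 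Since $x \in \mathbf{M}$ centralizes every $D_j$ and $C_s$, a direct calculation gives $w \cdot D_j = \omega_b^{-br_j}\, D_j$ and $w \cdot C_s = C_s$.

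Part (1) then follows from Springer's theorem on regular numbers for Weyl groups. The spectrum of $w$ on $\mathrm{Lie}(\mathbf{T})$ contains the $\omega_b^{-br_j}$, whose orders have LCM $b$; passing to the Lehrer--Springer parabolic reflection quotient $N_W(W_U)/W_U$ acting on $\mathrm{Lie}(\mathbf{T})^{W_U}$, where $U = \mathrm{span}\{D_j, C_s\}$, extracts a primitive $b$-th root of unity as the eigenvalue of some element on a regular eigenvector of that quotient. Springer's theorem then identifies $b$ as a regular number for $W$, and the regular numbers of a Weyl group are precisely the divisors of its fundamental degrees; hence $b$ divides some $d_i$, and in particular $b \leq h_{\mathbf{L}}$.

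For part (2), the equality $b = h_{\mathbf{L}}$ forces $b$ to equal the largest fundamental degree on some simple factor of $\mathbf{L}_{\mathrm{der}}$. Springer's description then constrains $w$ to be, up to $W$-conjugacy, a Coxeter element on that simple factor; Coxeter elements act on the reflection representation with no nonzero fixed vectors. The relation $w \cdot C_s = C_s$ therefore forces the projection of $C_s$ to each simple factor of $\mathrm{Lie}(\mathbf{T}) \cap \mathrm{Lie}(\mathbf{L}_{\mathrm{der}})$ to vanish, placing $C_s$ in $\mathrm{Lie}(Z^0_{\mathbf{L}})$. The main obstacle throughout is the Springer/Lehrer--Springer step: no single $\omega_b^{-br_j}$ need be a primitive $b$-th root of unity, so isolating a truly regular eigenvector of eigenvalue of order $b$ requires working with the parabolic reflection quotient $N_W(W_U)/W_U$ on $V^{W_U}$ rather than with $W$ directly, and comparing its fundamental degrees with those of $W$.
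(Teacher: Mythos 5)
Your overall skeleton matches the paper's proof: reduce to the Levi, extract from Proposition \ref{prop: orbit rat} an element $\phi$ with $\mathrm{Ad}(\phi)B=\omega_b\cdot B$, deduce the existence of $w\in W$ with $w\cdot D_j=\omega_b^{-br_j}D_j$ and $w\cdot C_s=C_s$, and then invoke Springer's eigenvalue theory for (1) and a Coxeter-element fixed-point argument for (2). Your construction of $w$ (correcting $\bar\phi$ by an element of $\mathbf{M}=Z_{\mathbf{L}}(\{D_1,\dots,D_l,C_s\})$ so that it normalizes $\mathbf{T}$) is a perfectly good elementary substitute for the paper's appeal to the fact that elements of $\mathrm{Lie}(\mathbf{T})$ conjugate under the group are $W$-conjugate. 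Two incidental claims are false but harmless: the order of $\bar\phi$ is only forced to be a multiple of $b$ (not exactly $b$), and the regular numbers of a Weyl group are not precisely the divisors of the degrees (e.g.\ $4$ divides a degree of $W(A_6)$ but is not regular); only the implication ``regular $\Rightarrow$ divides a degree'' is needed.

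The genuine gap is exactly at the step you flag as the main obstacle, and your proposed repair does not close it. Passing to the Lehrer--Springer quotient $N_W(W_U)/W_U$ acting on $V^{W_U}$ cannot ``extract a primitive $b$-th root of unity'': the image of $w$ still acts on $U\subseteq V^{W_U}$ with exactly the scalars $\omega_b^{-br_j}$ and $1$ as eigenvalues, and none of these need have order $b$ (already with two levels of denominators $2$ and $3$ one has $b=6$ while every eigenvalue you exhibit has order $1$, $2$ or $3$); restricting to a subspace or passing to a quotient reflection group creates no new eigenvalues, you give no argument that any eigenvector is regular, and you do not carry out the promised comparison of the degrees of $N_W(W_U)/W_U$ with those of $W$. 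Note that such eigenvalue configurations do occur for Weyl group elements (e.g.\ $(12)(345)$ in $S_5$ has eigenvalue orders $1,2,3$ on the reflection representation), so the conclusion genuinely does not follow from the eigenvalue data you have assembled; the paper at the corresponding point simply asserts that some eigenvalue of $w$ is a primitive $b$-th root of unity and then cites Springer's Theorem 3.4, so the content your proof would have to supply is precisely a justification that $b$ itself, and not merely each denominator of the $r_j$, occurs as the order of an eigenvalue --- and it does not. Part (2) inherits the same defect in a second form: the paper concludes via Kane's theorem that $w$ is a Coxeter element of the full Weyl group, which has no nonzero fixed vectors on the span of the roots, whereas your argument only makes $w$ Coxeter ``on some simple factor,'' which kills the projection of $C_s$ to that factor alone and does not yield $C_s\in\mathrm{Lie}(Z^0_{\mathbf{L}})$ as written.
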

\begin{proof}
Recall the notation $B_{irr} = \sum_{j=1}^l D_{j} \, t^{r_j}$. We have $\mathbf{G} = \mathbf{L} \ltimes \mathbf{U}$, where $\mathbf{U}$ is the unipotent radical. Write $\mathfrak{l} \vcentcolon = \text{Lie}(\mathbf{L})$ and $\mathfrak{u} \vcentcolon = \mathbf{U}$. We can decompose $A = A_{\mathfrak{l}} + A_{\mathfrak{u}}$. It follows from the proof of Theorem \ref{thm: reduction conn general} that $B_{irr}$ is given by the irregular part of the canonical form of $A_{\mathfrak{l}}$. Therefore, we can assume without loss of generality that $\mathbf{G} = \mathbf{L}$.

By assumption, we have $B = \mathbf{G}(F_d) \cdot A$ for some $d$ dividing $b$. By Proposition \ref{prop: orbit rat}, we know that there exists a $B$-twisted $\mu_d$-cocycle $\phi$. This means in particular that $\text{Ad}(\phi_{\omega_d}) (B_{irr} +t^{-1} C_s) = \omega_d \cdot B_{irr} +t^{-1} C_s$. We can consider $B_{irr} +t^{-1}C_s$ as an element of $\text{Lie}(\mathbf{T}_{\overline{F}})$. Also $\phi_{\omega_d}$ can be viewed as an element of $\mathbf{G}(\overline{F})$. This means that $B_{irr} +t^{-1}$ and $\omega_d \cdot B_{irr}+t^{-1}$ are $\mathbf{G}(\overline{F})$-conjugate elements of $\text{Lie}(\mathbf{T}_{\overline{F}})$. By \cite{collingwood.nilpotent} Chapter 2, there is an element $w \in W$ such that $w \cdot (B_{irr} t^{-1}C_s) = \omega_d \cdot B_{irr} + t^{-1}C_s$. By definition, $b$ is the least positive integer such that $(\omega_d)^b \cdot B_{irr} = B_{irr}$. We conclude that some of the eigenvalues of $w$ are primitive $b$ roots of unity. It follows that $b$ divides a fundamental degree of $\mathfrak{l}$ by \cite{springer-regular-reflection} Theorem 3.4. If $b = h_{\mathbf{L}}$, then $w$ must be a Coxeter element by \cite{kane-reflections} Theorem 32.2-C. Since $w\cdot C_s = C_s$, we must have $C_s \in \text{Lie}(Z_{\mathbf{L}}^0)$ by the lemma in page 76 of \cite{humphreys-coxeter}.
\end{proof}
\begin{remark}
This does not yield a bound on the ramification needed to put $A$ into canonical form. For example, if $A$ is regular then $B \in \mathfrak{g}_F$. But we have seen that it is sometimes necessary to pass to a ramified cover in order to put a a regular connection into canonical form.
\end{remark}
We remark that part (i) of Proposition \ref{prop: coxeter bound levels} was proven in \cite{chen-depthpreservation} via the existence of oper structures for any connection \cite{frenkel.zhu.opers}. Here we note that there is a direct argument using some facts about Coxeter groups.
\subsection*{Acknowledgements}
This paper grew out of a suggestion from Nicolas Templier to write a modern exposition to \cite{Babbitt.varadarajan.formal}. I am happy to thank him for his very valuable input on the redaction of the manuscript.
\bibliography{formal_reduction_theory_connections_short.bib}
\bibliographystyle{alpha}
\end{document}